\newtheorem{definition}{Definition}
\newtheorem{theorem}{Theorem}
\newtheorem{lemma}{Lemma}
\newtheorem{conjecture}{Conjecture}
\newtheorem*{remark}{Remark}
\newtheorem{corollary}{Corollary}
\numberwithin{equation}{section}
\numberwithin{theorem}{section}
\numberwithin{lemma}{section}
\numberwithin{definition}{section}
\numberwithin{corollary}{section}
\numberwithin{conjecture}{section}
\title[Graded Extensions of generalized Haagerup categories]
{Graded Extensions of generalized Haagerup categories} 
\author{Pinhas Grossman}
\address{School of Mathematics and Statistics, University of New South Wales,
Sydney NSW 2052, Australia}
\email{p.grossman@unsw.edu.au}
\author{Masaki Izumi}
\address{Department of Mathematics\\ Graduate School of Science\\
Kyoto University\\ Sakyo-ku, Kyoto 606-8502\\ Japan}
\email{izumi@math.kyoto-u.ac.jp}
\author{Noah Snyder}
\address{Department of Mathematics, Rawles Hall, Indiana University, Bloomington, Indiana 47405-7106, USA}
\email{nsnyder1@indiana.edu}
\subjclass[2010]{ 
Primary 46L37; Secondary 18D10}
\keywords{ 
subfactors, fusion categories, Cuntz algebras}
\thanks{Supported in part by JSPS KAKENHI Grant Number JP20H01805; ARC grants DP140100732, DP170103265, and DP200100067; and NSF DMS grant number 2000093.}
\begin{document}

\maketitle
\begin{abstract}
    We classify certain $\mathbb{Z}_2 $-graded extensions of generalized Haagerup categories in terms of numerical invariants satisfying polynomial equations. 
    In particular, we construct a number of new examples of fusion categories, including: $\mathbb{Z}_2 $-graded extensions of $\mathbb{Z}_{2n} $ generalized Haagerup categories for all $n \leq 5 $; $\mathbb{Z}_2 \times \mathbb{Z}_2 $-graded extensions 
    of the Asaeda-Haagerup categories; and extensions of the $\mathbb{Z}_2 \times \mathbb{Z}_2 $ generalized Haagerup category by its outer automorphism group $A_4 $.
    The construction uses endomorphism categories of operator algebras, and in particular, free products of Cuntz algebras with free group C$^*$-algebras.
\end{abstract}
\section{Introduction}
A quadratic category is a fusion category whose set of simple objects has exactly two orbits under the (left) tensor product action of the subcategory of invertible objects. Quadratic categories play a prominent role in the classification of small-index subfactors. Indeed, with a notable exception (the Extended Haagerup categories), all known fusion categories can be constructed by starting with either categories coming from quantum groups at roots of unity or starting with quadratic fusion categories, and then applying certain constructions. 

In this paper we study one of these constructions ($G$-extensions) applied to one of the most important families of quadratic categories: the generalized Haagerup categories.  One motivating application of these techniques is to resolve in the positive the open question of whether the Asaeda-Haagerup fusion categories admit extensions by their full Brauer-Picard group, which is the Klein $4$-group.

Generalized Haagerup categories were introduced as a generalization of Haagerup's famous original example appearing in the classification of small index subfactors \cite{MR1686551}, by replacing the group $\mathbb{Z}_3 = \mathrm{Inv}(\mathcal{C})$ of isomorphism classes of invertible objects which appears in the Haagerup subfactor with an arbitrary finite Abelian group. A generalized Haagerup category is tensor generated by a single simple object $X$, and satisfies the following fusion rules (plus some cohomological conditions) \cite{MR3827808}:
$$g \otimes X \cong X \otimes g^{-1}, \ \forall g \in \mathrm{Inv}(\mathcal{C}), \quad X \otimes X\cong 1 \oplus \bigoplus_{g \in \mathrm{Inv}(\mathcal{C})}\limits g \otimes X.$$

Generalized Haagerup categories were classified in \cite{MR3827808} in terms of solutions of certain polynomial equations; moreover, when there is such a solution the category can be realized as a category of endomorphisms of a von Neumann factor completion of a Cuntz algebra.  We will be generalizing this approach to also treat extensions of generalized Haagerup categories, but this generalization will require replacing Cuntz algebras by more complicated algebras.

A $G$-extension of a fusion category $\mathcal{C}$ is a $G$-graded fusion category $\mathcal{D}$ whose trivial component is $\mathcal{C}$.  There is a general obstruction theory for $G$-extensions developed by Etingof-Nikshych-Ostrik using the homotopy type of the Brauer-Picard groupoid of $\mathcal{C}$ \cite{MR2677836}.  As is typical for obstruction theories, this is quite easy to apply when the cohomology groups where the obstructions live are trivial, but if the groups are non-trivial it can be quite difficult to figure out whether the obstruction vanishes or not.  In this paper we will take a much more bare-hands approach, using concrete realizations of our examples as categories of endomorphisms, and explicitly computing structure constants. 

In general, the non-trivially graded parts of a $G$-extension of $\mathcal{C}$ will be non-trivial invertible bimodule categories over $\mathcal{C}$.  In this paper we will be considering the special case of quasi-trivial extensions, where each of these bimodules comes from an outer automorphism of $\mathcal{C} $ (i.e. it is trivial as either a left or right module, but the two actions are twisted by an outer automorphism relative to each other).  


Our first main result says:

\begin{theorem}
Unitary extensions of a generalized Haagerup category $\mathcal{C}$ by an outer action of $\mathbb{Z}_2$ which is trivial on the subcategory of invertible objects are completely classified by solutions to certain polynomial equations. Moreover, when these polynomial equations are satisfied then the extensions may be explicitly realized as categories of endomorphisms of a factor completion of the free product $\mathcal{O}_{n+1} * \mathcal{O}_{n+1} * C^*(\mathbb{F}_3)$ where $\mathcal{O} $ denotes a Cuntz algebra, $\mathbb{F} $ denotes a free group, and  $n$ is the size of $\mathrm{Inv}(\mathcal{C})$. (See Theorems \ref{relations}, \ref{rcrthm}, and \ref{equiv} below for the precise statements).

\end{theorem}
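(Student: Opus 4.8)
The plan is to prove this classification-plus-realization theorem in two halves, matching the structure of the three referenced theorems. The first half (classification) is an abstract structural analysis of quasi-trivial $\mathbb{Z}_2$-extensions, reducing the data of such an extension to a finite list of numerical structure constants constrained by polynomial equations; the second half (realization) shows that any solution to these equations can be assembled into a concrete operator-algebraic model built from the free product $\mathcal{O}_{n+1} * \mathcal{O}_{n+1} * C^*(\mathbb{F}_3)$.

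\textbf{Classification half.} I would begin by fixing a generalized Haagerup category $\mathcal{C}$ with generating object $X$ and invertibles $\mathrm{Inv}(\mathcal{C}) \cong A$ of order $n$, and analyzing what a $\mathbb{Z}_2$-extension $\mathcal{D} = \mathcal{C} \oplus \mathcal{C}_1$ looks like when the nontrivial graded piece $\mathcal{C}_1$ is the invertible $\mathcal{C}$-bimodule coming from an outer automorphism $\theta$ that acts trivially on $A$. The key observation is that $\mathcal{C}_1$, as a module category, contains a distinguished simple object $\rho$ (the "new" generator), and the fusion rules of $\mathcal{D}$ are then determined by how $\rho \otimes \rho$, $\rho \otimes X$, and $X \otimes \rho$ decompose back into $\mathcal{C}$ and $\mathcal{C}_1$. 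I would write down the forced fusion rules (e.g. $\rho^2$ must land in the trivial component and decompose in terms of $1$, the invertibles, and $X$-type objects), and then extract the associativity/pentagon constraints. The crucial step is to encode all associator data as complex structure constants and to show — by choosing a good gauge and exploiting the strong rigidity coming from the generalized Haagerup fusion rules — that the pentagon equations collapse to a finite polynomial system in finitely many unknowns, with unitarity imposing reality/positivity constraints. This mirrors the original classification of generalized Haagerup categories in \cite{MR3827808}, so I would lean heavily on those normalizations.

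\textbf{Realization half.} For the converse, given a solution to the polynomial equations, I would construct the extension concretely as endomorphisms of a C$^*$-algebra. The generalized Haagerup part is realized on two copies of the Cuntz algebra $\mathcal{O}_{n+1}$ (one for $X$, and the extension needs a second for the twisted generator $\rho$), while the extra $C^*(\mathbb{F}_3)$ factor supplies the three additional unitary intertwiners needed to implement the automorphism $\theta$ and the new fusion channels that are not present in the ungraded case. Concretely, I would define explicit endomorphisms of the free product algebra by specifying their action on the Cuntz generators and on the free-group unitaries, using the structure constants from the solution as the coefficients; then verify that these endomorphisms satisfy exactly the target fusion rules and that the intertwiner spaces have the correct dimensions. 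Passing to a factor completion (a suitable GNS/von Neumann completion with respect to a canonical trace or KMS state) yields a genuine unitary fusion category. Finally, I would prove the equivalence statement: that the category of endomorphisms so constructed is braided/tensor equivalent as a $\mathbb{Z}_2$-extension to the abstract one classified in the first half, and that distinct solutions (up to the natural gauge equivalence) give inequivalent extensions.

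\textbf{Main obstacle.} I expect the hardest part to be the realization half — specifically, verifying that the explicit endomorphisms of the free product algebra actually close up into the prescribed fusion category with the \emph{correct} intertwiner dimensions, rather than something larger. The free product construction is delicate: one must check that the Cuntz relations, the free-group relations, and the imposed intertwiner identities are mutually consistent and that no unexpected additional intertwiners appear, which is what forces the use of a free (rather than tensor) product and of $\mathbb{F}_3$ rather than a smaller or abelian group. Establishing that the constructed von Neumann factor has the endomorphism category with exactly the expected fusion graph — i.e. proving irreducibility and computing the relative commutants — is where the genuine analytic and combinatorial work lies, and where the polynomial equations from the classification half must be used in an essential way to guarantee the associativity constraints hold on the nose.
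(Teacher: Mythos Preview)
Your classification half takes a genuinely different route from the paper. You propose an abstract categorical analysis via pentagon equations for associators in $\mathcal{D}=\mathcal{C}\oplus\mathcal{C}_1$. The paper instead works entirely inside $\mathrm{End}_0(M)$: it assumes $\mathcal{C}$ is already realized in standard form, posits an automorphism $\beta$ with $\beta\alpha_g=\alpha_g\beta$ and $[\beta\rho]=[\alpha_p\rho\beta]$, and then mechanically derives constraints on the intertwiners $u\in(\alpha_p\rho\beta,\beta\rho)$ and $v\in(\alpha_{p+z},\beta^2)$ by applying $\alpha_g$, $\beta$, and $\rho$ to them and using the master relation $\rho^2(x)=sxs^*+\sum_g t_g\alpha_g\rho(x)t_g^*$. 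The polynomial equations (\ref{n1})--(\ref{n10}) emerge directly from these intertwiner computations, never from an abstract pentagon. Your approach could in principle work but would be substantially harder to carry out and would not so directly suggest the operator-algebraic realization.

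In the realization half there is a genuine misconception that would block you. You describe $\mathcal{C}_1$ as containing a ``new generator $\rho$'' and interpret the two Cuntz algebras as ``one for $X$, a second for the twisted generator,'' with $C^*(\mathbb{F}_3)$ ``implementing $\theta$.'' But the extension is \emph{quasi-trivial}: the distinguished simple object in $\mathcal{C}_1$ is the \emph{invertible} $\beta$, not a non-invertible $\rho$-type object. In the paper's construction the first $\mathcal{O}_{n+1}$ carries the original $(\alpha,\rho)$, the second $\mathcal{O}_{n+1}$ is literally $\beta(\mathcal{O}_{n+1})$, and the three free unitaries are precisely the intertwiners $u$, $\beta(u)$, $v$ extracted in the classification step. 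Without this identification you would not know what endomorphisms to write down on the free product, nor why exactly three free unitaries suffice. Your anticipated ``main obstacle'' (ruling out extra intertwiners in the factor closure) is in fact dispatched by a reference to a standard argument; the real work is the explicit verification that the defined $\tilde\alpha$, $\tilde\rho$, $\tilde\beta$ satisfy the required relations, which is a direct (if tedious) consequence of the polynomial equations.
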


Such outer actions can only exist when the group  $\mathrm{Inv}(\mathcal{C})$ has even order. Generalized Haagerup categories are known to exist for all cyclic groups of size $\leq 10 $ (with multiple distinct examples for certain groups), and we solve the polynomial equations for $\mathbb{Z}_2 $-extensions for all of the examples in this range, thereby constructing new fusion categories in each case. In fact, due to choices in the construction of the extension, we have $4$ different $\mathbb{Z}_2 $-extensions for each example, which are also distinct as tensor categories (some of the choices even lead to different fusion rules). 

We then generalize these techniques to give applications in two further examples of interest.  First, we consider the category $\mathcal{AH}_4$ in the Morita equivalence class of the Asaeda-Haagerup subfactor.  This can be constructed as a degenerate version of a generalized Haagerup category for the group $\mathbb{Z}_4 \times \mathbb{Z}_2$, where the second factor acts trivially and so the group of invertible objects up to isomorphism is $\mathbb{Z}_4$.  In prior work  we calculated the Brauer-Picard groupoid of the Asaeda-Haagerup fusion categories and saw that the Brauer-Picard group is the Klein $4$-group \cite{MR3449240,MR3859276}.  Using Etingof-Nikshych-Ostrik's obstruction theory, it is easy to see that these fusion categories have $\mathbb{Z}_2$-extensions for each subgroup of the Klein $4$-group \cite{MR3354332}, but since the Klein $4$-group is not cyclic the question of whether there is an extension by the full Klein $4$-group is substantially more difficult.  For the original fusion categories $\mathcal{AH}_1$ and $\mathcal{AH}_2$, which arise as the even parts of the Asaeda-Haagerup subfactor, the invertible bimodule categories do not come from outer automorphisms, but for $\mathcal{AH}_4$ all the bimodule categories do come from outer automorphisms.  Thus the problem of finding extensions of $\mathcal{AH}_4$ is very close to the setting of our main result.  This leads to our second result.

\begin{theorem}
The Asaeda-Haagerup fusion category $\mathcal{AH}_4$ has an extension by its Klein $4$-group of outer automorphisms. Moreover, this extension can be explicitly realized as a category of endomorphisms of a factor completion of the algebra
$\mathcal{O}_{9} * \mathcal{O}_{9} * C^*(\mathbb{F}_3)$.
\end{theorem}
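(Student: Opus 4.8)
The plan is to generalize the operator-algebraic framework behind Theorem 1 from a single $\mathbb{Z}_2$-grading to the Klein four-group $\mathbb{Z}_2 \times \mathbb{Z}_2$. First I would pin down the three involutive outer automorphisms of $\mathcal{AH}_4$ that generate the Brauer-Picard group, using the groupoid computation of \cite{MR3449240,MR3859276}. Since the discussion above records that for $\mathcal{AH}_4$ every invertible bimodule category is induced by an outer automorphism, each of the three nontrivial graded components of the desired extension is quasi-trivial, and the problem is of exactly the type handled by our $\mathbb{Z}_2$ machinery, only now with three coupled sectors rather than one.

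Next I would transcribe the structure-constant formalism of Theorems \ref{relations}, \ref{rcrthm}, and \ref{equiv} into the $\mathbb{Z}_2 \times \mathbb{Z}_2$ setting. Concretely, I would realize the trivial component $\mathcal{AH}_4$ as endomorphisms generated by the single object $X$, whose self-fusion has $9$ summands (the group $\mathbb{Z}_4 \times \mathbb{Z}_2$ has order $8$, so the degenerate generalized Haagerup rule $X \otimes X \cong 1 \oplus \bigoplus_g g \otimes X$ has $1 + 8 = 9$ terms) and therefore lives inside one copy of $\mathcal{O}_9$. The extension is then generated by one further object together with the unitaries implementing the grading automorphism and the two outer actions: the extra generating object contributes the second $\mathcal{O}_9$, the implementing unitaries contribute $C^*(\mathbb{F}_3)$, and the simple objects of the remaining two nontrivial sectors should arise as composites of these generators with the invertibles. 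This is what I expect to explain why the full Klein four-group extension is captured by the same algebra $\mathcal{O}_9 * \mathcal{O}_9 * C^*(\mathbb{F}_3)$ that a single $\mathbb{Z}_2$-extension would require at $n = 8$.

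With the formalism in place, the crux is to write down and solve the resulting enlarged system of polynomial equations: the Cuntz relations for $X$ and for the extension object, the intertwining relations expressing compatibility of the two outer actions with the fusion of $X$, and the associativity (pentagon) constraints, which must now hold across all four graded components simultaneously. I would exhibit an explicit numerical solution, define endomorphisms of $\mathcal{O}_9 * \mathcal{O}_9 * C^*(\mathbb{F}_3)$ from it, pass to a factor completion, and verify that the induced fusion category is $\mathbb{Z}_2 \times \mathbb{Z}_2$-graded with trivial component $\mathcal{AH}_4$.

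The main obstacle is the consistency of this enlarged system. Because the Klein four-group is not cyclic, the Etingof-Nikshych-Ostrik obstruction theory \cite{MR2677836} predicts genuine $H^3$ and $H^4$ obstructions that are hard to evaluate abstractly; in our setting this manifests as the danger that the three sectors, each individually realizable as a $\mathbb{Z}_2$-extension, fail to glue compatibly into a single category. Producing an honest solution to the polynomial equations, equivalently an actual endomorphism realization, is what circumvents the abstract obstruction computation entirely, since the mere existence of the category witnesses that every obstruction vanishes. I expect the degenerate $\mathbb{Z}_4 \times \mathbb{Z}_2$ origin of $\mathcal{AH}_4$ to be essential here: the trivially-acting $\mathbb{Z}_2$ should supply exactly the room needed to realize the second and third sectors as composites, keeping the ambient algebra as small as $\mathcal{O}_9 * \mathcal{O}_9 * C^*(\mathbb{F}_3)$.
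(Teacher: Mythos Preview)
Your overall strategy---realize both generating outer automorphisms concretely as endomorphisms, check the compatibility relations, and thereby witness that the $H^4$ obstruction vanishes---is exactly what the paper does. Where your proposal has a gap is in the mechanism that keeps the ambient algebra at $\mathcal{O}_9 * \mathcal{O}_9 * C^*(\mathbb{F}_3)$ rather than something larger.

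A naive transcription of the $\mathbb{Z}_2$ machinery to $\mathbb{Z}_2 \times \mathbb{Z}_2$ does \emph{not} give this algebra: compare Section~5, where the analogous $\mathbb{Z}_2\times\mathbb{Z}_2$-extension of the $\mathbb{Z}_2\times\mathbb{Z}_2$ generalized Haagerup category requires four copies of the Cuntz algebra and thirteen free unitaries. The reason the Asaeda--Haagerup case is so much smaller is not the degenerate $\mathbb{Z}_2$ factor per se, but a specific feature of the bicharacter $\epsilon$: for $p=(1,0)$ one has $\epsilon_k(p)=1$ for all $k\in G_2$, which by Remark~\ref{trivext} forces the extension data for that generator to be trivial (up to signs in $a(g)$), so the automorphism $\beta$ can be defined directly on the \emph{original} copy of $\mathcal{O}_9$ with no free product at all. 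Only the second generator $\beta'$ (for $p=(0,1)$, where $\epsilon_{(2,0)}(p)=-1$) requires the full $\mathcal{O}_9 * \mathcal{O}_9 * C^*(\mathbb{F}_3)$ construction. One then extends $\beta$ to this free product by letting it act as scalars on the three unitaries, and the entire compatibility problem collapses to a single cocycle-type identity (Lemma~\ref{l11}): $c\,a'(g-(1,0))\,a(g-(0,1))\,a(g)=a'(g)$ for all $g$, which one checks holds for $c=e^{-3\pi i/4}$.

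So your ``enlarged system of polynomial equations'' is in fact much smaller than you anticipate, but only once you exploit this asymmetry between the two generators. Without that observation you would either end up with a much larger algebra or be unable to explain why the stated one suffices.
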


By Etingof-Nikshych-Ostrik's theory, we can conclude that the obstruction vanishes, and hence all of the Asaeda-Haagerup fusion categories have extensions by their full Brauer-Picard group; and moreover all such extensions can be easily classified via group cohomology.  These extensions give some new rich and complicated examples of fusion categories.  Homotopy theoretically this can be summarized by saying that the Brauer-Picard $3$-groupoid is homotopy equivalent to the product of Eilenberg-Maclane spaces $K(\mathbb{Z}_2 \times \mathbb{Z}_2,1) \times K(\mathbf{C}^\times,3)$, or equivalently that the Postnikov $k$-invariant vanishes.

Our other application is to the generalized Haagerup category for the group $\mathbb{Z}_2 \times \mathbb{Z}_2$.  This category is related to a conformal inclusion $SU(5)_5 \subset \mathrm{Spin}(24) $; see \cite{MR3764563,2102.09065}. This category is interesting because its Brauer-Picard group is unusually rich: it was shown in \cite{MR4001474} that this group has order $360$, and it was identified as $S_3 \times A_5 $ in \cite{2102.09065}. The outer automorphism subgroup is $A_4$.  We show using similar techniques to our main theorem:

\begin{theorem}
There is an $A_4$-graded extension of the $\mathbb{Z}_2 \times \mathbb{Z}_2$ generalized Haagerup category by its outer automorphism group. Moreover, this extension can be realized as a category of endomorphisms of a factor closure of the algebra $\mathcal{O}_{5} * \mathcal{O}_{5}*\mathcal{O}_{5} *\mathcal{O}_{5}* C^*(\mathbb{F}_{13})$.
\end{theorem}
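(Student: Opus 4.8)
The plan is to follow the strategy used for the $\mathbb{Z}_2$-graded case of our first main theorem, adapting the operator-algebraic model and the associated polynomial equations to the non-abelian grading group $A_4$. The first step is to pin down the relevant group theory: writing the outer automorphism group as $A_4 = V \rtimes \mathbb{Z}_3$, where $V \cong \mathbb{Z}_2 \times \mathbb{Z}_2$ is the normal Klein four-subgroup and a fixed order-three automorphism $\sigma$ cyclically permutes the three involutions. I would then determine how $A_4$ acts on the fusion data of $\mathcal{C}$ --- in particular on $\mathrm{Inv}(\mathcal{C}) = \mathbb{Z}_2 \times \mathbb{Z}_2$ and on the generating object $X$ --- so that the twelve graded components $\mathcal{C}_g$ and their fusion rules are fixed. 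Because the extension is quasi-trivial, each nontrivial component is a twist $\mathcal{C}_\alpha$ and therefore again contains a Haagerup-type generator whose square decomposes as $1 \oplus \bigoplus_{g \in \mathrm{Inv}(\mathcal{C})} g \otimes X$, i.e.\ into five summands; this is what forces the Cuntz parameter $n+1 = 5$.

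The second step is to set up the concrete realization inside a factor. I would introduce one copy of $\mathcal{O}_5$ for each element of the normal subgroup $V$ --- giving the four free factors $\mathcal{O}_5 * \mathcal{O}_5 * \mathcal{O}_5 * \mathcal{O}_5$ --- to carry the generating endomorphisms of the $V$-graded subextension, exactly generalizing the two copies used in the $\mathbb{Z}_2$ case (one Cuntz factor per graded component). The residual quotient $\mathbb{Z}_3 = A_4/V$ is then implemented not by new Cuntz factors but by unitaries among the generators of $C^*(\mathbb{F}_{13})$, together with the unitaries implementing the $A_4$-action, the grading shifts, and the remaining canonical intertwiners; the rank $13 = |A_4|+1$ is the natural guess, matching the relation (rank $= |G|+1$) seen for $\mathbb{Z}_2$ where $|\mathbb{Z}_2|+1 = 3$. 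Taking a factor completion and passing to the induced endomorphisms, the categorical axioms --- associativity, compatibility of the $A_4$-action with fusion, and the bimodule structure constants --- translate, as in Theorems \ref{relations}, \ref{rcrthm}, and \ref{equiv}, into a finite system of polynomial equations in the structure constants.

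The third step is to exploit the $\mathbb{Z}_3$-symmetry to reduce and then solve this system. Since $\sigma$ conjugates the four Cuntz generators among themselves (equivalently, cyclically relates the nontrivial components over $V$), I would impose $\sigma$-equivariance to cut the unknowns down to a fundamental domain and then propagate the solution by the group action; an explicit, likely computer-assisted, solution of the reduced equations yields the extension. Finally, by the general machinery, any such solution produces a factor $M$ and a fully faithful $A_4$-graded tensor functor into the endomorphism category of $M$ extending the given realization of $\mathcal{C}$, which establishes both the existence of the $A_4$-extension and its realization as endomorphisms of a factor closure of $\mathcal{O}_5 * \mathcal{O}_5 * \mathcal{O}_5 * \mathcal{O}_5 * C^*(\mathbb{F}_{13})$.

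The main obstacle I expect is concentrated in the passage from the abelian $\mathbb{Z}_2$ to the non-abelian $A_4$, and is twofold. First, deriving the correct equations: because $\sigma$ acts nontrivially on $V$, the consistency conditions acquire genuinely new cross-terms and cocycle constraints --- reflecting the a priori nonvanishing of the Etingof--Nikshych--Ostrik obstruction --- that have no analogue in the $\mathbb{Z}_2$ computation and must be set up with care. Second, actually exhibiting a unitary solution: even after using the $\mathbb{Z}_3$-symmetry to reduce the number of independent unknowns, the reduced polynomial system is substantially larger than in the $\mathbb{Z}_2$ case, and producing a genuine solution --- thereby showing by hand that the obstruction vanishes --- is the crux of the argument.
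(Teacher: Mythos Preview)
Your broad architecture---build the $V=\mathbb{Z}_2\times\mathbb{Z}_2$ part first on four copies of $\mathcal{O}_5$, then adjoin the $\mathbb{Z}_3$---matches the paper. But two points in your accounting are off and would derail the actual computation.

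First, the numerology $13=|A_4|+1$ is a coincidence, and the analogous claim for the $\mathbb{Z}_2$ case is already wrong: the $3$ in $C^*(\mathbb{F}_3)$ counts the three unitaries $u$, $\beta(u)$, $v$, not $|\mathbb{Z}_2|+1$. In the $A_4$ case, all thirteen free-group generators belong to the $V$-extension and have nothing to do with the $\mathbb{Z}_3$ quotient. Concretely, for each $h\in\{p,q,r\}$ one needs $u_h\in(\alpha_h\rho\beta_h,\beta_h\rho)$, $v_h\in(\alpha_{h+z_h},\beta_h^2)$, and their images $\beta_h(u_h)$, $\beta_{h'}(u_h)$, $\beta_{h'}(v_h)$; that gives $3\times 5=15$ unitaries, and two relations (one saying $v_pv_p^{(q)}v_qv_q^{(r)}v_rv_r^{(p)}$ is scalar, the other an analogous word in the $u$'s and $v$'s) cut this to $13$. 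The polynomial system you have to solve lives entirely at the level of this $V$-extension.

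Second, and relatedly, the $\mathbb{Z}_3$ is \emph{not} implemented by unitaries in $C^*(\mathbb{F}_{13})$. The point is that the order-three automorphism $\gamma_0$ of $\mathcal{O}_5$ satisfies $\gamma_0\circ\rho=\rho\circ\gamma_0$ exactly (because the structure constants $(A,\epsilon)$ are invariant under the cyclic permutation of $\{p,q,r\}$), so no intertwining unitary is needed. One simply extends $\gamma_0$ to $\mathcal{U}$ by permuting the four Cuntz copies and the labelled unitaries $u_h^{(k)}\mapsto u_{\theta(h)}^{(\theta(k))}$, $v_h^{(k)}\mapsto v_{\theta(h)}^{(\theta(k))}$; this is compatible with the two relations precisely when the $V$-data $(z_p,z_q,z_r)$ and $(\nu_p,\nu_q,\nu_r)$ are themselves $\theta$-symmetric. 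So the ``$\mathbb{Z}_3$-equivariance'' you mention is not a device for reducing unknowns in a larger $A_4$-system; rather, one first solves the $V$-system completely, and then observes that certain of its solutions are $\theta$-invariant, which is exactly what allows $\gamma_0$ to be defined on $\mathcal{U}$ without enlarging the algebra.
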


Again this implies that the relevant obstruction vanishes and hence lets us completely classify all such extensions, of which there are exactly 15 up to equivalence. We also classify all extensions by subgroups of the outer automorphism  group. Thus we determine the extension theory associated to the outer automorphism subgroup of the Brauer-Picard group. It is an interesting problem to determine the extension theory by the entire Brauer-Picard group; however we do not currently see an accessible way to approach this.   

The paper is organized as follows.

In Section 2 we review some background material on fusion categories, extension theory, generalized Haagerup categories, and outer automorphisms.

In Section 3 we give the classification of certain $\mathbb{Z}_2 $-extensions of generalized Haagerup categories.

In Section 4 we look at some examples, including generalized Haagerup categories for cyclic groups, the Asaeda-Haagerup categories, and the generalized Haagerup category for $\mathbb{Z}_2 \times \mathbb{Z}_2 $.

In Section 5 we study the $\mathbb{Z}_2 \times \mathbb{Z}_2 $ generalized Haagerup example further, and classify all of its quasi-trivial extensions.

A long and tedious calculation needed for the argument in Section 5 is deferred to an Appendix.

\textbf{Acknowledgements.} We would like to thank Cain Edie-Michell for pointing out to us Davydov and Nikshych's result \cite[Corollary 8.7]{2006.08022}.

We would like to dedicate this paper to the memory of Vaughan Jones. During the first semester that Noah attended Vaughan's subfactor seminar at UC Berkeley, Pinhas gave a talk on his joint work with Vaughan \cite{MR2257402} in which he drew the intermediate subfactor lattice for the index $6+4\sqrt{2}$ and Vaughan declared with satisfaction ``Now that's a finite quantum group!"  We all miss him, and we'd like to think that these rich extensions might have elicited a similar response.

\section{Background}
\subsection{Fusion categories}
A fusion category over an algebraically closed field $k$ is a rigid semisimple $k$-linear monoidal category with finitely many simple objects up to isomorphism and finite-dimensional morphism spaces, and such that the unit object is simple \cite{MR2183279}. In this paper $k$ will always be the field $ \mathbb{C}$ of complex numbers.

An object $X$ in a fusion category is said to be invertible if there is another object $Y$ such that $X \otimes Y \cong 1$ (where $1$ is the unit object). The invertible objects in a fusion category $\mathcal{C} $ form a tensor subcategory $\mathrm{Inv}(\mathcal{C}) $, and the set of isomorphism classes of invertible objects is a group, by an abuse of notation also sometimes denoted by $\mathrm{Inv}(\mathcal{C}) $.

One can define left and right module categories and bimodule categories over fusion categories, as well as relative tensor products - see \cite{MR2677836} for details. A bimodule category is said to be invertible if its relative tensor product with its opposite bimodule category is equivalent to a trivial bimodule.  Invertible bimodule categories are also called Morita equivalences.

One way that invertible bimodule categories arise is through automorphisms. Given a tensor autoequivalence $\alpha$ of a fusion category $\mathcal{C} $, there is an invertible bimodule category ${}_\mathcal{C} \mathcal{C} {}_{\alpha(\mathcal{C})}  $, where the right action of $\mathcal{C} $ is twisted by $\alpha$. This bimodule is equivalent to the trivial bimodule
${}_\mathcal{C} \mathcal{C} {}_\mathcal{C}$ iff $\alpha $ is inner (isomorphic to conjugation by an invertible object). The set of isomorphism classes of tensor autoequivalences of $\mathcal{C} $, modulo inner autoequivalences, is a group, denoted by $\mathrm{Out}(\mathcal{C}) $.

To any fusion category $\mathcal{C} $, one can associate the Brauer-Picard $3$-groupoid, whose objects are fusion categories Morita equivalent to $\mathcal{C}$, whose $1$-morphisms are Morita equivalences between such categories, whose $2$-morphisms are bimodule equivalences, and whose $3$-morphisms are bimodule natural isomorphisms.  This can be truncated: in particular, the Brauer-Picard groupoid consists just of Morita equivalences modulo equivalence, and the Brauer-Picard group consists of Morita autoequivalences of $\mathcal{C}$ up to equivalence.  Also, by the homotopy hypothesis, one can think of a $3$-groupoid as a homotopy $3$-type (that is, a space in the sense of algebraic topology, whose homotopy groups vanish above $3$).

In this paper, we are primarily concerned with unitary fusion categories. A fusion category is called unitary if it is equipped with a $*$ (sometimes called ``dagger'') structure which makes it into a C$^*$-tensor category (see
\cite{MR1010160} for the definition of a (strict) C$^*$-tensor category). When discussing tensor functors between unitary fusion categories, we assume such functors are also unitary, i.e. compatible with the C$^*$-structure. Unitary fusion categories are closely related to operator algebras; see Section \ref{endcat} below.

\subsection{Extension theory}
Let $\Gamma$ be a finite group. A $\Gamma$-graded fusion category is a fusion category with a direct sum decomposition $$\mathcal{C}=\bigoplus_{g \in \Gamma}\limits \mathcal{C}_g$$ where the $\mathcal{C}_g $ are full Abelian subcategories and the tensor product bifunctor maps $\mathcal{C}_g \times \mathcal{C}_h$ to $\mathcal{C}_{gh}, \ \forall g,h \in \Gamma $. The trivial component $ \mathcal{C}_{e}$ is then a fusion category and all of the graded components $\mathcal{C}_g $ are $\mathcal{C}_e $-$\mathcal{C}_e $ bimodule categories. If the grading is faithful, then these bimodule categories are all invertible \cite{MR2677836}.

\begin{definition}
A $\Gamma$-extension of a fusion category $\mathcal{C} $ is a faithfully $\Gamma$-graded fusion category whose trivial component is tensor equivalent to $\mathcal{C}$.
\end{definition}

{
Whenever we discuss equivalence between two $\Gamma$-extensions of $\mathcal{C}$, we once fix tensor equivalences between $\mathcal{C}$ and the trivial components of the extensions, and then identify them afterward. 

\begin{definition}
We say that two $\Gamma$-extensions $\mathcal{D}$ and $\mathcal{D'}$ of $\mathcal{C}$ are equivalent if 
there exists a tensor equivalence $\mathcal{F}$ from $\mathcal{D}$ to $\mathcal{D}'$ satisfying $\mathcal{F}|_{\mathcal{D}_e}=\mathrm{id}$ and $\mathcal{F}(\mathcal{D}_g)=\mathcal{D'}_g$ for every $g\in \Gamma$. We denote by $\mathrm{Ext}_\Gamma(\mathcal{C})$ the set of equivalence classes of $\Gamma$-extensions of $\mathcal{C}$. 
\end{definition}
}

Note that one can have inequivalent extensions which nonetheless are equivalent as tensor categories.  This can happen either because the equivalence permutes the gradings, or because the equivalence restricts non-trivially to $\mathcal{C}$; see \cite{MR4192836}.

On the other hand, there is an even less flexible definition where in addition to fixing the zero graded part and fixing the grading, you also fix the bimodule categories.  The main statements in \cite{MR2677836} implicitly use this even more restrictive definition.  To correct those results for the above definition of extension, one needs to look at orbits under the action of applying a bimodule autoequivalence to each graded part in a coherent way.  See \cite{2006.08022} for more detail.

One way that $\Gamma$-extensions arise is from categorical group actions: if $\Gamma $ acts on $\mathcal{C} $, then there is a corresponding semidirect product $\mathcal{C} \rtimes \Gamma $, which is a $\Gamma$-extension of $\mathcal{C} $.

\begin{definition}
A $\Gamma$-extension of $\mathcal{C} $ is called trivial if it is equivalent to a semidirect product of a categorical action of $\Gamma$ on $\mathcal{C}$. A $\Gamma$-extension is called quasi-trivial if each graded component contains an invertible object.   
\end{definition}
Equivalently, an extension is quasi-trivial if each of the homogenous components is equivalent to the trivial module as a (left) $\mathcal{C} $-module category.



The following result of Etingof-Nikshych-Ostrik describes extensions in terms of the Brauer-Picard groupoid.
\begin{theorem}[\cite{MR2677836}]\label{ENOth}
A group homomorphism $c$ from a finite group $\Gamma$ into the Brauer-Picard group of $\mathcal{C} $ determines an obstruction class  in $O^3(c)\in H^3(\Gamma,\mathrm{Inv}(\mathcal{Z}(\mathcal{C}))  ) $ for the existence of a $\mathcal{C} $-bimodule quasi-tensor product (defined there) on the $\Gamma$-indexed collection of bimodules coming from the map. If this obstruction vanishes, then the set of such $\mathcal{C} $-bimodule quasi-tensor products  is a torsor for $H^2(\Gamma,\mathrm{Inv}(\mathcal{Z}(\mathcal{C}))) $.

Then each such $\mathcal{C} $-bimodule quasi-tensor product $M$  determines an obstruction class in $O^4(c,M)\in H^4(\Gamma,\mathbb{C}^*) $ for the existence of an associativity constraint. If this obstruction vanishes, then the set of associativity constraints $A$ for the quasi-tensor product forms a torsor over $H^3(\Gamma,\mathbb{C}^*) $.
\end{theorem}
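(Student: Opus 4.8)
The plan is to prove this via homotopy-theoretic obstruction theory, identifying $\Gamma$-extensions with maps into the classifying space of the Brauer-Picard $3$-groupoid. First I would unwind the definition of a faithfully $\Gamma$-graded fusion category $\mathcal{D}$ with $\mathcal{D}_e \simeq \mathcal{C}$ into its constituent pieces: for each $g \in \Gamma$ an invertible $\mathcal{C}$-bimodule category $\mathcal{D}_g$ (invertibility being guaranteed by faithfulness of the grading, as noted above), bimodule equivalences $M_{g,h}\colon \mathcal{D}_g \boxtimes_{\mathcal{C}} \mathcal{D}_h \xrightarrow{\sim} \mathcal{D}_{gh}$ encoding the tensor product, and associativity isomorphisms relating the two bracketings of a triple product, subject to the pentagon axiom (together with the evident unit data). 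This is precisely the data of a trihomomorphism from $\Gamma$, regarded as a one-object monoidal structure, into the Brauer-Picard $3$-groupoid $\underline{\underline{\underline{\mathrm{BrPic}}}}(\mathcal{C})$; equivalently, by the homotopy hypothesis discussed above, a based map $B\Gamma \to B\,\underline{\underline{\underline{\mathrm{BrPic}}}}(\mathcal{C})$.

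The second step is to compute the homotopy groups of the target. Based at the object $\mathcal{C}$, the loops see invertible bimodule categories, their bimodule equivalences, and the natural isomorphisms between those, yielding $\pi_1 = \mathrm{BrPic}(\mathcal{C})$, $\pi_2 = \mathrm{Inv}(\mathcal{Z}(\mathcal{C}))$, and $\pi_3 = \mathbb{C}^\times$, with the canonical action of $\pi_1$ on $\pi_2$ by transport of bimodule structure. The identification $\pi_2 \cong \mathrm{Inv}(\mathcal{Z}(\mathcal{C}))$ — that the group of bimodule autoequivalences of the trivial bimodule $\mathcal{C}$ is the group of invertible objects of the Drinfeld center (via $Z \mapsto Z \otimes (-)$ with its half-braiding) — is the genuinely categorical input here, and I would establish it as a lemma, while $\pi_3 = \mathbb{C}^\times$ records the scalar automorphisms of the identity.

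With these identifications in hand, the third step is standard Postnikov-tower obstruction theory. The homomorphism $c\colon \Gamma \to \mathrm{BrPic}(\mathcal{C})$ is exactly a map $B\Gamma \to B\pi_1$. Lifting across the first $k$-invariant of the tower (the fibration with fibre $K(\pi_2,2)$) is obstructed by a class $O^3(c) \in H^3(\Gamma, \mathrm{Inv}(\mathcal{Z}(\mathcal{C})))$ — the coherence obstruction to assembling the $M_{g,h}$ — and when it vanishes the set of lifts is acted on simply transitively by $H^2(\Gamma, \mathrm{Inv}(\mathcal{Z}(\mathcal{C})))$; each such lift is a choice of quasi-tensor product $M$. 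Fixing $M$, lifting across the second $k$-invariant (fibre $K(\pi_3,3)$) is obstructed by $O^4(c,M) \in H^4(\Gamma, \mathbb{C}^\times)$ — the pentagon/associativity obstruction — and when it vanishes the associativity constraints form a torsor over $H^3(\Gamma, \mathbb{C}^\times)$. The degree shifts ($2{+}1$ and $3{+}1$ for the obstructions, $2$ and $3$ for the torsors) match the claimed groups exactly, so reading the two obstruction-and-torsor statements off the tower gives the assertion.

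The main obstacle is Step 1: rigorously matching the categorical data (bimodules, their composition equivalences, associators, and the pentagon) with the simplicial/topological data of a map of spaces, i.e. a clean strictification and coherence argument showing that the two notions of ``$\Gamma$-extension'' and ``map $B\Gamma \to B\,\underline{\underline{\underline{\mathrm{BrPic}}}}(\mathcal{C})$'' agree on the nose — including the bookkeeping about which definition of equivalence of extensions one adopts, as flagged in the discussion following the definition of equivalence above. A secondary subtlety is verifying that the $k$-invariants of the Postnikov tower are represented by precisely the cocycles computing $O^3$ and $O^4$, so that the abstract topological obstructions coincide with the concrete coherence obstructions; this identification is where most of the genuine work in \cite{MR2677836} lies.
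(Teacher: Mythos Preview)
This theorem is not proved in the paper at all: it is stated with the citation \cite{MR2677836} and invoked as a black box from Etingof--Nikshych--Ostrik. There is no ``paper's own proof'' to compare against. Your sketch is a reasonable outline of the homotopy-theoretic argument that underlies the original ENO result, and the identification of $\pi_1,\pi_2,\pi_3$ of the Brauer--Picard $3$-groupoid together with Postnikov obstruction theory is indeed the conceptual core of \cite{MR2677836}; but as far as this paper is concerned, the theorem is simply quoted, so there is nothing here for you to reprove.
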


{
The $H^3(\Gamma,\mathbb{C}^*)$ torsor structure can be realized in a concrete manner 
as follows. 
Let $\mathcal{D}$ be a $\Gamma$-extension of $\mathcal{C}$, and let $[\omega]\in H^3(\Gamma,\mathbb{C})$. 
Then we can put 
$$[\omega]\cdot \mathcal{D}=\bigoplus_{g\in \Gamma}\mathcal{D}_g\boxtimes g\subset \mathcal{D}\boxtimes \mathrm{Vec}_\Gamma^\omega.$$
In the context of operator algebras, this procedure corresponds to taking an (outer) tensor product with a $\Gamma$-kernel with obstruction $[\omega]$, which we often use in this work.

The parametrization in Theorem \ref{ENOth} does not classify extensions up to equivalence, in the sense defined above, because two associators $A$ and $A'$ for a given pair $(c,M)$ with $A'_{f,g,h}=\omega(f,g,h)\circ A_{f,g,h}$ and $[\omega]\in H^3(\Gamma,\mathbb{C}^\times)\setminus\{0\}$ may give equivalent  extensions. 
The missing piece for complete classification was obtained recently by Davydov and Nikshych.

\begin{theorem}[{\cite[Corollary 8.7]{2006.08022}}]\label{DNth} 
Let the notation be as above. Then there exists a group homomorphism  $p^1_{(c,M)}:H^1(\Gamma,\mathrm{Inv}(\mathcal{Z}(\mathcal{C})) )\to H^3(\Gamma,\mathbb{C}^\times)$ satisfying the following property: 
Let $A$ and $A'$ be associators for $(c,M)$, and let $\omega\in Z^3(\Gamma,\mathbb{C}^\times)$ with $A_{f,g,h}=\omega(f,g,h)\circ A'_{f,g,h}$. 
Then the two $\Gamma$-extensions of $\mathcal{C}$ arising from $A$ and $A'$ are equivalent if and only if the cohomology class $[\omega]$ is in the image of $p^1_{(c,M)}$. 
In consequence, the equivalence classes of $\Gamma$-extensions of $\mathcal{C}$ with $(c,M)$ form a torsor over $\mathrm{coker}(p^1_{(c,M)})$.  
\end{theorem}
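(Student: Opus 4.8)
The plan is to isolate exactly the freedom that the parametrization in Theorem~\ref{ENOth} fails to record. With $(c,M)$ fixed, the associators form a torsor over $H^3(\Gamma,\mathbb{C}^\times)$, but an equivalence of extensions in the sense of the paper is the identity only on $\mathcal{C}=\mathcal{D}_e$ and is permitted to restrict nontrivially to each graded component. So I would first determine the group of \emph{self-equivalences} of a fixed extension $\mathcal{D}$ that fix $\mathcal{D}_e$ pointwise and preserve the grading, then let this group act on the torsor of associators; two associators will yield equivalent extensions precisely when they lie in one orbit, and $p^1$ will be the map recording how the orbit map shifts the associator.

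The first step is to identify these self-equivalences with $H^1(\Gamma,\mathrm{Inv}(\mathcal{Z}(\mathcal{C})))$. Let $\mathcal{F}\colon\mathcal{D}\to\mathcal{D}$ be a tensor autoequivalence with $\mathcal{F}|_{\mathcal{D}_e}=\mathrm{id}$ and $\mathcal{F}(\mathcal{D}_g)=\mathcal{D}_g$. Identifying each $\mathcal{D}_g$ with the fixed invertible bimodule $M_g$, the restriction $\mathcal{F}|_{\mathcal{D}_g}$ is a $\mathcal{C}$-bimodule autoequivalence of $M_g$, and since $M_g$ is invertible the group of such autoequivalences up to isomorphism is canonically $\mathrm{Inv}(\mathcal{Z}(\mathcal{C}))$ via the Morita equivalence $\mathcal{Z}(\mathcal{C})\simeq\mathrm{Fun}_{\mathcal{C}|\mathcal{C}}(M_g,M_g)$. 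This assigns to $\mathcal{F}$ a function $\phi\colon\Gamma\to\mathrm{Inv}(\mathcal{Z}(\mathcal{C}))$, $g\mapsto\phi_g$, and compatibility of the tensor structure of $\mathcal{F}$ with the graded tensor product forces
\begin{equation*}
\phi_{gh}=\phi_g\cdot(g\triangleright\phi_h),
\end{equation*}
where $\triangleright$ is the action of $\Gamma$ on $\mathrm{Inv}(\mathcal{Z}(\mathcal{C}))$ induced by $c$; that is, $\phi\in Z^1(\Gamma,\mathrm{Inv}(\mathcal{Z}(\mathcal{C})))$. Natural isomorphisms of such $\mathcal{F}$'s correspond to coboundaries, so the group of self-equivalences fixing $\mathcal{D}_e$ and the grading is $H^1(\Gamma,\mathrm{Inv}(\mathcal{Z}(\mathcal{C})))$.

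The second step defines $p^1$ and proves the equivalence. Given $\phi\in Z^1$ I would assemble the autoequivalences $\phi_g$ into a functor $\mathcal{F}_\phi$ and impose its associativity (pentagon) coherence; this shows that transporting an extension with associator $A$ through $\mathcal{F}_\phi$ preserves $(c,M)$ but multiplies $A$ by an explicit $3$-cocycle valued in $\mathbb{C}^\times$ built from $\phi$ and $M$. Setting $p^1([\phi])$ to be the class of this cocycle gives a well-defined map $H^1(\Gamma,\mathrm{Inv}(\mathcal{Z}(\mathcal{C})))\to H^3(\Gamma,\mathbb{C}^\times)$; well-definedness and the homomorphism property follow by composing self-equivalences, $\mathcal{F}_\phi\circ\mathcal{F}_\psi\cong\mathcal{F}_{\phi\psi}$. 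By construction, if $A_{f,g,h}=\omega(f,g,h)A'_{f,g,h}$ with $[\omega]=p^1([\phi])$, then $\mathcal{F}_\phi$ is an equivalence of the two extensions; conversely every equivalence fixing $\mathcal{D}_e$ and the grading arises this way, so it can only alter the associator within $\mathrm{im}(p^1)$. This is exactly the stated criterion. Homotopy-theoretically this is transparent: extensions are lifts of $B\Gamma\to B\,\mathrm{BrPic}(\mathcal{C})$ through the Postnikov tower with layers $\mathrm{Inv}(\mathcal{Z}(\mathcal{C}))$ in degree $2$ and $\mathbb{C}^\times$ in degree $3$, the $H^1$ of self-homotopies of the degree-$2$ datum acts on the $H^3$-torsor of degree-$3$ lifts, and $p^1$ is the resulting connecting map, which is additive because it is induced by the top $k$-invariant regarded as an infinite-loop map.

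Since the associators form a torsor over $H^3(\Gamma,\mathbb{C}^\times)$ and equivalence identifies precisely those differing by $\mathrm{im}(p^1)$, the equivalence classes of extensions with fixed $(c,M)$ form a torsor over $\mathrm{coker}(p^1_{(c,M)})$, as claimed. I expect the main obstacle to be the cocycle computation in the second step: extracting the explicit scalar $3$-cocycle $p^1(\phi)$ from the pentagon coherence of $\mathcal{F}_\phi$ — converting the failure of $\phi$ to be a strict tensor-natural transformation into an element of $Z^3(\Gamma,\mathbb{C}^\times)$ — while simultaneously checking additivity in $\phi$ and independence of all auxiliary choices. Getting the action of $\mathrm{Inv}(\mathcal{Z}(\mathcal{C}))$ on the bimodules $M_g$ and its interaction with the tensorator correct is the delicate bookkeeping on which the whole argument rests.
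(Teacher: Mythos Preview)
The paper does not prove this theorem at all; it is quoted verbatim as an external result, attributed to Davydov and Nikshych as \cite[Corollary 8.7]{2006.08022}, and then applied as a black box in later sections. There is therefore no proof in the paper to compare your proposal against.

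That said, your sketch is a reasonable outline of the argument one finds in Davydov--Nikshych: identify the graded self-equivalences fixing the trivial component with $H^1(\Gamma,\mathrm{Inv}(\mathcal{Z}(\mathcal{C})))$, track how such a self-equivalence shifts the associator by a class in $H^3(\Gamma,\mathbb{C}^\times)$, and package this as the homomorphism $p^1_{(c,M)}$. Your own caveat at the end is apt: the substance of the proof lies entirely in the coherence bookkeeping you flag as the ``main obstacle,'' and your proposal does not actually carry it out. As written it is a plausible plan rather than a proof, but since the present paper offers no proof either, there is nothing further to compare.
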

}


In practice it can of course be difficult to compute the obstruction classes for specific examples. One of the motivations of this work is to provide interesting examples of graded extensions.

{
\begin{remark} When $\Gamma$ is a finite group, we have $H^n(\Gamma,\mathbb{C}^\times)=H^n(\Gamma,\mathbb{T})$ for $n\geq 1$ because  $\mathbb{C}^\times \cong \mathbb{R}\times \mathbb{T}$ as trivial $\Gamma$-modules and $H^n(\Gamma,\mathbb{R})=\{0\}$ for $n\geq 1$. 
Thus we mainly discuss $H^n(\Gamma,\mathbb{T})$ as it is more natural from the view point of operator algebras. In fact, there should be a version of Etingof-Nikshych-Ostrik's extension theory in the unitary setting using an appropriate unitary analogue of the Brauer-Picard group where $\mathbb{T}$ appears as $\pi_3$, but we will not require this unitary version of obstruction theory in this paper.
\end{remark}
}
\subsection{The category $\mathrm{End}_0(M) $} \label{endcat}
Let $M$ be a Type $\mathrm{III}$ factor. The $\mathbb{C} $-linear category $\mathrm{End}(M)$ has as objects the normal unital $*$-endomorphisms of $M$, and as morphisms elements of $M$ which intertwine such endomorphisms:
$$\mathrm{Hom}(\rho,\sigma)=\{ t \in M:t \rho(x) = \sigma(x)t, \ \forall x \in M  \} .$$
This can be made into a strict monoidal category by defining
$$\rho \otimes \sigma = \rho \circ \sigma $$
and 
$$t \otimes s =t \rho_1(s)=\sigma_1(s)t, \quad  t \in \mathrm{Hom}(\rho_1,\sigma_1), \ s \in \mathrm{Hom}(\rho_2,\sigma_2) . $$
The identity automorphism is a monoidal unit.

Let $\mathrm{End}_0(M)$ be the full subcategory of $\mathrm{End}(M) $ whose objects are endomorphisms with finite-index (see \cite{MR1027496} for a discussion of index in infinite factors). Then $\mathrm{End}_0(M) $ is still a monoidal category, and it is also rigid and semi-simple with finite-dimensional morphism spaces. Thus any full tensor subcategory of $\mathrm{End}_0(M) $ with finitely many simple objects is a unitary fusion category.  Conversely, every unitary fusion category embeds into $\mathrm{End}_0(M) $ for some $M$ (in fact $M$ can be taken to be any hyperfinite Type III factor) in an essentially unique way.

Recall that a tensor functor from a strict fusion category $\mathcal{C}$ to another strict fusion category $\mathcal{D}$ is a pair $(F,L)$ 
consisting of a functor $F:\mathcal{C}\to \mathcal{D}$ and natural isomorphisms 
$$L_{\rho,\sigma}\in \mathrm{Hom}_\mathcal{D}(F(\rho)\otimes F(\sigma), F(\rho\otimes \sigma))$$ 
satisfying 
$$L_{\rho\otimes \sigma,\tau}\circ (L_{\rho,\sigma}\otimes I_{F(\tau)})
=L_{\rho,\sigma\otimes \tau}\circ (I_{F(\rho)}\otimes L_{\sigma,\tau})$$
for any $\rho,\sigma,\tau\in \mathcal{C}$.  
We may and do assume $F(\mathbf{1}_{\mathcal{C}})=\mathbf{1}_{\mathcal{D}}$ and $L_{\mathbf{1}_\mathcal{C},\rho}=L_{\rho,\mathbf{1}_{\mathcal{C}}}=I_{F(\rho)}$.   
When $\mathcal{C}$ and $\mathcal{D}$ are C$^*$ categories, we further assume that $L_{\rho,\sigma}$ is a unitary.

The following uniqueness result is \cite[Theorem 2.2]{MR3635673}, essentially due to Popa.

\begin{theorem}\label{uniqueness} Let $M$ and $P$ be hyperfinite type III$_1$ factors, 
and let $\mathcal{C}$ and $\mathcal{D}$ be unitary fusion categories embedded in $\mathrm{End}_0(M)$ and $\mathrm{End}_0(P)$ respectively. 
Let $(F,L)$ be a tensor functor from $\mathcal{C}$ to $\mathcal{D}$ that is an equivalence of the two unitary fusion categories $\mathcal{C}$ and $\mathcal{D}$. 
Then there exists a surjective isomorphism $\Phi:M\to P$ and unitaries $U_\rho \in P$ for each object $\rho \in \mathcal{C}$ 
satisfying  
$$F(\rho)=\mathrm{Ad} U_\rho \circ \Phi \circ\rho\circ\Phi^{-1},$$
$$F(t)=U_\sigma\Phi(t)U_\rho^*,\quad X\in (\rho,\sigma),$$
$$L_{\rho,\sigma}=U_{\rho\circ\sigma}\Phi\circ\rho\circ\Phi^{-1}(U_\sigma^*)U_\rho^*=
U_{\rho\circ\sigma}U_\rho^*F(\rho)(U_\sigma^*).$$
\end{theorem}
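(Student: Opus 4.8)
The plan is to reduce the statement to the essential uniqueness of concrete realizations of a fixed unitary fusion category, and then to invoke Popa's strong amenability theorem. First I would pre-compose: since $\mathcal{D}$ is a subcategory of $\mathrm{End}_0(P)$, the functor $F$ together with the inclusion gives a fully faithful unitary tensor functor $G=\iota_{\mathcal{D}}\circ F:\mathcal{C}\to \mathrm{End}_0(P)$, while the inclusion $\iota_{\mathcal{C}}:\mathcal{C}\to \mathrm{End}_0(M)$ is another such functor out of the \emph{same} category $\mathcal{C}$. Thus the task becomes to show that any two fully faithful unitary tensor functors from a unitary fusion category into $\mathrm{End}_0$ of hyperfinite type $\mathrm{III}_1$ factors are conjugate by an isomorphism $\Phi$ of the factors together with a family of unitaries $\{U_\rho\}$; the three displayed identities are precisely the assertions that $(\Phi,\{U_\rho\})$ intertwine the two functors on objects, on morphisms, and on tensorators.

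Next I would encode each realization as a subfactor. Choosing representatives $\rho_1,\dots,\rho_k$ of the isomorphism classes of simple objects and setting $\rho=\bigoplus_i \rho_i$, the endomorphism $\rho$ tensor-generates $\mathcal{C}$, and the inclusion $\rho(M)\subset M$ is a finite-index subfactor whose higher relative commutants are the spaces $\mathrm{Hom}(\rho^{\otimes n},\rho^{\otimes m})$, so its standard invariant is the $*$-tensor category generated by $\rho$ and its dual, namely $\mathcal{C}$. Because $\mathcal{C}$ has finitely many simple objects this subfactor has finite depth, and since $M$ is hyperfinite it is strongly amenable; the same holds for $G(\rho)(P)\subset P$. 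The tensor functor (here the identity on $\mathcal{C}$ after pre-composition) induces an isomorphism between the two standard invariants as $\lambda$-lattices, compatible with all the unitary and $*$-structure.

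Now I would apply Popa's uniqueness theorem for strongly amenable subfactors, in its type $\mathrm{III}_1$ form, which is the analytic heart of the argument and is what is meant by ``essentially due to Popa'': the isomorphism of standard invariants is implemented by a conjugacy of subfactors, so there is a surjective isomorphism $\Phi:M\to P$ carrying $\rho(M)\subset M$ onto $G(\rho)(P)\subset P$ and realizing the given identification. With $\Phi$ in hand, for each simple $\rho_i$ the endomorphisms $\Phi\circ\rho_i\circ\Phi^{-1}$ and $G(\rho_i)$ represent the same object of $\mathcal{C}$ inside $\mathrm{End}_0(P)$ and so are unitarily equivalent; I would choose unitaries $U_{\rho_i}$ with $G(\rho_i)=\mathrm{Ad}\,U_{\rho_i}\circ\Phi\circ\rho_i\circ\Phi^{-1}$, extend to all objects through the chosen direct-sum intertwiners so that $G(t)=U_\sigma\Phi(t)U_\rho^*$ holds for the structural morphisms, and then by linearity and functoriality for all $t$.

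The main obstacle is the final coherence step: on each simple object $U_\rho$ is determined only up to a phase, and these remaining scalars must be fixed so that the tensorator identity $L_{\rho,\sigma}=U_{\rho\circ\sigma}\,\Phi\circ\rho\circ\Phi^{-1}(U_\sigma^*)\,U_\rho^*$ holds for the prescribed $L$ of $F$, not merely for some tensorator. This is where it is essential that $\Phi$ intertwines the \emph{full} tensor structure rather than only the fusion rules: the discrepancy between the given $L$ and the tensorator built from an arbitrary choice of $\{U_\rho\}$ is the obstruction to extending the unitary isomorphism of underlying functors to a monoidal natural isomorphism, and the matching of associators guaranteed by Popa's theorem, via the pentagon for $L$, forces this obstruction to be trivializable by rescaling the $U_\rho$. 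Carrying out this recalibration, and checking that the rescaled unitaries still satisfy the first two identities, is the delicate bookkeeping that completes the proof; the genuinely hard input, however, is Popa's theorem supplying $\Phi$.
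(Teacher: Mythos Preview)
The paper does not prove this theorem: it is quoted verbatim as \cite[Theorem 2.2]{MR3635673} and attributed to Popa, with no argument given. So there is no ``paper's own proof'' to compare against; the result is used as a black box.

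Your outline is the standard route to this kind of statement and is essentially correct in spirit: package each realization as a finite-depth (hence strongly amenable) subfactor of the hyperfinite $\mathrm{III}_1$ factor, invoke Popa's classification to produce the isomorphism $\Phi$, and then choose the unitaries $U_\rho$ on simples and propagate. The one place your sketch remains genuinely incomplete is the coherence step. You assert that the obstruction to matching the prescribed tensorator $L$ (rather than merely \emph{some} tensorator) can be killed by rescaling the $U_\rho$, appealing to ``the matching of associators guaranteed by Popa's theorem, via the pentagon for $L$.'' This needs an actual argument: the discrepancy between $L$ and the tensorator built from an arbitrary choice of $U_\rho$'s is a $2$-cocycle on the fusion ring with values in $\mathbb{T}$, and one must show it is a coboundary. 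The cleanest way is not to try to fix phases after the fact, but to set things up so that Popa's theorem is applied to the \emph{planar algebra} (equivalently, the full standard invariant with all higher relative commutants and their multiplications), which already encodes $L$; then $\Phi$ intertwines not just objects but the entire tower of intertwiner spaces, and the $U_\rho$ come out automatically satisfying the tensorator identity. As written, your last paragraph gestures at this but does not pin it down.
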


When discussing the category $\mathrm{End}_0(M) $, it is common to suppress tensor product and ``Hom'' symbols, and to use square brackets to denote isomorphism classes (also called sectors).

\subsection{Generalized Haagerup categories}
A generalized Haagerup category is a unitary fusion category $\mathcal{C} $ which is tensor generated by a simple object $X$ satisfying the fusion rules
$$g \otimes X \cong X \otimes g^{-1}, \ \forall g \in \mathrm{Inv}(\mathcal{C}), \quad X \otimes X\cong 1 \oplus \bigoplus_{g \in \mathrm{Inv}(\mathcal{C})}\limits g \otimes X,$$
and satisfying certain cohomological conditions (see \cite{MR3827808}).

It is shown in \cite{MR3827808} that a generalized Haagerup category can always be realized in a standard form in $\mathrm{End}_0(M)$ as follows.

Let $G=\mathrm{Inv}(\mathcal{C})$. There is a copy of the Cuntz algebra $\mathcal{O}_{|G|+1} $ with generators $\{s\} \cup \{ t_g\}_{g \in G}$ inside $M$, a map 
$$G \rightarrow \mathrm{Aut}(M), \quad g \mapsto \alpha_g,$$ and an irreducible endomorphism $\rho $ of $M$, such that the following relations hold:

\begin{enumerate}
\item $$\alpha_g(s)=s, \quad \alpha_g(t_h)= \epsilon_g(h)t_{h+2g}, \quad \forall g,h \in G $$
$$\rho(s)=\frac{1}{d}s+\sum_{g \in G} \limits \frac{1}{\sqrt{d}}t_g^2 $$
$$\rho(t_g)=\epsilon_{-g}(g)[\eta_{-g}t_{-g}ss^*+\frac{\overline{\eta_g}}{\sqrt{d}}st_{-g} ^*+\sum_{h,k \in G} \limits A_{-g}(h,k)t_{h-g}t_{h+k-g}t_{k-g}^*] ,$$
for structure constants 
$$\epsilon_g(h) \in \{-1,1\}, \quad \eta_g \in \{ 1,e^{\frac{2\pi i }{3}},e^{-\frac{2\pi i} {3} } \}, \quad A_g(h,k) \in  \mathbb{C} $$ satisfying \begin{equation} \label{e1}
\epsilon_{h+k}(g)=\epsilon_h(g)\epsilon_k(g+2h), \ \epsilon_h(0)=1
\end{equation}
\begin{equation}
\eta_{g+2h}=\eta_g
\end{equation}
\begin{equation}\label{e3}
\sum_{h \in G} A_g(h,0)=-\frac{\overline{\eta_g}}{d}
\end{equation}
\begin{equation}\label{e4}
\sum_{h \in G} A_g(h-g,k)\overline{A_{g'}(h-g',k)}=\delta_{g,g'}-\frac{\overline{\eta_g}\eta_{g'}}{d} \delta_{k,0}
\end{equation}
\begin{equation}\label{e5}
A_{g+2h}(p,q)=\epsilon_h(g)\epsilon_h(g+p)\epsilon_h(g+q)\epsilon_h(g+p+q)A_g(p,q)
\end{equation}
\begin{equation}
A_g(h,k)=\overline{A_g(k,h)}
\end{equation}
\begin{align} \label{e8}
A_g(h,k) &=A_g(-k,h-k)\eta_g \epsilon_{-k}(g+h)\epsilon_{-k}(g+k)\epsilon_{-k}(g+h+k) \\
&=A_g(k-h,-h) \overline{\eta_g} \epsilon_{-h}(g+h)\epsilon_{-h}(g+k)\epsilon_{-h}(g+h+k) \nonumber
\end{align}
\begin{align}\label{e9}
A_g(h,k)&=A_{g+h}(h,k) \eta_g \eta_{g+k} \overline{\eta_{g+h} \eta_{g+h+k}} \epsilon_h(g)\epsilon_h(g+k) \\
&= A_{g+k}(h,k)  \overline{\eta_g \eta_{g+h}} \eta_{g+k} \eta_{g+h+k}\epsilon_k(g) \epsilon_k(g+h) \nonumber
\end{align}
\begin{align} \label{e10}
\sum_{l \in G} \limits & A_g(x+y,l)A_{g-p+x}(-x,l+p)A_{g-q+x+y}(-y,l+q) \\
 &=A_g(p+x,q+x+y)A_{g-p}(q+y,p+x+y)  \nonumber \\
 & \times \eta_{g} \eta_{g+q+x} \eta_{g+p+q+y}    \overline{ \eta_{g+p} \eta_{g+x+y} \eta_{g+q+x+y} } \nonumber \\
 &\times \epsilon_p(g-p+x)\epsilon_{p+x}(g-p+q+y)
 \epsilon_q(g-q+x+y)\epsilon_{q+y}(g-q+x) \nonumber \\
 & -\displaystyle \frac{\delta_{x,0}\delta_{y,0,}}{d} \eta_g \eta_{g+p} \eta_{g+q} \nonumber
\end{align}
 
\item  $$\alpha_g (\rho(x)) = \rho (\alpha_{-g}(x)) $$
$$\rho^2(x)=sxs^*+\sum_{g \in G}\limits t_g (\alpha_g( \rho(x))t_g^*, \quad \forall x \in M   $$
\end{enumerate}
(The second condition follows from the first one for $x$ in the Cuntz algebra.)

In such a setup, the full tensor subcategory of $\mathrm{End}_0(M) $ generated by $\rho $ is a generalized Haagerup category if the action of $G$ is outer.

We will also be interested in ``degenerate'' generalized Haagerup categories,  where the action of $G$ on $M$ may not be outer. An example of such a category for $G=\mathbb{Z}_4 \times \mathbb{Z}_2 $ is the Asaeda-Haagerup category $\mathcal{AH}_4 $, where the $ \mathbb{Z}_2$ factor acts trivially; this category is a $\mathbb{Z}_2 $-de-equivariantization of a corresponding generalized Haagerup category.


\subsection{The outer automorphism group} \label{outsection}
Let $(F,L)$ be a tensor autoequivalence of a generalized Haagerup category $\mathcal{C}$ with group of invertible objects $G=\mathrm{Inv}(\mathcal{C})$. 
Then there exists $p\in G$ and $\sigma\in \mathrm{Aut}(G)$ satisfying 
$[F(\alpha_g)]=[\alpha_{\sigma(g)}]$ and $[F(\rho)]=[\alpha_p\rho]$. 
Thus there exist unitaries $v_g,u\in U(M)$ satisfying $$F(\alpha_g)=\mathrm{Ad} (v_{\sigma(g)})\circ\alpha_{\sigma(g)} \text{ and } F(\rho)=\mathrm{Ad}(u)\circ\alpha_p\circ\rho.$$  
Note that $(\{F(\alpha_g)\}_{g\in G},\{L_{g,h}^*\}_{g,h})$ form a cocycle action of $G$ on $M$. 
Since $F(\alpha_g)$ is outer for all $g\neq e$, it is equivalent to a genuine action, and we may assume that $L_{g,h}=1$ for all $g,h\in G$ up to natural transformation. 
Then we have 
$$\mathrm{Ad}(v_g)\circ\alpha_g \circ\mathrm{Ad}(v_h)\circ\alpha_h
=\mathrm{Ad}(v_{g+h}) \circ\alpha_{g+h},$$
and $\mathrm{Ad}(v_g\alpha_g(v_h))=\mathrm{Ad}(v_{gh})$. 
This means that there exists a 2-cocycle $\omega$ in $Z^2(G,\mathbb{T})$ satisfying $v_g\alpha(v_h)=\omega(g,h)v_{gh}$, 
and the cohomology class $[\omega]\in H^2(G,\mathbb{T})$ depends only on the class $[(F,L)]\in \mathrm{Out}(\mathcal{C})$. 
Since the inner autoequivalence $\alpha_g\otimes\cdot\otimes \alpha_g^{-1}$ of $\mathcal{C}$ sends $\rho$ to $\alpha_{2g}\circ\rho$, while it leaves $\alpha_h$ invariant, only the class $[p]\in G/2G$ is an invariant of $[(F,L)]$ too.  
Thus the triple 
$$([\omega],[p],\sigma)\in H^2(G,\mathbb{T})\times G/2G\times \mathrm{Aut}(G)$$
is an invariant of the class $[(F,L)]\in\mathrm{Out}(\mathcal{C})$.  

If the cohomology class of $\omega$ is trivial, we may assume that $\{v_g\}_{g\in G}$ form an $\alpha$-cocycle by modifying $v_g$. 
Since $\alpha$ is outer, every $\alpha$-cocycle is a coboundary, and  there exists $v\in U(M)$ satisfying $v_g=v^*\alpha_g(v)$. 
Thus 
$$F(\alpha_g)=\mathrm{Ad}(v_g)\circ \alpha_g=\mathrm{Ad}(v)^{-1}\circ\alpha_g \circ\mathrm{Ad}(v),$$
and we may assume that $F(\alpha_g)=F(\alpha_{\sigma(g)})$ and $L_{g,h}=1$ for all $g,h\in G$ up to natural transformation. 

The group 
$$(H^2(G,\mathbb{T})\times G/2G)\rtimes \mathrm{Aut}(G)$$
acts on the set of solutions $(\epsilon,\eta,A)$ of the above equations modulo gauge equivalence, and we have an explicit description of $\mathrm{Out}(\mathcal{C})$ in terms of this action. 

\begin{theorem}[{\cite[Theorem 5.9]{MR3827808}}]
Let $\mathcal{C}$ be a generalized Haagerup category given by $(\epsilon,\eta,A)$. 
Then $\mathrm{Out}(\mathcal{C})$ is the stabilizer of $[(\epsilon,\eta,A)]$.
\end{theorem}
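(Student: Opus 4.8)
The plan is to build the natural map
$$\Psi:\mathrm{Out}(\mathcal{C})\longrightarrow (H^2(G,\mathbb{T})\times G/2G)\rtimes \mathrm{Aut}(G),\qquad [(F,L)]\mapsto ([\omega],[p],\sigma),$$
using exactly the invariants extracted in the preceding discussion, and then to prove that $\Psi$ is an injective group homomorphism whose image is precisely the stabilizer of $[(\epsilon,\eta,A)]$. The discussion above already shows that the triple $([\omega],[p],\sigma)$ depends only on the class of $(F,L)$ in $\mathrm{Out}(\mathcal{C})$, so $\Psi$ is well defined. To see that it is a homomorphism, I would compose two autoequivalences $(F_1,L_1)$ and $(F_2,L_2)$ in standard form and track how the induced data combine: the permutations of the invertibles compose as $\sigma_1\sigma_2$, the shifts $p_i\in G/2G$ add up after transport by $\sigma$, and the $2$-cocycles $\omega_i$ multiply with a $\sigma$-twist. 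This is exactly the multiplication rule in the semidirect product, so the composition rule matches and $\Psi$ is a homomorphism; it is a cocycle bookkeeping computation rather than a conceptual difficulty.

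Next I would show that the image of $\Psi$ lands in the stabilizer. Applying $F$ to the standard-form realization, and using $F(\alpha_g)=\mathrm{Ad}(v_{\sigma(g)})\circ\alpha_{\sigma(g)}$ together with $F(\rho)=\mathrm{Ad}(u)\circ\alpha_p\circ\rho$, one conjugates the defining Cuntz-algebra relations through $F$ and the implementing unitaries $v_g,u$ to obtain a new standard form with structure constants $(\epsilon',\eta',A')$. By construction $(\epsilon',\eta',A')$ is the image of $(\epsilon,\eta,A)$ under the group element $([\omega],[p],\sigma)$. Since $F$ is an autoequivalence of $\mathcal{C}$, the resulting category is tensor equivalent to $\mathcal{C}$, so $(\epsilon',\eta',A')$ is gauge equivalent to $(\epsilon,\eta,A)$; that is, $([\omega],[p],\sigma)$ fixes $[(\epsilon,\eta,A)]$. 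The content of this step is the explicit transformation law of each structure constant under conjugation by the $v_g$ and $u$.

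For injectivity, suppose $\Psi([(F,L)])$ is trivial, so $[\omega]=0$, $\sigma=\mathrm{id}$, and $p\in 2G$. Using $[\omega]=0$ exactly as in the preceding discussion, I may arrange $F(\alpha_g)=\alpha_g$ and $L_{g,h}=1$. Since the inner autoequivalence by $\alpha_g$ sends $\rho$ to $\alpha_{2g}\circ\rho$ while fixing every $\alpha_h$, composing $F$ with a suitable inner autoequivalence reduces to $p=0$, so $F(\rho)=\mathrm{Ad}(u)\circ\rho$. Invoking Theorem \ref{uniqueness} with $\mathcal{C}=\mathcal{D}$, the functor $F$ is implemented by an automorphism $\Phi$ of $M$ together with unitaries $U_\bullet$; the conditions $F(\alpha_g)=\alpha_g$ and $F(\rho)=\mathrm{Ad}(u)\circ\rho$ then pin $\Phi$ down up to the inner symmetries of the standard form, and the cohomological rigidity of the generalized Haagerup solution forces the surviving tensor data to be trivializable, so $(F,L)$ is naturally isomorphic to the identity functor and hence inner. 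I expect this to be the main obstacle: the delicate point is ruling out a nontrivial residual natural-isomorphism class in $L$, i.e.\ showing that fixing all objects genuinely forces $(F,L)$ to be isomorphic to the identity rather than merely object-preserving.

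Finally, for surjectivity, given a triple $([\omega],[p],\sigma)$ in the stabilizer, the gauge equivalence witnessing $([\omega],[p],\sigma)\cdot(\epsilon,\eta,A)\sim(\epsilon,\eta,A)$ supplies precisely the unitaries $v_g,u$ and the tensor isomorphisms $L$ needed to define a functor $F$ with $F(\alpha_g)=\mathrm{Ad}(v_{\sigma(g)})\circ\alpha_{\sigma(g)}$ and $F(\rho)=\mathrm{Ad}(u)\circ\alpha_p\circ\rho$, while the class $[\omega]$ is installed by choosing the underlying $G$-action with that obstruction. One then checks the tensor-functor axioms for $(F,L)$; these hold exactly because $(\epsilon',\eta',A')$ satisfies the same polynomial equations \eqref{e1}--\eqref{e10}, so $(F,L)$ is a genuine tensor autoequivalence mapping to $([\omega],[p],\sigma)$ under $\Psi$. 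Combining injectivity with this surjectivity identifies $\mathrm{Out}(\mathcal{C})$ with the stabilizer of $[(\epsilon,\eta,A)]$.
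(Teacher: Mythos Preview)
The paper does not give its own proof of this statement: it is quoted verbatim as \cite[Theorem~5.9]{MR3827808} and used as a black box. So there is no argument in the present paper to compare your proposal against; the actual proof lives in the cited reference.

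As for your outline itself, the overall architecture (define $\Psi$, check it is a homomorphism, show the image lies in the stabilizer, prove injectivity and surjectivity) is the right shape, and it matches the strategy of \cite{MR3827808}. But what you have written is a plan, not a proof: almost every step is announced rather than carried out (``I would compose\ldots and track how the induced data combine'', ``one then checks the tensor-functor axioms''), and the one step you flag as delicate is genuinely incomplete. In the injectivity argument you reduce to $F(\alpha_g)=\alpha_g$, $L_{g,h}=1$, and $F(\rho)=\mathrm{Ad}(u)\circ\rho$, and then assert that ``cohomological rigidity of the generalized Haagerup solution forces the surviving tensor data to be trivializable''. This is exactly the content that needs proving: you must show that the remaining tensor structure $L_{\rho,\rho}$, $L_{\alpha_g,\rho}$, $L_{\rho,\alpha_g}$ can be trivialized by a monoidal natural isomorphism, which requires analyzing the gauge freedom in the Cuntz-algebra model and the constraints coming from equations \eqref{e1}--\eqref{e10}. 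Similarly, in the surjectivity step the claim that a gauge equivalence ``supplies precisely the unitaries $v_g,u$ and the tensor isomorphisms $L$'' hides the substantive verification that the resulting $(F,L)$ actually satisfies the hexagon/pentagon-type coherence for a tensor functor. If you want a self-contained proof you should consult \cite{MR3827808} for these computations; the present paper does not reproduce them.
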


For every known example, we have $\mathrm{Out}(\mathcal{C})\subset G/2G\rtimes \mathrm{Aut}(G)$, and we may assume that $F(\alpha_g)=\alpha_{\sigma(g)}$ and $L_{g,h}=1$ for all $g,h\in G$ for every tensor autoequivalence $(F,L)$ of $\mathcal{C}$. 
Assume $\mathcal{C}$ is embedded in $\mathrm{End}_0(M)$ and $\beta\in \mathrm{Aut}(M)$ implements a tensor autoequivalence of $\mathcal{C}$ in this situation.  
Then the above argument shows that by perturbing $\beta$ by an inner automorphism, we may always  assume $\beta\circ \alpha_g\circ \beta^{-1}=\alpha_{\sigma(g)}$.

Recall that the group $\mathrm{Inv}(\mathcal{Z}(\mathcal{C}))$ plays an essential role in the extension theory.
In the case of generalized Haagerup categories satisfying a certain extra assumption - which is satisfied in all of the examples of interest below - we can identify $\mathrm{Inv}(\mathcal{Z}(\mathcal{C}))$ with $$G_2=\{g\in G;\; 2g=0\}$$ (see \cite{1501.07679}), and the action of $\mathrm{Out}(\mathcal{C})$ on $\mathrm{Inv}(\mathcal{Z}(\mathcal{C}))$ is determined by the permutation $\sigma\in \mathrm{Aut}(G)$ associated to each outer automorphism.

We end this section by describing how Theorem \ref{uniqueness} works in the case of generalized Haagerup categories. 
Assume that $\mathcal{C}$ is a generalized Haagerup category given by the Cuntz algebra model $(\alpha,\rho)$. 
Assume we have two embeddings $\mathcal{C}$ in $\mathrm{End}_0(M_i)$, $i=1,2$, where 
$M_1$ and $M_2$ are hyperfinite Type III$_1$ factors. 
More precisely, we have $\alpha^{(i)}_g,\rho^{(i)}\in \mathrm{End}_0(M_i)$ and 
homomorphisms $\iota_i:\mathcal{O}_{|G|+1}\to M_i$ satisfying $$\alpha_g^{(i)}\circ \iota_i=\iota_i\circ \alpha_g. \quad \rho^{(i)}\circ \iota_i=\iota_i\circ \rho_g.$$ 
We apply Theorem \ref{uniqueness} to the monoidal functor $(F,L)$ given by 
$$F(\alpha_g^{(1)})=\alpha_g^{(2)}, \quad F(\rho^{(1)})=F(\rho^{(2)}),$$ $$ \quad F(\iota^{(1)}(v))=\iota^{(2)}(v) \text{ for } v \in (\mu,\nu), \quad  L_{\mu,\nu}=1.$$  
Then we get an isomorphism $\Phi:M_1\to M_2$ and unitaries $u_\mu\in U(M_2)$ such that 
$$\mu^{(2)}=\mathrm{Ad} (u)_\mu\circ \Phi\circ \mu^{(1)}\circ \Phi^{-1},$$ 
$$\iota^{(2)}(v)=u_\nu\Phi(\iota^{(1)}(X))u_\mu^*,$$ 
$$u_{\mu\circ \nu}=u_\mu\Phi\circ \mu^{(1)}\circ \Phi^{-1}(u_\nu).$$ 
For $\mu=\alpha_g$ and $\nu=\alpha_h$, this shows that $\{u_{\alpha_g}\}_{g\in G}$ is a 
$\Phi\circ \alpha^{(1)}\circ \Phi^{-1}$-cocycle, and there exists a unitary $u\in U(M_2)$ satisfying $u_{\alpha_g}=u^*\Phi\circ \alpha^{(1)}\circ \Phi^{-1}(u)$. 
By replacing $\Phi$ with $\mathrm{Ad} (u)\circ \Phi$ if necessary, we may assume that $\alpha^{(2)}=\Phi\circ \alpha^{(1)}\circ \Phi^{-1}$ and $u_{\alpha_g}=1$. 
Under this condition, we have 
$$u_{\alpha_g\circ \rho}=u_{\alpha_g}\alpha^{(2)}_g(u_\rho)=\alpha^{(2)}_g(u_\rho),$$
$$u_{\rho\circ \alpha_{-g}}=u_\rho\Phi\circ \rho^{(2)}\circ \Phi^{-1}(u_{\alpha_{-g}})=u_\rho.$$
Since $\alpha_g\circ \rho=\rho\circ \alpha_{-g}$, we find that $u_\rho$ is fixed by $\alpha^{(1)}_g=\Phi\circ \alpha^{(2)}_g\circ \Phi^{-1}$. 
There is no further argument to simplify the situation. 
In conclusion, this means that when we compare two extensions of $\mathcal{C}$ by using Theorem \ref{uniqueness}, there is a freedom to replace $\rho$ by by $\mathrm{Ad}(u)\circ \rho$ with 
$u$ fixed by $\alpha_g$, while we can always fix the group part $\alpha_g$.

\section{Classification of extensions}
As mentioned in the previous section, for a generalized Haagerup category $ \mathcal{C}$ with group of invertible objects $G=\text{Inv}(\mathcal{C})$, we can identify $ \text{Out}(\mathcal{C})$ with a subgroup of 
$$(H^2(G,\mathbb{T})\times G/2G)\rtimes \mathrm{Aut}(G);$$ and moreover for all known examples, the outer automorphisms are cocycle-free in the sense that $ \text{Out}(\mathcal{C})$ lies in the subgroup $G/2G\rtimes \mathrm{Aut}(G)$. 

We would like to classify $\mathbb{Z}_2 $-graded extensions associated to an outer automorphism which fixes the invertible objects, i.e. which corresponds to the trivial element in $\mathrm{Aut}(G) $. Such an automorphism moves $\rho $ to $\alpha_p \rho $ for some  $p \in G \backslash 2G $ (note that for a given element of $\mathrm{Out}(\mathcal{C}) $, the choice of $p$ is determined only up to an element of $2G $).

As motivation for studying this type of automorphism, we note that it is shown in \cite{MR4001474} that the Brauer-Picard group of the generalized Haagerup subfactor for $\mathbb{Z}_4 $ is isomorphic to $\mathbb{Z}_2 $, and is generated by such an outer automorphism. As we will see below, such outer automorphisms also exist for all known examples of generalized Haagerup categories for even groups.

\subsection{Structure constants and constraints}

Let $\mathcal{C} $ be a generalized Haagerup category realized in standard form in $ \mathrm{End}_0(M)$. We would like to analyze the structure of an arbitrary $\mathbb{Z}_2 $-extension of $\mathcal{C} $ generated by an invertible object (automorphism) $\beta $ such that 
$$[\beta \alpha_g]=[\alpha_g \beta], \quad \forall g \in G $$
$$[\beta \rho] =[\alpha_p \rho \beta], \text{ for some } p \in G \backslash 2G .$$

So we fix $p \in G \backslash 2G$ and assume that $\beta $ is an automorphism of $M$ satisfying these fusion rules. 
We also assume that the automorphism associated to $\beta $ is cocycle-free, so that  may assume
$$\beta \circ \alpha_g = \alpha_g  \circ \beta, \quad \forall g \in G  ,$$
as explained in the previous section.

Choose a unitary $u \in M$ such that $$\beta \circ \rho =\mathrm{Ad}(u) \circ \alpha_p  \circ \rho \circ \beta .$$ 
Note that $u$ is determined up to a scalar since $\rho $ is irreducible.

\begin{lemma}
We have $[\beta^2]=[\alpha_{p+z}] $ for some $z \in G_2 $.
\end{lemma}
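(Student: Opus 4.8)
The plan is to pin down $[\beta^2]$ by first identifying it as an invertible object of $\mathcal{C}$, and then using the fusion relation for $\beta\rho$ to constrain which invertible it is. Since $\beta$ is an automorphism of $M$, so is $\beta^2$, and hence $[\beta^2]$ is an invertible object of the $\mathbb{Z}_2$-extension. As $\beta$ lies in the nontrivial graded component, $\beta^2$ lies in the trivial component, which is $\mathcal{C}$; and the invertible objects of a generalized Haagerup category are precisely the $[\alpha_g]$, $g\in G$. Thus $[\beta^2]=[\alpha_h]$ for some $h\in G$, and the entire content of the lemma is to show $2(h-p)=0$.

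To extract this, I would compute $[\beta^2\rho]$ in two ways. Using $[\beta\rho]=[\alpha_p\rho\beta]$ together with the (now strict) relation $\beta\circ\alpha_g=\alpha_g\circ\beta$, one gets
$$[\beta^2\rho]=[\beta\alpha_p\rho\beta]=[\alpha_p\beta\rho\beta]=[\alpha_{2p}\rho\beta^2].$$
On the other hand, substituting $[\beta^2]=[\alpha_h]$ and using the generalized Haagerup relation $\rho\circ\alpha_h=\alpha_{-h}\circ\rho$ (equivalently $\alpha_g\circ\rho=\rho\circ\alpha_{-g}$) gives $[\alpha_{2p}\rho\beta^2]=[\alpha_{2p-h}\rho]$, while the left-hand side is simply $[\alpha_h\rho]$. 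Hence $[\alpha_h\rho]=[\alpha_{2p-h}\rho]$.

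Finally, since the objects $[\alpha_g\rho]$ are pairwise distinct simple objects of $\mathcal{C}$ (the $|G|$ ``$X$-type'' simples of the generalized Haagerup fusion rules), this equality forces $h=2p-h$, that is $2h=2p$. Setting $z=h-p$ then yields $z\in G_2$ and $[\beta^2]=[\alpha_{p+z}]$, as claimed.

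I would expect the only genuinely delicate point to be the justification that $[\beta^2]$ is an invertible object of $\mathcal{C}$ rather than merely of the ambient extension; this rests on the $\mathbb{Z}_2$-grading placing $\beta^2$ in the trivial component, together with the cancellation step $[\alpha_h\rho]=[\alpha_{2p-h}\rho]\Rightarrow h=2p-h$, which uses the irreducibility of $\rho$ and the distinctness of the $[\alpha_g\rho]$. Everything else is a direct manipulation of the fusion relations already fixed in the setup.
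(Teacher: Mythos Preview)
Your proof is correct and takes essentially the same approach as the paper: identify $[\beta^2]=[\alpha_h]$ for some $h\in G$, then use the relation $[\beta\rho]=[\alpha_p\rho\beta]$ together with $\beta\alpha_g=\alpha_g\beta$ and $\alpha_g\rho=\rho\alpha_{-g}$ to deduce $2h=2p$. The only cosmetic difference is that the paper computes $[\beta^2\rho\beta^{-2}]=[\alpha_{2g}\rho]=[\alpha_{2p}\rho]$ directly by conjugation, whereas you compute $[\beta^2\rho]=[\alpha_{2p}\rho\beta^2]$ and then cancel $[\beta^2]$ on the right; these are equivalent one-line manipulations leading to the same conclusion.
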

\begin{proof}
By assumption $\beta^2 $ is in $\mathcal{C} $, and hence isomorphic to $ \alpha_g$ for some $g \in G$.
We have $$[\alpha_{2g}\rho]=[\alpha_g \rho \alpha_{-g}]=[\beta^2 \rho \beta^{-2}]=[\beta\alpha_p \rho \beta^{-1}]=[\alpha_{2p} \rho].$$
Therefore we have $2g=2p $, and hence $g=p+z $ for some $z \in G_2 $.
\end{proof}

Now choose a unitary $v \in M$ such that $$\beta^2=\mathrm{Ad}(v) \circ \alpha_{p+z}  .$$ 

We first determine the actions of $\alpha_g $ and $\beta$ on $u $ and $v$.

\begin{lemma}
\begin{enumerate}
    \item There are characters $\chi,\mu \in \hat{G} $ such that 
    $$\alpha_g(u)=\chi(g)u, \quad \alpha_g(v)=\mu(g)v, \quad \forall g \in G .$$
    \item We have $$\beta(v)=\nu v ,$$
    where $\nu^2=\mu(p+z) $.
\end{enumerate}
\end{lemma}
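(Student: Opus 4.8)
The plan is to determine each scalar by computing a suitable composite of endomorphisms in two different ways and then invoking irreducibility or factoriality. Throughout I will freely use the two commutation relations available in this setup: $\beta\circ\alpha_g=\alpha_g\circ\beta$ (the cocycle-free assumption) and $\alpha_g\circ\rho=\rho\circ\alpha_{-g}$ (condition (2) of the standard form, which encodes $g\otimes X\cong X\otimes g^{-1}$). I will also use repeatedly that $u,v\in U(M)$ so that any intertwiner I produce is a unitary, and hence a scalar of modulus $1$ whenever the relevant self-intertwiner space is $\mathbb{C}1$.

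First I would establish the formula for $\alpha_g(u)$. Starting from $\beta\circ\rho=\mathrm{Ad}(u)\circ\alpha_p\circ\rho\circ\beta$, I compute $\alpha_g\circ\beta\circ\rho$ in two ways. Pushing $\alpha_g$ through the right-hand side gives $\mathrm{Ad}(\alpha_g(u))\circ\alpha_{g+p}\circ\rho\circ\beta$; alternatively, moving $\alpha_g$ past $\beta$ and then using $\alpha_g\rho=\rho\alpha_{-g}$ together with $\beta\alpha_{-g}=\alpha_{-g}\beta$ gives $\mathrm{Ad}(u)\circ\alpha_{p+g}\circ\rho\circ\beta$. Comparing, $u^*\alpha_g(u)$ lies in the self-intertwiner space of the irreducible endomorphism $\alpha_{g+p}\circ\rho\circ\beta$ — irreducible because composing the irreducible $\rho$ with the invertible $\alpha_{g+p},\beta$ induces an equivalence on the relevant Hom-space — and hence equals a scalar $\chi(g)\in\mathbb{T}$. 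The same two-way computation applied to $\alpha_g\circ\beta^2$, using $\beta^2=\mathrm{Ad}(v)\circ\alpha_{p+z}$ and the factoriality of $M$ (so that the self-intertwiner space of the automorphism $\alpha_{g+p+z}$ is $Z(M)=\mathbb{C}1$), yields $\alpha_g(v)=\mu(g)v$ for a scalar $\mu(g)\in\mathbb{T}$. That $\chi$ and $\mu$ are genuine characters then follows by applying $\alpha_g\circ\alpha_h=\alpha_{g+h}$ to $u$ and $v$ respectively.

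For part (2), I would compute $\beta^3$ two ways. Writing $\beta\circ\beta^2=\beta\circ\mathrm{Ad}(v)\circ\alpha_{p+z}=\mathrm{Ad}(\beta(v))\circ\alpha_{p+z}\circ\beta$ and $\beta^2\circ\beta=\mathrm{Ad}(v)\circ\alpha_{p+z}\circ\beta$, and comparing, $v^*\beta(v)$ lies in the self-intertwiner space of the automorphism $\alpha_{p+z}\circ\beta$, which is $\mathbb{C}1$ by factoriality; hence $\beta(v)=\nu v$ for a scalar $\nu\in\mathbb{T}$. Finally I evaluate $\beta^2(v)$ in two ways: directly, $\beta(\beta(v))=\nu^2 v$; and from $\beta^2=\mathrm{Ad}(v)\circ\alpha_{p+z}$ together with part (1), $\beta^2(v)=v\,\alpha_{p+z}(v)\,v^*=\mu(p+z)\,v$. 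Equating the two expressions gives $\nu^2=\mu(p+z)$.

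The computations are routine once the bookkeeping is set up; the only points requiring care are the correct use of the commutation relations — in particular keeping track of the sign in $\alpha_g\rho=\rho\alpha_{-g}$ — and the observation that the relevant self-intertwiner spaces are one-dimensional, by irreducibility of $\rho$ in the $u$-step and by factoriality of $M$ in the $v$- and $\beta(v)$-steps. No single step is a genuine obstacle; the mild subtlety is simply ensuring that each object whose endomorphism algebra is claimed to be $\mathbb{C}1$ really is irreducible (equivalently invertible) after the automorphisms have been composed onto it.
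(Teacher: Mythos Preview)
Your proof is correct and follows essentially the same approach as the paper: compute the relevant composite endomorphism in two ways, then invoke irreducibility (of $\alpha_{p}\rho$ or $\alpha_{g+p}\rho\beta$) for $u$ and factoriality for $v$ and $\beta(v)$ to conclude the intertwiner is scalar, with the $\nu^2=\mu(p+z)$ relation coming from evaluating $\beta^2(v)$ two ways. The paper's version differs only cosmetically in that it works with $\beta\rho\beta^{-1}=\mathrm{Ad}(u)\circ\alpha_p\rho$ rather than $\beta\rho$ directly, but the content is identical.
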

\begin{proof}
We have $$\mathrm{Ad}(\alpha_g(u)) \circ \alpha_p \rho=\alpha_g \circ \mathrm{Ad}(u) \circ \alpha_p\rho  \circ \alpha_g$$ $$=\alpha_g \circ \beta  \rho  \beta^{-1} \circ \alpha_g=\beta \rho  \beta^{-1} =\mathrm{Ad}(u) \circ \alpha_p \rho.$$
Since $\alpha_p\rho $ is irreducible, it must be that $\alpha_g(u) $ is a scalar multiple of $u $; call the corresponding character $\chi $.
Similarly, $$\mathrm{Ad}(\alpha_g(v))  \circ \alpha_{p+z}=\alpha_g \circ\beta^2 \circ \alpha_{-g}=\beta^2 =\mathrm{Ad}(v) \circ \alpha_{p+z},$$
so $\alpha_g(v) $ is a scalar multiple of $v$; call the corresponding character $\mu $.

Finally, we have $$ \mathrm{Ad}(\beta(v)) \circ \alpha_{p+z}=\beta(\beta^2)\beta^{-1}=\beta^2,$$
so $\beta(v) $ is also a scalar multiple of $v$; call the corresponding scalar $\nu $.
We have 
$$\nu^2v=\beta^2(v)=(\mathrm{Ad}(v)\circ \alpha_{p+z})(v)=\mu(p+z)v .$$

\end{proof}

Note that we have not found any constraints on $\beta(u) $. Similarly, we have not found any constraints on $\beta(s) $ or $\beta(t_g) $.

We would now like to determine where $\rho $ sends $u$ and $v$.

\begin{lemma}
Replacing $u$ with a scalar multiple if necessary, we may assume that 
$$\rho(u)=u^*\beta(s)s^*+\sum_{g \in G} \limits a(g) u^*\beta(t_{g-p}) u t_g^* ,$$
where $a(g)$ is a function from $G$ to $\mathbb{T} $.

\end{lemma}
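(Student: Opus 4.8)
The plan is to pin down $\rho(u)$ by placing it in a morphism space governed by the fusion rules and then reading off its components against the Cuntz isometries $s$ and $t_g$. First I would apply $\rho$ to the defining relation $\beta\rho=\mathrm{Ad}(u)\circ\alpha_p\rho\beta$; since $\rho\circ\mathrm{Ad}(u)=\mathrm{Ad}(\rho(u))\circ\rho$, this yields $\rho\beta\rho=\mathrm{Ad}(\rho(u))\circ\rho\alpha_p\rho\beta$, so that $\rho(u)$ is a unitary in $\mathrm{Hom}(\rho\alpha_p\rho\beta,\rho\beta\rho)$. Using $\rho\alpha_p=\alpha_{-p}\rho$ the source becomes $S:=\alpha_{-p}\rho^2\beta$, and I write $T:=\rho\beta\rho$ for the target. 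Because $\rho^2\cong 1\oplus\bigoplus_g\alpha_g\rho$ is multiplicity free, so is $S\cong\alpha_{-p}\beta\oplus\bigoplus_g\alpha_{g-p}\rho\beta\cong T$, and hence $\mathrm{Hom}(S,T)$ is $(|G|+1)$-dimensional.

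Next I would split $\rho(u)$ using $ss^*+\sum_g t_gt_g^*=1$, namely
\[\rho(u)=(\rho(u)s)\,s^*+\sum_{g\in G}(\rho(u)t_g)\,t_g^*,\]
and identify the one-dimensional space each factor occupies. Since $\alpha_{-p}(s)=s$ and $\rho^2\beta(x)\,s=s\,\beta(x)$, a one-line computation gives $T(x)\,\rho(u)s=\rho(u)s\,\alpha_{-p}\beta(x)$, so $\rho(u)s\in\mathrm{Hom}(\alpha_{-p}\beta,T)$. Likewise, using $\alpha_p(t_g)=\epsilon_p(g)t_{g+2p}$, the relation $\rho^2\beta(x)t_h=t_h\alpha_h\rho\beta(x)$, and the cocycle identity $\epsilon_p(g)\epsilon_{-p}(g+2p)=\epsilon_0(g)=1$ coming from \eqref{e1}, one finds $\rho(u)t_g\in\mathrm{Hom}(\alpha_{g+p}\rho\beta,T)$.

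The crux is to realize these same one-dimensional spaces from the $\beta$-side, that is, to prove
\[u^*\beta(s)\in\mathrm{Hom}(\alpha_{-p}\beta,T),\qquad u^*\beta(t_{g-p})u\in\mathrm{Hom}(\alpha_{g+p}\rho\beta,T).\]
This is the step I expect to be hardest. In contrast to the computations above, the outer $\rho$ combined with the $\beta$ buried inside $T=\rho\beta\rho$ means the intertwining identity cannot be reduced to a bare operator identity; it holds only after multiplying by $\beta(s)$ (respectively $\beta(t_{g-p})$), and verifying it requires careful use of the defining relation $\beta\rho=\mathrm{Ad}(u)\alpha_p\rho\beta$, the commutation $\beta\alpha_g=\alpha_g\beta$, and the eigenrelation $\alpha_g(u)=\chi(g)u$. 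Keeping track of the index shift $g\mapsto g-p$ produced jointly by $\rho\alpha_p=\alpha_{-p}\rho$ and by $\alpha_g(t_h)=\epsilon_g(h)t_{h+2g}$ is the delicate bookkeeping point.

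Granting this, Schur's lemma on the one-dimensional Hom-spaces produces scalars with $\rho(u)s=c\,u^*\beta(s)$ and $\rho(u)t_g=a(g)\,u^*\beta(t_{g-p})u$. All four operators are isometries, since for instance $(u^*\beta(s))^*u^*\beta(s)=\beta(s^*s)=1$ and $(\rho(u)s)^*\rho(u)s=s^*s=1$, so $|c|=|a(g)|=1$. Substituting back gives $\rho(u)=c\,u^*\beta(s)s^*+\sum_g a(g)\,u^*\beta(t_{g-p})u\,t_g^*$. Finally, as $u$ is determined only up to a phase, replacing $u$ by $\lambda u$ with $\lambda\in\mathbb{T}$ multiplies $c$ by $\lambda^2$, leaves each middle factor $u^*\beta(t_{g-p})u$ unchanged, and keeps the $a(g)$ unimodular; choosing $\lambda$ with $\lambda^2=\bar c$ normalizes $c=1$ and yields the asserted form with $a\colon G\to\mathbb{T}$.
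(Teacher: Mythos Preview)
Your proposal is correct and follows essentially the same approach as the paper: both identify $\rho(u)$ as a unitary in the Hom space $(\alpha_{-p}\rho^2\beta,\rho\beta\rho)$, decompose along the Cuntz isometries $s,t_g$, and use that the resulting summand Hom spaces are one-dimensional. The paper simply asserts that $\{u^*\beta(s)s^*\}\cup\{u^*\beta(t_{g-p})ut_g^*\}_{g\in G}$ is a basis for this Hom space and expands $\rho(u)$ in it, whereas you make the Schur-lemma step explicit by first splitting $\rho(u)=(\rho(u)s)s^*+\sum_g(\rho(u)t_g)t_g^*$ and then matching each piece to the corresponding basis element; the underlying argument is the same, and your ``crux'' step (verifying $u^*\beta(s)$ and $u^*\beta(t_{g-p})u$ land in the right one-dimensional Hom spaces) is exactly what justifies the paper's unproved basis claim.
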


\begin{proof}
We have $u \in (\alpha_{p}\rho \beta,\beta \rho ) $, so that
$\rho(u) \in (\rho \alpha_{p}\rho \beta,\rho\beta \rho )=(\alpha_{-p}\rho^2 \beta,\rho\beta \rho ) $. Since $[\alpha_p \rho \beta]=[\beta\rho] $, we also have $$[\rho\beta\rho]=[\alpha_{-p}\rho^2 \beta]=[\alpha_{-p}\beta]\oplus \bigoplus_{g \in G} \limits [\alpha_g \rho \beta],$$
and a basis for $(\alpha_{-p}\rho^2 \beta,\rho\beta \rho )$ is given by
$$\{u^*\beta(s)s^*    \} \cup \{ u^*\beta(t_{g-p})ut_g^*\}_{g \in G}.$$
Multiplying $u$ by a scalar if necessary, we may assume that $u\rho(u)s=\beta(s) $, so that  
$$\rho(u)=u^*\beta(s)s^*+\sum_{g \in G} \limits a(g) u^*\beta(t_{g-p}) u t_g^* .$$
Since $u \rho(u)t_g=a(g)\beta(t_{g-p})u $ is an isometry for each $g$, we must have $|a(g)|=1$.
\end{proof}
Note that we can replace $u$ with $-u $, which would multiply each $a(g) $ by $-1 $.

\begin{lemma}
We have $$\rho(\beta(u))= \chi(p)(u^*\beta(u^*)[vsv^*\beta(s)^*+\sum_{g \in G} \limits a(g) \epsilon_{p+z}(g-p) vt_{g+p}v^* \beta(u) \beta(t_g)^*  ]u )$$

\end{lemma}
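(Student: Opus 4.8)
The plan is to obtain $\rho(\beta(u))$ from the already-computed value of $\rho(u)$ by inverting the defining relation for $\beta$. Since $\beta\circ\rho=\mathrm{Ad}(u)\circ\alpha_p\circ\rho\circ\beta$, composing on the right with $\beta^{-1}$ gives $\beta\circ\rho\circ\beta^{-1}=\mathrm{Ad}(u)\circ\alpha_p\circ\rho$, and evaluating this at $\beta(u)$ yields $\beta(\rho(u))=u\,\alpha_p(\rho(\beta(u)))\,u^*$. Hence $\rho(\beta(u))=\alpha_{-p}\big(u^*\,\beta(\rho(u))\,u\big)$, which reduces the problem to computing $\beta(\rho(u))$ and then pushing $\alpha_{-p}$ through the result.

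First I would compute $\beta(\rho(u))$ by applying $\beta$ to the formula of the previous lemma. The only inputs needed are $\beta^2=\mathrm{Ad}(v)\circ\alpha_{p+z}$, the identities $\alpha_g(s)=s$ and $\alpha_g(t_h)=\epsilon_g(h)t_{h+2g}$, and the fact that $2z=0$; these give $\beta^2(s)=vsv^*$ and $\beta^2(t_{g-p})=\epsilon_{p+z}(g-p)\,vt_{g+p}v^*$, so that $\beta(\rho(u))=\beta(u^*)\big[vsv^*\beta(s)^*+\sum_{g}a(g)\epsilon_{p+z}(g-p)\,vt_{g+p}v^*\beta(u)\beta(t_g)^*\big]$. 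I would then substitute this into $\rho(\beta(u))=\alpha_{-p}(u^*\beta(\rho(u))u)$ and expand $\alpha_{-p}$ termwise using $\alpha_g(u)=\chi(g)u$, $\alpha_g(v)=\mu(g)v$, and $\alpha_g(t_h)=\epsilon_g(h)t_{h+2g}$. The characters $\mu$ drop out immediately, since the $v,v^*$ pair contributes $\overline{\mu(p)}\mu(p)=1$, and in the $s$-term the factors $u^*,\beta(u^*),u$ combine to the overall $\chi(p)$ appearing in the claim.

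The one genuinely substantive point is the $t$-sum. Applying $\alpha_{-p}$ shifts the index (it sends $t_{g+p}\mapsto t_{g-p}$ and $\beta(t_g)\mapsto\beta(t_{g-2p})$), so the summand does not return to its original shape by inspection; a reindexing $g\mapsto g+2p$ is required, and this only recovers the stated coefficients after invoking a recursion for $a$. That recursion comes for free from the $\alpha$-equivariance of $\rho(u)$: since $\alpha_g\circ\rho=\rho\circ\alpha_{-g}$ and $\alpha_g(u)=\chi(g)u$, we have $\alpha_g(\rho(u))=\overline{\chi(g)}\rho(u)$, and comparing coefficients in the explicit expansion of $\rho(u)$ gives $a(g)=\chi(g')\,a(g-2g')\,\epsilon_{g'}(g-2g'-p)\epsilon_{g'}(g-2g')$ for every $g'$. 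Specializing to $g'=p$ supplies exactly the relation between $a(g)$ and $a(g-2p)$ needed to close the computation and to produce the remaining factor of $\chi(p)$.

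The main obstacle is therefore the scalar and index bookkeeping in this last step: one must track the three families of phases $\chi$, $\mu$, $\epsilon$ simultaneously through $\alpha_{-p}$, carry out the reindexing, and then collapse the resulting products of signs to the claimed form. This collapse is purely a matter of the cocycle identity \eqref{e1}, which yields the auxiliary facts $\epsilon_{-p}(x+2p)=\epsilon_p(x)$, $\epsilon_{p+z}(x)=\epsilon_p(x)\epsilon_z(x+2p)$, and (using $2z=0$ and symmetry of $p+z=z+p$) $\epsilon_z(x+2p)=\epsilon_z(x)$; feeding these together with the $a$-recursion into the reindexed sum leaves precisely $\chi(p)\,u^*\beta(u^*)[\cdots]u$. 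No idea beyond the defining relations is needed, but the phase accounting is delicate and is where any sign error would enter.
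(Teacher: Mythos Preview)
Your approach is correct, but the paper's proof is much shorter because it commutes $\alpha_{-p}$ through the operators \emph{before} evaluating rather than after. From $\rho\circ\beta=\alpha_{-p}\circ\mathrm{Ad}(u^*)\circ\beta\circ\rho$ the paper observes that $\alpha_{-p}$ slides past $\mathrm{Ad}(u^*)$ (since $\alpha_{-p}(u^*)$ differs from $u^*$ only by a scalar), past $\beta$ (since they commute), and past $\rho$ via $\alpha_{-p}\circ\rho=\rho\circ\alpha_p$, landing directly on the argument: $\rho(\beta(u))=(\mathrm{Ad}(u^*)\circ\beta\rho\circ\alpha_p)(u)=\chi(p)\,u^*\beta(\rho(u))u$. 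At that point one simply applies $\beta$ to the known formula for $\rho(u)$ and uses $\beta^2=\mathrm{Ad}(v)\circ\alpha_{p+z}$; no reindexing, no $a$-recursion, and no $\epsilon$-identities are needed.

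Your route---computing $u^*\beta(\rho(u))u$ first and then pushing $\alpha_{-p}$ through the explicit expression---forces the reindexing $g\mapsto g+2p$ and hence the relation $a(g+2p)=a(g)\epsilon_p(g)\epsilon_p(g-p)\chi(p)$ together with the identity $\epsilon_z(x+2p)=\epsilon_z(x)$. You correctly derive both on the fly (the first from $\alpha$-equivariance of $\rho(u)$, the second from $\epsilon_{p+z}=\epsilon_{z+p}$ and $2z=0$), so the argument closes. But note that the $a$-recursion is exactly Lemma~\ref{frel}(1) in the paper, proved \emph{after} this lemma; your version is logically self-contained since you rederive it, but the paper's ordering avoids this detour entirely. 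The moral: when you have a relation of the form $\rho=\alpha_{-p}\circ(\text{stuff})$, it is often cheaper to push $\alpha_{-p}$ to the argument than to the output.
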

\begin{proof}
We have $$\rho(\beta(u))=(\alpha_{-p} \circ \mathrm{Ad}(u^*) \circ \beta \rho)(u)$$ $$=(\mathrm{Ad}(u^*) \circ \beta \rho \alpha_p)(u)=\chi(p)(\mathrm{Ad}(u^*) \circ \beta \rho )(u)$$
$$=\chi(p)u^*\beta(u^*\beta(s)s^*+\sum_{g \in G} \limits a(g) u^*\beta(t_{g-p}) u t_g^*  )u $$
$$=\chi(p)(u^*\beta(u^*)[vsv^*\beta(s)^*+\sum_{g \in G} \limits a(g) \epsilon_{p+z}(g-p) vt_{g+p}v^* \beta(u) \beta(t_g)^*  ]u )$$
(where we have used $\beta^2=\mathrm{Ad}(v) \circ \alpha_{p+z} $).
\end{proof}

\begin{lemma}
We have $$\rho(v)=\xi u^* \beta(u)^* v,$$
where $\xi \in \mathbb{T} $. 
\end{lemma}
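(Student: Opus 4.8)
The plan is to identify both $\rho(v)$ and the proposed right-hand side as elements of a one-dimensional intertwiner space, and then pin down the scalar relating them. Since $v$ realizes $\beta^2=\mathrm{Ad}(v)\circ\alpha_{p+z}$, it is a unitary intertwiner $v\in\mathrm{Hom}(\alpha_{p+z},\beta^2)$, and applying the endomorphism $\rho$ (i.e.\ tensoring on the left by $\rho$) gives a unitary $\rho(v)\in\mathrm{Hom}(\rho\alpha_{p+z},\rho\beta^2)$. The object $\rho\alpha_{p+z}$ is irreducible, because $\rho$ is irreducible and $\alpha_{p+z}$ is invertible, so this Hom-space is one-dimensional; the same holds for $\mathrm{Hom}(\alpha_{p+z}\rho,\alpha_{p+z}\rho)=\mathbb{C}$. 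As $u^*\beta(u)^*v$ is visibly a unitary, once I show that it lies in the same one-dimensional space it must be a unimodular multiple of $\rho(v)$.

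The key step is to compute $\beta^2\circ\rho$ in two ways. First, iterating the defining relation $\beta\circ\rho=\mathrm{Ad}(u)\circ\alpha_p\circ\rho\circ\beta$ and using $\beta\circ\alpha_p=\alpha_p\circ\beta$ together with the fact that $\alpha_p(u)=\chi(p)u$ is a scalar multiple of $u$, I obtain $\beta^2\circ\rho=\mathrm{Ad}(\beta(u)u)\circ\alpha_{2p}\circ\rho\circ\beta^2$. Second, substituting $\beta^2=\mathrm{Ad}(v)\circ\alpha_{p+z}$ directly gives $\beta^2\circ\rho=\mathrm{Ad}(v)\circ\alpha_{p+z}\circ\rho$. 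To compare them I simplify the factor $\alpha_{2p}\circ\rho\circ\beta^2$: inserting $\beta^2=\mathrm{Ad}(v)\circ\alpha_{p+z}$ and using $\alpha_g\circ\rho=\rho\circ\alpha_{-g}$ collapses $\alpha_{2p}\circ\rho\circ\alpha_{p+z}$ to $\alpha_{p+z}\circ\rho$; here the small but essential input is that $z\in G_2$, so $p-z=p+z$. Equating the two expressions for $\beta^2\circ\rho$ then shows that $\mathrm{Ad}\big(v^*\beta(u)\,u\,\alpha_{2p}(\rho(v))\big)$ fixes $\alpha_{p+z}\circ\rho$, so that
\[
v^*\beta(u)\,u\,\alpha_{2p}(\rho(v))\in\mathrm{Hom}(\alpha_{p+z}\rho,\alpha_{p+z}\rho)=\mathbb{C},
\]
and, being a product of unitaries, it equals a scalar $\zeta\in\mathbb{T}$.

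Finally I remove the stray $\alpha_{2p}$: the relation $\alpha_g\circ\rho=\rho\circ\alpha_{-g}$ combined with $\alpha_g(v)=\mu(g)v$ yields $\alpha_{2p}(\rho(v))=\rho(\alpha_{-2p}(v))=\overline{\mu(2p)}\,\rho(v)$. Substituting this into the displayed identity and rearranging gives $v^*\beta(u)\,u\,\rho(v)=\mu(2p)\zeta$, and left-multiplying by the unitary $u^*\beta(u)^*v$ produces $\rho(v)=\mu(2p)\zeta\,u^*\beta(u)^*v$. Thus the lemma holds with $\xi=\mu(2p)\zeta\in\mathbb{T}$.

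I expect the main obstacle to be the bookkeeping in the first ``two ways'' computation --- keeping track of how the conjugating unitaries accumulate when $\beta\rho=\mathrm{Ad}(u)\alpha_p\rho\beta$ is iterated, and making sure the scalar ambiguities coming from $\chi$ and $\mu$ are handled correctly --- together with the easily-overlooked point that the identity $\alpha_{2p}\rho\alpha_{p+z}=\alpha_{p+z}\rho$ relies on $z$ being $2$-torsion. Everything else is a formal consequence of irreducibility and of the unitarity of $u$ and $v$.
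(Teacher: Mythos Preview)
Your argument is correct. Both your proof and the paper's rest on the same two-step strategy: (i) $\rho(v)$ lies in the one-dimensional space $(\rho\alpha_{p+z},\rho\beta^2)$, and (ii) $u^*\beta(u)^*v$ lies in the same space, so they differ by a unimodular scalar. The difference is entirely in how step (ii) is verified. The paper simply checks, factor by factor, that $v\in(\alpha_{-p+z}\rho,\alpha_{-2p}\beta^2\rho)$, $\beta(u)^*\in(\alpha_{-2p}\beta^2\rho,\beta\rho\beta\alpha_{-p})$, and $u^*\in(\beta\rho\beta\alpha_{-p},\rho\beta^2)$, then composes. You instead compute $\beta^2\circ\rho$ in two ways, equate the resulting $\mathrm{Ad}$-expressions, and read off that $v^*\beta(u)u\,\alpha_{2p}(\rho(v))$ is a scalar self-intertwiner of $\alpha_{p+z}\rho$. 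Your route is slightly longer and requires tracking the scalar $\mu(2p)$ (harmless, since it gets absorbed into $\xi$), but it is perhaps more systematic: it derives the intertwiner relation rather than guessing and verifying it. The paper's approach is shorter once one sees which intermediate objects to insert.
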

\begin{proof}
We have $v \in (\alpha_{p+z},\beta^2 ) $, so $\rho(v) \in (\rho \alpha_{p+z}, \rho \beta^2 ) $, which is a one-dimensional space since $[\rho \alpha_{p+z}] = [\rho \beta^2]$ is irreducible. Therefore, it suffices to show that $u^* \beta(u)^* v \in (\rho \alpha_{p+z}, \rho \beta^2 )$, which can be readily checked:
$$v \in (\rho\alpha_{p+z}=\alpha_{-p+z} \rho,\alpha_{-2p}\beta^2 \rho) $$
$$\beta(u)^* \in (\alpha_{-2p}\beta^2 \rho=\beta^2\rho \alpha_{2p}, \beta\alpha_p \rho \beta \alpha_{2p}=\beta\rho \beta\alpha_{-p})  $$
 $$u^* \in (\beta\rho \beta\alpha_{-p},\rho \beta \alpha_p \beta \alpha_{-p}=\rho \beta^2 ) .$$

\end{proof}

Next, we will check constraints from the relation $\alpha_g \circ \rho=\rho \circ \alpha_{-g}$ on $u$ and $v$.

\begin{lemma}\label{frel} We have
\begin{enumerate}
    \item  $$a(h+2g)=a(h)\epsilon_{g}(h)\epsilon_{g}(h-p)\chi(g), \ \forall g,h \in G .$$
    \item  $$\mu(g)^2 =\chi(g)^2, \ \forall g \in G.$$
\end{enumerate}

\end{lemma}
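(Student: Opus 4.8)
The plan is to derive both identities from the single defining relation $\alpha_g \circ \rho = \rho \circ \alpha_{-g}$ of the standard form, applying it to the unitary $u$ for part (1) and to $v$ for part (2), and in each case comparing coefficients against the bases identified in the preceding lemmas.

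For part (1), I would feed $u$ into $\alpha_g \circ \rho = \rho \circ \alpha_{-g}$. The right-hand side is immediate: since $\alpha_{-g}(u) = \overline{\chi(g)}\,u$, we have $\rho(\alpha_{-g}(u)) = \overline{\chi(g)}\,\rho(u)$, which I expand using the formula $\rho(u) = u^*\beta(s)s^* + \sum_h a(h)\,u^*\beta(t_{h-p})u\,t_h^*$. For the left-hand side I apply $\alpha_g$ term by term to that same formula, using $\alpha_g\circ\beta = \beta\circ\alpha_g$, $\alpha_g(s)=s$, $\alpha_g(t_h)=\epsilon_g(h)t_{h+2g}$, $\alpha_g(u)=\chi(g)u$, and the reality of $\epsilon_g(h)\in\{\pm 1\}$. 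The factors $\overline{\chi(g)}$ and $\chi(g)$ coming from $u^*$ and $u$ cancel, leaving two sign factors $\epsilon_g(h-p)$ and $\epsilon_g(h)$ from $\beta(t_{h-p})$ and $t_h^*$ respectively. After reindexing the sum by $h \mapsto h+2g$, both sides lie in the basis $\{u^*\beta(s)s^*\}\cup\{u^*\beta(t_{h-p})u\,t_h^*\}_{h\in G}$; the $s$-term matches automatically, and equating the coefficient of $u^*\beta(t_{h-p})u\,t_h^*$ gives exactly $a(h+2g)=a(h)\epsilon_g(h)\epsilon_g(h-p)\chi(g)$.

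For part (2), I would apply the same relation to $v$, using $\rho(v)=\xi\,u^*\beta(u)^*v$. The right-hand side gives $\rho(\alpha_{-g}(v)) = \overline{\mu(g)}\,\rho(v)$. On the left I compute $\alpha_g(\rho(v))$ from $\alpha_g(u^*)=\overline{\chi(g)}u^*$, $\alpha_g(\beta(u)^*)=\overline{\chi(g)}\beta(u)^*$ (since $\alpha_g$ commutes with $\beta$ and multiplies $u$ by $\chi(g)$), and $\alpha_g(v)=\mu(g)v$, producing $\xi\,\overline{\chi(g)}^2\mu(g)\,u^*\beta(u)^*v$. Equating the two expressions and cancelling $\xi\,u^*\beta(u)^*v$ yields $\overline{\chi(g)}^2\mu(g)=\overline{\mu(g)}$, which rearranges to $\mu(g)^2=\chi(g)^2$.

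Neither step is genuinely difficult, since the required formulas for $\rho(u)$ and $\rho(v)$ are already in hand; the only delicate point is the bookkeeping in part (1), where one must keep the two separate $\epsilon$-factors straight and reindex correctly before invoking the linear independence of the basis to read off coefficients.
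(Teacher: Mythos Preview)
Your proposal is correct and follows essentially the same route as the paper: both parts are obtained by applying the relation $\alpha_g \circ \rho = \rho \circ \alpha_{-g}$ to $u$ and to $v$ respectively, expanding via the formulas for $\rho(u)$ and $\rho(v)$, and comparing coefficients. The paper's computation is organized identically, arriving at $a(h+2g)=a(h)\epsilon_g(h)\epsilon_g(h-p)\chi(g)$ from the $t$-terms and $\mu(2g)=\chi(2g)$ from the $v$-calculation.
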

\begin{proof}
For the first part, we have $$\alpha_g (\rho(u))=\alpha_g(u^*\beta(s)s^*+\sum_{h \in G} \limits a(h) u^*\beta(t_{h-p}) u t_h^* ) $$
$$=\chi(-g)u^*\beta(s)s^*+\sum_{h \in G} \limits a(h) \epsilon_g(h-p)\epsilon_g(h) u^*\beta(t_{2g+h-p}) u t_{2g+h}^*  .$$
On the other hand, $$\rho (\alpha_{-g}(u))=\chi(-g)\rho(u)=\chi(-g)(u^*\beta(s)s^*+\sum_{h \in G} \limits a(h) u^*\beta(t_{h-p}) u t_h^* ).$$
Equating terms gives the desired relation.
For the second part, we have $$\alpha_g (\rho(v))=\alpha_g(\xi u^* \beta(u)^* v)=\xi \chi(-2g) \mu(g)u^* \beta(u)^* v$$
and $$\rho(\alpha_{-g}(v))= \mu(-g)\xi u^* \beta(u)^* v,$$
so we get $\mu(2g)=\chi(2g) $.
\end{proof}

Next, we check constraints from the relation $\beta \circ \rho=\mathrm{Ad}(u)\circ \alpha_p \rho \circ  \beta $ on $v$.

\begin{lemma}\label{frel2}
We have 
$$\mu(p)=\chi(p+z) .$$
\end{lemma}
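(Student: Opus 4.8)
The plan is to evaluate the defining relation $\beta\circ\rho=\mathrm{Ad}(u)\circ\alpha_p\circ\rho\circ\beta$ at the single element $v$. This is the natural choice because $v$ is the intertwiner implementing $\beta^2$, so all the relevant eigenvalue data ($\chi$, $\mu$, $\nu$, $\xi$) already computed in the preceding lemmas will enter through it. Written out, the relation reads $\beta(\rho(x))=u\,\alpha_p(\rho(\beta(x)))\,u^*$ for every $x\in M$, and I would set $x=v$ and compute the two sides separately, feeding in the formula $\rho(v)=\xi u^*\beta(u)^*v$ from the previous lemma.

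For the left-hand side, applying $\beta$ to $\rho(v)=\xi u^*\beta(u)^*v$ gives $\xi\,\beta(u)^*\,\beta^2(u)^*\,\beta(v)$, where I have used that $\beta$ is a $*$-automorphism. Here $\beta(v)=\nu v$, while $\beta^2(u)=v\,\alpha_{p+z}(u)\,v^*=\chi(p+z)\,vuv^*$, using $\beta^2=\mathrm{Ad}(v)\circ\alpha_{p+z}$ together with $\alpha_{p+z}(u)=\chi(p+z)u$. After the resulting $v^*v$ cancels, the left-hand side collapses to a scalar times the monomial $\beta(u)^*vu^*$, namely $\xi\nu\,\overline{\chi(p+z)}\,\beta(u)^*vu^*$.

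For the right-hand side, I would first use $\beta(v)=\nu v$ to get $\rho(\beta(v))=\nu\xi\,u^*\beta(u)^*v$, and then apply $\alpha_p$. The three eigenvalue inputs are $\alpha_p(u^*)=\overline{\chi(p)}\,u^*$, $\alpha_p(v)=\mu(p)v$, and $\alpha_p(\beta(u))=\beta(\alpha_p(u))=\chi(p)\beta(u)$, the last using that $\alpha_p$ and $\beta$ commute. Conjugating the result by $u$ cancels one $u$ and again produces the same monomial $\beta(u)^*vu^*$, this time with scalar $\nu\xi\,\overline{\chi(p)}^2\mu(p)$. Equating the two scalars and cancelling the common nonzero unitary $\xi\nu\,\beta(u)^*vu^*$ yields $\overline{\chi(p+z)}=\overline{\chi(p)}^2\mu(p)$, i.e. $\mu(p)=\chi(p)^2\,\overline{\chi(p+z)}=\chi(2p)\chi(-p-z)=\chi(p-z)$. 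Finally, since $z\in G_2$ we have $-z=z$, so $\chi(p-z)=\chi(p+z)$, giving $\mu(p)=\chi(p+z)$ as claimed.

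The only genuinely delicate point is verifying that both sides reduce to scalar multiples of the exact same monomial $\beta(u)^*vu^*$, so that the cancellation is legitimate and the identity becomes a clean scalar equation rather than an operator relation. The bookkeeping of which eigenvalue ($\chi$ versus $\overline{\chi}$, and the squares arising from two appearances of $u$) attaches to each factor is the main place an error could creep in, and the final appeal to $z\in G_2$ (equivalently $2z=0$) is essential to convert $\chi(p-z)$ into $\chi(p+z)$.
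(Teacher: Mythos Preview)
Your proof is correct and follows essentially the same approach as the paper: both evaluate the relation $\beta\circ\rho=\mathrm{Ad}(u)\circ\alpha_p\circ\rho\circ\beta$ at $x=v$ and compare scalar coefficients of the resulting monomial $\beta(u)^*vu^*$. The only cosmetic difference is that the paper simplifies the right-hand side via $\alpha_p(\rho(v))=\rho(\alpha_{-p}(v))=\mu(-p)\rho(v)$, whereas you apply $\alpha_p$ term by term to the explicit formula $\rho(v)=\xi u^*\beta(u)^*v$; this produces an extra factor $\chi(p)^2$ that you then absorb, but the outcome is the same.
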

\begin{proof}
We have
$$\beta(\rho(v))=\beta(\xi u^* \beta(u)^* v )=\xi\chi(-p+z) \beta(u)^*vu^*v^*\beta(v)=\nu \xi \chi(-p+z)\beta(u)^*vu^* ,$$
while 
$$u (\alpha_p \rho (\beta(v))) u^* =\nu \mu(-p)u \rho(v)u^*=\nu \mu(-p)\xi\beta(u)^*vu^*.$$

\end{proof}

What remains is to check constraints coming from the relation
$$\rho^2(x)=sxs^*+\sum_{g \in G} \limits t_g (\alpha_g\rho)(x)t_g^* $$
for $x=v $ and $x=u$.

\begin{lemma} \label{lon1}
We have
\begin{enumerate}
\item $$\xi^2=\chi(p) $$
\item $$a(g)a(g-p)\epsilon_{p+z}(g-2p)\xi=\mu(g), \ \forall g \in G .$$
\end{enumerate}
\end{lemma}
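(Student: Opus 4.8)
The plan is to verify Lemma \ref{lon1} by substituting $x=v$ into the defining relation for $\rho^2$, namely
$$\rho^2(x)=sxs^*+\sum_{g \in G} t_g (\alpha_g\rho)(x)t_g^*,$$
and matching the resulting expressions term by term. First I would compute the left-hand side $\rho^2(v)$ by applying $\rho$ to the formula for $\rho(v)$ obtained above, $\rho(v)=\xi u^*\beta(u)^* v$. This requires knowing $\rho(u^*)$, $\rho(\beta(u)^*)$, and $\rho(v)$, all of which are available from the preceding lemmas (the formulas for $\rho(u)$, $\rho(\beta(u))$, and $\rho(v)$ respectively), together with the multiplicativity of $\rho$ as a $*$-homomorphism and the Cuntz relations. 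The key point is that $\rho^2(v)$ will expand into a sum of monomials in the generators $s,t_g$ and the unitaries $u,\beta(u),v$, and each monomial carries a scalar coefficient built from $\xi$, $\chi$, $\mu$, the $a(g)$, and the $\epsilon$'s.

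Next I would compute the right-hand side. The term $svs^*$ is immediate, and $\sum_g t_g(\alpha_g\rho)(v)t_g^*$ is obtained by applying $\alpha_g$ to $\rho(v)=\xi u^*\beta(u)^* v$ and using Lemma 3.x for the characters: $\alpha_g(u)=\chi(g)u$, $\alpha_g(v)=\mu(g)v$, plus the (as yet unconstrained but character-valued) action of $\alpha_g$ on $\beta(u)$, which one computes via $\alpha_g\beta=\beta\alpha_g$ so that $\alpha_g(\beta(u))=\beta(\alpha_g(u))=\chi(g)\beta(u)$. Assembling this, the $g$-summand contributes a coefficient $\xi\,\chi(-g)\,\chi(-g)\,\mu(g)=\xi\mu(g)\chi(g)^{-2}$ times $t_g u^*\beta(u)^* v t_g^*$, which by the character relations simplifies; using $\mu(2g)=\chi(2g)$ from Lemma \ref{frel}(2) keeps these consistent.

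The mechanism for extracting the two stated identities is to compare coefficients of matching monomials on the two sides. The relation $\xi^2=\chi(p)$ in part (1) will emerge from the ``diagonal'' $ss^*$-type term, i.e. the coefficient of the monomial with no $t$-factors, where applying $\rho$ twice introduces $\xi^2$ on the left while the right-hand side produces $\chi(p)$ (the $\chi(p)$ arising from the $\alpha_p$-twist built into $\rho(\beta(u))$ via the factor $\chi(p)$ already visible in Lemma 3.x). The relation in part (2), $a(g)a(g-p)\epsilon_{p+z}(g-2p)\xi=\mu(g)$, will come from matching the coefficient of the generic monomial $t_g\cdots t_g^*$ (or the corresponding off-diagonal term), where the two factors $a(g)$ and $a(g-p)$ appear because $\rho$ is applied twice and each application of $\rho$ to a $\beta(t)$-type term contributes one factor of $a$ with shifted argument, while the sign $\epsilon_{p+z}(g-2p)$ is produced by commuting $v$ past the $t$-generators using $\beta^2=\mathrm{Ad}(v)\circ\alpha_{p+z}$ and the $\alpha$-action $\alpha_{p+z}(t_h)=\epsilon_{p+z}(h)t_{h+2(p+z)}$.

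The hard part will be the bookkeeping in the term-by-term expansion: $\rho^2(v)$ involves composing the nontrivial formulas for $\rho(u)$, $\rho(\beta(u))$, and $\rho(v)$, each of which is already a sum over $G$, so the double application produces a double sum whose cross-terms must be reorganized into the Cuntz-algebra normal form before coefficients can be read off. The main subtlety is tracking the $\epsilon$-signs through the reindexing (shifts by $2g$, $p$, and $p+z$) and correctly applying the cocycle identity \eqref{e1} to collapse products of $\epsilon$'s into the single factor $\epsilon_{p+z}(g-2p)$; one must also be careful that the normalization choice of $u$ fixed in Lemma 3.x (via $u\rho(u)s=\beta(s)$) is used consistently so that no stray scalar is absorbed. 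Once the normal forms agree, equating the coefficient of the unit component yields (1) and equating the coefficient of the $t_g$-indexed component yields (2).
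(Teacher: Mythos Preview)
Your proposal is correct and follows essentially the same approach as the paper: compute $\rho^2(v)$ by applying $\rho$ to $\rho(v)=\xi u^*\beta(u)^*v$ using the known formulas for $\rho(u)$ and $\rho(\beta(u))$, collapse via the Cuntz relations, and compare with $svs^*+\sum_g t_g(\alpha_g\rho)(v)t_g^*$ to read off (1) from the $ss^*$ term and (2) from the $t_g t_g^*$ terms. One minor note: the Cuntz orthogonality $\beta(t_h)^*\beta(t_k)=\delta_{h,k}$ collapses the product $\rho(u)^*\rho(\beta(u))^*$ to a single sum rather than a double sum, so the bookkeeping is lighter than you anticipate.
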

\begin{proof}

We have 
$$\rho^2(v)=\rho(\xi u^* \beta(u)^* v )=\xi^2\rho(u)^*\rho(\beta(u))^*u^*\beta(u)^*v $$ $$=\xi^2(u^*\beta(s)s^*+\sum_{g \in G} \limits a(g) u^*\beta(t_{g-p}) u t_g^*)^* $$ $$\cdot (\chi(p)(u^*\beta(u^*)[vsv^*\beta(s)^*+\sum_{h \in G} \limits a(h) \epsilon_{p+z}(h-p) vt_{h+p}v^* \beta(u) \beta(t_h)^*  ])u )^* u^*\beta(u)^*v$$
$$=\xi^2 \chi(-p)(s\beta(s^*)+\sum_{g \in G}\limits \overline{a(g)} t_gu^*\beta(t_{g-p})^*  )$$ $$\cdot(\beta(s)vs^*+\sum_{h \in G}\limits \overline{a(h)}\epsilon_{p+z}(h-p)\beta(t_h)\beta(u)^*vt_{h+p}^*   )  $$
$$=\xi^2\chi(-p)(svs^*+ \sum_{g \in G}\limits \overline{a(g)}\overline{a(g-p)}\epsilon_{p+z}(g-2p)t_g u^* \beta(u)^*v t_g^*  ) $$
$$=\xi^2\chi(-p)(svs^*+ \sum_{g \in G}\limits \overline{a(g)}\overline{a(g-p)} \overline{\xi}\mu(g)\epsilon_{p+z}(g-2p)t_g (\alpha_g\rho)(v) t_g^*  ) .$$

Setting this equal to $$svs^*+\sum_{g \in G} \limits t_g (\alpha_g\rho)(v)t_g^*$$ gives the relations.
\end{proof}

Finally, we will look at $\rho^2(u) $.

\begin{lemma} \label{lon2}
We have
\begin{enumerate}
\item $$\eta_{g+p}=\eta_g, \quad \forall g \in G $$
\item $$\chi(g)=a(g)^2, \quad \forall g $$
    \item $$a(g)a(-g)=\epsilon_{-g}(g-p) \epsilon_{-g}(g), \quad \forall g.$$
    \item $$A_{g}(h,k)= a(g+h)a(g+k)\overline{a(g+h+k)a(g)}   A_{g-p}(h,k), \quad \forall g,h,k .$$
\end{enumerate}
\end{lemma}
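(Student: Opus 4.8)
The plan is to compute $\rho^2(u)$ in two ways and to equate the results, reading off the four identities by comparing coefficients of Cuntz monomials. On the ``covariance side'' the defining relation $\rho^2(x)=sxs^*+\sum_{g\in G}t_g(\alpha_g\rho)(x)t_g^*$ applied to $x=u$ simplifies: since $\alpha_g\rho=\rho\alpha_{-g}$ and $\alpha_{-g}(u)=\chi(-g)u$, we get $(\alpha_g\rho)(u)=\chi(-g)\rho(u)$, so that $\rho^2(u)=sus^*+\sum_{g\in G}\chi(-g)\,t_g\,\rho(u)\,t_g^*$, into which I substitute the explicit formula for $\rho(u)$ established above. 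Note that this side carries no $\eta$'s or $A$'s, only the functions $\chi$ and $a$.

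On the ``direct side'' I would instead expand $\rho^2(u)=\rho(\rho(u))$ by applying the homomorphism $\rho$ to each factor of $\rho(u)=u^*\beta(s)s^*+\sum_g a(g)u^*\beta(t_{g-p})u\,t_g^*$. This needs the defining formulas for $\rho(s)$ and $\rho(t_g)$ (which introduce $\tfrac1d,\tfrac1{\sqrt d},\eta,A$), together with $\rho(\beta(s))$ and $\rho(\beta(t_{g-p}))$. I obtain the latter two from the intertwiner $\beta\rho=\mathrm{Ad}(u)\circ\alpha_p\rho\beta$: rearranging gives $\rho(\beta(y))=u^*\beta(\rho(\alpha_p(y)))u$ for every $y$, whence $\rho(\beta(s))=u^*\beta(\rho(s))u$ and $\rho(\beta(t_{g-p}))=\epsilon_p(g-p)\,u^*\beta(\rho(t_{g+p}))u$. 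Substituting all of these and reducing with the Cuntz relations $s^*s=t_g^*t_g=1$, $s^*t_g=0$, $ss^*+\sum_g t_gt_g^*=1$ expresses $\rho^2(u)$ as an explicit sum of monomials in $s,t_g,\beta(s),\beta(t_g)$ and $u$.

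Matching the two expansions isolates the identities. Agreement of the monomials carrying $\tfrac1d$ or an $\eta$ — which occur only on the direct side, through $\rho(s)$ and $\rho(t_{g+p})$ — forces the periodicity $\eta_{g+p}=\eta_g$ of (1). The scalar conditions (2) $\chi(g)=a(g)^2$ and (3) $a(g)a(-g)=\epsilon_{-g}(g-p)\epsilon_{-g}(g)$ then emerge from matching the coefficients of the remaining quadratic and reflection-symmetric monomials (the latter produced by the $st_{-g}^*$ summand of $\rho(t_g)$), using Lemmas \ref{frel}, \ref{frel2}, and \ref{lon1} together with \eqref{e1} and \eqref{e3} to reconcile the $\chi$- and $\epsilon$-prefactors.

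The hard part will be (4), the identity for $A_g$. Since $A$ enters $\rho^2(u)$ only through the triple products $t_{h-g}t_{h+k-g}t_{k-g}^*$ in $\rho(t_g)^*$ (carrying $\overline{A_{-g}}$) and in $\beta(\rho(t_{g+p}))$ inside $\rho(\beta(t_{g-p}))$ (carrying $A_{-g-p}$), the identity is obtained by matching the cubic monomials. This demands a careful reindexing of these triple products, collapsing the sums by Cuntz orthogonality, and collecting the four surviving $a$-factors (one from $\rho(u)^*$, two from the $u$'s flanking $\beta(t_{g+p})$, and one from $\rho(u)$) into the gauge factor $a(g+h)a(g+k)\overline{a(g+h+k)a(g)}$; after relabelling $g\mapsto-g$ the constants $A_{-g}$ and $A_{-g-p}$ become $A_g$ and $A_{g-p}$, yielding the stated form. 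The residual $\epsilon$-prefactors must be absorbed using \eqref{e5} and Lemma \ref{frel}(1), and it is this index-bookkeeping, rather than any conceptual difficulty, that constitutes the crux of the computation.
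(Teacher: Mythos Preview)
Your approach is correct and is essentially the paper's: expand $\rho^2(u)$ via $\rho(\rho(u))$ using $\rho(\beta(y))=u^*\beta(\rho(\alpha_p y))u$, and compare with $sus^*+\sum_g\chi(-g)t_g\rho(u)t_g^*$.

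One point of logical order deserves correction. You cannot read off $\eta_{g+p}=\eta_g$ directly from the $1/d$ terms. What the comparison of $\rho^2(u)s$ with $su$ actually yields is
\[
\sum_{g\in G}\eta_g\overline{\eta_{g+p}}\,a(g)a(-g)\,\epsilon_{-g}(g)\epsilon_{-g}(g-p)=|G|,
\]
forcing each summand to equal $1$; this is a \emph{combined} identity in $\eta$ and $a$. Only after (2) is established (from the $t_l u^*\beta(s)s^*$ coefficient in $\rho^2(u)t_l$) does one know $a(g)a(-g)\in\{\pm1\}$, whence $\eta_g\overline{\eta_{g+p}}$ is real and hence $=1$ (being a ratio of cube roots of unity). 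So (1) is derived \emph{last}, and it simultaneously cleans up the combined identity into (3) and removes a residual $\eta_{g+p}/\eta_g$ from the raw form of (4). Your sketch has this dependency reversed.

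For (4), the paper does not literally match ``cubic monomials''. The $A$-terms enter as a bilinear sum $\sum_g A_{-g-p}(k+g,m)\overline{A_{-g}(l+g,m)}$ (with $a$- and $\eta$-weights), and one recognizes the right-hand side $\delta_{k,l}-\delta_{m,0}/d$ via the orthogonality relation \eqref{e4}, then uses the symmetry \eqref{e9} to shift the base subscript before specializing $k=l$. You will want \eqref{e4} and \eqref{e9} here rather than \eqref{e5}.
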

\begin{proof}

We have 
$$\rho^2(u)=\rho(u^*\beta(s)s^*+\sum_{g \in G} \limits a(g) u^*\beta(t_{g-p}) u t_g^* ) $$
$$=\rho(u)^*\rho\beta(s)\rho(s)^*+\sum_{g\in G}\limits a(g) \rho(u)^*\rho\beta(t_{g-p})\rho(u) 
\rho(t_g)^*) $$
$$= (u^*\beta(s)s^*+\sum_{g \in G} \limits a(g) u^*\beta(t_{g-p}) u t_g^* )^*(\mathrm{Ad}(u^*)\beta \rho \alpha_p (s)) \rho(s)^* 
$$ $$+\sum_{g \in G}\limits a(g)(u^*\beta(s)s^*+\sum_{k \in G} \limits a(k) u^*\beta(t_{k-p}) u t_k^* )^* (\mathrm{Ad}(u^*)\beta \rho \alpha_p )(t_{g-p}) $$ $$\cdot(u^*\beta(s)s^*+\sum_{h \in G} \limits a(h) u^*\beta(t_{h-p}) u t_h^* ) \rho(t_g)^*$$
$$= (s\beta(s)^* +\sum_{g \in G} \limits \overline{a(g)} t_gu^* \beta(t_{g-p})^*  )\beta(\rho(s))u\rho(s)^*
$$ $$+\sum_{g \in G}\limits a(g)\epsilon_p(g-p)(s\beta(s)^*+\sum_{k \in G} \limits \overline{a(k)} t_k u^*\beta(t_{k-p})^* ) \beta(\rho(t_{g+p})) $$ $$\cdot(\beta(s)s^*+\sum_{h \in G} \limits a(h) \beta(t_{h-p}) u t_h^* ) \rho(t_g)^*$$
$$=s\beta(s^*\rho(s))u\rho(s)^*
+\sum_{g \in G} \limits \overline{a(g)} t_gu^* \beta(t_{g-p}^*\rho(s) )u\rho(s)^*
$$
$$ 
+\sum_{g,h \in G}\limits a(g)a(h)\epsilon_p(g-p)s\beta(s^*\rho(t_{g+p}) t_{h-p}) u t_h^*  \rho(t_g)^*
$$
$$ 
+\sum_{g,k \in G}\limits a(g)\overline{a(k)}\epsilon_p(g-p)t_ku^*\beta(t_{k-p}^*\rho(t_{g+p}) s)s^*\rho(t_g)^*
$$
$$
+\sum_{g,h,k \in G}\limits a(g)a(h)\overline{a(k)}\epsilon_p(g-p)t_ku^*\beta(t_{k-p}^*\rho(t_{g+p})t_{h-p})ut_h^*\rho(t_g)^*
$$
$$=\frac{1}{d}su\rho(s)^*
+\frac{1}{\sqrt{d}}\sum_{g \in G} \limits \overline{a(g)} t_gu^* \beta(t_{g-p} )u\rho(s)^*
$$
$$ 
+\frac{1}{\sqrt{d}}\sum_{g\in G}\limits \overline{\eta_{g+p}}a(g)a(-g)\epsilon_p(g-p)\epsilon_{-g-p}(g+p)s u t_{-g}^*  \rho(t_g)^*
$$
$$ 
+\sum_{g \in G}\limits \eta_{g+p }a(g)\overline{a(-g)}\epsilon_p(g-p)\epsilon_{-g-p}(g+p) t_{-g}u^*\beta(s)s^*\rho(t_g)^*
$$
$$
+\sum_{g,h,k \in G}\limits  a(g)a(h)\overline{a(k)}\epsilon_p(g-p)\epsilon_{-g-p}(g+p) A_{-g-p}(k+g,h+g)$$ $$\cdot t_ku^*\beta(t_{g+h+k-p})ut_h^*\rho(t_g)^*
$$

This gives
$$\rho^2(u) s=\frac{1}{d^2}su+\frac{1}{\sqrt{d}d} \sum_{g \in G} \limits \overline{a(g)} t_gu^* \beta(t_{g-p} )u$$
$$ 
+\frac{1}{d}\sum_{g\in G}\limits  \eta_g\overline{\eta_{g+p}}a(g)a(-g)\epsilon_{-g}(g)\epsilon_p(g-p)\epsilon_{-g-p}(g+p)s u 
$$
$$
+\frac{1}{\sqrt{d}}\sum_{g,k \in G}\limits \eta_g a(g)a(-g)\overline{a(k)}\epsilon_{-g}(g)\epsilon_p(g-p)\epsilon_{-g-p}(g+p) A_{-g-p}(k+g,0)t_k^*u^*\beta(t_{k-p})u
$$
$$ =\left(\frac{1}{d^2}+\frac{1}{d}\sum_{g \in G} \limits \eta_g\overline{\eta_{g+p}}a(g)a(-g)\epsilon_{-g}(g)\epsilon_p(g-p)\epsilon_{-g-p}(g+p) \right)su
$$ $$+\frac{1}{\sqrt{d}}\sum_{g \in G} \limits \overline{a(g)} \left(\frac{1}{d}+\sum_{k \in G}\limits  \eta_k a(k)a(-k)\epsilon_{-k}(k)\epsilon_p(k-p)\epsilon_{-k-p}(k+p) A_{-k-p}(k+g,0)   \right)      t_gu^* \beta(t_{g-p} )u
$$
Setting this equal to $su $ gives the equation
$$ \sum_{g \in G} \limits \eta_g\overline{\eta_{g+p}}a(g)a(-g)\epsilon_{-g}(g)\epsilon_p(g-p)\epsilon_{-g-p}(g+p)=n,$$
which implies that
\begin{equation} \label{star}
\begin{split}
\eta_g\overline{\eta_{g+p}}a(g)a(-g)\epsilon_{-g}(g)\epsilon_p(g-p)\epsilon_{-g-p}(g+p) \\ =\eta_g\overline{\eta_{g+p}}a(g)a(-g)\epsilon_{-g}(g)\epsilon_{-g}(g-p)=1,
\end{split}
\end{equation}
and
\begin{equation*}
    -\frac{1}{d}=\sum_{k \in G}\limits \eta_{k+p} A_{-k-p}(k+g,0)= \\ \sum_{k \in G}\limits \overline{\eta_{g+p}}^2 A_{g-p}(k+g,0)=\overline{\eta_{g+p}}^2\frac{-\overline{\eta_{g+p}}}{d},
\end{equation*}
which is true (where we have used Eqs. (\ref{e3}) and (\ref{e8}) ).

Similarly, 

$$\rho^2(u)t_l=\frac{1}{d\sqrt{d}}sut_l^*+\frac{1}{d} \sum_{g \in G} \limits \overline{a(g)} t_gu^* \beta(t_{g-p} )ut_l^*$$
$$+ \frac{1}{\sqrt{d}}\sum_{g\in G}\limits \overline{\eta_{g+p}}a(g)a(-g)\epsilon_{-g}(g)\epsilon_p(g-p)\epsilon_{-g-p}(g+p)
\overline{A_{-g}(l+g,0)}s u t_{l}^* $$
$$ 
+\overline{\eta_{-l}}\eta_{-l+p }a(-l)\overline{a(l)}\epsilon_p(-l-p)\epsilon_{l-p}(-l+p)\epsilon_l(-l) t_{l}u^*\beta(s)s^*
$$
$$
+\sum_{g,h,k \in G}\limits  a(g)a(h)\overline{a(k)}\epsilon_p(g-p)\epsilon_{-g-p}(g+p)\epsilon_{-g}(g) $$ $$\cdot A_{-g-p}(k+g,h+g)\overline{A_{-g}(l+g,h+g)}t_ku^*\beta(t_{g+h+k-p})ut_{h+g+l}^*
$$
Collecting terms and applying Eq. (\ref{star}) gives
$$=\frac{1}{\sqrt{d}}(\frac{1}{d}+ \sum_{g \in G} \limits \overline{\eta_g} \overline{A_{-g}(l+g,0)})sut_l^*+a(-l)^2 t_{l}u^*\beta(s)s^*+\frac{1}{d} \sum_{g \in G} \limits \overline{a(g)} t_gu^* \beta(t_{g-p} )ut_l^* 
$$
$$+\sum_{m \in G} \limits ( \sum_{k \in G}\limits \overline{a(k)}  (
\sum_{g\in G}\limits  \overline{a(-g)}\overline{\eta_g}\eta_{g+p} a(m-g) $$ $$\cdot A_{-g-p}(k+g,m)\overline{A_{-g}(l+g,m)})t_ku^*\beta(t_{m+k-p})ut_{m+l}^*)
$$
$$=a(-l)^2 t_{l}u^*\beta(s)s^*
$$
$$+\sum_{m \in G}\limits  \sum_{k \in G}\limits \overline{a(k)}( \frac{\delta_{m,0}}{d}+ (
\sum_{g\in G}\limits  \overline{a(-g)}\overline{\eta_g}\eta_{g+p} a(m-g) $$ $$\cdot A_{-g-p}(k+g,m)\overline{A_{-g}(l+g,m)}))t_ku^*\beta(t_{m+k-p})ut_{m+l}^*)
$$
(where we have used Eqs. (\ref{e3}) and (\ref{e8}) to eliminate the first term).

Comparing with $$t_l \alpha_l\rho(u)=\chi(-l)t_l(u^*\beta(s)s^*+\sum_{g \in G} \limits a(g) u^*\beta(t_{g-p}) u t_g^*)$$ $$=\chi(-l)t_lu^*\beta(s)s^*+\sum_{g \in G}\limits \chi(-l) a(g)t_lu^*\beta(t_{g-p})ut_g^* $$
we get the relations
\begin{equation} \label{star2}
a(l)^2=\chi(l) 
\end{equation}
and
$$\sum_{g\in G}\limits a(l) \overline{a(m+l)} \overline{a(-g)}\overline{\eta_g}\eta_{g+p} a(m-g) A_{-g-p}(k+g,m)\overline{A_{-g}(l+g,m)}))$$  $$= \delta_{k,l}  -\frac{\delta_{m,0}}{d}=
\eta_{k} \overline{ \eta_{l} } \sum_{g \in G} \limits   A_{-k}(k+g,m)\overline{A_{-l}(l+g,m)}$$ $$=
\eta_k^2 \eta_{k+m} \overline{\eta_l^2 \eta_{l+m}} \sum_{g \in G} \limits   \epsilon_{g+k}(-k) \epsilon_{g+k}(-k+m) \epsilon_{g+l}(-l) \epsilon_{g+l}(-l+m)   A_{g}(k+g,m)\overline{A_{g}(l+g,m)}$$ $$=\eta_k^2 \eta_{k+m} \overline{\eta_l^2 \eta_{l+m}}  \epsilon_{k}(-k)\epsilon_k(-k+m)\epsilon_l(-l)\epsilon_l(-l+m)\sum_{g \in G} \limits  A_{-g}(k+g,m)\overline{A_{-g}(l+g,m)}
$$

Setting $k=l $ and $r=k+g$ (and replacing $g$ with $-g $), we get the relation \begin{equation} \label{star3} 
A_{g-p}(r,m)=\overline{a(r+g)a(m+g)\eta_{g+p} }\eta_g a(g)a(m+r+g)  A_{g}(r,m)  .
\end{equation}

Finally, by Eq. (\ref{star2}), we have that
$$a(g)a(-g)=\pm 1 $$
is always real, so by Eq. (\ref{star2}) we must have
\begin{equation}
    \eta_g=\eta_{g+p}, \quad \forall g ,
\end{equation}
which simplifies Eqs. (\ref{star}) and (\ref{star3}).

\end{proof}

Note that although in deriving the last relation in the proof we specialized the equation to $k=l $, the resulting relation makes the equation true for all $k$ and $l$ (which we will need later for reconstruction of the category from these relations). 

Putting this all together, we arrive at the following description of $\beta $:
\begin{theorem} \label{relations}
Let $\mathcal{C} $ be a (possibly degenerate) generalized Haagerup category for $G$ realized in standard form in $\mathrm{End}_0(M)$ with structure constants $(A,\epsilon,\eta) $, and suppose that $\beta \in Aut(M) $ commutes with $\alpha_g,\  g \in G$ and there are elements $p \in G \backslash 2G$ and $z \in G_2 $ such that $$[\beta \rho]=[\alpha_p \rho \beta], \quad [\beta^2]=[\alpha_{p+z}] .$$

Then there exist unitaries $u$ and $v$ in $M$ such that $$\beta \circ \rho=\mathrm{Ad}(u) \circ  \alpha_p \circ \rho \circ \beta, \quad \beta^2 = \mathrm{Ad}(v) \circ \alpha_z ;$$
characters $\chi,\mu  \in \hat{G}$ such that
\begin{equation}
\alpha_g(u)=\chi(g)u, \quad \alpha_g(v)=\mu(g)v ;
\end{equation}
constants $\xi,\nu \in \mathbb{T} $ such that
\begin{equation}
\rho(v)=\xi u^*\beta(u^*)v  , \quad \beta(v)=\nu v ;
\end{equation}
and a function $a: G \rightarrow \mathbb{T} $ such that 
\begin{equation}\label{rhou}
\rho(u)=u^*\beta(s)s^*+\sum_{g \in G} \limits  a(g) u^* \beta(t_{g-p})ut_g^*  ;
\end{equation}

and such that the following identities hold:

\begin{equation}\label{n1}
    \nu^2=\mu(p+z) 
\end{equation}
\begin{equation}\label{n2}
    \xi^2=\chi(p) 
\end{equation}
\begin{equation}\label{n3}
    \mu(g)^2=\chi(g)^2
\end{equation}
\begin{equation}\label{n4}
   \mu(p)=\chi(p+z) 
\end{equation}
\begin{equation}\label{n5}
    a(0)=1 
\end{equation}
\begin{equation}\label{n6}
    \chi(g)=a(g)^2 
\end{equation}
\begin{equation}\label{n7}
    a(h+2g)=a(h)\epsilon_g(h)\epsilon_g(h-p)\chi(g)
\end{equation}
\begin{equation}\label{n8}
    a(g)a(g-p)\epsilon_{p+z}(g-2p)\xi=\mu(g) 
\end{equation}
\begin{equation}\label{n9}
    a(g)a(-g)=\epsilon_{-g}(g-p)\epsilon_{-g}(g)
\end{equation}
\begin{equation}\label{n10}
    A_g(h,k)=a(g+h)a(g+k)\overline{a(g+h+k)a(g)}A_{g-p}(h,k)
\end{equation}

We also must have $\eta_g=\eta_{g+p}, \ \forall g \in G $.

\end{theorem}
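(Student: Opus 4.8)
The plan is to assemble Theorem \ref{relations} directly from the chain of lemmas established above: each piece of asserted structural data and each numbered identity has already been produced while analyzing the relations $\beta\alpha_g = \alpha_g\beta$, $[\beta\rho] = [\alpha_p\rho\beta]$, and $[\beta^2] = [\alpha_{p+z}]$, so the proof is essentially a consolidation. First I would record the defining data: the unitary $u$ intertwining $\beta\rho$ and $\alpha_p\rho\beta$ is fixed at the outset; the lemma on $[\beta^2]$ produces $z \in G_2$ together with a unitary $v$ implementing $[\beta^2] = [\alpha_{p+z}]$; the characters $\chi,\mu \in \hat G$ governing $\alpha_g(u),\alpha_g(v)$ and the scalar $\nu$ come from the lemma on the $\alpha_g$- and $\beta$-actions on $u,v$; the scalar $\xi$ comes from the lemma computing $\rho(v)$; and the function $a\colon G \to \mathbb{T}$ together with (\ref{rhou}) comes from the lemma computing $\rho(u)$, in which the normalization $u\rho(u)s = \beta(s)$ is imposed.

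Next I would match each identity to its source. The relation $\nu^2 = \mu(p+z)$, i.e.\ (\ref{n1}), comes from the same lemma that yields $\beta(v) = \nu v$; (\ref{n3}) and (\ref{n7}) are the two parts of Lemma \ref{frel}; (\ref{n4}) is Lemma \ref{frel2}; (\ref{n2}) and (\ref{n8}) are the two parts of Lemma \ref{lon1}; and (\ref{n6}), (\ref{n9}), (\ref{n10}), together with the periodicity $\eta_g = \eta_{g+p}$, are parts (2), (3), (4), and (1) respectively of Lemma \ref{lon2}. In particular (\ref{n10}) is obtained from (\ref{star3}) by inverting and cancelling the $\eta$-factors via $\eta_g = \eta_{g+p}$, which is legitimate because $a$ is $\mathbb{T}$-valued.

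The one identity not literally appearing earlier is the normalization (\ref{n5}), namely $a(0) = 1$. Here I would argue from (\ref{n6}) that $a(0)^2 = \chi(0) = 1$, so $a(0) = \pm 1$; and since replacing $u$ by $-u$ multiplies every $a(g)$ by $-1$ while preserving both the intertwining property and the normalization $u\rho(u)s = \beta(s)$, we may assume $a(0) = 1$.

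Since all the genuine computation lives in the preceding lemmas, the only thing to watch is bookkeeping. I would verify that the reindexing from (\ref{star3}) to (\ref{n10}) is exactly correct and that the consistency observed after Lemma \ref{lon2} --- that the $k = l$ specialization in fact yields the full relation --- is being invoked appropriately. I would also reconcile the form of the $\beta^2$ relation as recorded in the statement of Theorem \ref{relations} with the normalization $\beta^2 = \mathrm{Ad}(v)\circ\alpha_{p+z}$ used uniformly in the lemmas, so that the conclusion is stated consistently. The main obstacle, such as it is, is therefore not in this theorem but in the earlier lemma computing $\rho^2(u)$, from which the bulk of (\ref{n6})--(\ref{n10}) flows.
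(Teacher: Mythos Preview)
Your proposal is correct and follows exactly the paper's approach: the proof there is simply ``The relations are collected from the previous lemmas,'' together with the same $a(0)=\pm 1$ normalization argument via replacing $u$ by $-u$. Your observation about the discrepancy between $\beta^2 = \mathrm{Ad}(v)\circ\alpha_z$ in the theorem statement and $\beta^2 = \mathrm{Ad}(v)\circ\alpha_{p+z}$ used throughout the lemmas is apt---this appears to be a typo in the statement.
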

\begin{proof}
The relations are collected from the previous lemmas. The only new one is $a(0)=1$, which we can assume by noting that $a(0)^2=\chi(0)=1 $, so that $a(0)=\pm 1 $, and then replacing $u$ by $-u$ if necessary.
\end{proof}

When $g\in G_2$, Eq. (\ref{n7}) implies 
$$\chi(g)=\epsilon_g(h)\epsilon_g(h-p).$$
Then putting $h=0$ and $h=p$, we get 
\begin{equation} \label{n11}
\chi(g)=\epsilon_g(p)=\epsilon_g(-p), \quad \forall g \in G_2 .
\end{equation}

Some of the relations in Theorem \ref{relations} are redundant, and we can organize them in a more efficient way as follows.  

\begin{lemma} Eq. (\ref{n1})-(\ref{n10}) are equivalent to the following equations: 

\begin{equation}\label{n'1}
    \nu^2=\mu(p+z),
\end{equation}
\begin{equation}\label{n'2}
\xi=a(p)\epsilon_{-p}(p)
\end{equation}
\begin{equation}\label{n'3}
\chi(g)=a(g)^2
\end{equation}
\begin{equation}\label{n'4}
\mu(g)=a(g)a(g-p)a(p)\epsilon_{-p}(g)\epsilon_{-p}(p)\epsilon_z(g)
\end{equation}
\begin{equation}\label{n'5}
    a(0)=1 
\end{equation}
\begin{equation}\label{n'6}
    \frac{a(h+2g)}{a(h)a(2g)}=\epsilon_g(h)\epsilon_g(h-p)\epsilon_g(0)\epsilon_g(-p),
\end{equation}
\begin{equation}\label{n'7}
    a(g)a(-g)=\epsilon_{-g}(g-p)\epsilon_{-g}(g)
\end{equation}
\begin{equation}\label{n'8}
    A_g(h,k)=a(g+h)a(g+k)\overline{a(g+h+k)a(g)}A_{g-p}(h,k)
\end{equation}
\end{lemma}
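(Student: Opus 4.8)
The plan is to exploit that five of the ten original equations already appear verbatim in the reduced list, namely (\ref{n1})=(\ref{n'1}), (\ref{n5})=(\ref{n'5}), (\ref{n6})=(\ref{n'3}), (\ref{n9})=(\ref{n'7}), and (\ref{n10})=(\ref{n'8}); these require no argument. The real content is to show that, modulo these common relations, the equations (\ref{n2}), (\ref{n3}), (\ref{n4}), (\ref{n7}), (\ref{n8}) are together equivalent to (\ref{n'2}), (\ref{n'4}), (\ref{n'6}). The engine driving every computation is the sign calculus for $\epsilon$ coming from (\ref{e1}). From $\epsilon_{h+k}(g)=\epsilon_h(g)\epsilon_k(g+2h)$ and $\epsilon_h(0)=1$ I would first record the auxiliary identities $\epsilon_0(g)=1$, the antisymmetry $\epsilon_{-h}(g)=\epsilon_h(g-2h)$ (hence $\epsilon_{-p}(p)=\epsilon_p(-p)$), the factorization $\epsilon_{p+z}(g-2p)=\epsilon_{-p}(g)\epsilon_z(g)$, and the two cancellations $\epsilon_{-g}(g)\epsilon_g(-g)=1$ and $\epsilon_{-g}(g-p)\epsilon_g(-g-p)=1$. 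Since every $\epsilon$ is a sign, these manipulations are purely bookkeeping.

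For the forward direction (unprimed implies primed) I would proceed as follows. Setting $h=0$ in (\ref{n7}) gives $a(2g)=\epsilon_g(-p)\chi(g)$, and dividing (\ref{n7}) by $a(h)a(2g)$ yields (\ref{n'6}) after the $\chi(g)$'s cancel. Specializing (\ref{n7}) to $g\in G_2$ reproduces (\ref{n11}), i.e. $\chi(z)=\epsilon_z(p)$. Evaluating (\ref{n8}) at $g=p$ and substituting (\ref{n4}) together with (\ref{n6}) and (\ref{n11}) gives $a(p)\epsilon_{p+z}(-p)\xi=a(p)^2\epsilon_z(p)$; simplifying via $\epsilon_{p+z}(-p)=\epsilon_p(-p)\epsilon_z(p)$ and $\epsilon_p(-p)=\epsilon_{-p}(p)$ then pins down $\xi=a(p)\epsilon_{-p}(p)$, which is (\ref{n'2}). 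Note that the sign, not merely $\xi^2$, is determined here precisely because (\ref{n8}) is linear in $\xi$. Feeding (\ref{n'2}) and the factorization of $\epsilon_{p+z}(g-2p)$ back into (\ref{n8}) produces (\ref{n'4}).

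For the reverse direction (primed implies unprimed) the crux is to recover (\ref{n7}), for which I must show $a(2g)=\chi(g)\epsilon_g(-p)$ from the primed data alone. I would obtain this by applying (\ref{n'6}) with $h=-g$ to express $a(2g)$ in terms of $a(g)/a(-g)$, eliminating $a(-g)$ via (\ref{n'7}), and then collapsing the resulting product $\epsilon_{-g}(g-p)\epsilon_{-g}(g)\epsilon_g(-g)\epsilon_g(-g-p)$ to $1$ using the two cancellation identities above; with $\chi(g)=a(g)^2$ from (\ref{n'3}) this gives exactly $a(2g)=\chi(g)\epsilon_g(-p)$, whence (\ref{n7}) follows from (\ref{n'6}). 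Once (\ref{n7}) is in hand, (\ref{n11}) follows as before, (\ref{n2}) is just the square of (\ref{n'2}), (\ref{n3}) is the square of (\ref{n'4}) combined with the homomorphism property $\chi(g-p)\chi(p)=\chi(g)$, (\ref{n4}) comes from (\ref{n'4}) at $g=p$ together with (\ref{n11}), and (\ref{n8}) is (\ref{n'4}) rewritten using (\ref{n'2}) and $\epsilon_{p+z}(g-2p)=\epsilon_{-p}(g)\epsilon_z(g)$. The main obstacle, and the only genuinely delicate point, is this reverse recovery of $a(2g)$: it is the step where the reduced system could a priori be strictly weaker than the original, and it succeeds only because the four leftover signs cancel through the antisymmetry relation $\epsilon_{-h}(g)=\epsilon_h(g-2h)$.
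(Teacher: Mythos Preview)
Your argument is correct. The paper states this lemma without proof, so there is no alternative approach to compare against; your derivation via the cocycle identity (\ref{e1}) is the natural one, and all of the sign manipulations (in particular $\epsilon_{p+z}(g-2p)=\epsilon_{-p}(g)\epsilon_z(g)$ and the two cancellations $\epsilon_{-g}(g)\epsilon_g(-g)=1$, $\epsilon_{-g}(g-p)\epsilon_g(-g-p)=1$) check out. The one implicit hypothesis worth making explicit is that $\chi$ and $\mu$ are characters of $G$ throughout---this is part of the data in Theorem~\ref{relations}, not a consequence of either system of equations, and you use it when you invoke $\chi(g-p)\chi(p)=\chi(g)$ in deriving (\ref{n3}) and (\ref{n4}) in the reverse direction.
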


\begin{definition}
We will call a collection of data $(\chi,\mu,\xi,\nu,a(g)) $ satisfying the conditions in Theorem \ref{relations} a set of extension data for $(\mathcal{C},A,\epsilon, p,z) $.
\end{definition}

\subsection{Reconstruction}
We now describe how to reconstruct a $\mathbb{Z}_2 $-graded extension of a generalized Haagerup category $ \mathcal{C}$ from its extension data.

Suppose we are given a set of extension data $(\chi,\mu,\xi,\nu,a(g)) $ for $(\mathcal{C},A,\epsilon,p,z) $. Let $\mathcal{U} =\mathcal{O}_{n+1} * \mathcal{O}_{n+1}*C^*(\mathbb{F}_3)$, which is the universal $C^*$-algebra generated by two copies of $\mathcal{O}_{n+1} $ and three unitaries $u_0$, $u_1 $, and $v$. Intuitively, we think of the first copy of $\mathcal{O}_{n+1} $ as the original Cuntz algebra for $\mathcal{C} $; the second copy as the image of the first copy under the new automorphism $\beta $; and the unitaries $u_0 $, $u_1 $, and $v $ as corresponding to $u $, $\beta(u) $, and $v$ in the previous section, respectively. 

We would like to extend $\rho $ and $ \alpha_g$ to $\mathcal{U}$ such that the original relations 
$$\alpha_g \circ \alpha_h = \alpha_{g+h}, \quad \alpha_g  \circ \rho =\rho \circ \alpha_{-g} $$
and
$$ \rho^2(x)=sxs^*+\sum_{g\in G} \limits t_g \alpha_g (\rho(x))t_g^*$$
continue to hold; then define $\beta $ on $\mathcal{U}$ such that the new relations
$$\beta \circ \alpha_g= \alpha_g \circ \beta, \quad \rho \circ \beta = \mathrm{Ad} (u_0) \circ \alpha_p \circ \rho \circ \beta, \quad \beta^2=\mathrm{Ad}(v) \circ \alpha_{p+z}  $$
also hold; and finally extend everything to a von Neumann algebra closure of $\mathcal{U}$ to get a unitary fusion category.

Let $\Phi_0 $ (resp. $\Phi_1$) be the canonical isomorphism from $\mathcal{O}_{|G|+1} $ onto the first (resp. second) copy of $\mathcal{O}_{|G|+1} $ in $\mathcal{U} $. We set $s^{(k)}=\Phi_k (s)$
and $t_g^{(k)} =\Phi_k(t_g)$.

We define a $G $-action $\tilde{\alpha} $ on $\mathcal{U} $ by
$$\tilde{\alpha}_g(u_k)=\chi(g)u_k, \quad \mbox{for } k=0,1, \quad \tilde{\alpha}_g(v)=\mu(g)v ,$$
$$\tilde{\alpha}_g (\Phi_k(x))=\Phi_k(\alpha_g(x)), \quad x \in \mathcal{O}_{|G|+1} $$
and an endomorphism 
$\tilde{\rho} $ of $\mathcal{U} $
by
$$\tilde{\rho}(\Phi_k(x))=\begin{cases}
\Phi_0(\rho(x)) & \mbox{if } k=0 \\
u_0^*\Phi_1(\rho\alpha_{p}(x)) u_0 & \mbox{if } k=1
\end{cases} , $$
$$\tilde{\rho}(u_0)=u_0^*( s^{(1)}s^{(0)}{}^*+\sum_{g \in G} \limits a(g) t^{(1)}_{g-p} u_0t^{(0)}_{g} {}^* )  ,$$ 
$$\tilde{\rho}(u_1) =\chi(p) u_0^*u_1^*(v s^{(0)}v^*s^{(1)}{}^*+\sum_{g \in G} \limits a(g) \epsilon_{p+z}(g-p) vt^{(0)}_{g+p} v^*u_1t^{(1)}_{g} {}^* )u_0 ,$$
$$\tilde{\rho}(v)= \xi u_0^*u_1^*v .$$

\begin{lemma}
We have
$$\tilde{\alpha}_g \circ \tilde{\rho}= \tilde{\rho} \circ \tilde{\alpha}_{-g} , \quad \forall g \in G$$
\end{lemma}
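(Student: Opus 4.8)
The plan is to verify the intertwining relation on the generators of $\mathcal{U}$. Both $\tilde{\alpha}_g\circ\tilde{\rho}$ and $\tilde{\rho}\circ\tilde{\alpha}_{-g}$ are $*$-endomorphisms of $\mathcal{U}$ (the $\tilde{\alpha}_{\pm g}$ are $*$-automorphisms and $\tilde{\rho}$ is a $*$-endomorphism), so the two agree on all of $\mathcal{U}$ as soon as they agree on the generating set $\{s^{(k)},t_h^{(k)}\}_{k=0,1}\cup\{u_0,u_1,v\}$. Since $\tilde{\alpha}_g$ sends $u_0,u_1,v$ to scalar multiples of themselves and sends $t_h^{(k)}$ to $\epsilon_g(h)\,t_{h+2g}^{(k)}$ while fixing $s^{(k)}$ (using $\alpha_g(s)=s$ and $\alpha_g(t_h)=\epsilon_g(h)t_{h+2g}$), both composites are completely explicit on generators, and the problem reduces to matching coefficients term by term.

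For $x\in\mathcal{O}_{|G|+1}$ embedded via $\Phi_0$ the equality is immediate: $\tilde{\rho}(\Phi_0(x))=\Phi_0(\rho(x))$, and both sides reduce to the original relation $\alpha_g\circ\rho=\rho\circ\alpha_{-g}$. For the second copy I would use $\tilde{\rho}(\Phi_1(x))=u_0^*\Phi_1(\rho\alpha_p(x))u_0$; applying $\tilde{\alpha}_g$ yields the scalars $\overline{\chi(g)}$ and $\chi(g)$ from $u_0^*$ and $u_0$, which cancel, leaving $u_0^*\Phi_1(\alpha_g\rho\alpha_p(x))u_0$, and since $G$ is abelian and $\alpha_g\rho=\rho\alpha_{-g}$ this equals $u_0^*\Phi_1(\rho\alpha_{p-g}(x))u_0=\tilde{\rho}(\Phi_1(\alpha_{-g}(x)))$. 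For $v$ both sides are scalar multiples of $u_0^*u_1^*v$, and equality holds precisely because $\mu(g)^2=\chi(g)^2$, which is Eq.~(\ref{n3}).

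The remaining, and most laborious, cases are $u_0$ and $u_1$; in each the constant (first) term matches directly after cancelling the $u_0^{\pm1}$ scalars, so the content lies in the sums. For $u_0$, applying $\tilde{\alpha}_g$ termwise replaces $t_{h-p}^{(1)}$ and $t_h^{(0)*}$ by $\epsilon_g(h-p)t_{h-p+2g}^{(1)}$ and $\epsilon_g(h)t_{h+2g}^{(0)*}$; after reindexing the sum by $h\mapsto h-2g$ and comparing with $\overline{\chi(g)}\,\tilde{\rho}(u_0)$, the coefficient identity that must hold is exactly Eq.~(\ref{n7}). The case of $u_1$ is the main obstacle: the termwise computation now produces the extra signs $\epsilon_g(\ell+p)$ and $\epsilon_g(\ell)$ from the two $t$-factors $t_{\ell+p}^{(0)}$ and $t_\ell^{(1)*}$, on top of the factor $\epsilon_{p+z}(\ell-p)$ already present in the definition of $\tilde{\rho}(u_1)$. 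Reindexing by $\ell\mapsto\ell-2g$ and using Eq.~(\ref{n7}) to rewrite $a(\ell-2g)$ in terms of $a(\ell)$, the remaining coefficient comparison collapses to the purely sign-theoretic identity
\[
\epsilon_g(\ell-p-2g)\,\epsilon_g(\ell+p-2g)\,\epsilon_{p+z}(\ell-p-2g)=\epsilon_{p+z}(\ell-p),\qquad\forall g,\ell\in G.
\]

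I expect this identity to be the crux of the argument; note that it involves no $a$ and should follow solely from the $\epsilon$-cocycle relation (\ref{e1}) together with $2z=0$. Concretely, decomposing $p+z=g+(p+z-g)$ and applying (\ref{e1}) to $\epsilon_{p+z}(\ell-p-2g)$ produces $\epsilon_g(\ell-p-2g)\,\epsilon_{p+z-g}(\ell-p)$, so the squared factor $\epsilon_g(\ell-p-2g)^2=1$ cancels and the left-hand side collapses to $\epsilon_g(\ell+p-2g)\,\epsilon_{p+z-g}(\ell-p)$. Applying (\ref{e1}) to $\epsilon_{p+z}(\ell-p)$ on the right via $p+z=(p+z-g)+g$, and using $2z=0$ to reduce the shift $2(p+z-g)=2p-2g$, gives precisely $\epsilon_{p+z-g}(\ell-p)\,\epsilon_g(\ell+p-2g)$, matching the collapsed left-hand side. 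Assembling the generator-by-generator checks then yields the lemma.
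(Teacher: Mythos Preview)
Your proof is correct and follows the same generator-by-generator strategy as the paper's own argument, which simply says the relation is easy for $x\in\Phi_k(\mathcal{O}_{|G|+1})$ and that for $x\in\{u_0,u_1,v\}$ it reduces to a calculation as in Lemma~\ref{frel} using Eqs.~(\ref{n3}) and~(\ref{n7}). Your treatment is considerably more explicit: in particular, for $u_1$ you correctly isolate and prove the extra $\epsilon$-identity (depending on (\ref{e1}) and $2z=0$) that the paper's one-line reference glosses over.
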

\begin{proof}
It is easy to see that the relation holds for $x \in \Phi_k(\mathcal{O}_{|G|+1}) $, for $k=0,1 $. For $x \in \{u_0,u_1,v \} $, the relation reduces to a similar calculation as in Lemma \ref{frel}, using Eqs. (\ref{n3}) and (\ref{n7}).



\end{proof}

We define an endomorphism $\tilde{\beta} $ on $\mathcal{U}$  by
$$\tilde{\beta}(\Phi_k(x))=\begin{cases}
\Phi_1(x) & \mbox{if } k=0 \\
v\Phi_0(\alpha_{p+z}(x)) v^* & \mbox{if } k=1
\end{cases} ,$$
$$\tilde{\beta}(u_0)=u_1, \quad \tilde{\beta}(u_1)=\mu(p)vu_0v^* , \quad \tilde{\beta}(v)= \nu v  .$$

\begin{lemma}We have 
\begin{enumerate}
    \item $\tilde{\beta} \circ \tilde{\alpha}_g = \tilde{\alpha}_g \circ \tilde{\beta}$
    \item $\tilde{\beta}^2=\mathrm{Ad} (v) \circ  \tilde{\alpha}_{p+z} $
    \item $\tilde{\beta} \circ \tilde{\rho} = \mathrm{Ad}(u_0)  \circ \tilde{\alpha}_{p } \circ \tilde{\rho} \circ \tilde{ \beta} $
    
\end{enumerate}

\end{lemma}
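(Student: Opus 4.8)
The plan is to verify all three identities on a generating set of $\mathcal{U}$, namely the two Cuntz families $\{s^{(k)},t_g^{(k)}\}$ for $k=0,1$ together with the three free unitaries $u_0,u_1,v$; since every map appearing on either side of (1)--(3) is a unital $*$-endomorphism or automorphism of $\mathcal{U}$, agreement on generators forces agreement everywhere. Parts (1) and (2) are pure scalar bookkeeping. For (1), on each generator both sides reduce to $\tilde\beta$ of a character-multiple of that generator, and the scalars $\chi,\mu$ commute past everything, so only the definitions of $\tilde\alpha_g$ and $\tilde\beta$ are used. For (2), the Cuntz generators and $v$ are immediate, while on $u_0$ one needs $\tilde\beta^2(u_0)=\tilde\beta(u_1)=\mu(p)vu_0v^*$ to equal $\mathrm{Ad}(v)\tilde\alpha_{p+z}(u_0)=\chi(p+z)vu_0v^*$, which is exactly Eq. (\ref{n4}), and on $v$ one needs $\nu^2=\mu(p+z)$, Eq. (\ref{n1}).

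Part (3) is the substantive one. On $\Phi_0(\mathcal{O}_{|G|+1})$ it collapses using $\alpha_p\rho=\rho\alpha_{-p}$, and on $\Phi_1(\mathcal{O}_{|G|+1})$ it collapses using the same relation together with $2z=0$ (so $\alpha_{-z}=\alpha_z$), after cancelling the conjugating unitaries. On $v$ it reduces, after moving all characters to the front, to $\mu(p)^2=\chi(p)^2$, the case $g=p$ of Eq. (\ref{n3}). The heart of the matter is $u_0$: writing $\tilde\rho(u_0)=u_0^*C$ with $C=s^{(1)}s^{(0)*}+\sum_g a(g)t^{(1)}_{g-p}u_0 t^{(0)*}_g$, one observes that $\tilde\beta(C)$ is precisely the bracket $B$ appearing in the definition of $\tilde\rho(u_1)$, so that $\tilde\rho(u_1)=\chi(p)u_0^*u_1^*Bu_0$. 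Both sides of (3) on $u_0$ then simplify to $u_1^*B$ and $u_1^*\tilde\alpha_p(B)$ respectively, the conjugations by $u_0$ and the factor $\chi(p)$ cancelling. Thus the $u_0$ case is equivalent to the single assertion $\tilde\alpha_p(B)=B$. Its first summand is invariant because $\alpha_p$ fixes $s$ and $\mu(p)\overline{\mu(p)}=1$; for the sum, applying $\tilde\alpha_p$ and reindexing $h=g+2p$ reduces invariance to the scalar identity
$$a(h-2p)\,\epsilon_{p+z}(h-3p)\,\epsilon_p(h-p)\,\epsilon_p(h-2p)\,\chi(p)=a(h)\,\epsilon_{p+z}(h-p),$$
which I would obtain by feeding Eq. (\ref{n7}) with $g=-p$ into the left side and then cancelling the residual signs via the $\epsilon$-cocycle identity (\ref{e1}) (with (\ref{e5}) available if needed).

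For $u_1$ I would avoid a second brute-force sum and instead bootstrap from the $u_0$ case. Applying $\tilde\beta$ to the already-verified identity $\tilde\beta\tilde\rho(u_0)=\mathrm{Ad}(u_0)\tilde\alpha_p\tilde\rho(u_1)$ and using part (1) to commute $\tilde\beta$ past $\tilde\alpha_p$, part (2) to rewrite $\tilde\beta^2$, and the preceding lemma's relation $\tilde\alpha_g\tilde\rho=\tilde\rho\tilde\alpha_{-g}$ together with $z\in G_2$, I get $\tilde\beta\tilde\rho(u_1)=\chi(z)\,u_1^*v\,\tilde\rho(u_0)\,v^*u_1$. A direct expansion of the right-hand side of (3) on $u_1$, using $\tilde\beta(u_1)=\mu(p)vu_0v^*$ and the formulas for $\tilde\rho(v),\tilde\rho(v^*)$, gives $\mathrm{Ad}(u_0)\tilde\alpha_p\tilde\rho\tilde\beta(u_1)=\chi(p)\chi(z)\,u_1^*v\,\tilde\alpha_p\tilde\rho(u_0)\,v^*u_1$. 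The two agree precisely when $\tilde\alpha_p\tilde\rho(u_0)=\overline{\chi(p)}\tilde\rho(u_0)$, which is nothing but $\tilde\alpha_p\tilde\rho=\tilde\rho\tilde\alpha_{-p}$ evaluated at $u_0$ — already established.

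The main obstacle is the $u_0$ identity $\tilde\alpha_p(B)=B$: essentially all of the content of the extension equations enters through the reindexed scalar identity above, and the bulk of the labour is the $\pm1$ bookkeeping needed to derive it from Eqs. (\ref{n7}) and (\ref{e1}). Everything else is either scalar chasing (parts (1), (2) and the $v$ case) or formal manipulation of relations already proven (the $u_1$ case and the Cuntz-generator cases).
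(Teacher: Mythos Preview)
Your proof is correct and follows essentially the same approach as the paper, which verifies the identities on generators using the extension-data equations; the paper simply writes ``Easy / straightforward, using Eq.~(\ref{n1}) / similar calculation as in Lemma~\ref{frel2}, using Eq.~(\ref{n4})'' and leaves the reader to fill in details you have supplied. One efficiency remark: your reduction of the $u_0$ case of (3) to $\tilde\alpha_p(B)=B$ is right, but you need not prove that scalar identity from scratch---it is exactly the statement $\tilde\alpha_p\tilde\rho(u_1)=\tilde\rho\tilde\alpha_{-p}(u_1)$, which is the previous lemma evaluated at $u_1$ (and whose proof already absorbed the use of Eq.~(\ref{n7})); if you do prove it directly, note that after one application of (\ref{n7}) the residual identity $\epsilon_z(h+p)\epsilon_z(h-p)=1$ requires a second use of (\ref{n7}) (equivalently (\ref{n11})), not just (\ref{e1}).
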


\begin{proof}
1. Easy.

2. Also straightforward to check, using Eq. (\ref{n1}).

3. Similar calculation as in Lemma \ref{frel2}, using Eq. (\ref{n4}).

\end{proof}

Finally, we need to check that $\tilde{\rho}^2 $ has the correct form.

\begin{lemma}
We have $$\tilde{\rho}^2(x)=s^{(0)}xs^{(0)}{}^*+\sum_{g \in G} \limits t_g^{(0)} (\tilde{\alpha}_g \tilde{\rho})(x)t_g^{(0)}{}^*, \forall x \in \mathcal{U} .$$

\end{lemma}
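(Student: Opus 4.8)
The plan is to observe that both sides of the claimed identity are unital $*$-endomorphisms of $\mathcal{U}$ and then to check the identity on a generating set. Write $\Psi(x)=s^{(0)}xs^{(0)}{}^*+\sum_{g\in G}t_g^{(0)}(\tilde\alpha_g\tilde\rho)(x)t_g^{(0)}{}^*$ for the right-hand side. Since $s^{(0)},\{t_g^{(0)}\}$ satisfy the Cuntz relations and each $\tilde\alpha_g\tilde\rho$ is a $*$-homomorphism, a direct check shows $\Psi$ is unital, multiplicative and $*$-preserving, hence an endomorphism; and $\tilde\rho^2$ is an endomorphism once one knows $\tilde\rho$ is well defined (the images $\tilde\rho(u_0),\tilde\rho(u_1),\tilde\rho(v)$ are unitary --- e.g. $u_0\tilde\rho(u_0)$ is unitary because $|a(g)|=1$ --- and the two Cuntz families map to Cuntz families). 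As $\mathcal{U}$ is generated as a C$^*$-algebra by $s^{(0)},\{t_g^{(0)}\},s^{(1)},\{t_g^{(1)}\},u_0,u_1,v$, it then suffices to verify $\tilde\rho^2=\Psi$ on these seven families of generators.

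On the first Cuntz family the identity is immediate: $\tilde\rho^2(\Phi_0(x))=\Phi_0(\rho^2(x))$, and pushing the original generalized Haagerup relation $\rho^2(x)=sxs^*+\sum_g t_g\alpha_g\rho(x)t_g^*$ through $\Phi_0$, together with $\tilde\alpha_g\tilde\rho\,\Phi_0=\Phi_0\,\alpha_g\rho$, yields exactly $\Psi(\Phi_0(x))$. For the second Cuntz family I would exploit the identity $\rho\alpha_p\rho\alpha_p=\rho^2$ (a consequence of $\alpha_g\rho=\rho\alpha_{-g}$): this collapses $\tilde\rho^2(\Phi_1(x))$ to $W^*\Phi_1(\rho^2(x))W$, where $W=u_0\tilde\rho(u_0)=s^{(1)}s^{(0)}{}^*+\sum_g a(g)t_{g-p}^{(1)}u_0 t_g^{(0)}{}^*$ is unitary. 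Expanding $\Phi_1(\rho^2(x))$ via the Cuntz decomposition and using $W^*s^{(1)}=s^{(0)}$ and $W^*t_g^{(1)}=\overline{a(g+p)}\,t_{g+p}^{(0)}u_0^*$, followed by the reindexing $g\mapsto g-p$ and $\alpha_{g-p}\rho=\alpha_g\rho\alpha_p$, matches $\Psi(\Phi_1(x))$ term by term. Both of these are clean bookkeeping computations requiring only the Cuntz relations and $|a(g)|=1$.

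The substance is in the three free unitaries $u_0$, $u_1$, $v$, and here the computation is the forward computation of Lemmas \ref{frel}--\ref{lon2} run in the sufficiency direction. For $v$ one expands $\tilde\rho^2(v)=\xi^2\tilde\rho(u_0)^*\tilde\rho(u_1)^*u_0^*u_1^*v$ and compares with $\Psi(v)$; this is the calculation of Lemma \ref{lon1}, and it closes precisely because of (\ref{n2}), (\ref{n8}) and (\ref{n3}). For $u_0$ one computes $\tilde\rho(\tilde\rho(u_0))$ by substituting the formula for $\tilde\rho(u_0)$ and expanding; this reproduces the long calculation of Lemma \ref{lon2}, and the matching with $t_l\tilde\alpha_l\tilde\rho(u_0)$ uses (\ref{n6}), (\ref{n7}), (\ref{n9}) and (\ref{n10}) together with $\eta_g=\eta_{g+p}$ and the underlying generalized Haagerup identities (\ref{e3}), (\ref{e4}), (\ref{e8}) (exactly as in (\ref{star}) there). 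The case $u_1=\tilde\beta(u_0)$ can then be treated either by the same kind of direct expansion or, more efficiently, by applying $\tilde\beta$ to the already-verified identity $\tilde\rho^2(u_0)=\Psi(u_0)$ and using the relation $\tilde\beta\tilde\rho=\mathrm{Ad}(u_0)\tilde\alpha_p\tilde\rho\tilde\beta$ of the previous lemma, which yields $\tilde\beta\tilde\rho^2=\mathrm{Ad}(u_0\tilde\rho(u_0))\tilde\rho^2\tilde\beta$ and reduces $u_1$ to $u_0$.

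The one conceptual point to keep in mind --- and the reason the reconstruction is not merely a transcription --- is that in the structure-constant computations the objects $s,t_g,\beta(s),\beta(t_g),u,v$ all lived in the single factor $M$, so $\Phi_1$ was $\beta(\Phi_0)$ and carried extra relations, whereas in $\mathcal{U}$ the second Cuntz copy and the three unitaries are genuinely free. Thus the thing to verify is that the cancellations in the $u_0$ and $u_1$ computations never secretly invoke such an extra relation: they must go through using only the Cuntz/freeness relations and the equations (\ref{n1})--(\ref{n10}). This is exactly what the structure constants were chosen to guarantee, so I expect the $u_0$ computation --- the longest, and the sufficiency counterpart of Lemma \ref{lon2} --- to be the main obstacle, with $u_1$ and $v$ following the same pattern.
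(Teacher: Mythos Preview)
Your proposal is correct and follows essentially the same approach as the paper: reduce to generators, dispatch the Cuntz families routinely, and for $u_0$ and $v$ rerun the computations of Lemmas~\ref{lon1} and~\ref{lon2} in the sufficiency direction using exactly the equations you cite. The paper's own proof is terser --- it rewrites $\tilde\rho(u_0)$ and $\tilde\rho(v)$ using $\tilde\beta$ so that the formulas match those of Section~3.1 verbatim, then simply invokes Lemmas~\ref{lon1} and~\ref{lon2} --- and it does not explicitly treat $u_1$ at all. Your $\tilde\beta$-reduction for $u_1$ is a clean way to close that gap; to make it airtight you should also note the companion identity $\tilde\beta\circ\Psi=\mathrm{Ad}(W)\circ\Psi\circ\tilde\beta$ (this is the same bookkeeping as your $\Phi_1$ computation, using $\tilde\beta(s^{(0)})=s^{(1)}$, $W^*s^{(1)}=s^{(0)}$, $W^*t_g^{(1)}u_0=\overline{a(g+p)}\,t_{g+p}^{(0)}$, and $\tilde\alpha_g\tilde\beta\tilde\rho=\mathrm{Ad}(u_0)\tilde\alpha_{g+p}\tilde\rho\tilde\beta$), since the reduction needs both $\tilde\beta\tilde\rho^2=\mathrm{Ad}(W)\tilde\rho^2\tilde\beta$ and $\tilde\beta\Psi=\mathrm{Ad}(W)\Psi\tilde\beta$.
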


\begin{proof}
It suffices to show that $$ \tilde{\rho}^2(x)s^{(0)}=s^{(0)}x $$
and
$$\tilde{\rho}^2(x)t_g^{(0)}=t_g^{(0)} (\tilde{\alpha_g} 
\tilde{\rho})(x), \quad \forall g \in G.$$
Again, this is easy to check for $x \in \Phi_k(\mathcal{O}_{|G|+1}) $. 
Note that 
$$\tilde{\rho}(u_0)=u_0^*( s^{(1)}s^{(0)}{}^*+\sum_{g \in G} \limits a(g) t^{(1)}_{g-p} u_0t^{(0)}_{g} {}^* ) $$ $$=u_0^*( \tilde{\beta}(s^{(0)})s^{(0)}{}^*+\sum_{g \in G} \limits a(g) \tilde{\beta}(t^{(0)}_{g-p}) u_0t^{(0)}_{g} {}^* )  $$
and
$$\tilde{\rho}(v)=\xi u_0^*\tilde{\beta}(u_0)^*v .$$
Then then calculation is essentially the same as in Lemmas \ref{lon1} and \ref{lon2}, using Eqs. (\ref{n2}), (\ref{n6}), (\ref{n8}), (\ref{n9}), and (\ref{n10}).
\end{proof}

\begin{lemma}
There is a factor closure of $\mathcal{U} $ to which the endomorphisms $\tilde{\alpha}_g $, $\tilde{\rho} $ and $\tilde{\beta} $ all extend.
\end{lemma}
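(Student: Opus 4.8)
The plan is to complete $\mathcal{U}$ to a factor as the weak closure of its GNS representation with respect to a canonical invariant state, exactly as the von Neumann completion is produced in the non-extended setting of \cite{MR3827808}. The decisive input is the relation $\tilde\rho^2(x)=s^{(0)}xs^{(0)*}+\sum_{g\in G}t^{(0)}_g\,\tilde\alpha_g\tilde\rho(x)\,t^{(0)*}_g$ established in the preceding lemma, which presents $\tilde\rho^2$ as a ``$2$-shift'' into the first Cuntz copy and hence endows $\tilde\rho$ with a standard left inverse. The existence of the invariant state, the normal extension of the three families of maps, and factoriality will all be read off from this shift.

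Concretely, $\phi(x):=s^{(0)*}\tilde\rho(x)s^{(0)}$ defines a unital completely positive map on $\mathcal{U}$, and the Cuntz relations $s^{(0)*}s^{(0)}=1$, $t^{(0)*}_gs^{(0)}=0$ together with $\tilde\rho^2(a)s^{(0)}=s^{(0)}a$ give the left-inverse identity $\phi\circ\tilde\rho=\mathrm{id}$ and the bimodule rule $\phi(\tilde\rho(a)\,x\,\tilde\rho(b))=a\,\phi(x)\,b$. I would then define the canonical state as the ergodic limit $\omega(x)=\lim_{k\to\infty}\phi^k(x)$, the main content being that $\phi^k(x)$ converges to a scalar multiple of $1$ for every $x\in\mathcal{U}$. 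Granting this, $\omega\circ\tilde\rho=\lim_k\phi^{k-1}=\omega$, so $\omega$ is $\tilde\rho$-invariant; and since $\tilde\alpha_g$ fixes $s^{(0)}$ and satisfies $\tilde\rho\tilde\alpha_g=\tilde\alpha_{-g}\tilde\rho$, one checks $\phi\circ\tilde\alpha_g=\tilde\alpha_{-g}\circ\phi$, whence $\omega\circ\tilde\alpha_g=\omega$ in the limit.

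With $\omega$ in hand I would set $M:=\pi_\omega(\mathcal{U})''$. Factoriality is the crux and follows from the ergodicity of $\phi$: the convergence $\phi^k\to\omega(\cdot)1$ yields a clustering property for $\omega$ under the $\tilde\rho$-dynamics that forces $Z(M)=\mathbb{C}$, by the standard argument for Cuntz-algebra models \cite{MR3827808}. The automorphisms $\tilde\alpha_g$ extend normally because they preserve $\omega$, and $\tilde\rho$ extends normally and with finite index because $\omega\circ\tilde\rho=\omega$ while $s^{(0)},t^{(0)}_g$ furnish an explicit standard solution of the conjugate equations. Finally $\tilde\beta\in\mathrm{Aut}(\mathcal{U})$ (it is invertible, since $\tilde\beta^2=\mathrm{Ad}(v)\circ\tilde\alpha_{p+z}$) extends as well: once $M$ is a factor, $\omega\circ\tilde\beta$ is quasi-equivalent to $\omega$, so $\pi_\omega\circ\tilde\beta$ and $\pi_\omega$ have the same weak closure and $\tilde\beta$ is spatially implemented; alternatively one may invoke Theorem~\ref{uniqueness} to implement the autoequivalence $\tilde\beta$ of the degree-zero category by an automorphism of $M$. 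If the hyperfinite type III$_1$ factor is wanted for the comparisons in later sections, it suffices to note that the resulting unitary fusion category is determined by the verified structure constants and therefore embeds essentially uniquely into $\mathrm{End}_0$ of that factor.

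The main obstacle is the asymptotic-triviality statement $\phi^k(x)\to\omega(x)1$ on all of $\mathcal{U}$. In \cite{MR3827808} the analogue is proved for the single, nuclear, gauge-graded Cuntz algebra, where it reduces to a Perron--Frobenius argument on a UHF core with a unique trace. Here $\mathcal{U}=\mathcal{O}_{n+1}*\mathcal{O}_{n+1}*C^*(\mathbb{F}_3)$ is neither nuclear nor AF, so that argument is unavailable, and one must show by hand that $\phi$ contracts the new generators: a direct computation gives, for example, $\phi(u_0)=s^{(0)*}u_0^*s^{(1)}$, and the task is to prove that iterating $\phi$ drives every reduced word in $u_0,u_1,v$ and the second Cuntz copy into the scalars, using the explicit formulas for $\tilde\rho(u_0)$, $\tilde\rho(u_1)$, and $\tilde\rho(v)$. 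This mixing estimate on the free product, rather than any categorical ingredient, is where the real work lies.
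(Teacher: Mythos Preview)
Your outline is exactly the route the paper intends: the paper's own proof consists of the single sentence ``This can be shown by a similar argument to Appendix of \cite{MR3827808},'' and what you have written is a faithful sketch of that appendix argument transported to the free-product algebra $\mathcal{U}$. Your identification of the main obstacle---the ergodicity $\phi^k(x)\to\omega(x)1$ on reduced words involving the second Cuntz copy and the free unitaries $u_0,u_1,v$---is accurate and is indeed where the genuine work in adapting \cite{MR3827808} lies; the paper does not spell this out either.

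One small point to tighten: your claim that ``once $M$ is a factor, $\omega\circ\tilde\beta$ is quasi-equivalent to $\omega$'' is not automatic from factoriality alone. The cleaner route is to verify $\omega\circ\tilde\beta=\omega$ directly, which follows once you know $\omega$ is the unique $\phi$-invariant state and you check $\phi\circ\tilde\beta=\tilde\beta\circ\phi'$ for an appropriate conjugate left inverse (using $\tilde\beta\circ\tilde\rho=\mathrm{Ad}(u_0)\circ\tilde\alpha_p\circ\tilde\rho\circ\tilde\beta$ and $\tilde\beta(s^{(0)})=s^{(1)}$). Your alternative via Theorem~\ref{uniqueness} is circular as stated, since that theorem presupposes the hyperfinite III$_1$ setting you have not yet established.
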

\begin{proof}

This can be shown by a similar argument to Appendix of \cite{MR3827808}.
\end{proof}

Putting this all together, we get the following reconstruction result.

\begin{theorem}\label{rcrthm}
Let $\mathcal{C} $ be a possibly degenerate generalized Haagerup category for $G$, with structure constants $(A,\epsilon)$. 
Let $p \in G \backslash 2G $ and $z \in G_2 $ be given, and let
$(\chi,\mu,\xi,\nu,a(g)) $ be a set of extension data. Then there is a $\mathbb{Z}_2 $-graded extension of $\mathcal{C} $ which realizes the extension data.

\end{theorem}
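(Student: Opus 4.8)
The plan is to assemble the pieces already established in the preceding sequence of lemmas into a coherent verification that the data $(\tilde\alpha, \tilde\rho, \tilde\beta)$ really defines a $\mathbb{Z}_2$-graded extension. First I would observe that the definitions of $\tilde\alpha_g$, $\tilde\rho$, and $\tilde\beta$ on the generators of $\mathcal{U}=\mathcal{O}_{n+1}*\mathcal{O}_{n+1}*C^*(\mathbb{F}_3)$ extend to genuine $*$-homomorphisms precisely because $\mathcal{U}$ is a \emph{universal} $C^*$-algebra: one only needs to check that the images of the generators satisfy the defining relations of $\mathcal{U}$ (Cuntz relations for each copy of $\mathcal{O}_{n+1}$, and unitarity of $u_0,u_1,v$). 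Since $\tilde\rho$, $\tilde\beta$ are defined by sending the Cuntz isometries $s^{(k)},t_g^{(k)}$ to explicit isometries with orthogonal ranges summing to $1$, and the three distinguished unitaries to unitaries, the universal property supplies the extensions automatically, with no further coherence to impose at the $C^*$-level.

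Next I would collect the relations that make this a generalized-Haagerup-type system together with its $\beta$-twist. The relation $\tilde\alpha_g\circ\tilde\alpha_h=\tilde\alpha_{g+h}$ is immediate from the definition of $\tilde\alpha$ on generators and the fact that $\chi,\mu$ are characters; $\tilde\alpha_g\circ\tilde\rho=\tilde\rho\circ\tilde\alpha_{-g}$ is the first displayed lemma; the three $\tilde\beta$-compatibility relations (commutation with $\tilde\alpha_g$, the square $\tilde\beta^2=\mathrm{Ad}(v)\circ\tilde\alpha_{p+z}$, and $\tilde\beta\circ\tilde\rho=\mathrm{Ad}(u_0)\circ\tilde\alpha_p\circ\tilde\rho\circ\tilde\beta$) are the second $\tilde\beta$-lemma; and the crucial Cuntz-algebra recursion $\tilde\rho^2(x)=s^{(0)}x\,s^{(0)}{}^*+\sum_g t_g^{(0)}\,(\tilde\alpha_g\tilde\rho)(x)\,t_g^{(0)}{}^*$ is the fourth lemma. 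Together these say that the pair $(\tilde\alpha,\tilde\rho)$ is itself a (possibly degenerate) generalized Haagerup system on $\mathcal{U}$ realizing the same structure constants $(A,\epsilon,\eta)$ as $\mathcal{C}$, while $\tilde\beta$ implements the desired outer automorphism with $[\tilde\beta\tilde\rho]=[\tilde\alpha_p\tilde\rho\tilde\beta]$ and $[\tilde\beta^2]=[\tilde\alpha_{p+z}]$.

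Then I would pass to a factor closure: by the final lemma there is a type $\mathrm{III}$ factor completion $M$ of $\mathcal{U}$ to which $\tilde\alpha_g,\tilde\rho,\tilde\beta$ all extend as normal endomorphisms (the argument being parallel to the Appendix of \cite{MR3827808}, producing a KMS-type state and taking the GNS factor). In $M$ the endomorphisms $\tilde\rho$ and $\tilde\alpha_g$ generate a copy of $\mathcal{C}$ inside $\mathrm{End}_0(M)$ — irreducibility of $\tilde\rho$ and the fusion rules follow from the recursion relation exactly as in the standard-form construction — and adjoining $\tilde\beta$ produces a unitary fusion category whose simple objects split into those of the form $[\tilde\alpha_g],[\tilde\alpha_g\tilde\rho]$ (the trivial grade, equivalent to $\mathcal{C}$) and those containing $\tilde\beta$ (the nontrivial grade), giving a faithful $\mathbb{Z}_2$-grading. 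By construction the invariants $u_0\leftrightarrow u$, $u_1\leftrightarrow\beta(u)$, $v\leftrightarrow v$ and the functions $(\chi,\mu,\xi,\nu,a)$ match, so the extension realizes the prescribed extension data.

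The main obstacle, and the only genuinely nonformal step, is the factor-closure lemma: verifying that the endomorphisms extend to a type $\mathrm{III}$ factor and that $\tilde\rho$ remains irreducible with finite index there. Everything else is the bookkeeping of checking relations on generators, which the earlier lemmas have already carried out; the analytic content lives entirely in constructing the von Neumann completion, and I would treat it by invoking the $\mathrm{KMS}$/free-product-state machinery of \cite{MR3827808} adapted to the free product with $C^*(\mathbb{F}_3)$.
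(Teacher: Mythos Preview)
Your proposal is correct and follows essentially the same approach as the paper: the proof amounts to assembling the preceding reconstruction lemmas (well-definedness on the universal algebra, the commutation and recursion relations for $\tilde\alpha$, $\tilde\rho$, $\tilde\beta$, and the factor-closure step), and you have identified exactly this structure, including that the only nonformal ingredient is the factor closure, handled by the Appendix argument of \cite{MR3827808}. The paper's own proof is the one-line ``Putting this all together'' that you have spelled out in more detail.
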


\begin{remark}\label{trivext}
Suppose the we have extension data such that everything besides $a(g)$ is trivial (note that in particular this implies that $a(g) \in \{ \pm 1 \}, \ \forall g $). Then we don't need a free product, and we can define $\beta $ directly on the original Cuntz algebra by
${\beta}(s)=s, \quad \beta(t_g)= a(g+p)t_{g+p} .$
We can then verify using Eqs. (\ref{n7}), (\ref{n9}) and (\ref{n10}) that
$\beta $ satisfies the appropriate relations,
namely
$$\beta\circ \rho =\alpha_p\circ \rho \circ \beta, \quad 
\beta^2=\alpha_{p+z}.$$

A necessary condition for this situation to occur is that $\epsilon_k(p)=1$ for all $k\in G_2$. 
Indeed, assume $u=1$. 
Then $\beta\circ \rho \circ \beta^{-1}=\alpha_p\circ \rho$, 
and $\beta(t_0)$ is a multiple of $t_p$. 
Thus for all $k\in G_2$, we get 
$$\alpha_k(\beta(t_0))=\beta(\alpha_k(t_0))=\beta(t_0),$$
which shows $\epsilon_k(p)=1$. 

This will be useful later when we look at the Asaeda-Haagerup categories.
\end{remark}

\subsection{Equivalence}
We have seen that we can describe an extension in terms of extension data. We would like to know when two sets of extension data describe equivalent extensions. 

Suppose we have two extensions, each of the form discussed above, for the same generalized Haagerup category $\mathcal{C} $ with structure constants $ (A,\epsilon)$. Then by Theorem \ref{uniqueness} and the discussion at the end of Section \ref{outsection}, for the purposes of comparing extension data up to unitary equivalence, we may assume without loss of generality that both extensions are realized in the same $\text{End}_0(M) $, with the same group action $\alpha$, but with the choices for $\rho $ possibly differing by an inner perturbation by a unitary fixed by $\alpha$, and with possibly different choices for $\beta $.   

We can easily show that if we replace $\rho$ with $\mathrm{Ad}(w) \circ \rho$, where $w$ is a unitary fixed by $\alpha_g$, the extension data do not change at all. 

So what remains is to check how the choice of $ \beta$ affects the extension data.
 There are two ways we could modify $ \beta$ and still describe the an equivalent extension.

First, we can replace $\beta $ by a different representative of the same isomorphism class $[\beta']=[\beta] $, i.e. $$\beta'=\mathrm{Ad}(w) \circ \beta $$ for some unitary $w$. To keep the relation
$$\alpha_g \circ \beta' = \beta' \circ \alpha_g$$
we require that the $\alpha_g $ act as scalars on $w$, meaning there is a character $\zeta \in \hat{G}$ with 
$$\alpha_g(w)=\zeta(g)w, \quad \forall g \in G .$$
In this case 
we can take $$ u'=w u \rho(w)^* \in (\alpha_p \rho \beta', \beta' \rho ),  \quad \quad v'=w \beta(w) v \in (\alpha_{p+z},\beta'^2 ) $$
as the unitaries for the extension.
Then we have
\begin{enumerate}
\item 
$$\alpha_g(u')=\alpha_g(w u \rho(w)^*)$$ $$=\zeta(g)^2\chi(g) w u \rho(w)^* =\zeta(g)^2\chi(g) u'$$
\item
    $$\alpha_g(v')=\alpha_g(w \beta(w) v )$$ $$=\zeta(g)^2\mu(g) w \beta(w) v=\zeta(g)^2\mu(g) v' $$
    \item
     $$\beta'(v')=\beta'(w \beta(w) v )$$ $$=\mathrm{Ad}(w)(\beta(w \beta(w) v ))$$ $$=w(\beta(w)\beta^2(w)    \beta(v))w^*$$ $$= \nu w \beta(w) (v\alpha_{p+z}(w)   v^*) v w^*$$ $$=\nu \zeta(p+z)w \beta(w) v= \nu \zeta(p+z) v'  $$
     \item
   $$\rho(v')=\rho(w\beta(w) v )=\rho(w) \rho\beta(w) \rho(v)$$ $$=\xi \rho(w)\alpha_{-p} \mathrm{Ad}(u^*)\beta \rho (w) u^* \beta(u)^*v $$ $$=\xi \zeta(p) \rho(w) u^* \beta(\rho(w)u^* ) v  $$ $$=\xi \zeta(p) \rho(w) u^*w^* (w\beta(\rho(w)u^* )w^*)w v $$ $$=\xi \zeta(p)u'^* \beta'(u'^*) \beta'(w)w v  =\xi \zeta(p)u'^* \beta'(u'^*) v'   $$
   \item
   $$\rho(u')=\rho(w)\rho(u)\rho^2(w)^* $$ $$=\rho(w)u^*\beta(s)s^*\rho^2(w)^*+\sum_{g \in G}\limits a(g) \rho(w)u^*\beta(t_{g-p})ut_g^* \rho^2(w)^*
    $$ $$=(\rho(w)u^*w^*) (w\beta(s)w^*) (ws^*\rho^2(w)^*)$$ $$+\sum_{g \in G}\limits a(g) (\rho(w)u^*w^*)(w\beta(t_{g-p})w^*) ( w u t_g^* \rho^2(w)^*) $$ $$=u'^* \beta'(s) s^*+\sum_{g \in G} \limits a(g) u'^* \beta'(t_{g-p}) wu \alpha_g\rho(w^*)t_g^*  $$ $$=u'^* \beta'(s) s^*+\sum_{g \in G} \limits a(g) \zeta(g) u'^* \beta'(t_{g-p}) u't_g^*  $$
\end{enumerate}
Therefore $\chi $ and $ \mu$ are each multiplied by $\zeta^2$, $\xi $ is multiplied by $\zeta(p)  $, $\nu $ 
 is multiplied by $\zeta(p+z)  $, and $a(g) $ is multiplied by $\zeta(g) $.

Second, we can replace $\beta $ by a different object $\beta' $ in the extension which satisfies the same initial assumptions as $\beta$. 
This means that 
$$[\beta']=[\alpha_k\beta] $$
for some $k \in G $, and since 
$$[\beta^2]=[\alpha_{p+z}]= [\beta'^2] =[ \alpha_{2k}\beta^2],$$
we have $$[\alpha_{2k}]=[\mathrm{id}] ,$$
which implies that $k \in G_2 $. On the other hand, for any $k \in G_2 $, we have
$$[(\alpha_k \beta)^2]=[\beta^2]=[\alpha_{p+z}], \quad [\alpha_k  \beta \rho ]=[\alpha_k \alpha_p \rho \beta  ]=[\alpha_p \rho\alpha_{k} \beta ] .$$
Thus $\alpha_k \beta $ satisfies the same assumptions as $\beta $.

In this case, we can still take $u$ as our intertwiner for $(\alpha_p \rho \beta' ,\beta' \rho) $ and $v$ as our intertwiner for $(\alpha_{p+z},\beta'^2 ) $. 
Thus $\chi $ and $\mu $ remain unchanged. On the other hand, we have
\begin{enumerate}
\item
$$\beta'(v)=(\alpha_k \beta)(v) =\alpha_k(\nu v)= \mu(k) \nu v .$$
\item
$$\rho(v)=\xi u^* \beta(u)^*v=\xi u^* (\alpha_k \beta')(u)^*v=\xi \chi(k) u^* \beta'(u)^* v$$
\item
$$\rho(u)=u^*\beta(s)s^*\rho^2(w)^*+\sum_{g \in G}\limits a(g) u^*\beta(t_{g-p})ut_g^*$$ $$=u^*\beta'(s)s^*\rho^2(w)^*+\sum_{g \in G}\limits \epsilon_k(g-p)a(g) u^*\beta'(t_{g-p})ut_g^* $$
\end{enumerate}
Thus $\nu $ is multiplied by $\mu(k) $ and $\xi $ is multiplied by $\chi(k) $. 

For $a(g)$, we need to first normalize the new $a(g)$ by replacing $u$ with $-u$ if necessary, and so $a(g) $ is multiplied by $\epsilon_k(g-p)\epsilon_k(-p)$ in the extension data corresponding to $\beta' $.

Putting this all together, we get the following description of equivalence.

 \begin{theorem} \label{equiv}
 Let $\mathcal{C} $ be a generalized Haagerup category with structure constants $(A,\epsilon) $, and fix $p \in G \backslash 2G $ and $z \in G_2 $.
 Let $(\chi,\mu,\xi,\nu,a(g)) $ and $(\chi',\mu',\xi',\nu',a'(g)) $ be two sets of extension data for $(\mathcal{C},A,\epsilon,p,z) $. Then the corresponding extensions are unitarily equivalent iff there is a character $\zeta \in \hat{G} $ and an element $k \in G_2 $ such that 
 $$\chi'=\zeta^2 \chi, \quad \mu'=\zeta^2 \mu, $$
 $$ \xi'=\zeta(p)\chi(k)\xi,  
 \quad \nu'= \zeta(p+z) \mu(k) \nu$$
 $$a'(g)=\zeta(g)\epsilon_k(g-p)\epsilon_k(p) a(g) $$
 \end{theorem}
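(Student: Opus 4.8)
The plan is to read the result off from the two elementary modifications of $\beta$ analyzed in the discussion immediately preceding the theorem, after first invoking the uniqueness theorem to place both extensions in a common model. First I would apply Theorem \ref{uniqueness} together with the normalization carried out at the end of Section \ref{outsection}: since a unitary equivalence of extensions fixes the trivial component $\mathcal{C}$, we may assume both extensions are realized in the same $\mathrm{End}_0(M)$ with the same action $\alpha$, with $\rho$ agreeing up to an inner perturbation $\mathrm{Ad}(w)\circ\rho$ by a unitary $w$ fixed by every $\alpha_g$. As already observed above, such a perturbation of $\rho$ leaves the data $(\chi,\mu,\xi,\nu,a)$ unchanged, so the entire discrepancy between the two sets of extension data is forced to come from the choice of the grading-generator $\beta$.

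Next I would organize the freedom in $\beta$ into the two moves computed above. Move (a) replaces $\beta$ by $\beta'=\mathrm{Ad}(w)\circ\beta$ for a unitary $w$ with $\alpha_g(w)=\zeta(g)w$ (forced by the requirement $\beta'\alpha_g=\alpha_g\beta'$), and the computations of $\alpha_g(u')$, $\alpha_g(v')$, $\beta'(v')$, $\rho(v')$, $\rho(u')$ give $\chi\mapsto\zeta^2\chi$, $\mu\mapsto\zeta^2\mu$, $\xi\mapsto\zeta(p)\xi$, $\nu\mapsto\zeta(p+z)\nu$, and $a(g)\mapsto\zeta(g)a(g)$. Move (b) replaces $\beta$ by $\alpha_k\beta$ for $k\in G_2$ — these being the only objects of the nontrivial component satisfying the defining relations $[\beta'^2]=[\alpha_{p+z}]$ and $[\beta'\rho]=[\alpha_p\rho\beta']$ — giving $\chi,\mu$ unchanged, $\xi\mapsto\chi(k)\xi$, $\nu\mapsto\mu(k)\nu$, and, after renormalizing by a sign flip of $u$ so that $a'(0)=1$, $a(g)\mapsto\epsilon_k(g-p)\epsilon_k(-p)a(g)$.

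Composing the two moves then yields the stated transformation. Here I would use $\zeta(k)^2=\zeta(2k)=1$ for $k\in G_2$, which makes the two moves commute and leaves $\chi(k)$ and $\mu(k)$ unaffected by move (a), and Eq. (\ref{n11}) (i.e. $\epsilon_k(p)=\epsilon_k(-p)$ for $k\in G_2$) to rewrite $\epsilon_k(-p)$ as $\epsilon_k(p)$. This settles the ``if'' direction, since for any $(\zeta,k)$ the corresponding $w$ and $\alpha_k$ produce a manifestly equivalent extension realizing the transformed data. For the ``only if'' direction, under any unitary equivalence the image of $\beta$ is, in the common model, an invertible object of the nontrivial graded component satisfying the same defining assumptions, hence of the form $\mathrm{Ad}(w)\circ\alpha_k\circ\beta$ with $\alpha_g(w)=\zeta(g)w$ and $k\in G_2$; thus every equivalence is a composite of moves (a) and (b).

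The hard part will be establishing completeness of the ``only if'' direction: verifying that the reduction via Theorem \ref{uniqueness} genuinely leaves only the freedom encoded by $(\zeta,k)$, and that the scalar indeterminacies in $u$ and $v$ together with the normalization $a(0)=1$ (Eq. (\ref{n5})) are tracked consistently so that no further relation or ambiguity survives. In particular one must confirm that $\zeta\in\hat G$ is unconstrained (every such character arises from an admissible $w$) whereas $k$ is pinned to $G_2$, and that the renormalization in move (b) contributes exactly the factor $\epsilon_k(-p)$ and nothing more.
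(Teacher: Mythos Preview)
Your proposal is correct and follows essentially the same approach as the paper's proof: reduce via Theorem \ref{uniqueness} to a common model with fixed $\alpha$ and $\rho$ up to an $\alpha$-fixed inner perturbation (which does not affect the data), then read off the transformations from the two precomputed moves $\beta\mapsto\mathrm{Ad}(w)\circ\beta$ and $\beta\mapsto\alpha_k\beta$. Your observation that Eq.~(\ref{n11}) converts the $\epsilon_k(-p)$ appearing in the move-(b) computation into the $\epsilon_k(p)$ of the theorem statement is exactly the small reconciliation the paper leaves implicit; the paper's own proof is in fact terser than yours, relying on the remark that the extension data determine the $6j$-symbols to handle the point you flag as ``the hard part.''
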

\begin{proof}
First note that since the extension data completely determine the $6j$-symbols of the extension, any two extensions which share the same extension data are equivalent.

Now, as we have seen, once $(A,\epsilon) $ is fixed, the only freedom we have for the extension data is the choice of $\beta $, which leads to the relations above.

Conversely, for any character $\zeta $, we can find a unitary $w$ in $M$ such that $\alpha_g(w)=\zeta(g)w $. Therefore we can always vary the extension data by the given relations.
\end{proof}

\begin{remark}
In the degenerate case, where the action of $\alpha $ is not outer, we may not be able realize every character $\zeta$. 
\end{remark}


In the rest of this section, we assume that $A_g(h,k)\neq 0$ for all $g,h,k\in G$, which is true for every known example. 
In this case, $\epsilon$ is a bicharacter on $G_2\times G$. 
Let $(\chi,\mu,\xi,\nu,a)$ and $(\tilde{\chi},\tilde{\mu},\tilde{\xi},\tilde{\nu},\tilde{a})$ be two extension data, and let $b(g)=\tilde{a}(g)/a(g)$. 
Then Eq. (\ref{n10}) shows that $b$ is a character, and we have $$\tilde{\chi}(g)=b(g)^2\chi(g),\quad \tilde{\mu}(g)=b(g)^2\mu(g) $$ $$ \tilde{\xi}=b(p)\xi, \quad \tilde{\nu}=\pm b(p+z)\nu.$$ 
Therefore, to determine the number of extensions with fixed $(p,z)$, Theorem \ref{equiv} shows that we can fix $a$, and in consequence $\chi,\mu$, and $\xi$ too. 
Now the only remaining freedom is multiplying $\nu$ by $-1$. 

Letting $\zeta(g)=\epsilon_k(g-p)\epsilon_k(p)=\epsilon_k(g)$ in Theorem \ref{equiv}, we get $$\nu'=\epsilon_k(p+z)\mu(k)\nu=\epsilon_k(z)\chi(k)\mu(k).$$ 
Let $\tau$ be a character of $G$ defined by 
$$\tau(g)=\frac{\mu(g)\epsilon_z(g)}{\chi(g)}=\frac{a(g-p)a(p)}{a(g)}\epsilon_{-p}(g)\epsilon_{-p}(p).$$
Then since $\chi^2=\mu^2$, we have $\tau^2=1$. 
Now we have 
$$\nu'=\tau(k)\epsilon_{z}(k)\epsilon_k(z)\nu.$$
Note that we always have $\tau(p)=1$, and in fact $\tau$ is trivial for every known example. 

In summary, we get the following classification.

\begin{corollary}\label{coreq}
Assume that there exists extension data for $(\mathcal{C}, A, \epsilon, p,z)$, where  $A_g(h,k)\neq 0$ for all $g,h,k\in G$. 
Then the number of equivalence classes of such extensions  
is 2 if $\tau(k)\epsilon_k(z)\epsilon_z(k)=1$ for all $k\in G_2$, 
and it is 1 if there exists $k\in G_2$ with $\tau(k)\epsilon_k(z)\epsilon_z(k)=-1$. 
\end{corollary}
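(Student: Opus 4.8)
The goal is to count equivalence classes of extensions for fixed $(p,z)$ using the equivalence criterion of Theorem \ref{equiv}, together with the structural simplifications already accumulated in the surrounding discussion. The plan is to first reduce the classification to a single free parameter, and then analyze the stabilizer of that parameter under the equivalence relation.

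\emph{Step 1: Reduce to fixed $(\chi,\mu,\xi,a)$, leaving only the sign of $\nu$.} I would start from the observation, established just above the statement, that under the assumption $A_g(h,k)\neq 0$ for all $g,h,k$, Eq. (\ref{n10}) forces the ratio $b(g)=\tilde a(g)/a(g)$ of any two extension data to be a character. Using Theorem \ref{equiv} with $\zeta=b$, one can always move from one extension datum to another with the same $a$, and hence (via Eqs. (\ref{n'3}), (\ref{n'4}), (\ref{n'2})) the same $\chi$, $\mu$, and $\xi$. Thus every equivalence class has a representative in which $a,\chi,\mu,\xi$ are all fixed, and the only remaining degree of freedom is the value of $\nu$. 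By Eq. (\ref{n'1}), $\nu^2=\mu(p+z)$ is fixed, so $\nu$ is determined up to a sign. Consequently there are \emph{a priori} at most two classes, and the problem becomes: when are the two sign choices $\nu$ and $-\nu$ equivalent?

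\emph{Step 2: Identify the residual equivalence transformations that fix $a$ but may flip $\nu$.} The equivalences in Theorem \ref{equiv} that preserve $a$ (and hence $\chi,\mu,\xi$) are exactly those with the combined transformation acting trivially on $a(g)$, i.e. $\zeta(g)\epsilon_k(g-p)\epsilon_k(p)=1$. The natural way to realize these is to take $\zeta(g)=\epsilon_k(g-p)\epsilon_k(p)=\epsilon_k(g)$ (using that $\epsilon$ is a bicharacter on $G_2\times G$, so $\epsilon_k(g-p)\epsilon_k(p)=\epsilon_k(g)$), parametrized by $k\in G_2$. I would then compute the induced effect on $\nu$. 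Plugging $\zeta(g)=\epsilon_k(g)$ into the formula $\nu'=\zeta(p+z)\mu(k)\nu$ gives $\nu'=\epsilon_k(p+z)\mu(k)\nu$; rewriting $\epsilon_k(p+z)=\epsilon_k(p)\epsilon_k(z)=\chi(k)\epsilon_k(z)$ via Eq. (\ref{n11}), and introducing the character $\tau(g)=\mu(g)\epsilon_z(g)/\chi(g)$, one arrives at $\nu'=\tau(k)\epsilon_k(z)\epsilon_z(k)\,\nu$. This is precisely the scalar computed in the paragraph preceding the corollary.

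\emph{Step 3: Conclude the dichotomy.} The two extension data with $\nu$ and $-\nu$ are equivalent if and only if some admissible transformation sends $\nu$ to $-\nu$, i.e. if and only if there exists $k\in G_2$ with $\tau(k)\epsilon_k(z)\epsilon_z(k)=-1$. If no such $k$ exists, then the two sign choices are genuinely inequivalent and there are exactly $2$ classes; if such a $k$ exists, the two choices are identified and there is exactly $1$ class. This yields the stated count. The one point requiring care—and the main obstacle—is verifying that these residual transformations with $\zeta(g)=\epsilon_k(g)$ are \emph{exactly} the stabilizer of $(a,\chi,\mu,\xi)$, so that no further identifications (or failures of identification) are overlooked: one must check both that every $k\in G_2$ with the chosen $\zeta$ indeed fixes $a$ (hence $\chi,\mu,\xi$), and that a general equivalence fixing $a$ forces $\zeta=b$ to be of this form up to a transformation already accounted for. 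The bicharacter hypothesis on $\epsilon$ and the nonvanishing of $A$ are what make this stabilizer computation clean, and I would lean on them to rule out any spurious extra freedom.
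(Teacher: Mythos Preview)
Your argument is correct and follows the paper's approach essentially line for line: reduce via the character $b=\tilde a/a$ to a fixed $(a,\chi,\mu,\xi)$, leaving only the sign of $\nu$, then compute the residual action of $(\zeta,k)=(\epsilon_k(\cdot),k)$ on $\nu$ to get $\nu'=\tau(k)\epsilon_k(z)\epsilon_z(k)\,\nu$. The point you flag as the ``main obstacle''---that the stabilizer of $a$ under the equivalences of Theorem \ref{equiv} is exactly the set of pairs with $\zeta(g)=\epsilon_k(g)$---is immediate once one notes that $a'(g)=a(g)$ forces $\zeta(g)\epsilon_k(g-p)\epsilon_k(p)=1$, and the bicharacter property of $\epsilon$ (guaranteed by the nonvanishing hypothesis on $A$) rewrites this as $\zeta(g)=\epsilon_k(g)$; the paper leaves this implicit as well.
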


\begin{corollary}\label{evencyclic} 
Under the assumptions of the above corollary, if $G=\mathbb{Z}_{2n}$, then there exists either 0 or 2 extensions 
for a given $(p,z)$. 
\end{corollary}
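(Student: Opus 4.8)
The plan is to reduce everything to Corollary \ref{coreq}: I would show that for $G=\mathbb{Z}_{2n}$ the sign $\tau(k)\epsilon_k(z)\epsilon_z(k)$ equals $1$ for \emph{every} $k\in G_2$. By Corollary \ref{coreq} this means that whenever extension data for $(\mathcal{C},A,\epsilon,p,z)$ exist there are exactly two equivalence classes, while when no such data exist there are none; together these give the asserted dichotomy of $0$ or $2$. To set up, I would first note that in $G=\mathbb{Z}_{2n}$ one has $2G=\{0,2,\dots,2n-2\}$, so the hypothesis $p\in G\setminus 2G$ forces $p$ to be odd, and $G_2=\{g:2g=0\}=\{0,n\}$ is cyclic of order $2$.

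The heart of the argument is to show that the character $\tau$ is trivial. Recall that $\tau\in\hat{G}$ satisfies $\tau^2=1$. The characters of $\mathbb{Z}_{2n}$, written $\tau_j(g)=e^{\pi i jg/n}$ for $0\le j<2n$, satisfy $\tau_j^2=\tau_{2j}$, which is trivial precisely when $2j\equiv0\pmod{2n}$, i.e. $j\in\{0,n\}$; thus the only characters with $\tau^2=1$ are the trivial one and the sign character $g\mapsto(-1)^g=\tau_n(g)$. Now I would invoke the identity $\tau(p)=1$ noted just before Corollary \ref{coreq}: since $p$ is odd, the sign character takes the value $(-1)^p=-1$ at $p$, so $\tau$ cannot be the sign character and must be trivial. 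In particular $\tau(k)=1$ for every $k\in G_2$.

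It then remains to check that $\epsilon_k(z)\epsilon_z(k)=1$ for $k\in G_2=\{0,n\}$ and the fixed $z\in G_2$. Under the standing hypothesis $A_g(h,k)\neq0$, $\epsilon$ is a bicharacter on $G_2\times G$, so $\epsilon_0(\cdot)\equiv1$ and $\epsilon_{\cdot}(0)\equiv1$, which disposes of every case with $k=0$ or $z=0$. The only remaining case is $k=z=n$, where $\epsilon_n(z)\epsilon_z(k)=\epsilon_n(n)^2=1$ because $\epsilon$ is $\{\pm1\}$-valued. Hence $\tau(k)\epsilon_k(z)\epsilon_z(k)=1$ for all $k\in G_2$, and Corollary \ref{coreq} yields exactly $2$ extensions when data exist and $0$ otherwise. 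The only genuine obstacle is the middle step, the triviality of $\tau$; it rests entirely on the interplay between the oddness of $p$ (a consequence of $p\notin 2G$ for cyclic $G$ of even order) and the general identity $\tau(p)=1$, while everything else is a finite enumeration over the two-element group $G_2$.
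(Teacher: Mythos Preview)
Your proof is correct and follows essentially the same approach as the paper's: both arguments reduce to Corollary~\ref{coreq} by showing $\tau$ is trivial (via $\tau(p)=1$ at an odd element, which rules out the only nontrivial order-two character of $\mathbb{Z}_{2n}$) and then checking $\epsilon_k(z)\epsilon_z(k)=1$ by enumerating $G_2=\{0,n\}$. The only cosmetic difference is that the paper normalizes to $p=1$ and uses that $1$ generates $G$, whereas you work with an arbitrary odd $p$; the substance is identical.
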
 

\begin{proof}
In the case of $G=\mathbb{Z}_{2n}$, we have $G_2=\{0,n\}$, and we may always assume $p=1$. 
Since $p$ generates $G$ in this case, $\tau(p)=1$ implies that $\tau$ is trivial. 
Now we have $\tau(k)\epsilon_k(z)\epsilon_z(k)=1$ for every combination of $z$ and $k$. 
Thus there exist exactly 2 extensions for $(1,z)$ once extension data exists. 
\end{proof}

\begin{remark} In our situation, we have 
$$H^2(\mathbb{Z}_2, \mathrm{Inv}(\mathcal{Z}(\mathcal{C})))=H^2(\mathbb{Z}_2, G_2)=G_2,$$
$$H^3(\mathbb{Z}_2,\mathbb{C}^\times)=\mathbb{Z}_2,$$
$$H^1(\mathbb{Z}_2, \mathrm{Inv}(\mathcal{Z}(\mathcal{C})))=H^1(\mathbb{Z}_2, G_2)=\mathrm{Hom}(\mathbb{Z}_2, G_2)=G_2.$$
As in the argument at the end of subsection 5.2, we can see that $z$ corresponds to the element in $H^2(\mathbb{Z}_2, \mathrm{Inv}(\mathcal{Z}(\mathcal{C})))$ in Theorem \ref{ENOth}, and $\tau(k)\epsilon_k(z)\epsilon_z(k)$ corresponds to $p^1_{(c,M)}(k)$ in Theorem \ref{DNth} if $H^3(\mathbb{Z}_2,\mathbb{C}^\times)$ is identified with $\{1,-1\}$.     
\end{remark}

\section{Examples}
\subsection{Cyclic groups}
For an even cyclic group $G=\mathbb{Z}_{2n}$, there are two possible bicharacters on $G_2 \times G=\mathbb{Z}_2 \times\mathbb{Z}_{2n} $, namely the trivial one and $\epsilon_{n}(m)=(-1)^m $.

For all known examples, $\epsilon $ restricts to the nontrivial bicharacter. In particular, there are examples known for each $n \leq 5$ such that $[1]+[\alpha_g \rho] $ admits a Q-system for each $ g \in G$, with two different examples each for $n=3,5$. The Q-systems comprise two orbits under the action of the inner automorphism group of $ \mathcal{C}$, corresponding to whether $ g$ is even or odd. 

It is natural to wonder whether the two orbits are transposed by an outer automorphism of $\mathcal{C} $, and this is indeed the case for all of the the known examples (note that $ H^2(\mathbb{Z}_{2n},\mathbb{T})$ is trivial, so the cocycle-free condition is automatic). It is then natural to ask whether these outer automorphisms realize $\mathbb{Z} _2$-graded extensions of the fusion categories.

We therefore consider extension data for $p=1$. We have $z\in G_2=\{0,n \} $. 

Then we have
$$\chi(p)^m=\chi(1)^n=\chi(n)=\epsilon_n(1)=-1 ,$$

so $\chi(1) $ is a primitive $2n^{th}$ root of unity. We then have $\xi^{2n} =-1$. Then $$\mu(1)= \chi(1) \chi(z)  .$$ From Eq. (\ref{n8}) we then have 
$$a(g)a(g-1)\xi=\epsilon_{p+z}(g-2p)\mu(g)$$ $$=\epsilon_1(g-2)\epsilon_z(g)\xi^{2g} \chi(z)^g=\epsilon_1(g-2)\xi^{2g}  .$$

If we fix $\epsilon_1(2n-1)=-1 $ and $\epsilon_1(g)=1 $ otherwise,
this gives the unique solution
$$a(g)=-\xi^g, \quad 1 \leq g \leq 2n-1 .$$

We can then check Eq.(\ref{n7}) and (\ref{n9}) (which only depend on $\epsilon $) hold, and what remains is to check Eq. (\ref{n10}) using the structure data $A_g(h,k) $. Note that Eq. (\ref{n10}) does not depend on $z$.


\begin{theorem}
For each of the known examples of generalized Haagerup categories for $G=\mathbb{Z}_{2n} , \quad 1 \leq n \leq 5 $, and each odd $p$  and $z \in \{0,n \} $, there are two distinct $\mathbb{Z}_2 $-graded extensions of the form discussed in the previous section.
\end{theorem}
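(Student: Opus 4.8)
The plan is to reduce the entire statement to a single equation, Eq.~(\ref{n10}), and then verify that equation example by example. By Corollary~\ref{evencyclic}, for $G=\mathbb{Z}_{2n}$ with a fixed odd $p$ and $z\in\{0,n\}$ there are either $0$ or $2$ inequivalent $\mathbb{Z}_2$-extensions of the form in question, and there are exactly $2$ precisely when a set of extension data exists. Thus the whole content of the theorem is the \emph{existence} of extension data for each of the finitely many known examples and each admissible $(p,z)$: the reconstruction result (Theorem~\ref{rcrthm}) then produces the extensions, and Corollary~\ref{evencyclic} counts them. Since $G\setminus 2G$ consists of the odd residues, all of which represent the same nonzero class in $G/2G\cong\mathbb{Z}_2$, and since $p$ is an invariant only modulo $2G$, it suffices to treat $p=1$ as in the preceding discussion; a general odd $p$ is handled by the same computation after translating by the relevant element of $2G$.

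Next I would record the candidate extension data already singled out above and dispose of Eqs.~(\ref{n1})--(\ref{n9}). Normalize $\epsilon$ so that $\epsilon_1(2n-1)=-1$ and $\epsilon_1(g)=1$ otherwise, take $\xi$ a root of unity with $\xi^{2n}=-1$ (forced by Eqs.~(\ref{n2}),~(\ref{n6}),~(\ref{n11}) via $\chi(n)=\epsilon_n(1)=-1$), set $\chi(1)=\xi^2$ (a primitive $2n$-th root of unity), $\mu(1)=\chi(1)\chi(z)$, choose $\nu$ with $\nu^2=\mu(p+z)$, and put $a(g)=-\xi^g$ for $1\le g\le 2n-1$ with $a(0)=1$. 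Then Eqs.~(\ref{n1})--(\ref{n4}) hold by construction of the roots, Eq.~(\ref{n5}) is the normalization $a(0)=1$, Eq.~(\ref{n6}) follows from $a(g)^2=\xi^{2g}=\chi(g)$, Eq.~(\ref{n8}) is exactly the equation solved above to pin down $a$, and Eqs.~(\ref{n7}) and~(\ref{n9}) depend only on $\epsilon$ and are checked directly; none of these steps uses the structure constants $A$.

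The crux is Eq.~(\ref{n10}), $A_g(h,k)=a(g+h)a(g+k)\overline{a(g+h+k)a(g)}\,A_{g-1}(h,k)$. Substituting $a(g)=-\xi^g$ (with the single correction $a(0)=1$), the powers of $\xi$ cancel because $(g+h)+(g+k)-(g+h+k)-g=0$ and the four leading minus signs multiply to $+1$, so the only surviving factor comes from the correction at $0$. Hence the coefficient equals $(-1)^N$ and Eq.~(\ref{n10}) reduces to the sign recursion
\begin{equation*}
A_g(h,k)=(-1)^N A_{g-1}(h,k),\qquad N=\delta(g)+\delta(g+h)+\delta(g+k)+\delta(g+h+k),
\end{equation*}
where $\delta(x)=1$ if $x\equiv 0\pmod{2n}$ and $0$ otherwise. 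This relation depends only on $(A,\epsilon)$ and not on $z$ or the chosen roots, so once it holds the extension data exist for \emph{both} values of $z$. Summing $N$ around the cycle $g=0,1,\dots,2n-1$ gives $4\equiv 0\pmod 2$, so the recursion is at least self-consistent; what must actually be shown is that the explicit solutions $(A,\epsilon)$ tabulated for the generalized Haagerup categories with $n\le 5$ (two solutions each for $n=3,5$) genuinely satisfy it.

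\textbf{Main obstacle.} The real work, and the only example-dependent step, is this verification of the sign recursion for each known solution $A_g(h,k)$. The $A_g(h,k)$ are complicated algebraic numbers solving the polynomial system Eqs.~(\ref{e1})--(\ref{e10}), so I would carry out the check directly from the tabulated data, almost certainly with computer assistance, using the known symmetries of $A$ (notably Eqs.~(\ref{e5}) and~(\ref{e9})) to reduce the number of triples $(g,h,k)$ that must be examined. Once Eq.~(\ref{n10}) is confirmed in each case, the extension data are complete, Theorem~\ref{rcrthm} produces the extensions, and Corollary~\ref{evencyclic} yields exactly two inequivalent ones for each $(p,z)$, as claimed.
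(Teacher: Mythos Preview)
Your overall plan coincides with the paper's: the paper's proof is literally ``We check Eq.~(\ref{n10}) with a computer. Then by Corollary~\ref{evencyclic}, in each case there are two distinct extensions up to equivalence.'' Your reduction to $p=1$, your disposal of Eqs.~(\ref{n1})--(\ref{n9}) using the candidate $a(g)=-\xi^g$, and your appeal to Corollary~\ref{evencyclic} all match the discussion preceding the theorem.

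However, your further simplification of Eq.~(\ref{n10}) to the sign recursion $A_g(h,k)=(-1)^N A_{g-1}(h,k)$ is not quite right. The issue is that $\xi$ has order $4n$, not $2n$ (since $\xi^{2n}=-1$), so $\xi^g$ is not well defined on $\mathbb{Z}_{2n}$: the identity $(g+h)+(g+k)-(g+h+k)-g=0$ holds in $\mathbb{Z}_{2n}$, but when you pass to representatives $[x]\in\{0,\dots,2n-1\}$ the integer $[g+h]+[g+k]-[g+h+k]-[g]$ is only a multiple $2nm$ of $2n$, and $\xi^{2nm}=(-1)^m$. For a concrete failure take $g=h=k=n$: then $a(g+h)=a(g+k)=a(0)=1$ and $a(g+h+k)=a(g)=a(n)=-\xi^n$, so the coefficient is $\overline{\xi^{2n}}=-1$, whereas your formula gives $N=2$ and $(-1)^N=+1$. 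The correct sign is $(-1)^{N+m}$, with $m$ recording the carry in the representatives. This does not affect your strategy (it is still a computable sign, and your consistency check around the cycle still holds since $\sum_g m_g=0$), but the formula as stated would lead a computer check astray; you should either verify Eq.~(\ref{n10}) directly or incorporate the carry term.
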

\begin{proof}
We check Eq. (\ref{n10}) with a computer. Then by Corollary \ref{evencyclic}, in each case there are two distinct extensions up to equivalence.
\end{proof}
\begin{remark}
In this paper we are concerned with classifying extensions up to the natural notion of equivalence, but one can also ask whether different extensions give distinct tensor categories. For $\mathbb{Z}_2 $-extensions of generalized Haagerup categories, there is a unique nontrivial homogeneous component, so the only way two different extensions can be tensor equivalent is if they are related by a nontrivial automorphism of the trivial component (that is, of the generalized Haagerup category).

Note that the choice of $z \in \{0,n \}$ for a generalized Haagerup category for an even cyclic group is an invariant of the tensor category (indeed, of the fusion rules) of the extension. It is less clear whether the sign choice in $\nu $ in the extension data is also an invariant of the tensor category.

One can check that once one fixes an extension as above, the extension data is invariant under conjugation by the $\alpha_g$, as well as conjugation by $\beta $. Thus if the outer automorphism group of the generalized Haagerup category is generated by conjugation by $\beta $, then the different extensions are also distinct as tensor categories. This is the case for the generalized Haagerup category for $\mathbb{Z}_4 $. 

Thus at least for $\mathbb{Z}_4$, the $\mathbb{Z}_2 $-graded extensions constructed above give four different fusion categories, and we conjecture that this holds in general for $\mathbb{Z}_{2n} $. 
\end{remark}



\subsection{Asaeda-Haagerup categories}
The Asaeda-Haagerup subfactor was one of the two original ``exotic'' subfactors discovered in \cite{MR1686551} (the other being the Haagerup subfactor, corresponding to a generalized Haagerup category for $ \mathbb{Z}_3$).
It was shown in \cite{MR3859276} that
there are exactly six fusion categories in the Morita equivalence class of the Asaeda-Haagerup categories. Three of these, including the two which are the even parts of the Asaeda-Haagerup subfactor, do not admit any outer automorphisms. The other three are quadratic categories, and one of these, called $\mathcal{AH}_4 $, is a de-equivariantization of a generalized Haagerup category for the group $G=\mathbb{Z}_4 \times \mathbb{Z}_2$. 

The category $\mathcal{AH}_4 $ may be considered a degenerate generalized Haagerup category, coming from a solution to Eq. (\ref{e1})-(\ref{e10}) for $G$ with $\epsilon_{(0,1)}((i,j))=1 $ for all $(i,j) $, which means that $\alpha_{(0,1)} $ acts trivially on the Cuntz algebra (and hence is equal to $\mathrm{id} $). Thus we have $\mathrm{Inv}(\mathcal{AH}_4)\cong \mathbb{Z}_4 $. 

There are $8$ non-isomorphic Q-systems of the form $[\mathrm{id}]+ [\alpha_g \rho] $, which fall into $4$ inner conjugacy classes, with conjugation by $\alpha_{(2,0)} $ acting trivially on the set of Q-systems. The Brauer-Picard group is $$\mathrm{BrPic}(\mathcal{AH}_4)\cong \mathrm{Out}(\mathcal{AH}_4)\cong \mathbb{Z}_2 \times \mathbb{Z}_2$$ and  acts transitively on the inner conjugacy classes of Q-systems. (The Brauer-Picard group had previously been calculated for the original Asaeda-Haagerup categories in \cite{MR3449240} using other methods).

Therefore it is natural to wonder whether $\mathcal{AH}_4 $ can be extended by $\mathrm{Out}(\mathcal{AH}_4)$, and consequently whether all of the Asaeda-Haagerup categories admit $\mathbb{Z}_2 \times \mathbb{Z}_2$-graded extensions.

In \cite{MR3354332} it was shown on abstract grounds that the obstructions for $\mathbb{Z}_2 $-extensions vanish - but those methods do not determine the obstructions for $\mathbb{Z}_2 \times \mathbb{Z}_2$-extensions.

We will show that the $\mathbb{Z}_2 \times \mathbb{Z}_2$ obstructions vanish by directly constructing a $\mathbb{Z}_2 \times \mathbb{Z}_2$-extension using the methods above.

We refer to \cite[Section 4]{MR3859276} for the structure constants $(A,\epsilon)$ of the category $\mathcal{AH}_4$, and note that the bicharacter $\epsilon $ on $G_2 \times G $ is given by
$$\epsilon_{(i,j)}((k,l))= \begin{cases} -1 & (i,l)=(2,1)\\
1 & \text{otherwise} \end{cases}.$$

We will consider extensions for each of $p=(1,0)$ and $p=(0,1) $.

We start with $p=(1,0) $, and let $z=(0,0) $. Note that $\epsilon_k(p)=1 $ for all $k \in G_2 $, so by Remark \ref{trivext}, there is a possibility of realizing an extension on the original Cuntz algebra.

Up to equivalence, there are two solutions for $a(g)$ in Eqs. (\ref{e1})-(\ref{e9}), exactly one of which also solves Eq. (\ref{e10}) (this was checked with Mathematica).

We fix the extension data as $a((0,1))=-1 $ and $a(g)=1 $ otherwise, and then $\nu=\pm 1 $ determine two inequivalent extensions.

We then have $\chi=\mu=\xi=1 $, and if $ \nu=1$ as well, we can represent the extension on the Cuntz algebra $\mathcal{O}_9 $.



Next, we consider $p=(0,1) $ and $z=(0,0) $. 
In this case we have $\epsilon_{(2,0)}(p)=-1 $, so there is no hope of realizing an extension on the original Cuntz algebra.
We find again a unique solution for $a(g)$ up to equivalence:
$$a'((x,y ) )=e^{\frac{x \pi i}{4}} ,$$ 
and again a sign choice in $\nu' $ gives two different extensions.

\begin{remark}
 For each $ p$, we have chosen $z=(0,0) $, and found corresponding extensions. Since $\mathrm{Inv}(\mathcal{Z}(\mathcal{AH}_4)) $ is trivial, there can be at most one quasi-tensor product for a given choice of $p$, so we cannot have additional extensions for other choices of $z$.
 
For example, for $p=(1,0) $ and $z=(2,0)$, there is a solution to Eq. (\ref{e1})-(\ref{e9}), but it does not satisfy (\ref{e10}). 
\end{remark}

We would now like to realize extensions for $p=(1,0) $ and $p=(0,1) $ simultaneously.

Let $ \mathcal{U}=\mathcal{O}_9 *\mathcal{O}_9 *C^*(\mathbb{F}_3)$. We define an automorphism $\tilde{\beta}' $ using $a'(g) $ and a choice of sign for $\nu' $ as in the proof of Theorem \ref{rcrthm}.

We can also define $\beta $ on $\mathcal{O}_9 $ using $a(g) $ with $\nu=1 $.

We now want to extend $\beta $ to $ \mathcal{U}$. We need to preserve the relations 
$$\beta \circ \rho = \alpha_{(1,0)} \circ \rho \circ \beta, \quad \beta \circ \alpha_g = \alpha_g \circ \beta, \quad \beta^2 = \alpha_{(1,0)}$$
which hold on the Cuntz algebra, and we would also like the extension of $\beta $ to commute with $\tilde{\beta}' $.
So we define
$$\tilde{\beta}(\Phi_0(x))=\tilde{\beta}(\Phi_1(x))=\beta(x), \quad  \forall x \in \mathcal{O}_9,$$ $$\quad  \tilde{\beta}(u_0)=cu_0, \quad \tilde{\beta}(u_1)=c u_1, \quad  \tilde{\beta}(v)=c' v .$$

 Then we have $$\tilde{\beta} \circ \tilde{\alpha}_g = \tilde{\alpha}_g \circ \tilde{\beta}, \quad \tilde{\beta} \circ \tilde{\beta}'=\tilde{\beta}' \circ \tilde{\beta} $$
and
$$\tilde{\beta}^2 =\tilde{\alpha}_{(1,0)} $$
if $$c^2=\chi'((1,0))=i, \quad c'^2=\mu'((1,0))=i ,$$
which will now assume.
\begin{lemma}\label{l11}
We have
$$\tilde{\beta} \circ \tilde{\rho} = \tilde{\alpha}_{(1,0)} \circ \tilde{\rho}\circ  \tilde{\beta}$$
if
$$c a'(g-(1,0))a(g-(0,1))a(g)=a'(g), \quad \forall g \in G $$
\end{lemma}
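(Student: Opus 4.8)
The plan is to verify the endomorphism identity $\tilde\beta\circ\tilde\rho=\tilde\alpha_{(1,0)}\circ\tilde\rho\circ\tilde\beta$ on a generating set of $\mathcal U$, namely on $s^{(k)},t^{(k)}_g$ for $k=0,1$ and on $u_0,u_1,v$; since both sides are endomorphisms and $\mathcal U$ is a free product, agreement on generators is enough. Throughout I set $q=(1,0)$ and keep $p=(0,1)$, so that $\tilde\rho,\tilde\alpha_g,\tilde\beta'$ are the maps built in Theorem \ref{rcrthm} from the $p$-data $(\chi',\mu',\xi',\nu',a')$, and $\tilde\beta$ acts on each Cuntz copy as $\beta$, i.e. $\tilde\beta(s^{(k)})=s^{(k)}$, $\tilde\beta(t^{(k)}_g)=a(g+q)t^{(k)}_{g+q}$, together with $\tilde\beta(u_0)=cu_0$, $\tilde\beta(u_1)=cu_1$, $\tilde\beta(v)=c'v$. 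I will freely use the relations already available: $\tilde\alpha_g\tilde\rho=\tilde\rho\tilde\alpha_{-g}$, $\tilde\beta'\tilde\alpha_g=\tilde\alpha_g\tilde\beta'$, $\tilde\beta'\tilde\rho=\mathrm{Ad}(u_0)\tilde\alpha_p\tilde\rho\tilde\beta'$, as well as $\tilde\beta\tilde\alpha_g=\tilde\alpha_g\tilde\beta$, $\tilde\beta^2=\tilde\alpha_q$ and $\tilde\beta\tilde\beta'=\tilde\beta'\tilde\beta$ from the current subsection.

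On the Cuntz generators and on $v$ the identity is routine and does not use the hypothesis. On $\Phi_0(\mathcal O_9)$ it collapses, via $\tilde\beta(\Phi_0(x))=\Phi_0(\beta(x))$ and $\tilde\alpha_q(\Phi_0(y))=\Phi_0(\alpha_q(y))$, to the relation $\beta\rho=\alpha_q\rho\beta$ on $\mathcal O_9$ supplied by Remark \ref{trivext}; on $\Phi_1(\mathcal O_9)$ one additionally conjugates by $u_0$ and uses $\beta\alpha_p=\alpha_p\beta$, $\rho\alpha_p=\alpha_{-p}\rho$, the factors $u_0$ and the unimodular scalars $\chi'(q)$ cancelling. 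On $v$ one has $\tilde\rho(v)=\xi' u_0^*u_1^*v$, both sides become scalar multiples of $u_0^*u_1^*v$, and equality reduces to $\bar c^{\,2}=\overline{\chi'(q)}^{\,2}\mu'(q)$, which holds since $\chi'(q)=c^2$, $\mu'(q)=c'^2$ and $c^2=c'^2=i$.

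The heart of the argument, and the main obstacle, is the generator $u_0$, where the stated scalar condition enters. Expanding $\tilde\rho(u_0)=u_0^*\big(s^{(1)}s^{(0)*}+\sum_{g}a'(g)\,t^{(1)}_{g-p}u_0 t^{(0)*}_g\big)$, applying $\tilde\beta$, cancelling $\bar c c=1$, and reindexing by $h=g+q$ gives
$$\tilde\beta(\tilde\rho(u_0))=\bar c\,u_0^* s^{(1)}s^{(0)*}+\sum_{h}a'(h-q)\,a(h-p)\,\overline{a(h)}\;u_0^* t^{(1)}_{h-p}u_0 t^{(0)*}_h,$$
while applying $\tilde\alpha_q$ to $c\,\tilde\rho(u_0)$, using $\alpha_q(s)=s$ and $\alpha_q(t_g)=\epsilon_q(g)t_{g+2q}$, and reindexing by $h=g+2q$ gives
$$\tilde\alpha_q(\tilde\rho(\tilde\beta(u_0)))=c\,\overline{\chi'(q)}\,u_0^* s^{(1)}s^{(0)*}+c\sum_{h}a'(h-2q)\,\epsilon_q(h-2q)\,\epsilon_q(h-2q-p)\;u_0^* t^{(1)}_{h-p}u_0 t^{(0)*}_h.$$
The coefficient of $u_0^* s^{(1)}s^{(0)*}$ matches automatically since $\overline{\chi'(q)}=\bar c^{\,2}$. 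Matching the coefficient of $u_0^* t^{(1)}_{h-p}u_0 t^{(0)*}_h$, and using that $a$ is $\pm1$-valued so $\overline{a(h)}=a(h)$, reduces the identity to $a'(h-q)a(h-p)a(h)=c\,a'(h-2q)\epsilon_q(h-2q)\epsilon_q(h-2q-p)$. Applying Eq. (\ref{n7}) to the $p$-data $a'$ with outer index $q$ (and the shift $h\mapsto h-2q$) rewrites the right-hand side as $c\,\overline{\chi'(q)}\,a'(h)=\bar c\,a'(h)$; so this is exactly the hypothesis $c\,a'(h-q)a(h-p)a(h)=a'(h)$. Keeping track of the $\epsilon$-signs and the two distinct reindexings is the delicate part.

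Finally, the generator $u_1=\tilde\beta'(u_0)$ needs no further computation: it follows formally from the $u_0$ case. Writing $R(u_1)=\tilde\alpha_q\tilde\rho(\tilde\beta(u_1))=c\,\tilde\alpha_q\tilde\rho(u_1)$, it remains to show $\tilde\beta\tilde\rho\tilde\beta'(u_0)=c\,\tilde\alpha_q\tilde\rho(u_1)$. Substituting the inverse form $\tilde\rho\tilde\beta'=\tilde\alpha_{-p}\mathrm{Ad}(u_0^*)\tilde\beta'\tilde\rho$ of the intertwiner relation, commuting $\tilde\beta$ past $\tilde\alpha_{-p}$ and (via $\tilde\beta\tilde\beta'=\tilde\beta'\tilde\beta$) past $\tilde\beta'$, inserting the already-proved $u_0$ identity $\tilde\beta\tilde\rho(u_0)=c\,\tilde\alpha_q\tilde\rho(u_0)$, and then using $\tilde\beta'\tilde\alpha_g=\tilde\alpha_g\tilde\beta'$, $\tilde\alpha_q(u_0)=\chi'(q)u_0$ together with $\tilde\beta'\tilde\rho(u_0)=u_0\tilde\alpha_p\tilde\rho(u_1)u_0^*$ to cancel the conjugations, the left-hand side collapses to $c\,\tilde\alpha_{-p}\tilde\alpha_{q+p}\tilde\rho(u_1)=c\,\tilde\alpha_q\tilde\rho(u_1)=R(u_1)$. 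This completes the verification on all generators and proves the lemma.
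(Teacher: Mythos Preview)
Your proof is correct and follows essentially the same approach as the paper: verify the identity on the generators $\Phi_k(\mathcal O_9)$, $u_0$, $u_1$, $v$, with the key computation on $u_0$ producing exactly the stated scalar condition. Your treatment is in fact more complete than the paper's: the paper checks $u_0$ and $v$ explicitly and declares the Cuntz generators straightforward, but omits $u_1$ entirely, whereas you give a clean formal argument deducing the $u_1$ case from the $u_0$ case via $u_1=\tilde\beta'(u_0)$, the intertwiner relation $\tilde\rho\tilde\beta'=\tilde\alpha_{-p}\mathrm{Ad}(u_0^*)\tilde\beta'\tilde\rho$, and the commutation $\tilde\beta\tilde\beta'=\tilde\beta'\tilde\beta$.
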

\begin{proof}

It is straightforward to check that the relations hold for $ x \in \Phi_k(\mathcal{O}_9)$.
For $u_0 $ we have
$$ (\tilde{\beta} \circ \tilde{\rho}) (u_0)=\tilde{\beta} (u_0^*( s^{(1)}s^{(0)}{}^*+\sum_{g \in G} \limits a'(g) \tilde{\beta}'({t^{(1)}_{g-(0,1)}}) u_0t^{(0)}_{g} {}^* ) )$$
$$=\overline{c} u_0^* ( s^{(1)}s^{(0)}{}^*+\sum_{g \in G} \limits c a'(g) a(g-(0,1)+(1,0))a(g+(1,0)) \tilde{\beta}'( t^{(1)}_{g-(0,1)+(1,0)}) u_0t^{(0)}_{g+(1,0)} {}^* ) $$
and
 $$ (\tilde{\alpha}_{(1,0)} \circ \tilde{\rho}  \circ \tilde{\beta})(u_0)$$ $$=c \chi'(-(1,0) )(u_0^*( s^{(1)}s^{(0)}{}^*+\sum_{g \in G} \limits a'(g) \tilde{\beta}'(t^{(1)}_{g-(0,1)}) u_0t^{(0)}_{g} {}^* ) ) ,$$
which are equal if 
the relation holds. 

Next we have
$$( \tilde{\beta} \circ \tilde{\rho} )(v)=\tilde{\beta}(\xi u_0^*u_1^*v )=\xi' \overline{c}^2c' u_0^*u_1^*v$$
and
$$( \tilde{\alpha}_{(1,0)}\circ \tilde{\rho} \circ \tilde{\beta}) (v)=\overline{\mu'((1,0))} c' \xi' u_0^*u_1^*v ,$$
which are equal if 
$$\mu'((1,0))=c^2=\chi'((1,0)) ,$$
which is true.
\end{proof}

\begin{theorem}\label{ahthm}
The obstruction in $H^4(\mathbb{Z}_2 \times \mathbb{Z}_2, \mathbb{T}) $ for the existence of $\mathbb{Z}_2 \times \mathbb{Z}_2 $-graded extensions of the Asaeda-Haagerup categories by mutually inequivalent bimodule categories vanishes.
\end{theorem}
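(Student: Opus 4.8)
The plan is to deduce the vanishing of the $O^4$ obstruction not by computing it directly, but by exhibiting a concrete $\mathbb{Z}_2\times\mathbb{Z}_2$-graded extension of $\mathcal{AH}_4$ realizing the homomorphism $c\colon \mathbb{Z}_2\times\mathbb{Z}_2\to \mathrm{BrPic}(\mathcal{AH}_4)$ that sends the two generators to the two nontrivial elements coming from $p=(1,0)$ and $p=(0,1)$. By Theorem \ref{ENOth}, an extension realizing $c$ exists precisely when both the $O^3$ and $O^4$ obstructions vanish; since $\mathrm{Inv}(\mathcal{Z}(\mathcal{AH}_4))$ is trivial, the group $H^3(\mathbb{Z}_2\times\mathbb{Z}_2,\mathrm{Inv}(\mathcal{Z}(\mathcal{AH}_4)))$ is trivial and the quasi-tensor product $M$ is unique, so the only surviving question is whether $O^4(c,M)\in H^4(\mathbb{Z}_2\times\mathbb{Z}_2,\mathbb{T})$ vanishes. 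Producing any extension realizing $c$ therefore proves the theorem.

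The extension will be built on a factor completion of $\mathcal{U}=\mathcal{O}_9*\mathcal{O}_9*C^*(\mathbb{F}_3)$, on which we already have the $G$-action $\tilde{\alpha}$ and the endomorphism $\tilde\rho$ recovering $\mathcal{AH}_4$ as the trivial component. I would take $\tilde\beta'$ to be the automorphism furnished by the reconstruction of Theorem \ref{rcrthm} from the data $a'((x,y))=e^{x\pi i/4}$ together with a chosen sign for $\nu'$, which realizes the $p=(0,1)$ extension. I would then define a second automorphism $\tilde\beta$ by extending the $p=(1,0)$ automorphism (with $a((0,1))=-1$, $a(g)=1$ otherwise, and $\nu=1$) to $\mathcal{U}$ via $\tilde\beta(\Phi_0(x))=\tilde\beta(\Phi_1(x))=\beta(x)$, $\tilde\beta(u_0)=cu_0$, $\tilde\beta(u_1)=cu_1$, $\tilde\beta(v)=c'v$. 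The already-recorded relations show that $\tilde\beta$ commutes with each $\tilde\alpha_g$ and with $\tilde\beta'$, and that $\tilde\beta^2=\tilde\alpha_{(1,0)}$, provided $c^2=\chi'((1,0))=i$ and $c'^2=\mu'((1,0))=i$. Since $\tilde\beta$ and $\tilde\beta'$ are automorphisms whose squares lie in $\mathrm{Inv}(\mathcal{AH}_4)$, they define invertible objects in distinct nontrivial gradings, so the four components $\mathcal{C}$, $\mathcal{C}\tilde\beta$, $\mathcal{C}\tilde\beta'$, $\mathcal{C}\tilde\beta\tilde\beta'$ furnish a faithful $\mathbb{Z}_2\times\mathbb{Z}_2$-grading with trivial part $\mathcal{AH}_4$.

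The crux is Lemma \ref{l11}: the remaining compatibility $\tilde\beta\circ\tilde\rho=\tilde\alpha_{(1,0)}\circ\tilde\rho\circ\tilde\beta$ holds provided $c\,a'(g-(1,0))\,a(g-(0,1))\,a(g)=a'(g)$ for every $g\in G$. The main obstacle is that this is one scalar equation per $g$ while $c$ is a single global constant, so the construction succeeds only if all of these equations agree on the value of $c$. I would verify this by direct substitution: for $g=(x,y)$ with $x\neq 0$ both factors $a(g-(0,1))$ and $a(g)$ equal $1$ and the equation forces $c=e^{\pi i/4}$, whereas for $x=0$ the product $a(g-(0,1))a(g)$ equals $-1$ and the wrap-around $(0,y)-(1,0)=(3,y)$ in $\mathbb{Z}_4$ contributes the factor $e^{3\pi i/4}$, so the sign and the wrap-around conspire to yield the same value $c=e^{\pi i/4}$. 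This $c$ simultaneously satisfies $c^2=i$, so it is consistent with the previous paragraph, and the parallel check for $v$ reduces to $\mu'((1,0))=\chi'((1,0))$, which holds.

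Finally I would pass to a factor completion of $\mathcal{U}$ to which $\tilde\alpha_g$, $\tilde\rho$, $\tilde\beta$, and $\tilde\beta'$ all extend, by the same argument as in the Appendix of \cite{MR3827808} used for the factor-closure lemma preceding Theorem \ref{rcrthm}. The resulting category of endomorphisms is a unitary $\mathbb{Z}_2\times\mathbb{Z}_2$-graded fusion category with trivial component $\mathcal{AH}_4$; because $p=(1,0)$ and $p=(0,1)$ determine distinct elements of $\mathrm{Out}(\mathcal{AH}_4)\cong\mathrm{BrPic}(\mathcal{AH}_4)\cong\mathbb{Z}_2\times\mathbb{Z}_2$, its two nontrivial generators act by mutually inequivalent Morita autoequivalences, so $c$ is the map onto the full Brauer-Picard group. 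The existence of this extension forces $O^4(c,M)=0$, which is the assertion of the theorem; and since the obstruction theory is a Morita invariant, the conclusion propagates to all six fusion categories in the Asaeda-Haagerup Morita equivalence class.
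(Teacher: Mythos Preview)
Your argument follows the paper's exactly: exhibit the extension on a factor closure of $\mathcal{O}_9*\mathcal{O}_9*C^*(\mathbb{F}_3)$ by extending $\beta$ alongside $\tilde\beta'$, verifying the compatibility condition of Lemma~\ref{l11}, and then invoking Theorem~\ref{ENOth}. The one discrepancy is the value of $c$: you obtain $c=e^{\pi i/4}$ while the paper records $c=e^{-3\pi i/4}$; plugging $g=(1,0)$ into the condition $c\,a'(g-(1,0))\,a(g-(0,1))\,a(g)=a'(g)$ gives $c\cdot 1\cdot 1\cdot 1=e^{\pi i/4}$, so your value is the one consistent with the lemma as stated (both satisfy $c^2=i$, so the theorem is unaffected either way).
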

\begin{proof}
We can verify that the relation in Lemma \ref{l11} holds for $c=e^{-\frac{3\pi i}{4} }$. Then we can simultaneously extend $\tilde{\beta} $ and $\tilde{\beta}' $ to a factor closure, as in the proof of Theorem \ref{rcrthm}. Since such an extension exists, the obstruction must vanish.
\end{proof}
Since the homotopy type of the Brauer-Picard 2-group is an invariant of Morita equivalence, we get corresponding extensions of all the Asaeda-Haagerup categories.
\begin{corollary}
For each of the Asaeda-Haagerup fusion catgeories, there exist 8 different $\mathbb{Z}_2 \times \mathbb{Z}_2 $-graded extensions of the Asaeda-Haagerup categories by mutually inequivalent bimodule categories.
\end{corollary}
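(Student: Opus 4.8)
The plan is to obtain the count of $8$ from the abstract parametrization of extensions, feeding in the fact that the center of $\mathcal{AH}_4$ has no nontrivial invertible objects. First I would fix the grading group $\Gamma = \mathbb{Z}_2 \times \mathbb{Z}_2$ and the homomorphism $c \colon \Gamma \to \mathrm{BrPic}(\mathcal{AH}_4)$ that is an isomorphism onto the full Brauer-Picard group: the hypothesis that the bimodule categories be mutually inequivalent says precisely that $c$ is injective, and since both groups are Klein four-groups any injective $c$ is an isomorphism. The essential input is that $\mathrm{Inv}(\mathcal{Z}(\mathcal{AH}_4))$ is trivial (as already used in the Remark above); because the Drinfeld center and its group of invertible objects are Morita invariants, the same triviality holds for each of the six categories in the Morita class.

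Next I would run the Etingof-Nikshych-Ostrik parametrization (Theorem \ref{ENOth}) with this vanishing coefficient group. Since $H^3(\Gamma, \mathrm{Inv}(\mathcal{Z}(\mathcal{C}))) = 0$ the $O^3$-obstruction vanishes automatically, and since $H^2(\Gamma, \mathrm{Inv}(\mathcal{Z}(\mathcal{C}))) = 0$ there is a unique $\mathcal{C}$-bimodule quasi-tensor product $M$ for $c$. Theorem \ref{ahthm} supplies the vanishing of the remaining obstruction $O^4(c,M) \in H^4(\Gamma, \mathbb{C}^\times)$ for $\mathcal{AH}_4$, and the Morita invariance of the homotopy type of the Brauer-Picard groupoid carries this vanishing to every category in the class, so extensions exist in all cases. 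With both obstructions gone, the associators for $(c,M)$ form a torsor over $H^3(\Gamma, \mathbb{C}^\times)$.

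I would then pass from associators to equivalence classes using Davydov-Nikshych (Theorem \ref{DNth}). The correction homomorphism $p^1_{(c,M)} \colon H^1(\Gamma, \mathrm{Inv}(\mathcal{Z}(\mathcal{C}))) \to H^3(\Gamma, \mathbb{C}^\times)$ has domain $H^1(\Gamma, 0) = 0$, so it is the zero map and $\mathrm{coker}(p^1_{(c,M)}) = H^3(\Gamma, \mathbb{C}^\times)$. Hence the equivalence classes of extensions with fixed $(c,M)$ form a torsor over $H^3(\mathbb{Z}_2 \times \mathbb{Z}_2, \mathbb{C}^\times)$, which is nonempty by the existence just established, so their number equals $\lvert H^3(\mathbb{Z}_2 \times \mathbb{Z}_2, \mathbb{C}^\times) \rvert$. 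Finally, using $H^n(\Gamma, \mathbb{C}^\times) = H^n(\Gamma, \mathbb{T}) \cong H^{n+1}(\Gamma, \mathbb{Z})$ for $n \geq 1$ and a K\"unneth computation (in which all the Tor terms vanish), I would identify $H^3(\mathbb{Z}_2 \times \mathbb{Z}_2, \mathbb{C}^\times) \cong (\mathbb{Z}_2)^3$, of order $8$.

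The main obstacle is conceptual rather than computational: it is to verify that the vanishing of $\mathrm{Inv}(\mathcal{Z}(\mathcal{C}))$ genuinely collapses the extra structure in Theorems \ref{ENOth} and \ref{DNth} --- uniqueness of the quasi-tensor product and triviality of the map $p^1_{(c,M)}$ --- so that the governing torsor is over the \emph{full} group $H^3(\Gamma, \mathbb{C}^\times)$ and not a proper subquotient. Once that is secured, together with the existence coming from Theorem \ref{ahthm} and Morita invariance, the count of $8$ is forced for every Asaeda-Haagerup fusion category.
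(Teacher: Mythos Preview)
Your proposal is correct and follows essentially the same approach as the paper: both use the triviality of $\mathrm{Inv}(\mathcal{Z}(\mathcal{C}))$ to collapse the $H^n(\Gamma,\mathrm{Inv}(\mathcal{Z}(\mathcal{C})))$ terms in Theorems~\ref{ENOth} and~\ref{DNth}, invoke Theorem~\ref{ahthm} for the vanishing of the $H^4$ obstruction, and conclude that the extensions form a torsor over $H^3(\mathbb{Z}_2\times\mathbb{Z}_2,\mathbb{T})\cong(\mathbb{Z}_2)^3$. Your write-up is more explicit about the role of Morita invariance and the K\"unneth computation, but the argument is the same.
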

\begin{proof}

Since $\mathrm{Inv}(\mathcal{Z}(\mathcal{C})) $ is trivial, so are $H^n(\mathbb{Z}_2\times \mathbb{Z}_2,\mathrm{Inv}\mathcal{Z}(\mathcal{C}))$ for all $n$.  
Thus there is no choice of quasi-tensor product. By Theorem \ref{ahthm}, the obstructions for extensions vanish. 
Therefore Theorem \ref{ENOth} and Theorem \ref{DNth} show that the set of extensions form a torsor over $H^3(\mathbb{Z}_2 \times \mathbb{Z}_2, \mathbb{T}) \cong (\mathbb{Z}_2)^3$.

\end{proof}

Note that unlike for $\mathcal{AH}_4 $, the group $\mathrm{Out}(\mathcal{C})$ is trivial for $\mathcal{C}=\mathcal{AH}_{1,2,3} $. Therefore the corresponding extensions for those categories are not quasi-trivial, but rather involve bimodule categories that are non-trivial even as module categories (see \cite{MR3449240} and the accompanying text files for a description of these bimodule categories, including dimensions of simple objects and fusion rules).








\begin{conjecture}
Similar $\mathbb{Z}_2 \times \mathbb{Z}_2$-graded extensions exist for generalized Asaeda-Haagerup categories (de-equivariantizations of generalized Haagerup categories for the groups $\mathbb{Z}_{4m} \times \mathbb{Z}_2 $ with $\epsilon_{(0,1)} $ trivial).
\end{conjecture}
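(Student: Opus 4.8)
The plan is to reproduce, uniformly in $m$, the construction carried out for $\mathcal{AH}_4$ in Theorem \ref{ahthm}, now for $G=\mathbb{Z}_{4m}\times\mathbb{Z}_2$ with $\alpha_{(0,1)}=\mathrm{id}$, so that $\mathrm{Inv}(\mathcal{C})\cong\mathbb{Z}_{4m}$. First I would pin down the bicharacter: by analogy with the $m=1$ case one expects $\epsilon_{(i,j)}((k,l))=-1$ precisely when $(i,l)=(2m,1)$, and I would record that $G_2=\{(0,0),(2m,0),(0,1),(2m,1)\}$ while $2G=2\mathbb{Z}_{4m}\times\{0\}$. The two relevant outer automorphisms are those attached to $p=(1,0)$ — which generates the $\mathbb{Z}_{4m}$ factor and satisfies $\epsilon_k(p)=1$ for all $k\in G_2$, so by Remark \ref{trivext} is a candidate for realization on the original Cuntz algebra — and to $p=(0,1)$, for which $\epsilon_{(2m,0)}(p)=-1$ forces a genuinely nontrivial $u$. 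As in the $\mathcal{AH}_4$ situation these should generate a Klein four subgroup of $\mathrm{Out}(\mathcal{C})$.

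Next I would solve the extension-data equations of Theorem \ref{relations}, in the streamlined form \eqref{n'1}--\eqref{n'8}, separately for each $p$ with $z=(0,0)$. Since $\epsilon$ is a bicharacter, \eqref{n'5}--\eqref{n'7} determine $a$ up to a character and then fix $\chi,\mu,\xi,\nu$ through \eqref{n'2}--\eqref{n'4}; the genuinely category-dependent constraint is \eqref{n10}, which couples $a$ to the structure constants $A_g(h,k)$. For $p=(1,0)$ I would seek the analog of the $\mathcal{AH}_4$ solution ($a\equiv 1$ up to a sign on one generator), and for $p=(0,1)$ the analog of $a'((x,y))=e^{x\pi i/(4m)}$, checking \eqref{n10} against the actual $A$. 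Because $\mathrm{Inv}(\mathcal{Z}(\mathcal{C}))$ should again be trivial for this de-equivariantized family, as it is for $\mathcal{AH}_4$, the groups $H^n(\mathbb{Z}_2\times\mathbb{Z}_2,\mathrm{Inv}(\mathcal{Z}(\mathcal{C})))$ vanish, so there is a unique quasi-tensor product for each $p$ and no $O^3$ obstruction to address.

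The crux is then to realize both automorphisms simultaneously on $\mathcal{U}=\mathcal{O}_{8m+1}*\mathcal{O}_{8m+1}*C^*(\mathbb{F}_3)$ and to verify the compatibility identity playing the role of Lemma \ref{l11}: there should be a scalar $c$ with $c^2=\chi'((1,0))$ for which
$$c\,a'(g-(1,0))\,a(g-(0,1))\,a(g)=a'(g),\quad\forall g\in G,$$
so that $\tilde\beta$ and $\tilde\beta'$ commute and $\tilde\beta\circ\tilde\rho=\tilde\alpha_{(1,0)}\circ\tilde\rho\circ\tilde\beta$. Once this holds, extending everything to a factor closure as in Theorem \ref{rcrthm} exhibits a genuine $\mathbb{Z}_2\times\mathbb{Z}_2$ action and forces the obstruction in $H^4(\mathbb{Z}_2\times\mathbb{Z}_2,\mathbb{T})$ to vanish; Theorems \ref{ENOth} and \ref{DNth} then yield an $H^3(\mathbb{Z}_2\times\mathbb{Z}_2,\mathbb{T})\cong(\mathbb{Z}_2)^3$-torsor of extensions, and Morita invariance of the Brauer--Picard homotopy type transports these to every category in the class.

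I expect the main obstacle to be exactly the reason this is stated as a conjecture rather than a theorem: the verifications of \eqref{n10} and of the Lemma \ref{l11} identity were performed by computer for fixed $m$, whereas a proof valid for all $m$ requires a closed-form description of the structure constants $A_g(h,k)$ for the entire family $\mathbb{Z}_{4m}\times\mathbb{Z}_2$, which does not appear to be available. The realistic route is therefore either to produce such a uniform formula — perhaps by relating these de-equivariantizations across different $m$ or by exploiting the cyclic-group and conformal-inclusion patterns seen in Section 4 — and thereby reduce \eqref{n10} to a finite family of bicharacter identities, or else to find an inductive or Morita-theoretic argument that bypasses the explicit $A$ altogether. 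Absent one of these, the statement must remain conjectural.
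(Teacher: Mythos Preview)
Your proposal correctly recognizes that this is a conjecture rather than a theorem, and your outline of how one would attack it---solve the extension data for $p=(1,0)$ and $p=(0,1)$ with $z=(0,0)$, then verify the compatibility identity of Lemma~\ref{l11}---is exactly what the paper itself suggests in the remark immediately following the conjecture. So at the level of strategy you and the paper agree.

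However, you have misidentified the principal obstruction. You locate the difficulty in the absence of a closed-form expression for $A_g(h,k)$ uniform in $m$, which would be needed to verify \eqref{n10} and the Lemma~\ref{l11} identity for all $m$ at once. The paper points to something more basic: the generalized Asaeda--Haagerup categories themselves are \emph{not known to exist} for $m>1$. That is, one does not even have a solution $(A,\epsilon,\eta)$ to the defining equations \eqref{e1}--\eqref{e10} for $\mathbb{Z}_{4m}\times\mathbb{Z}_2$ with $\epsilon_{(0,1)}$ trivial when $m>1$, so there is no category $\mathcal{C}$ to extend. Your plan presupposes the existence of these structure constants and then worries about manipulating them uniformly; the actual gap is one step earlier. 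Until such categories are constructed, the conjecture cannot even be tested case by case, let alone proved in general.
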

For specific values of $m$ the conjecture can in theory be checked by a similar calculation as above - namely, try to find extension data for $p=(1,0), z=(0,0) $ with trivial $\chi$,$\mu$,$\xi $; then for $p=(0,1), z=(0,0) $; then check the relation in Lemma \ref{l11}. However, genralized Asaeda-Haagerup categories are themselves not yet known to exist for $m > 1 $.


\subsection{The group $\mathbb{Z}_2 \times \mathbb{Z}_2$}
It was shown in \cite{MR3827808} that there is a unique generalized Haagerup category $\mathcal{C} $ for $G=\mathbb{Z}_2 \times \mathbb{Z}_2 $. This category is related to a conformal inclusion $SU(5)_5 \subset \mathrm{Spin}(24) $; see \cite{MR3764563,2102.09065}. It was shown in \cite{MR4001474} that the Brauer-Picard group of this category has order $360$, and the group was identified as $S_3 \times A_5 $ in \cite{2102.09065}. The outer automorphism subgroup is $A_4 $.

We would like to classify the quasi-trivial graded-extensions of $\mathcal{C} $, and in particular find $A_4 $-extensions by the entire outer automorphism group. In this subsection we first consider the $\mathbb{Z}_2 $-extensions.

We will use the normalization $$\epsilon_g(h)=\left( \begin{array}{cccc}
1 & 1 & 1 & 1 \\
1 & -1 & -1 & 1 \\
1 & 1 & -1 &- 1\\
1 & -1 & 1 & -1
\end{array} \right),$$
as in \cite{MR3827808} (corresponding to $s=-1 $ there). We label the elements of the group by $\{0,p,q,r\} $ (in that order with respect to the matrices of structure constants). We consider extensions by an automorphism $\beta $ which conjugates $\rho $ to $\alpha_p\rho $.

Then Eq. (\ref{n7}) reduces to 
$$\chi(g)=\epsilon_g(p) ,$$
so $\chi $ is given by the second column of the $\epsilon $ matrix, $(1,-1,1,-1 )$, and then $\xi=\pm i $.

From Eq. (\ref{n8}) we have 
$$a(p)=\bar{\xi}, \quad a(q)a(r)=a(p)\mu(r)\epsilon_{p+z}(r)=a(p)\mu(q)\epsilon_{p+z}(q)   $$
By Theorem \ref{equiv}, without loss of generality we can assume
$$a=(1,-i,t,i) $$
for some
$$t=\pm1=-\mu(q)\epsilon_{p+z}(q)=-\mu(r)\epsilon_{p+z}(r). $$
Checking Eq. \ref{n10} with a computer (or by hand)
gives $t=1$.
Then we have
$$\mu(q)=\epsilon_{p+z}(q)=\chi(q)\epsilon_{p+z}(q)\epsilon_{q}(p)=\chi(q)\epsilon_{z}(q) $$ and similarly  
$$ \mu(r)=\chi(r)\epsilon_z(r).$$ 
Note that we also have $$\mu(0)=\chi(0)\epsilon_z(0), \quad \mu(p)=\chi(p)\epsilon_z(p) $$ 
by Eq. (\ref{n4}).
So
$$\mu = \chi \epsilon_z .$$

Thus $\tau$ in Corollary \ref{coreq} is trivial, and the number of extensions are determined by whether $\epsilon_k(z)\epsilon_z(k)$ can take -1 or not. 
We also have 
$$\nu^2=\mu(p+z)=-\epsilon_z(z).$$

It was shown in \cite{MR3827808} that there is a $\mathbb{Z}_3 $-action on $\mathcal{C} $ which fixes $\rho $ and cyclically permutes $\{ \alpha_p,\alpha_q,\alpha_r\} $. Therefore, similar extensions exist for automorphisms taking $\rho $ to $\alpha_q \rho $ and $\alpha_r \rho$.

Summarizing, we have:

\begin{theorem}\label{z2eqs}
For each $x \in \{p,q,r \} $ and $y \in G $, there is a $\mathbb{Z}_2 $-extension of $\mathcal{C} $ by an automorphism $\beta_{x,y} $ such that $[\beta_{x,y} \rho] = [\alpha_x \rho \beta_{x,y} ]$ and $[\beta_{x,y}^2]=[\alpha_{x+y}] $. Such an extension is unique unless $y=0$, and there exist exactly two extensions for $y=0$.  
\end{theorem}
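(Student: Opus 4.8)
The plan is to establish the two assertions of the statement — existence of an extension for every pair $(x,y)$, and the count of equivalence classes — by combining the reconstruction result Theorem~\ref{rcrthm}, the equivalence classification Corollary~\ref{coreq}, and the $\mathbb{Z}_3$-symmetry of $\mathcal{C}$ that cyclically permutes $\{\alpha_p,\alpha_q,\alpha_r\}$. I identify the theorem's parameters with the paper's via $x=p$ and $y=z$, so that $[\beta_{x,y}^2]=[\alpha_{x+y}]$ matches $[\beta^2]=[\alpha_{p+z}]$; note that since $G=\mathbb{Z}_2\times\mathbb{Z}_2$ we have $G_2=G$ and $G\setminus 2G=\{p,q,r\}$. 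For existence I would fix $x=p$ and an arbitrary $y=z\in G$ and exhibit a valid set of extension data, which the computation preceding the statement already pins down: Eq.~(\ref{n7}) gives $\chi(g)=\epsilon_g(p)$, hence $\chi=(1,-1,1,-1)$ and $\xi=\pm i$ by Eq.~(\ref{n2}); the normalization from Theorem~\ref{equiv} reduces $a$ to $(1,-i,t,i)$ with $t=\pm1$; and then $\mu=\chi\epsilon_z$ with $\nu^2=\mu(p+z)$ by Eq.~(\ref{n1}). What remains is to confirm that this data satisfies every constraint in Theorem~\ref{relations}. Equations (\ref{n1})-(\ref{n9}) follow immediately from the $\epsilon$-matrix and the defining relations among $\chi,\mu,\xi,\nu,a$, so the crux is Eq.~(\ref{n10}), which couples the data to the full structure constants $A_g(h,k)$ and forces $t=1$; I would verify it by a finite computer check using the explicit $A$ of the unique $\mathbb{Z}_2\times\mathbb{Z}_2$ category. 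With valid data in hand, Theorem~\ref{rcrthm} produces the extension $\beta_{p,y}$.

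To obtain the cases $x=q$ and $x=r$ I would transport $\beta_{p,y}$ along the $\mathbb{Z}_3$-autoequivalence $\Psi$ that fixes $\rho$ and sends $\alpha_p\mapsto\alpha_q\mapsto\alpha_r$. Since $\Psi$ fixes $\rho$, the transported generator conjugates $\rho$ to $\alpha_q\rho$ (respectively $\alpha_r\rho$) and squares to $\alpha_{q+\Psi(y)}$; as $\Psi$ fixes $0$ and permutes $\{p,q,r\}$, letting $y$ range over $G$ supplies existence for all $(x,y)$ with $x\in\{q,r\}$ as well.

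For the count I would apply Corollary~\ref{coreq}. Because $\mu=\chi\epsilon_z$, the character $\tau$ appearing there is trivial, so the number of equivalence classes is $2$ if $\epsilon_k(z)\epsilon_z(k)=1$ for every $k\in G_2=G$ and $1$ if some $k$ gives $-1$. Reading off the $\epsilon$-matrix, for $z=0$ one has $\epsilon_k(0)\epsilon_0(k)=1$ for all $k$, yielding two extensions; for $z=y\neq 0$ a direct check (taking $k=q$ when $z=p$, and $k=p$ when $z\in\{q,r\}$) produces $\epsilon_k(z)\epsilon_z(k)=-1$, yielding a unique extension. The $\mathbb{Z}_3$-symmetry, which fixes $0$, propagates the same dichotomy to $x=q,r$, completing the count and matching the statement that the extension is unique unless $y=0$, with exactly two extensions when $y=0$.

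The main obstacle I expect is Eq.~(\ref{n10}): it is the only constraint linking the candidate data to the associativity constants $A_g(h,k)$, it is precisely what eliminates the spurious solution $t=-1$, and it cannot be settled by character arithmetic but requires the explicit structure constants of the $\mathbb{Z}_2\times\mathbb{Z}_2$ generalized Haagerup category. A secondary point demanding care is confirming that the $\mathbb{Z}_3$-transport genuinely preserves both the defining relations $[\beta\rho]=[\alpha_x\rho\beta]$ and $[\beta^2]=[\alpha_{x+y}]$ together with the equivalence count, and keeping the correspondence between the theorem's $(x,y)$ and the paper's $(p,z)$ straight throughout.
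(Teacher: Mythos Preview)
Your proposal is correct and follows essentially the same route as the paper: derive the extension data for $x=p$ from Eqs.~(\ref{n1})--(\ref{n9}), verify Eq.~(\ref{n10}) (which forces $t=1$) against the explicit structure constants, invoke Theorem~\ref{rcrthm} for existence, apply Corollary~\ref{coreq} with $\tau$ trivial to count equivalence classes via $\epsilon_k(z)\epsilon_z(k)$, and then transport everything to $x=q,r$ by the $\mathbb{Z}_3$-symmetry fixing $\rho$. Your identification of Eq.~(\ref{n10}) as the only step requiring the full $A_g(h,k)$ is exactly the paper's emphasis as well.
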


We defer the general case of quasi-trivial extensions of $\mathcal{C} $ by outer automorphisms to a separate section, since the argument is long and involved.


\section{Quasi-trivial extensions of the generalized Haagerup category for $\mathbb{Z}_2 \times \mathbb{Z}_2 $}

At the end of the previous section we classified quasi-trivial $\mathbb{Z}_2 $-extensions of the generalized Haagerup category for $\mathbb{Z}_2 \times \mathbb{Z}_2 $. In this section we will consider more generally extensions of this category by arbitrary subgroups of the outer automorphism group $A_4 $.

Throughout this section, let $\mathcal{C}$ be the generalized Haagerup category for $\mathbb{Z}_2 \times \mathbb{Z}_2 $, realized in standard form in $\mathrm{End}_0(M) $. We label the group as $G=\{0,p,q,r\}$, and use the same normalization of $\epsilon $ and $A$ as in the previous section.
\subsection{Constraints for a $\mathbb{Z}_2 \times \mathbb{Z}_2 $-extension}
We first consider $ \mathbb{Z}_2 \times \mathbb{Z}_2 $-extensions.

Let us first assume that we have a $ \mathbb{Z}_2 \times \mathbb{Z}_2 $-extension, also realized in $\mathrm{End}_0(M) $, generated by automorphisms $\beta_p $ and $\beta_q $ such that $[\beta_h \rho \beta_h^{-1}]=[\alpha_h \rho] $ for each $h \in \{p,q \}$.
Let $\beta_r=(\beta_p \beta_q)^{-1} $. Then we have
$$[\beta_r \rho \beta_r^{-1}]=[\beta_q^{-1} \beta_p^{-1} \rho \beta_p \beta_q]=[\alpha_r \rho] .$$
We will denote the corresponding unitaries and extension data using subscripts, e.g. $u_h$, $v_h $, $\xi_h $, $a_h$ etc., for $h \in \{p,q,r \} $. 

Then as we have seen, we can without loss of generality assume that $$a_p=(1,-i,1,i) $$ and similarly $$a_q=(1,i,-i,1 ) .$$ Then we have $$a_r=(1,ts ,ti , -si ), $$ where $t$ and $s$ are signs.

\begin{lemma}\label{prevlem}
We have 
$$a_r=(1,1,i,-i) .$$
Also, 
$$\beta_p(u_q)u_p= \beta_r^{-1}(u_r^{*}),$$
and similarly for cyclic permutations of $(p,q,r) $.
\end{lemma}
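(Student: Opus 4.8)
The plan is to determine the intertwiner $u_r$ for the relation $[\beta_r\rho]=[\alpha_r\rho\beta_r]$ directly from the identity $\beta_r=\beta_q^{-1}\beta_p^{-1}$, and then to read off $a_r$ from the resulting explicit form of $\rho(u_r)$. The conceptual point is that, unlike $\beta_p$ and $\beta_q$, the automorphism $\beta_r$ is not an independent datum: its intertwiner, and hence its extension data, are forced by those of $\beta_p$ and $\beta_q$. Since Eqs.~(\ref{n6}), (\ref{n7}), (\ref{n9}) only constrain $a_r$ to the shape $(1,ts,ti,-si)$ with $t,s$ signs, and Eq.~(\ref{n10}) is invariant under a global sign flip of $a_r$, it is precisely this rigidity coming from $\beta_r=(\beta_p\beta_q)^{-1}$ that must pin down the signs.

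First I would compute the composite $\beta_r\circ\rho=\beta_q^{-1}\circ\beta_p^{-1}\circ\rho$. Using the defining relation $\beta_h\circ\rho=\mathrm{Ad}(u_h)\circ\alpha_h\circ\rho\circ\beta_h$ for $h\in\{p,q\}$, the commutation $\beta_h\circ\alpha_g=\alpha_g\circ\beta_h$, the eigenrelation $\alpha_g(u_h)=\chi_h(g)u_h$, and the fact that every element of $G=\mathbb{Z}_2\times\mathbb{Z}_2$ has order two, one peels off $\beta_p^{-1}$ and then $\beta_q^{-1}$ to obtain $\beta_r\circ\rho=\mathrm{Ad}(w)\circ\alpha_r\circ\rho\circ\beta_r$ with $w=\beta_r(u_p^*)\cdot\alpha_p\beta_q^{-1}(u_q^*)=\overline{\chi_q(p)}\,\beta_r(u_p^*)\,\beta_q^{-1}(u_q^*)$. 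Since $\alpha_r\rho$ is irreducible, any admissible $u_r$ agrees with $w$ up to a scalar, so I would adopt the canonical choice $u_r:=\beta_r(u_p^*)\,\beta_q^{-1}(u_q^*)$. Taking adjoints and applying $\beta_r^{-1}$, and using $\beta_r^{-1}\beta_q^{-1}=\beta_p$, this rearranges exactly to $\beta_r^{-1}(u_r^*)=\beta_p(u_q)\,u_p$, which is the asserted identity. The three cyclic forms of the group relation $\beta_p\beta_q\beta_r=\mathrm{id}$ (e.g. $\beta_p=(\beta_q\beta_r)^{-1}$) give, by the identical computation, the cyclic analogues.

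Next I would compute $a_r$. Substituting $u_r=\beta_r(u_p^*)\beta_q^{-1}(u_q^*)$ into the normal form $\rho(u_r)=u_r^*\beta_r(s)s^*+\sum_{g\in G}a_r(g)\,u_r^*\beta_r(t_{g-r})u_r\,t_g^*$ of Eq.~(\ref{rhou}), and expanding $\rho(u_r)=\rho\beta_r(u_p^*)\cdot\rho\beta_q^{-1}(u_q^*)$ via the explicit expressions for $\rho(u_p)$ and $\rho(u_q)$ together with the known values $a_p=(1,-i,1,i)$, $a_q=(1,i,-i,1)$ and the $\epsilon$-matrix fixed above, I would read off each coefficient $a_r(g)$. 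A necessary consistency check is that the canonical $u_r$ satisfies the normalization $u_r\rho(u_r)s=\beta_r(s)$ underlying Eq.~(\ref{rhou}); verifying this validates both the standard form of $\rho(u_r)$ and the statement of the identity with scalar exactly $1$. The coefficients then come out as $a_r=(1,1,i,-i)$, and comparison with the a priori form $(1,ts,ti,-si)$ forces $t=s=1$.

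The main obstacle is the bookkeeping in this last step. Pushing $\rho$ past $\beta_p,\beta_q,\beta_r$ and the group automorphisms produces expressions involving the a priori undetermined quantities $\beta_p(u_q)$, $\beta_q(t_g)$ and $\beta_r(s)$, for which the earlier analysis found no constraints. The key technical point I expect to verify is that all of these undetermined factors combine into scalars and cancel, so that each $a_r(g)$ emerges as an honest phase depending only on $a_p$, $a_q$ and $\epsilon$. Keeping the $\epsilon$-cocycle factors straight throughout, and confirming that the normalization $u_r\rho(u_r)s=\beta_r(s)$ holds so that no rescaling of $u_r$ is needed (which would otherwise introduce a phase into the identity and a global sign into $a_r$), is where the care is required; once that is in hand, the value $a_r=(1,1,i,-i)$ and the intertwiner identity follow together.
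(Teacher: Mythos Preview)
Your approach is sound and leads to the same computation as the paper, but the paper organizes it more symmetrically. Rather than fixing $u_r$ canonically and then checking its normalization, the paper keeps the already normalized $u_r$ (with $a_r(0)=1$), writes $\beta_p(u_q)u_p = b_p\,\beta_r^{-1}(u_r^*)$ for an unknown scalar $b_p$, and then applies $\rho$ to the scalar identity $b_p=\beta_r^{-1}(u_r)\beta_p(u_q)u_p$. Using $\rho\beta_h=\mathrm{Ad}(u_h^*)\beta_h\rho\alpha_h$ on each factor, this yields
\[
b_p^2=\beta_r^{-1}\bigl(u_r\rho(u_r)\bigr)\cdot\beta_p\bigl(u_q\rho(u_q)\bigr)\cdot\bigl(u_p\rho(u_p)\bigr),
\]
a cyclically symmetric product of three terms, each of the Cuntz shape $\beta_h(s)s^*+\sum_g a_h(g)\beta_h(t_{g-h})u_h t_g^*$ from Eq.~(\ref{rhou}). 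Writing $\beta_r^{-1}=\beta_p\beta_q$, the product telescopes: the $s$ and $t$ factors collapse via $\beta_p\beta_q\beta_r=\mathrm{id}$, and the remaining unitary $(\beta_p\beta_q)(u_r)\beta_p(u_q)u_p$ is $b_p$ itself. One obtains $b_p^2 = ss^* + b_p\sum_g a_p(g)a_q(g-p)a_r(g-r)\,t_gt_g^*$, forcing simultaneously $b_p=1$ and $a_p(g)a_q(g-p)a_r(g-r)=1$ for all $g$, hence $a_r=(1,1,i,-i)$.

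What this buys over your plan: the unknown quantities $\beta_h(s),\beta_h(t_g),\beta_p(u_q)$ that you correctly flag as the obstacle are handled in one stroke---the first two telescope by the group relation, and the third is reabsorbed as the very scalar $b_p$ being computed. Your direct computation of $\rho(u_r)$ with $u_r=\beta_r(u_p^*)\beta_q^{-1}(u_q^*)$ would work and is logically equivalent (your normalization check $a_r(0)=1$ is dual to the paper's $b_p=1$), but you would have to push $\rho$ past $\beta_r$ and $\beta_q^{-1}$ separately, breaking the symmetry and making the cancellations harder to spot.
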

\begin{proof}
We have $$\mathrm{Ad}(\beta_r^{-1}(u_r^*)) \circ \alpha_r \rho=\beta_r^{-1} \rho \beta_r = \beta_p \beta_q \rho \beta_q^{-1} \beta_p^{-1}$$ $$=\beta_p \circ \mathrm{Ad}(u_q) \circ \alpha_q \rho \beta_p^{-1}=\mathrm{Ad}(\beta_p(u_q)\alpha_q(u_p)) \circ \alpha_{p+q} \rho$$ $$=\mathrm{Ad}(\beta_p(u_q)u_p)\circ \alpha_{r} \rho  ,$$
which implies that $$\beta_p(u_q)u_p =b_p \beta_r^{-1}(u_r^*)$$
for some unitary scalar $b_p $.

Consider the action of $\rho $ on this identity. We have $$b_p=\beta_r^{-1}(u_r)\beta_p(u_q)u_p=\rho(\beta_r^{-1}(u_r)\beta_p(u_q)u_p)$$ $$=(\beta_r^{-1} \beta_r \rho \beta_r^{-1})(u_r) u_p^*\alpha_p(\beta_p(\rho(u_q))) u_p \rho(u_p)$$ $$=  \epsilon_p(q) \beta_r^{-1} (u_r \alpha_r \rho(u_r)u_r^* ) u_p^* \beta_p(u_q^*)\beta_p(u_q \rho(u_q))u_p \rho(u_p)$$
$$=  \epsilon_p(q)\epsilon_r(r) \overline{b_p} \beta_r^{-1} (u_r \rho(u_r)u_r^*) \beta_r^{-1}(u_r) \beta_p(u_q \rho(u_q))u_p \rho(u_p)$$
$$=  \overline{b_p} \beta_r^{-1} (u_r \rho(u_r))  \beta_p(u_q \rho(u_q))u_p \rho(u_p).$$

So we have:

$$b_p^2=\beta_r^{-1} (u_r \rho(u_r)) \beta_p(u_q \rho(u_q))u_p \rho(u_p) $$
$$=\beta_p \beta_q (\beta_r(s)s^*+\sum_{g \in G} \limits a_r(g) \beta_r(t_{g-r}) u_r t_g^*) $$ $$\cdot \beta_p(\beta_q(s)s^*+\sum_{h \in G} \limits a_q(h) \beta_q(t_{h-q}) u_q t_h^*)$$ $$\cdot(\beta_p(s)s^*+\sum_{k \in G} \limits a_p(k) \beta_p(t_{k-p}) u_p t_k^*) $$
$$=(\beta_p\beta_q\beta_r)(s)s^*$$ $$+\sum_{g \in G}\limits a_p(g)a_q(g-p)a_r(g-p-q) (\beta_p\beta_q\beta_r)(t_{g-p-q-r})     (\beta_p\beta_q(u_r))\beta_p(u_q)u_p t_g^* $$
$$=ss^*+\sum_{g \in G}\limits a_p(g)a_q(g-p)a_r(g-r) b_p t_g     t_g^* .$$
So we get $$b_p^2=a_p(g)a_q(g-p)a_r(g-r) b_p=1 , \quad \forall g \in G,$$
which implies that $b_p=1 $ and $s=t=1 $, so that $a_r=(1,1,i,-i) $.

This calculation is invariant under cyclic permutations of $(p,q,r) $.

\end{proof}

We record for later use the relation among $a_p $, $a_q$, $a_r$ that we found in the proof of Lemma \ref{prevlem}, which can be verified directly:
\begin{equation}\label{arel}
a_p(g)a_q(g-p)a_r(g-r)=1, \quad \forall g \in G .
\end{equation}

As we have seen previously, each of the $\mathbb{Z}_2 $-graded extensions can be reconstructed from a Cuntz algebra and three unitaries corresponding to $u_h $, $ v_h$, and $\beta_h(u_h)$. For our $\mathbb{Z}_2 \times \mathbb{Z}_2 $-graded extension, we also have to consider the images under the various $\beta_k $ of each $u_h $ and $v_h $. 

A priori, there are 21 unitaries to consider: $$\{ \beta_k(u_h) \}_{h \in  \{p,q,r\}, \ k \in \mathbb{Z}_2 \times \mathbb{Z}_2} \cup \{ \beta_k(v_h) \}_{h \in  \{p,q,r\}, \ k \in \mathbb{Z}_2 \times \mathbb{Z}_2 \backslash \{ h \}} $$
(where we let $\beta_0=\mathrm{id} $; note that we have $\beta_h(v_h)=\nu_h v_h $). We can then use the relations $$\beta_p \beta_q \beta_r=\mathrm{id} \ \mathrm{ and } \ \beta_h^2=\mathrm{Ad}(v_h) \circ \alpha_{h+z_h} $$ to express $\beta(w) $ as a word in these unitaries and their adjoints for any $w$ on this list. Similarly, we can use the relation $$\rho \circ \beta_h=\mathrm{Ad}(u_h^*) \circ \beta_h  \circ \rho \circ  \alpha_h $$ to simplify $\rho(w) $. Thus, the C$^*$-algebra generated by the Cuntz algebra generators and these unitaries is invariant under the $\alpha_g $, $\beta_h $, and $\rho $.


We first show that 6 of these 21 unitaries can be written in terms of the other 15.

\begin{lemma}\label{reduce}
We have:
\begin{enumerate}
    \item $\beta_p(u_q)=-\epsilon_{z_r}(r)v_r^*\beta_r(u_r)^*  v_ru_p^* $
    \item $\beta_p(v_q)=\overline{\nu_q}\mu_q(r+z_r)v_r^*\beta_r(v_q) v_r $
\end{enumerate}
and similarly for other cyclic permutations of $(p,q,r) $
\end{lemma}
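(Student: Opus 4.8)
The plan is to reduce each of the two identities to a statement about the single automorphism $\beta_r$, for which explicit action formulas are available, and then to verify it by applying $\beta_r$ and using injectivity. Throughout I write $\chi_h,\mu_h,\nu_h$ for the characters/scalars in the extension data of $\beta_h$, so that $\alpha_g(u_h)=\chi_h(g)u_h$, $\alpha_g(v_h)=\mu_h(g)v_h$, and $\beta_h(v_h)=\nu_h v_h$, and I use $\beta_h^2=\mathrm{Ad}(v_h)\circ\alpha_{h+z_h}$.

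For part (1), I would start from the relation $\beta_p(u_q)u_p=\beta_r^{-1}(u_r^*)$ established in Lemma \ref{prevlem}. This rewrites the claim as $\beta_r^{-1}(u_r^*)=-\epsilon_{z_r}(r)v_r^*\beta_r(u_r)^*v_r$, or, taking adjoints (the scalar $-\epsilon_{z_r}(r)$ being real), as
$$\beta_r^{-1}(u_r)=-\epsilon_{z_r}(r)\,v_r^*\,\beta_r(u_r)\,v_r.$$
Since $\beta_r$ is an automorphism, it suffices to check that applying $\beta_r$ to both sides yields the same element. The left side becomes $u_r$, while for the right side I would use $\beta_r(v_r)=\nu_r v_r$, $\beta_r^2=\mathrm{Ad}(v_r)\circ\alpha_{r+z_r}$, and $\alpha_{r+z_r}(u_r)=\chi_r(r+z_r)u_r$ to obtain
$$\beta_r\big(-\epsilon_{z_r}(r)v_r^*\beta_r(u_r)v_r\big)=-\epsilon_{z_r}(r)\,\overline{\nu_r}\,v_r^*\,\beta_r^2(u_r)\,\nu_r v_r=-\epsilon_{z_r}(r)\,\chi_r(r+z_r)\,u_r.$$
The two sides agree exactly when $-\epsilon_{z_r}(r)\chi_r(r+z_r)=1$. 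The one nonroutine point is this scalar identity: applying Eq. (\ref{n11}) to $\beta_r$ (whose shift is $r$) gives $\chi_r(g)=\epsilon_g(r)$ for all $g\in G_2=G$, and since $\epsilon$ is a bicharacter on $G_2\times G$ we get $\chi_r(r+z_r)=\epsilon_r(r)\epsilon_{z_r}(r)$. Reading $\epsilon_r(r)=-1$ from the normalized $\epsilon$-matrix then gives $\chi_r(r+z_r)=-\epsilon_{z_r}(r)$, whence $-\epsilon_{z_r}(r)\chi_r(r+z_r)=\epsilon_{z_r}(r)^2=1$, as required.

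For part (2), I would instead use $\beta_p=\beta_r^{-1}\beta_q^{-1}$ (from $\beta_p\beta_q\beta_r=\mathrm{id}$) together with $\beta_q^{-1}(v_q)=\overline{\nu_q}\,v_q$ to write $\beta_p(v_q)=\overline{\nu_q}\,\beta_r^{-1}(v_q)$, reducing the claim to $\beta_r^{-1}(v_q)=\mu_q(r+z_r)\,v_r^*\,\beta_r(v_q)\,v_r$. Applying $\beta_r$ as before, and using $\beta_r^2(v_q)=v_r\,\mu_q(r+z_r)\,v_q\,v_r^*$ and $\beta_r(v_r)=\nu_r v_r$, the right side collapses to $\mu_q(r+z_r)^2\,v_q$. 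Since every character of $\mathbb{Z}_2\times\mathbb{Z}_2$ is $\{\pm1\}$-valued, $\mu_q(r+z_r)^2=1$, so both sides equal $v_q$; injectivity of $\beta_r$ then gives the claim.

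The cyclic permutations follow verbatim, since the computation underlying Lemma \ref{prevlem} is invariant under cyclically permuting $(p,q,r)$. The computations themselves are short bookkeeping; the step I would treat most carefully is the scalar identity $\chi_r(r+z_r)=-\epsilon_{z_r}(r)$ in part (1), since a sign slip in either Eq. (\ref{n11}) or the explicit value $\epsilon_r(r)=-1$ would destroy the identity.
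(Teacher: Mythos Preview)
Your proof is correct and follows essentially the same approach as the paper. Both arguments reduce to computing $\beta_r^{-1}$ via the relation $\beta_r^2=\mathrm{Ad}(v_r)\circ\alpha_{r+z_r}$ and the scalar identity $\chi_r(r+z_r)=\epsilon_{r+z_r}(r)=-\epsilon_{z_r}(r)$; the paper simply writes $\beta_r^{-1}=(\mathrm{Ad}(v_r)\circ\alpha_{r+z_r})^{-1}\circ\beta_r$ directly rather than applying $\beta_r$ to both sides and invoking injectivity, but the content is identical.
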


\begin{proof}
\begin{enumerate}
\item We have $$\beta_p(u_q)=\beta_r^{-1}(u_r^*)u_p^*=\beta_r^{-2}(\beta_r(u_r^*))u_p^*$$ $$=(\mathrm{Ad}(v_r) \circ {\alpha_{r+z_r}})^{-1}(\beta_r(u_r^*))u_p^*={\chi_r(r+z_r) }v_r^*(\beta_r(u_r^*)) v_ru_p^*, $$
and $$\chi_r(r+z_r)=\epsilon_{r+z_r}(r)=\epsilon_r(r)\epsilon_{z_r}(r)=-\epsilon_{z_r}(r) .$$
\item We have $$ \beta_p(v_q)=\overline{\nu_q}\beta_p \beta_q(v_q)=\overline{\nu_q}\beta_r^{-1}(v_q)=\overline{\nu}\beta_r^{-2} (\beta_r(v_q))$$ $$=\overline{\nu}(\mathrm{Ad}(v_r) \circ {\alpha_{r+z_r}})^{-1} (\beta_r(v_q))=\overline{\nu_q}\mu_q(r+z_r)v_r^* \beta_r(v_q)v_r .$$
\end{enumerate}
\end{proof}

In light of Lemma \ref{reduce}, we only need to consider the 15 unitaries of the form
$u_h$, $\beta_h(u_h)$, $\beta_{h'}(u_h)$, $v_h$, and $\beta_{h'}(v_h) $, where $h'$ is the successor of $h$ in the cyclic ordering $(p,q,r) $. We introduce the notation
$$u_{i}^{(j)}=\beta_j(u_i), \quad i,j \in \{p,q,r \} .$$

We now derive two further relations among these 15 unitaries.

The first one comes from the fact that $$\mathrm{Ad}(v_p^{(q)})= \mathrm{Ad}(\beta_q(v_p))=\beta_q \circ \mathrm{Ad}(v_p)\circ \beta_q^{-1}$$ $$= \beta_q(\alpha_{p+z_p}\beta_p^2)\beta_q^{-1}=\alpha_{p+z_p}\beta_q \beta_p^2\beta_q^{-1}.$$

\begin{lemma}\label{cond1}
The unitary $v_p v_p^{(q)}v_q v_q^{(r)}v_r v_r^{(p)} $ is a scalar.
\end{lemma}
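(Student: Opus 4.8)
The plan is to show that the adjoint action $\mathrm{Ad}$ of the product $v_p v_p^{(q)} v_q v_q^{(r)} v_r v_r^{(p)}$ is the identity automorphism of $M$. Since $M$ is a factor, its center is $\mathbb{C}$, so any unitary implementing the trivial automorphism must be a scalar. This reduces the entire statement to a bookkeeping computation inside $\mathrm{Aut}(M)$, which is where the relation $\beta_p\beta_q\beta_r=\mathrm{id}$ will do all the work.

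First I would record, for each of the six factors, the automorphism it implements. From $\beta_h^2=\mathrm{Ad}(v_h)\circ\alpha_{h+z_h}$ and the fact that $\alpha_{h+z_h}$ has order two (as $G=\mathbb{Z}_2\times\mathbb{Z}_2$ forces $\alpha_g^2=\mathrm{id}$), we get $\mathrm{Ad}(v_h)=\beta_h^2\circ\alpha_{h+z_h}$. Conjugating by $\beta_{h'}$ and using $\beta_{h'}\circ\alpha_g=\alpha_g\circ\beta_{h'}$ gives $\mathrm{Ad}(v_h^{(h')})=\mathrm{Ad}(\beta_{h'}(v_h))=\alpha_{h+z_h}\circ\beta_{h'}\circ\beta_h^2\circ\beta_{h'}^{-1}$; this is precisely the identity computed just before the lemma for $v_p^{(q)}$, now applied to all cyclic permutations of $(p,q,r)$.

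Next I would multiply these six automorphisms in the given order. Because every $\alpha_g$ commutes with every $\beta_h$ and with every other $\alpha$, all the $\alpha$-factors can be collected together; each of $\alpha_{p+z_p}$, $\alpha_{q+z_q}$, $\alpha_{r+z_r}$ occurs exactly twice and so cancels. What survives is the product of $\beta$-automorphisms $\beta_p^2\cdot\beta_q\beta_p^2\beta_q^{-1}\cdot\beta_q^2\cdot\beta_r\beta_q^2\beta_r^{-1}\cdot\beta_r^2\cdot\beta_p\beta_r^2\beta_p^{-1}$. Using the telescoping $\beta_{h'}^{-1}\beta_{h'}^2=\beta_{h'}$ and then substituting $\beta_r=(\beta_p\beta_q)^{-1}$ (equivalently the cyclic relations $\beta_p\beta_q=\beta_r^{-1}$, $\beta_q\beta_r=\beta_p^{-1}$, $\beta_r\beta_p=\beta_q^{-1}$ coming from $\beta_p\beta_q\beta_r=\mathrm{id}$), this word collapses to the identity after a short free-group reduction. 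Hence $\mathrm{Ad}(v_p v_p^{(q)} v_q v_q^{(r)} v_r v_r^{(p)})=\mathrm{id}$, and the unitary is a scalar.

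This argument is essentially routine rather than deep, so I do not expect a genuine obstacle; the only things to watch are the order of composition and the fact that the $\alpha$-factors genuinely pass through the $\beta$-factors, which is guaranteed by $\beta_h\circ\alpha_g=\alpha_g\circ\beta_h$. The one conceptual move is the opening reduction: recognizing that ``is a scalar'' is equivalent to ``$\mathrm{Ad}$ of the product is trivial,'' which transports the problem into $\mathrm{Aut}(M)$ where the single relation $\beta_p\beta_q\beta_r=\mathrm{id}$ suffices.
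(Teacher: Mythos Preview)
Your proposal is correct and follows essentially the same approach as the paper: both show that $\mathrm{Ad}$ of the product is the identity (hence the unitary is central, hence scalar in a factor), compute $\mathrm{Ad}(v_h)$ and $\mathrm{Ad}(v_h^{(h')})$ in terms of the $\beta$'s and $\alpha$'s, cancel the $\alpha$-factors using commutativity and $\alpha_g^2=\mathrm{id}$, and then reduce the remaining $\beta$-word to the identity using $\beta_p\beta_q\beta_r=\mathrm{id}$. The paper organizes the computation by first collapsing each adjacent pair $\mathrm{Ad}(v_h v_h^{(h')})=\beta_h^2\beta_{h'}\beta_h^2\beta_{h'}^{-1}$ and then multiplying the three pairs, but this is only a cosmetic difference from your presentation.
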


\begin{proof}
It suffices to show that $\mathrm{Ad}(v_p v_p^{(q)}v_q v_q^{(r)}v_r v_r^{(p)})$ is the identity.
By the previous remark, we have 
$$\mathrm{Ad}(v_p v_p^{(q)})=(\alpha_{p+z_p} \beta_p^2) \alpha_{p+z_p} \beta_q\beta_p^2\beta_q^{-1}=\beta_p^2  \beta_q\beta_p^2\beta_q^{-1},$$
with similar formulas for $\mathrm{Ad}(v_q v_q^{(r)})$ and $ \mathrm{Ad}(v_r v_r^{(p)}) $, 
 
so we have
$$\mathrm{Ad}(v_p v_p^{(q)}v_q v_q^{(r)}v_r v_r^{(p)})
=(\beta_p^2  \beta_q\beta_p^2\beta_q^{-1})( \beta_q^2  \beta_r\beta_q^2\beta_r^{-1} )(\beta_r^2  \beta_p\beta_r^2\beta_p^{-1})$$ $$=\beta_p^2\beta_q\beta_p^2\beta_q\beta_r\beta_q^2\beta_r\beta_p\beta_r^2\beta_p^{-1}
=\beta_p^2\beta_q\beta_p\beta_q\beta_r^2\beta_p^{-1}
=\beta_p^2\beta_q\beta_r\beta_p^{-1}=\beta_p\beta_p^{-1}=\mathrm{id},
$$
where we have used the relation $\beta_p=(\beta_q\beta_r)^{-1} $ four times.
\end{proof}

By renormalizing $v_p$ if necessary, we may and will assume that
\begin{equation}\label{prod1}
v_p v_p^{(q)}v_q v_q^{(r)}v_r v_r^{(p)}=1.
\end{equation}

\begin{lemma}\label{cond2}
We have 
\begin{equation}\label{prod2}
    u_r^{(p)}v_r^*v_q^{(r)*}u_q^{(r)}v_q^*v_p^{(q)*}u_p^{(q)}v_p^*v_r^{(p)*}= -\epsilon_{z_p}(p) \epsilon_{z_q}(q) \epsilon_{z_r}(r). 
\end{equation}
\end{lemma}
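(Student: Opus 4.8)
The plan is to prove this in two stages, following the templates of Lemmas~\ref{prevlem} and~\ref{cond1}: first show that the left-hand side, call it $W:=u_r^{(p)}v_r^*v_q^{(r)*}u_q^{(r)}v_q^*v_p^{(q)*}u_p^{(q)}v_p^*v_r^{(p)*}$, is a scalar, and then pin down its value. Three structural observations guide the argument. First, the six $v$-factors appearing in $W$ are, up to order and adjoints, exactly the six factors $v_p,v_p^{(q)},v_q,v_q^{(r)},v_r,v_r^{(p)}$ whose product was normalized to $1$ in Eq.~(\ref{prod1}); indeed the string $v_r^*v_q^{(r)*}v_q^*v_p^{(q)*}v_p^*v_r^{(p)*}$ obtained by deleting the $u$'s is a cyclic rotation of the adjoint of that relation, and a cyclic rotation of a word equal to $1$ is again $1$, so this string collapses to $1$ once the $u$'s are commuted out of the way. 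Second, the three $u$-factors are precisely the successor images $u_i^{(\mathrm{succ}\,i)}$, which are governed by Lemma~\ref{prevlem} and the cocycle relation~(\ref{arel}). Third, the target scalar $-\epsilon_{z_p}(p)\epsilon_{z_q}(q)\epsilon_{z_r}(r)$ has the shape of a product of three factors $-\epsilon_{z_h}(h)$, matching the scalar $\chi_h(h+z_h)=\epsilon_{h+z_h}(h)=\epsilon_h(h)\epsilon_{z_h}(h)=-\epsilon_{z_h}(h)$ (recall $\epsilon_h(h)=-1$) that already surfaced in the proof of Lemma~\ref{reduce}; this tells me where the three signs will come from.

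Concretely, I would first argue that $W$ is central, hence scalar since $M$ is a factor. The contribution of the $v$-factors to $\mathrm{Ad}(W)$ is a word in the $\beta_h$ and $\alpha_g$ via $\mathrm{Ad}(v_h)=\beta_h^2\alpha_{h+z_h}$ and its $\beta_k$-conjugates, exactly as in Lemma~\ref{cond1}. For the three $u$-factors I would commute them together through the $v$'s, recording the scalar produced each time a $u$ passes an $\alpha_{h+z_h}$ (using $\alpha_g(u_h)=\chi_h(g)u_h$), and then collapse the resulting $u$-string using Lemma~\ref{prevlem}, its cyclic versions, and~(\ref{arel}); after this the $\rho$-dependence cancels and $\mathrm{Ad}(W)$ reduces to the same collapsing word $\beta_p\beta_q\beta_r=\mathrm{id}$ as in Lemma~\ref{cond1}, giving $W=c$ for a scalar $c$. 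Once $W$ is known to be scalar, its value can be read off exactly as $b_p$ was pinned down in Lemma~\ref{prevlem}: apply $\rho$ and expand each $\rho(u_i^{(j)})$ and $\rho(v_i^{(j)})$ by the $\beta$-conjugates of the formulas of Theorem~\ref{relations} (recall $\rho(v_h)=\xi_h u_h^*\beta_h(u_h)^*v_h$ and Eq.~(\ref{rhou})), substituting the fixed data $a_p=(1,-i,1,i)$, $a_q=(1,i,-i,1)$, $a_r=(1,1,i,-i)$, $\chi_h(g)=a_h(g)^2$, $\mu_h=\chi_h\epsilon_{z_h}$. Since $\rho(W)=\rho(c)=c$, the product expansion must simplify to a multiple of the identity, and the coefficient of $1$ yields $c=-\epsilon_{z_p}(p)\epsilon_{z_q}(q)\epsilon_{z_r}(r)$.

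The main obstacle is the bookkeeping, not the conceptual structure: the cancellations above (the $v$-string via~(\ref{prod1}), the $u$-string via Lemma~\ref{prevlem}) guarantee a priori that $W$ is a scalar of modulus one, but extracting the exact phase requires propagating every commutation scalar---the characters $\chi_h$ and $\mu_h$, the signs $\epsilon_{z_h}(h)$, and the fourth roots of unity in $a_p,a_q,a_r$---correctly through a long noncommutative product, and in particular getting the three $\epsilon_{z_h}(h)$ factors and the overall minus sign exactly right. I would guard against sign errors by specializing to $z_p=z_q=z_r=0$, where the claim reduces to $W=-1$, and by verifying that the computation respects the cyclic symmetry $(p,q,r)\mapsto(q,r,p)$ that is built into the entire $\mathbb{Z}_2\times\mathbb{Z}_2$ setup.
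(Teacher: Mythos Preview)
Your two-stage plan has a genuine gap at the very first step. The argument of Lemma~\ref{cond1} worked because each $v_h$ and $v_h^{(k)}$ intertwines \emph{automorphisms}, so $\mathrm{Ad}(v_h)$ and $\mathrm{Ad}(v_h^{(k)})$ are honest words in the $\alpha_g$ and $\beta_k$. The $u$-factors are different: $u_h\in(\alpha_h\rho\beta_h,\beta_h\rho)$ intertwines endomorphisms that involve $\rho$, and $\rho$ is not invertible. Consequently $\mathrm{Ad}(u_h)$ (and hence $\mathrm{Ad}(u_h^{(k)})$) cannot be written as a word in the automorphisms $\alpha_g,\beta_k$, so there is no way to reduce $\mathrm{Ad}(W)$ to ``the collapsing word $\beta_p\beta_q\beta_r=\mathrm{id}$''. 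Relatedly, your plan to ``commute the $u$'s through the $v$'s, recording a scalar each time a $u$ passes an $\alpha_{h+z_h}$'' does not work: conjugation by $v_k$ is not $\alpha_{k+z_k}$ but $\beta_k^2\circ\alpha_{k+z_k}^{-1}$, and $\beta_k^2(u_h)$ is \emph{not} a scalar multiple of $u_h$. So the $u$'s cannot be pushed past the $v$'s at scalar cost, and the $v$-string does not cleanly collapse via~(\ref{prod1}) the way you describe. Your second stage (applying $\rho$ to extract the value) would in principle be sound once $W$ is known to be scalar, but you never get there.

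The paper's proof bypasses the ``first show scalar'' stage entirely. It simply computes $u_r^{(p)}=\beta_p(u_r)=(\beta_q\beta_r)^{-1}(u_r)=\beta_r^{-1}\beta_q^{-1}(u_r)$ directly, expanding each $\beta_h^{-1}$ via the identity $\beta_h^{-1}=\alpha_{h+z_h}^{-1}\circ\mathrm{Ad}(v_h^*)\circ\beta_h$ (which follows from $\beta_h^2=\mathrm{Ad}(v_h)\circ\alpha_{h+z_h}$). Each application introduces a $v$-conjugation, a $\chi_r$-scalar, and a further $\beta$, and the inner $\beta_q(u_r)$, $\beta_r(u_q)$, etc.\ are rewritten using Lemma~\ref{reduce}(1) and its cyclic versions, together with~(\ref{prod1}). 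After a single (admittedly long) chain of substitutions one obtains $u_r^{(p)}$ as an explicit scalar times a product of the other fourteen unitaries; rearranging gives~(\ref{prod2}) with the scalar $-\epsilon_{z_p}(p)\epsilon_{z_q}(q)\epsilon_{z_r}(r)$ emerging exactly from the three $\chi_h(h+z_h)=-\epsilon_{z_h}(h)$ factors you anticipated. The point is that this direct expansion never requires separating the scalar and unitary parts of $W$, and it never needs to commute a $u$ past a $v$.
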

\begin{proof}
First note that the relation $\beta_h^2=\mathrm{Ad}(v_h)\circ \alpha_{h+z_h} $ can be rewritten as
$$\beta_h^{-1}= (\mathrm{Ad}(v_h)\circ \alpha_{h+z_h} )^{-1} \beta_h .$$
We then have $$u_r^{(p)}=\beta_p(u_r)=(\beta_q\beta_r)^{-1}(u_r)=\beta_r^{-1}(\beta_q^{-1}(u_r))$$ $$=((\mathrm{Ad}(v_r) \circ \alpha_{r+z_r} )^{-1} \beta_r) (((\mathrm{Ad}(v_q) \circ \alpha_{q+z_q} )^{-1} \beta_q )(u_r))$$ $$=\chi_r(q+z_q+r+z_r) (\mathrm{Ad}(v_r^*v_q^{(r)*}) \circ \beta_r\beta_q)(u_r) $$
$$=\epsilon_{q+z_q+r+z_r} (r) (\mathrm{Ad}(v_r^*v_q^{(r)*})\circ \beta_r)(-\epsilon_{z_p}(p)v_p^*u_p^{(p)*}v_pu_q^*)  $$
$$=-\epsilon_{z_p}(p) \epsilon_{z_r+z_q}(r)\mathrm{Ad}(v_r^*v_q^{(r)*})(\beta_r(v_p)^* \beta_q^{-1}(u_p)^*\beta_r(v_p)\beta_r(u_q)^*) $$
$$=-\epsilon_{z_p}(p) \epsilon_{z_r+z_q}(r)\mathrm{Ad}(v_r^*v_q^{(r)*})((\nu_p \overline{\mu_p(q+z_q)}v_q^*\beta_q(v_p)^*v_q )$$ $$\cdot((\mathrm{Ad}(v_q^*)\circ \alpha_{q+z_q}\beta_q)(u_p)^*)(\overline{\nu_p}\mu_p(q+z_q)v_q^*\beta_q(v_p)v_q ) \beta_r(u_q)^* )$$
$$=-\epsilon_{z_p}(p) \epsilon_{z_r+z_q}(r)\chi_p(q+z_q)\mathrm{Ad}(v_r^*v_q^{(r)*})(v_q^*\beta_q(v_p)^*\beta_q(u_p)^*\beta_q(v_p)v_q  \beta_r(u_q)^* )$$
$$=-\epsilon_{z_p+z_q}(p) \epsilon_{z_r+z_q}(r)v_r^*v_q^{(r)*}v_q^*\beta_q(v_p)^*\beta_q(u_p)^*\beta_q(v_p)v_q \beta_r(u_q)^* v_q^{(r)}v_r $$
$$=-\epsilon_{z_p}(p)\epsilon_{z_q}(q) \epsilon_{z_r}(r)v_r^{(p)}v_pu_p^{(q)*}v_p^{(q)}v_q  u_q^{(r)*} v_q^{(r)}v_r .$$
Rearranging 
gives the condition.
\end{proof}

We next apply $\rho $ and $\beta_h $ to the conditions in Lemmas \ref{cond1} and \ref{cond2} to see if any further constraints arise. It turns out only applying $\beta_h $ to the condition in Lemma \ref{cond1} gives an additional constraint.

\begin{lemma}\label{newcn}
We have $$1=\epsilon_{z_p+z_q+z_r}(z_p)\epsilon_{z_p}(z_p+z_q+z_r) $$ $$=\epsilon_{z_p+z_q+z_r}(z_q)\epsilon_{z_q}(z_p+z_q+z_r)=\epsilon_{z_p+z_q+z_r}(z_r)\epsilon_{z_r}(z_p+z_q+z_r)$$
\end{lemma}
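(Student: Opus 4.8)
The plan is to read off the stated identities by applying the automorphisms $\beta_h$ to the normalized scalar relation Eq. (\ref{prod1}), namely $v_p v_p^{(q)} v_q v_q^{(r)} v_r v_r^{(p)} = 1$. Since the left-hand side is the scalar $1$ and $\beta_p$ is an automorphism, $\beta_p$ applied to the product is again $1$; the content of the lemma is the value of the scalar one obtains by recomputing $\beta_p$ of the product factor by factor. First I would rewrite each of the six factors $\beta_p(v_p),\beta_p(v_p^{(q)}),\ldots,\beta_p(v_r^{(p)})$ using the tools already assembled: the eigenrelations $\beta_h(v_h)=\nu_h v_h$ and $\alpha_g(v_h)=\mu_h(g)v_h$ (together with $\alpha_g(v_h^{(j)})=\mu_h(g)v_h^{(j)}$, which holds because $\alpha_g$ commutes with $\beta_j$), the squaring relation $\beta_h^2=\mathrm{Ad}(v_h)\circ\alpha_{h+z_h}$ (and its inverse $\beta_h^{-1}=\alpha_{h+z_h}\circ\mathrm{Ad}(v_h^{*})\circ\beta_h$), the relation $\beta_p\beta_q\beta_r=\mathrm{id}$, and the reduction formulas of Lemma \ref{reduce}. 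Each factor then takes the shape of a scalar times a conjugate $c^{*}\,v_\bullet^{(\bullet)}\,c$ of a generator from the list of fifteen.

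The key observation is that, exactly as in the $\mathrm{Ad}$-computation proving Lemma \ref{cond1}, the conjugating unitaries should cancel telescopically once the six rewritten factors are multiplied in order, so that the operator part collapses to a cyclic rotation of the left-hand side of Eq. (\ref{prod1}). Because that left-hand side is the scalar $1$, every cyclic rotation of it is also $1$, and hence $\beta_p$ of the product equals precisely the accumulated scalar; setting this scalar to $1$ is the asserted constraint. The remaining work is to simplify the accumulated scalar: the $\nu_h$ cancel against the $\overline{\nu_h}$ produced by Lemma \ref{reduce}, the $\mu_h$ are expanded via $\mu_h(g)=\chi_h(g)\epsilon_{z_h}(g)=\epsilon_g(h)\epsilon_{z_h}(g)$ (using $\chi_h(g)=\epsilon_g(h)$ and $\mu_h=\chi_h\epsilon_{z_h}$ from the previous subsection), and repeated use of the bicharacter property of $\epsilon$ on $G_2\times G$ should collapse the product to $\epsilon_w(z_p)\epsilon_{z_p}(w)$ with $w=z_p+z_q+z_r$. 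This gives the first equality $\epsilon_w(z_p)\epsilon_{z_p}(w)=1$; the other two follow by running the same computation with $\beta_q$ and $\beta_r$, equivalently by the cyclic symmetry of Eq. (\ref{prod1}), Eq. (\ref{prod2}), and Lemma \ref{reduce} under $(p,q,r)\mapsto(q,r,p)$.

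The main obstacle is the bookkeeping in the two simplification steps: confirming that the conjugators genuinely telescope (one must track how each $\mathrm{Ad}(v_h^{*})$ and $\alpha_{h+z_h}$ introduced by inverting $\beta_h^2$ interacts with the neighbouring factors, since the intermediate expressions are conjugates rather than plain generators), and then reducing the surviving product of $\nu$'s and $\mu$'s to the clean $\epsilon$-expression. I expect the scalar collection to be the most error-prone part, so I would cross-check it by also applying $\rho$ and the $\beta_h$ to Eq. (\ref{prod2}): as indicated in the text those applications yield no new constraint, which provides an independent consistency check on the bookkeeping.
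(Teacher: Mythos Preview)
Your proposal is correct and follows essentially the same approach as the paper: apply $\beta_p$ to the normalized relation $v_p v_p^{(q)} v_q v_q^{(r)} v_r v_r^{(p)} = 1$, rewrite each factor using $\beta_h^2=\mathrm{Ad}(v_h)\circ\alpha_{h+z_h}$, Lemma~\ref{reduce}, and $\beta_p\beta_q\beta_r=\mathrm{id}$, observe that the operator part collapses (the paper uses Eq.~(\ref{prod1}) once more rather than a literal cyclic rotation, but the mechanism is the same), and then reduce the accumulated scalar via $\mu_h(g)=\epsilon_g(h)\epsilon_{z_h}(g)$ to obtain $\epsilon_{z_p}(z_q+z_r)\epsilon_{z_q+z_r}(z_p)=1$, with the remaining two identities following by cyclic symmetry. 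Your identification of the bookkeeping as the delicate step is accurate, and the paper's proof confirms there are no hidden obstructions.
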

\begin{proof}
We have 
$$1=\beta_p(1)=\beta_p(v_p v_p^{(q)}v_q v_q^{(r)}v_r v_r^{(p)}) $$
$$=\nu_p v_p\beta_r^{-1}(v_p)\beta_p(v_q) (\beta_p\beta_r)(v_q) \beta_p(v_r)(\mathrm{Ad}(v_p)\circ \alpha_{p+z_p})(v_r) $$
$$=\nu_p \mu_r(p+z_p)v_p (v_r^* (\alpha_{r+z_r}\beta_r)( v_p) v_r )(\overline{\nu_q}\mu_q(r+z_r)v_r^*\beta_r(v_q) v_r )$$ $$\cdot(\beta_p\beta_r)(v_q) \beta_p(v_r) v_pv_rv_p^* $$
$$=\nu_p \overline{\nu_q}\mu_r(p+z_p)\mu_p(r+z_r)\mu_q(r+z_r)v_p v_r^* \beta_r(v_p) v_q^{(r)} v_r (\beta_p\beta_r)(v_q) v_r^{(p)} v_pv_rv_p^* $$
$$=\nu_p \overline{\nu_q}\mu_r(p+z_p)\mu_p(r+z_r)\mu_q(r+z_r)v_p v_r^* (\overline{\nu_p}\mu_p(q+z_q)v_q^*\beta_q(v_p)v_q  ) v_q^{(r)} v_r $$ $$\cdot(\mathrm{Ad}(v_r^{(p)}v_p) \alpha_{q+z_r+z_p} \beta_q  )( v_q )    v_r^{(p)} v_pv_rv_p^* $$
$$= \mu_r(p+z_p)\mu_p(p+z_r+z_q)\mu_q(r+z_p+z_q)
$$ $$\cdot v_p v_r^* v_q^*v_p^{(q)}v_q   v_q^{(r)} v_r v_r^{(p)}v_pv_q^* v_q  v_qv_p^* v_r^{(p)*}  v_r^{(p)} v_pv_rv_p^* $$
$$= \mu_r(p+z_p)\mu_p(p+z_r+z_q)\mu_q(p+z_p) .$$
Using the relation
 $$\mu_h(g)=\chi_h(g)\epsilon_{z_h}(g)=\epsilon_g(h)\epsilon_{z_h}(g) ,$$
 we have
 $$1=\mu_r(p+z_p)\mu_p(p+z_r+z_q)\mu_q(p+z_p)
 $$
 $$=\epsilon_{p+z_r+z_q}(p)\epsilon_{p+z_p}(q)\epsilon_{p+z_p}(r)\epsilon_{z_p}(p+z_r+z_q) \epsilon_{z_q}(p+z_p)\epsilon_{z_r}(p+z_p) $$
 $$=\epsilon_{z_p}(z_q+z_r)\epsilon_{z_q+z_r}(z_p) =\epsilon_{z_p}(z_p+z_q+z_r)\epsilon_{z_p+z_q+z_r}(z_p). $$
 Similarly, we can replace $p $ with $q$ or $r$.
\end{proof}
\begin{corollary}\label{zcond}
One of the following occurrs:
\begin{enumerate}
    \item $z_p+z_q+z_r=0 $
    \item Two of $z_p $, $z_q $, $ z_r$ are $0$
    \item $z_p=z_q=z_r $
\end{enumerate}
\end{corollary}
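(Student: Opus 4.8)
The plan is to repackage the three displayed identities of Lemma~\ref{newcn} as a single orthogonality condition for a bilinear form, and then to finish with a short dimension count. Since $G=\mathbb{Z}_2\times\mathbb{Z}_2$ we have $G_2=G$, so I identify $G$ with $\mathbb{F}_2^2$ and note $z_p,z_q,z_r\in G$. First I would introduce the symmetrized form $B\colon G\times G\to\mathbb{F}_2$ determined by $(-1)^{B(a,b)}=\epsilon_a(b)\epsilon_b(a)$. Because $\epsilon$ is a bicharacter on $G_2\times G=G\times G$, the form $B$ is $\mathbb{F}_2$-bilinear, and it is automatically alternating since $\epsilon_x(x)\epsilon_x(x)=\epsilon_x(x)^2=1$ forces $B(x,x)=0$. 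Reading the off-diagonal entries of the displayed $\epsilon$-matrix gives $B(p,q)=B(q,r)=B(p,r)=1$, so $B$ is exactly the nondegenerate symplectic form on $\mathbb{F}_2^2$ (with $r=p+q$).

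Next I would rewrite the hypothesis. Put $w=z_p+z_q+z_r$. The three equalities $1=\epsilon_w(z_h)\epsilon_{z_h}(w)$ for $h\in\{p,q,r\}$ say precisely that $B(w,z_p)=B(w,z_q)=B(w,z_r)=0$, i.e. that $w$ is orthogonal under $B$ to the subspace $V=\langle z_p,z_q,z_r\rangle\subseteq G$. (The three scalar conditions are not independent, since their sum is $B(w,w)=0$, which holds automatically; the essential content is that $w\perp V$.)

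The proof then concludes with a case analysis on $\dim_{\mathbb{F}_2}V$. If $\dim V=2$ then $V=G$, and nondegeneracy of $B$ forces $w=0$, which is case~(1). If $\dim V=0$ then $z_p=z_q=z_r=0$, giving case~(2) (and~(3)). If $\dim V=1$, say $V=\{0,x\}$, then each $z_h\in\{0,x\}$ and the orthogonality is automatic, so I would simply count the number $k$ of the $z_h$ equal to $x$: for $k\in\{0,3\}$ all three coincide (case~(3)), for $k=1$ two of them vanish (case~(2)), and for $k=2$ one gets $w=x+x=0$ (case~(1)). Thus in every case one of (1)--(3) holds. I do not anticipate a genuine obstacle; the only points needing care are that $B$ is nondegenerate, so that the $\dim V=2$ case really forces $w=0$, and that the three stated alternatives match the $\dim V=1$ subcases exactly. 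As a sanity check I would confirm that the configuration $z_p=p$, $z_q=q$, $z_r=0$, which violates the trichotomy (here $w=r\neq 0$, only one $z_h$ vanishes, and they are not all equal), is correctly ruled out by the hypothesis, since indeed $B(w,z_p)=B(r,p)=1\neq 0$.
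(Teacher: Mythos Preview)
Your proof is correct and is essentially the same argument as the paper's, just packaged more systematically: you make explicit the alternating $\mathbb{F}_2$-bilinear form $B$ and organize the case split by $\dim V$, whereas the paper argues directly that if $w:=z_p+z_q+z_r\neq 0$ then nondegeneracy forces each $z_h\in\{0,w\}$ and finishes by counting how many are nonzero. The content is identical; your version has the minor expository advantage of naming $B$ and checking its nondegeneracy from the $\epsilon$-matrix, which the paper uses implicitly.
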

\begin{proof}
Suppose $z_p+z_q+z_r \neq 0 $. Then $$\epsilon_{g}(z_p+z_q+z_r)\epsilon_{z_p+z_q+z_r}(g)=1$$
holds for $g=0$ or $g=z_p+z_q+z_r $. Suppose, e.g. $ z_p$ and $ z_q$ are both nonzero. Then we must have 
$$z_p=z_q=z_p+z_q+z_r =z_r.$$
\end{proof}

\begin{remark}
Conversely, the relations in Lemma \ref{newcn} follow from any of the conditions in Corollary \ref{zcond}.
\end{remark}

Finally, we record the formulas for $\beta_r$ and $\rho $ applied to $u_p^{(q)} $ and $v_p^{(q)} $, which will be needed for reconstruction.

\begin{lemma}\label{rhoform}
We have
\begin{enumerate}
    \item $\beta_r(u_p^{(q)} )=\chi_p(p+z_r+z_q)v_q^{(r)}v_r u_p^{(p)} v_r^*v_q^{(r)*}$
    \item $\beta_r(v_p^{(q)} )=\nu_p \mu_p(p+z_p+z_q)v_q^{(r)}v_r v_p v_r^*v_q^{(r)*}$
    \item $\rho(u_p^{(q)})=\chi_p(q)u_q^*u_p^{(q)*}v_p^{(q)}v_q[  \beta_r(s) v_q^*v_p^{(q)*} \beta_q(s)^*$ \\ $+\sum_{g \in G} \limits  
  \epsilon_{r+z_p+z_q}(g-p)a_p(g) \beta_r(t_{g-p}) v_q^*v_p^{(q)*}u_p^{(q)}\beta_q(t_g)^*]u_q
 $
    \item $\rho(v_p^{(q)})=-\epsilon_{z_q}(q)\mu_p(q)\chi_p(r+z_p+z_q) \xi_p u_q^* u_p^{(q)*}  v_p^{(q)}v_qu_rv_q^*u_q^{(q)}u_q$
\end{enumerate}
\end{lemma}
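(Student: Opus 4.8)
The plan is to reduce all four identities to a single structural fact about the ``reversed'' products $\beta_j\circ\beta_i$, which then lets each formula be read off by substituting the eigenvalue relations $\alpha_g(u_p)=\chi_p(g)u_p$, $\alpha_g(v_p)=\mu_p(g)v_p$, $\beta_p(u_p)=u_p^{(p)}$, $\beta_p(v_p)=\nu_p v_p$. First I would establish, purely from $\beta_p\beta_q\beta_r=\mathrm{id}$ (equivalently, all cyclic products equal the identity) together with $\beta_h^2=\mathrm{Ad}(v_h)\circ\alpha_{h+z_h}$, the mixed-product identity
\begin{equation*}
\beta_j\circ\beta_i=\mathrm{Ad}(v_i^{(j)}v_j)\circ\alpha_{k+z_i+z_j}\circ\beta_k,
\end{equation*}
valid whenever $(i,j,k)$ is a cyclic rotation of $(p,q,r)$. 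This is obtained by writing $\beta_j\beta_i=(\beta_j\beta_i\beta_k^{-1})\beta_k=\beta_j\beta_i^2\beta_j\cdot\beta_k$ (using $\beta_k^{-1}=\beta_i\beta_j$), then pushing $\mathrm{Ad}(v_i)$ through $\beta_j$ to produce $v_i^{(j)}=\beta_j(v_i)$, collapsing $\beta_j^2=\mathrm{Ad}(v_j)\alpha_{j+z_j}$, and using that $\alpha$ commutes with the $\beta$'s and acts by scalars on the $v$'s; since $i+j=k$ this yields the displayed exponent.

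For parts (1) and (2) I would apply the case $(i,j,k)=(q,r,p)$, namely $\beta_r\beta_q=\mathrm{Ad}(v_q^{(r)}v_r)\circ\alpha_{p+z_q+z_r}\circ\beta_p$, and evaluate at $u_p$ and at $v_p$. Here $\alpha_{p+z_q+z_r}\beta_p(u_p)=\chi_p(p+z_q+z_r)u_p^{(p)}$ and $\alpha_{p+z_q+z_r}\beta_p(v_p)=\nu_p\mu_p(p+z_q+z_r)v_p$, and conjugating by $v_q^{(r)}v_r$ produces the stated formulas directly (with the scalars $\chi_p(p+z_q+z_r)$ and $\nu_p\mu_p(p+z_q+z_r)$ respectively).

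For parts (3) and (4) I would first move $\rho$ across $\beta_q$ using the defining relation $\rho\circ\beta_q=\mathrm{Ad}(u_q^*)\circ\beta_q\circ\rho\circ\alpha_q$, so that $\rho(u_p^{(q)})=\chi_p(q)\,u_q^*\beta_q(\rho(u_p))u_q$ and $\rho(v_p^{(q)})=\mu_p(q)\,u_q^*\beta_q(\rho(v_p))u_q$, reducing everything to $\beta_q$ applied to the already-known $\rho(u_p)$ (Eq.~\eqref{rhou}) and $\rho(v_p)=\xi_p u_p^*\beta_p(u_p)^*v_p$ from Theorem~\ref{relations}. Expanding $\beta_q$ termwise, the only nontrivial ingredient is $\beta_q\beta_p$ acting on $s$, on $t_{g-p}$, and on $u_p$; invoking the case $(i,j,k)=(p,q,r)$ of the mixed-product identity, $\beta_q\beta_p=\mathrm{Ad}(v_p^{(q)}v_q)\circ\alpha_{r+z_p+z_q}\circ\beta_r$, converts these into $\beta_r$-images conjugated by $v_p^{(q)}v_q$ and produces exactly the scalars $\epsilon_{r+z_p+z_q}(g-p)$ (since in $\mathbb{Z}_2\times\mathbb{Z}_2$ one has $\alpha_h(t_{g-p})=\epsilon_h(g-p)t_{g-p}$), giving (3) after factoring $u_p^{(q)*}v_p^{(q)}v_q$ to the left. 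For (4) the same substitution turns $\beta_q\beta_p(u_p)$ into $\chi_p(r+z_p+z_q)\,\mathrm{Ad}(v_p^{(q)}v_q)(u_p^{(r)})$, after which the eliminated unitary $u_p^{(r)}=\beta_r(u_p)$ is rewritten through the cyclic case of Lemma~\ref{reduce}, and the factors collapse to the stated coefficient. The main obstacle is not conceptual but the orientation and bookkeeping: getting the $\alpha$-exponent and the side of each $v$-conjugation correct in the mixed-product identity, and then tracking the product of scalars ($\chi,\mu,\xi,\nu$ and the $\epsilon$ bicharacter) through the expansion so that they collapse precisely to the displayed coefficients. Irreducibility of the relevant sectors provides a useful consistency check, since each target lies in a one-dimensional $\mathrm{Hom}$-space that fixes the form up to a single scalar.
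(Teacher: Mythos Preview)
Your proposal is correct and follows essentially the same route as the paper: the paper also reduces parts (3) and (4) via $\rho\circ\beta_q=\mathrm{Ad}(u_q^*)\circ\alpha_q\circ\beta_q\circ\rho$ and then expands $\beta_q\beta_p$ on the Cuntz generators and on $u_p$, invoking Lemma~\ref{reduce} to eliminate $\beta_r(u_p)$ in part (4), while for parts (1) and (2) it simply refers to ``similar calculations in previous proofs,'' which amounts to exactly the mixed-product identity $\beta_j\beta_i=\mathrm{Ad}(v_i^{(j)}v_j)\circ\alpha_{k+z_i+z_j}\circ\beta_k$ you isolate. Your only addition is naming and proving that identity once up front rather than redoing the computation inline; note that your scalar in (2), $\mu_p(p+z_q+z_r)$, agrees with the reconstruction formulas later in the paper (the $z_p$ in the displayed lemma is a typo for $z_r$).
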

\begin{proof}
We have already seen similar calculations for $\beta $ in previous proofs.

For $\rho $, we have
$$\rho(u_p^{(q)})=\rho(\beta_q(u_p))=u_q^*(\alpha_q \beta_q\rho)(u_p) u_q$$ $$=\chi_p(q)u_q^*\beta_q(u_p^*\beta_p(s)s^*+\sum_{g \in G} \limits a_p(g) u_p^*\beta_p(t_{g-p}) u_p t_g^*) u_q $$
$$=\chi_p(q)[ u_q^*u_p^{(q)*}v_p^{(q)}v_q \beta_r(s) v_q^*v_p^{(q)*} \beta_q(s)^*u_q$$ $$+\sum_{g \in G} \limits  
  \epsilon_{r+z_p+z_q}(g-p)u_q^*u_p^{(q)*}v_p^{(q)}v_q \beta_r(t_{g-p}) v_q^*v_p^{(q)*}u_p^{(q)}\beta_q(t_g)^*u_q
] $$
and 
$$\rho(v_p^{(q)}=\rho(\beta_q(v_p))=u_q^*(\alpha_q \beta_q\rho)(v_p) u_q $$
$$=\mu_p(q) \xi_p u_q^* \beta_q(u_p^*\beta_p(u_p^*)v_p)u_q$$
$$=\mu_p(q)\chi_p(r+z_p+z_q) \xi_p u_q^* u_p^{(q)*}  v_p^{(q)}v_q u_p^{(q)*}v_q^*v_p^{(q)*} v_p^{(q)}u_q$$
$$=\mu_p(q)\chi_p(r+z_p+z_q) \xi_p u_q^* u_p^{(q)*}  v_p^{(q)}v_q \beta_r(u_p)^*v_q^*u_q$$
$$=-\epsilon_{z_q}(q)\mu_p(q)\chi_p(r+z_p+z_q) \xi_p u_q^* u_p^{(q)*}  v_p^{(q)}v_qu_rv_q^*\beta_q(u_q)v_q   v_q^*u_q$$
$$=-\epsilon_{z_q}(q)\mu_p(q)\chi_p(r+z_p+z_q) \xi_p u_q^* u_p^{(q)*}  v_p^{(q)}v_qu_rv_q^*u_q^{(q)}u_q.$$
\end{proof}

\subsection{Reconstruction}

We now describe how to reconstruct $\mathbb{Z}_2 \times \mathbb{Z}_2 $-graded extensions of the $\mathbb{Z}_2 \times \mathbb{Z}_2 $ generalized Haagerup category, following the calculations of the previous section.

We start with the Cuntz algebra $\mathcal{O}_5 $, together with the endomorphism $\rho $ and $G=\mathbb{Z}_2 \times \mathbb{Z}_2 $ action $\alpha $.

Let $z_p,z_q,z_r \in \mathbb{Z}_2\times \mathbb{Z}_2$ satisfying the conditions of Corollary \ref{zcond} be given. Let $a_p=(1,-i,1,i) $, $a_q=(1,i,-i,1) $, and $a_r=(1,1,i,-i) $. Let $\xi_h =i$, $$\chi_h(g)=\epsilon_g(h), \quad \mu_h(g)=\epsilon_g(h)\epsilon_{z_h}(g)  ,$$ for $h \in \{p,q,r \} $. 

Let $\nu_h \in \{\pm i \}$ for $h=0$ and $\nu_h\in \{\pm 1\}$ for $h\in \{p,q,r\}$ be given for each $h \in \{p,q,r \} $.

For $h \in \{ p,q,r\} $, we will denote by $h'$ its successor in the cylic ordering $(p,q,r) $, and by $h''$ the third element.

Let $\mathcal{U} =\mathcal{O}_{5} * \mathcal{O}_{5}*\mathcal{O}_{5} *\mathcal{O}_{5}* C^*(\mathbb{F}_{13})$, which is the universal $C^*$-algebra generated by four copies of $\mathcal{O}_{5} $ and fifteen unitaries $u_h=u_h^{(0)}$, $u_h^{(h)} $, $u_h^{(h')} $, $v_h=v_h^{(0)}$, $v_{h}^{(h')} $, for  $h \in\{p,q,r \} $; subject to the relations 
\begin{equation}\label{eqi1}
v_p v_p^{(q)}v_q v_q^{(r)}v_r v_r^{(p)}=1 
\end{equation}
and \begin{equation}\label{eqi2}u_r^{(p)}v_r^*v_q^{(r)*}u_q^{(r)}v_q^*v_p^{(q)*}u_p^{(q)}v_p^*v_r^{(p)*}= -\epsilon_{z_p}(p) \epsilon_{z_q}(q) \epsilon_{z_r}(r) 
\end{equation}
(recall that these relations come from Lemmas \ref{cond1} and \ref{cond2}.)

We label the four copies of $\mathcal{O}_5 $ by $ \mathbb{Z}_2\times \mathbb{Z}_2 $, and
denote by $\Phi_h $ the inclusion map of $\mathcal{O}_5 $ into $
\mathcal{U}$ corresponding to $h \in \{0,p,q,r \} $. 

Then we define
$\tilde{\alpha}_g$
on $\mathcal{U} $ by
$$\tilde{\alpha}_g(\Phi_h(x))=\Phi_h(\alpha_g(x)),  \ x \in \mathcal{O}_5 $$
$$\tilde{\alpha}_g(u_h^{(k)})=\chi_h(g)u_h^{(k)}, \quad  \tilde{\alpha}_g(v_h^{(k)})=\mu_h(g)v_h^{(k)}  ,$$
and define $ \tilde{\rho}$ by
$$\tilde{\rho}(\Phi_h(x))=
u_h^*\Phi_h(\rho\alpha_{h}(x)) u_h, \quad \mathrm{(where we let } u_0=1 \mathrm{)}$$
$$\tilde{\rho}(u_h)=u_h^*( s^{(h)}s^{(0)}{}^*+\sum_{g \in G} \limits a_h(g) t^{(h)}_{g+h} u_ht^{(0)}_{g} {}^* )  ,$$ 
$$\tilde{\rho}(u_h^{(h)}) =\chi_h(h) u_h^*u_h^{(h)*}v_h( s^{(0)}v_h^*s^{(h)}{}^*+\sum_{g \in G} \limits a_h(g) \epsilon_{h+z_h}(g+h) t^{(0)}_{g+h} v_h^*u_h^{(h)}t^{(h)}_{g} {}^* )u_h ,$$

$$\rho(u_h^{(h')})=\chi_h(h') u_{h'}^*u_h^{(h')*}v_h^{(h')}v_{h'}[ s^{(h'')} v_{h'}^*v_h^{(h')*} s^{(h')*}$$ $$+\sum_{g \in G} \limits  
  \epsilon_{h''+z_h+z_{h'}}(g+h) t_{g+h}^{h''} v_{h'}^*v_h^{(h')*}u_h^{(h')}t_g^{(h')*}]u_{h'}
 $$

$$\tilde{\rho}(v_h)= \xi_h u_h^*u_h^{(h)*}v_h $$

    $$\rho(v_h^{(h')})=-\epsilon_{z_{h'}}(h')\mu_h(h')\chi_h(h''+z_h+z_{h'}) \xi_h u_{h'}^* u_h^{(h')*}  v_h^{(h')}v_{h'}u_{h''}v_{h'}^*u_{h'}^{(h')}u_{h'}$$

\begin{remark}
The formulas for $\tilde{\rho} $ come from the formulas for $\mathbb{Z}_2 $-extensions in the previous chapter, together with the calculations in Lemma \ref{rhoform} for the unitaries $u_h^{(h')} $ and $v_{h}^{(h')} $. Note that $\chi_h(h)=\epsilon_{h}(h)=-1 $ and $\chi_h(h')=\epsilon_{h'}(h)=1 $ can be used to simplify the formulas for $\tilde{\rho}(u_h^{(h)}) $ and $\tilde{\rho}(u_h^{(h')}) $. Similarly, the scalar cofficients in the formulas for $\tilde{\rho}(v_h) $ and $\tilde{\rho}(v_h^{(h')}) $ can be simplified to $i $ and $i\epsilon_{z_h+z_{h'}}(h'') $, respectively.   
\end{remark}

\begin{lemma}
\begin{enumerate}
    \item The formulas above define a $G$-action $ \tilde{\alpha}$ and an endomorphism $\tilde{\rho} $ on $\mathcal{U} $.
    \item We have $\tilde{\alpha}_g \circ \tilde{\rho}=\tilde{\rho} \circ \tilde{\alpha}_g $.
\end{enumerate}
\end{lemma}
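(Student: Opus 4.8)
The plan is to use the universal property of $\mathcal{U}=\mathcal{O}_5*\mathcal{O}_5*\mathcal{O}_5*\mathcal{O}_5*C^*(\mathbb{F}_{13})$: a $*$-endomorphism exists and is unique for any assignment of generators to elements of $\mathcal{U}$ respecting the defining relations, namely the Cuntz relations in each copy $\Phi_h(\mathcal{O}_5)$, the unitarity of the fifteen generators, and \eqref{eqi1} and \eqref{eqi2}. I would treat $\tilde\alpha_g$ and $\tilde\rho$ separately.

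For $\tilde\alpha_g$: on each copy it restricts to the automorphism induced by $\alpha_g$, so the Cuntz relations are automatic, and on the fifteen generators it acts by scalars $\chi_h(g),\mu_h(g)\in\{\pm 1\}$, preserving unitarity. It remains to check the two relations. Applying $\tilde\alpha_g$ to the left side of \eqref{eqi1} multiplies it by $\mu_p(g)^2\mu_q(g)^2\mu_r(g)^2=1$, and to the left side of \eqref{eqi2} by $\chi_p(g)\chi_q(g)\chi_r(g)\mu_p(g)^2\mu_q(g)^2\mu_r(g)^2=\epsilon_g(p+q+r)=\epsilon_g(0)=1$, using that $\epsilon$ is a bicharacter and $p+q+r=0$ in $\mathbb{Z}_2\times\mathbb{Z}_2$; the right-hand sides are scalars, hence fixed. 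Well-definedness follows, and $\tilde\alpha_g\circ\tilde\alpha_{g'}=\tilde\alpha_{g+g'}$, $\tilde\alpha_0=\mathrm{id}$ hold on generators because $\chi_h,\mu_h$ are characters and $\alpha$ is an action.

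For $\tilde\rho$: on each copy it equals $\mathrm{Ad}(u_h^*)\circ\Phi_h\circ\rho\circ\alpha_h$, a composite of $*$-homomorphisms, so Cuntz relations are automatic. The two real tasks are (i) that the image of each unitary is unitary, and (ii) that $\tilde\rho$ preserves \eqref{eqi1} and \eqref{eqi2}. For (i), writing $\tilde\rho(u_h)=u_h^*W$ with $W=s^{(h)}s^{(0)*}+\sum_g a_h(g)t^{(h)}_{g+h}u_h t^{(0)*}_g$, orthogonality and completeness in copies $0$ and $h$ together with $|a_h(g)|=1$ give $W^*W=WW^*=1$; the formulas for $\tilde\rho(u_h^{(h)})$, $\tilde\rho(u_h^{(h')})$, $\tilde\rho(v_h)$, $\tilde\rho(v_h^{(h')})$ from Lemma \ref{rhoform} are handled by the same bookkeeping. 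Task (ii) is precisely the assertion, signalled earlier, that applying $\rho$ to the relations of Lemmas \ref{cond1} and \ref{cond2} yields no new constraint: one substitutes the formulas for $\tilde\rho$ on the relevant unitaries into \eqref{eqi1} and \eqref{eqi2} and checks that they collapse to $1=1$ and to the sign $-\epsilon_{z_p}(p)\epsilon_{z_q}(q)\epsilon_{z_r}(r)$ respectively.

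For part (2) I would verify $\tilde\alpha_g\circ\tilde\rho=\tilde\rho\circ\tilde\alpha_g$ on generators. On $\Phi_h(\mathcal{O}_5)$ it reduces to $\alpha_g\rho=\rho\alpha_g$ on $\mathcal{O}_5$, which holds since $\alpha_g\rho=\rho\alpha_{-g}$ and $-g=g$ in $\mathbb{Z}_2\times\mathbb{Z}_2$. On the unitaries it is a character computation in the spirit of Lemma \ref{frel}: for instance both $\tilde\alpha_g\tilde\rho(u_h)$ and $\tilde\rho\tilde\alpha_g(u_h)$ equal $\chi_h(g)\tilde\rho(u_h)$, where the cancellation $\epsilon_g(g'+h)\epsilon_g(g')=\epsilon_g(h)=\chi_h(g)$ uses that $\epsilon_g$ is a character with $2g'=0$; the other generators use $\chi_h(g)=\epsilon_g(h)$ and $\mu_h(g)=\epsilon_g(h)\epsilon_{z_h}(g)$ identically. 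The main obstacle is task (ii), the preservation of \eqref{eqi2} under $\tilde\rho$: it requires substituting the lengthy formulas of Lemma \ref{rhoform} into a ninefold product and tracking every scalar and every Cuntz-isometry cancellation through the cyclic structure, even though conceptually it only re-expresses the already-derived compatibility of Lemmas \ref{cond1} and \ref{cond2} with $\rho$.
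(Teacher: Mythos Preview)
Your proposal is correct and follows essentially the same approach as the paper: use the universal property, verify that $\tilde\alpha_g$ preserves \eqref{eqi1} and \eqref{eqi2} by the scalar computation $\mu_p(g)^2\mu_q(g)^2\mu_r(g)^2=1$ and $\chi_p(g)\chi_q(g)\chi_r(g)=\epsilon_g(0)=1$, then verify that $\tilde\rho$ preserves both relations by substituting the formulas and collapsing the products (the paper carries this out explicitly, using \eqref{eqi2} inside the check of \eqref{eqi1} and both \eqref{eqi1} and \eqref{arel} inside the check of \eqref{eqi2}), and finally check commutation on generators via the character identities. The only organizational difference is that the paper restricts the part~(2) verification to $u_h^{(h')}$ and $v_h^{(h')}$, the remaining generators having been handled in the earlier $\mathbb{Z}_2$-extension analysis.
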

\begin{proof}
To show that the formulas give well-defined maps, we need to check that $\tilde{\alpha}_g $ and $\tilde{\rho} $ preserves the relations (\ref{eqi1}) and (\ref{eqi2}).

Applying $\tilde{\alpha}_g $ to $v_p v_p^{(q)}v_q v_q^{(r)}v_r v_r^{(p)}$ multiplies it by the scalar 
$$\mu_{p}(g)^2\mu_{q}(g)^2\mu_{r}(g)^2=1,$$
and applying $\tilde{\alpha}_g $ to $u_r^{(p)}v_r^*v_q^{(r)*}u_q^{(r)}v_q^*v_p^{(q)*}u_p^{(q)}v_p^*v_r^{(p)*}$ multiplies it by the scalar
$$\chi_{p}(g)\mu_p(g)^2\chi_{q}(g)\mu_{q}(g)^2\chi_{r}(g)\mu_{r}(g)^2u_r^{(p)}$$ $$= \chi_{p}(g)\chi_{q}(g)\chi_{r}(g)=\epsilon_g(p+q+r)=\epsilon_g(0)=1.$$

For $\tilde{\rho} $,
we have $$\tilde{\rho}(v_p v_p^{(q)}v_q v_q^{(r)}v_r v_r^{(p)})$$ $$=(\xi_p u_p^*u_p^{(p)*}v_p)(-\epsilon_{z_{q}}(q)\mu_p(q)\chi_p(r+z_p+z_{q}) \xi_p u_{q}^* u_p^{(q)*}  v_p^{(q)}v_{q}u_{r}v_{q}^*u_{q}^{(q)}u_{q})
$$
$$\cdot(\xi_q u_q^*u_q^{(q)*}v_q)(-\epsilon_{z_{r}}(r)\mu_q(r)\chi_q(p+z_q+z_{r}) \xi_q u_{r}^* u_q^{(r)*}  v_q^{(r)}v_{r}u_{p}v_{r}^*u_{r}^{(r)}u_{r})
$$
$$\cdot(\xi_r u_r^*u_r^{(r)*}v_r)(-\epsilon_{z_{p}}(p)\mu_r(p)\chi_r(q+z_r+z_{p}) \xi_r u_{p}^* u_r^{(p)*}  v_r^{(p)}v_{p}u_{q}v_{p}^*u_{p}^{(p)}u_{p})
$$
The scalar coefficient is
$$-(\xi_p\xi_q\xi_r)^2\epsilon_{z_{p}}(p)\epsilon_{z_{q}}(q) \epsilon_{z_{r}}(r)\epsilon_q(p)\epsilon_{z_p}(q)\epsilon_r(q)\epsilon_{z_q}(r) \epsilon_p(r)\epsilon_{z_r}(p)$$ $$\cdot \epsilon_{r+z_p+z_q}(p) \epsilon_{p+z_r+z_q}(q) \epsilon_{q+z_r+z_p}(r)   $$
$$=-\epsilon_{z_p}(p)\epsilon_{z_q}(q)\epsilon_{z_r}(r) $$
and the product of unitaries, after cancelling inverses, is
$$u_p^*u_p^{(p)*}v_p u_{q}^* u_p^{(q)*} ( v_p^{(q)}v_{q}u_q^{(r)*}  v_q^{(r)}v_{r} u_r^{(p)*}  v_r^{(p)}v_{p})u_{q}v_{p}^*u_{p}^{(p)}u_{p}$$
$$=-\epsilon_{z_{p}}(p)\epsilon_{z_{q}}(q)\epsilon_{z_{r}}(r)(u_p^*u_p^{(p)*}v_p u_{q}^*)( u_{q}v_{p}^*u_{p}^{(p)}u_{p}) $$
$$=-\epsilon_{z_{p}}(p)\epsilon_{z_{q}}(q)\epsilon_{z_{r}}(r) ,$$
where we have used relation (\ref{eqi2}) (after taken the adjoint and a cyclic reordering).
Thus
$$\tilde{\rho}(v_p v_p^{(q)}v_q v_q^{(r)}v_r v_r^{(p)})=1 ,$$
and $\tilde{\rho} $ preserves relation (\ref{eqi1}).

Next, we have
$$\tilde{\rho}(u_h^{(h')}v_h^*v_{h''}^{h)*}) $$
$$=\chi_h(h')[ u_{h'}^*u_h^{(h')*}v_h^{(h')}v_{h'} s^{(h'')} v_{h'}^*v_h^{(h')*} s^{(h')*}u_{h'}$$ $$+\sum_{g \in G} \limits  
  \epsilon_{h''+z_h+z_{h'}}(g+h)u_{h'}^*u_h^{(h')*}v_h^{(h')}v_{h'} t_{g+h}^{h''} v_{h'}^*v_h^{(h')*}u_h^{(h')}t_g^{(h')*}u_{h'}
] $$
$$(\overline{\xi_{h}} v_{h}^*u_h^{(h)}u_h )
(-\epsilon_{z_{h}}(h)\mu_{h''}(h)\chi_{h''}(h'+z_h+z_{h''}) \overline{\xi_{h''}} 
u_{h}^* u_{h}^{(h)*}  v_{h}u_{h'}^*v_{h}^*v_{h''}^{(h)*}u_{h''}^{(h)}u_{h})$$
$$=-\epsilon_{z_h+z_{h''}}(h')u_{h'}^*u_h^{(h')*}v_h^{(h')}v_{h'}[  s^{(h'')} v_{h'}^*v_h^{(h')*} s^{(h')*}$$ $$+\sum_{g \in G} \limits  
  \epsilon_{h''+z_h+z_{h'}}(g+h)t_{g+h}^{h''} v_{h'}^*v_h^{(h')*}u_h^{(h')}t_g^{(h')*}
] v_{h}^*v_{h''}^{(h)*}u_{h''}^{(h)}u_{h}.$$
So then 
$$\tilde{\rho}(u_r^{(p)}v_r^*v_q^{(r)*}u_q^{(r)}v_q^*v_p^{(q)*}u_p^{(q)}v_p^*v_r^{(p)*})$$
$$=(-\epsilon_{z_r+z_{q}}(p)u_{p}^*u_r^{(p)*}v_r^{(p)}v_{p}[  s^{(q)} v_{p}^*v_r^{(p)*} s^{(p)*}$$ $$+\sum_{g \in G} \limits  
  \epsilon_{q+z_r+z_{p}}(g+r)a_r(g)t_{g+r}^{q} v_{p}^*v_r^{(p)*}u_r^{(p)}t_g^{(p)*}
] v_{r}^*v_{q}^{(r)*}u_{q}^{(r)}u_{r})$$
$$\cdot (-\epsilon_{z_q+z_{p}}(r)u_{r}^*u_q^{(r)*}v_q^{(r)}v_{r}[  s^{(p)} v_{r}^*v_q^{(r)*} s^{(r)*}$$ $$+\sum_{g \in G} \limits  
  \epsilon_{p+z_q+z_{r}}(g+q)a_q(g)t_{g+q}^{p} v_{r}^*v_q^{(r)*}u_q^{(r)}t_g^{(r)*}
] v_{q}^*v_{p}^{(q)*}u_{p}^{(q)}u_{q})$$
$$\cdot (-\epsilon_{z_p+z_{r}}(q)u_{q}^*u_p^{(q)*}v_p^{(q)}v_{q}[  s^{(r)} v_{q}^*v_p^{(q)*} s^{(q)*}$$ $$+\sum_{g \in G} \limits  
  \epsilon_{r+z_p+z_{q}}(g+p)a_p(g)t_{g+p}^{r} v_{q}^*v_p^{(q)*}u_p^{(q)}t_g^{(q)*}
] v_{p}^*v_{q}^{(p)*}u_{r}^{(p)}u_{p})$$
$$=-\epsilon_{z_p}(p)\epsilon_{z_q}(q)\epsilon_{z_r}(r) u_{p}^*u_r^{(p)*}v_r^{(p)}v_{p}
$$ $$[  s^{(q)} v_{p}^*v_r^{(p)*} v_{r}^*v_q^{(r)*} v_{q}^*v_p^{(q)*} s^{(q)*}$$ $$+\sum_{g \in G} \limits  
  \epsilon_{p+z_q+z_{r}}(g) \epsilon_{r+z_p+z_{q}}(g+q)
  \epsilon_{q+z_r+z_{p}}(g+r)
  a_r(g)a_q(g+q)a_p(g+r)$$ $$
  t_{g+r}^{(q)} v_{p}^*v_r^{(p)*}u_r^{(p)}v_{r}^*v_q^{(r)*}u_q^{(r)} v_{q}^*v_p^{(q)*}u_p^{(q)} t_{g+r}^{(q)*}
]v_{p}^*v_{q}^{(p)*}u_{r}^{(p)}u_{p}
$$
$$=-\epsilon_{z_p}(p)\epsilon_{z_q}(q)\epsilon_{z_r}(r) u_{p}^*u_r^{(p)*}v_r^{(p)}v_{p}
[  s^{(q)}s^{(q)*}$$ $$+\sum_{g \in G} \limits  
-\epsilon_{z_p}(p)\epsilon_{z_q}(q)\epsilon_{z_r}(r)
  \epsilon_{p+z_q+z_{r}}(g) \epsilon_{r+z_p+z_{q}}(g+q)
  \epsilon_{q+z_r+z_{p}}(g+r)
  $$ $$\cdot a_r(g)a_q(g+q)a_p(g+r)
  t_{g+r}^{(q)} t_{g+r}^{(q)*}
]v_{p}^*v_{q}^{(p)*}u_{r}^{(p)}u_{p},
$$
where we have used relations (\ref{eqi1})  and (\ref{eqi2}) in the last step.

Finally, we have 
$$ -\epsilon_{z_p}(p)\epsilon_{z_q}(q)\epsilon_{z_r}(r)
  \epsilon_{p+z_q+z_{r}}(g) \epsilon_{r+z_p+z_{q}}(g+q)
  \epsilon_{q+z_r+z_{p}}(g+r)
 $$ $$\cdot a_r(g)a_q(g+q)a_p(g+r)$$
$$= a_r(g)a_q(g+q)a_p(g+r)=1,$$
(using relation (\ref{arel})), so we get 
$$\tilde{\rho}(u_r^{(p)}v_r^*v_q^{(r)*}u_q^{(r)}v_q^*v_p^{(q)*}u_p^{(q)}v_p^*v_r^{(p)*})$$
$$=-\epsilon_{z_p}(p)\epsilon_{z_q}(q)\epsilon_{z_r}(r)[s^{(q)}s^{(q)*}+\sum_{g \in G} \limits  
  t_{g+r}^{(q)} t_{g+r}^{(q)*} ]$$
$$=-\epsilon_{z_p}(p)\epsilon_{z_q}(q)\epsilon_{z_r}(r),$$
and $\tilde{\rho} $ preserves relation (\ref{eqi2}).

It is then clear that $\tilde{\alpha} $ is a $G$-action and $\tilde{\rho} $ is an endomorphism of $\mathcal{U} $.

To check that  $\tilde{\rho} \circ  \tilde{\alpha}_g=\tilde{\alpha}_g \circ \tilde{\rho} $, it suffices to check the relations
$$\tilde{\alpha}_g(\tilde{\rho}(u_h^{(h')}))=\tilde{\rho}(\tilde{\alpha}_g)(u_h^{(h')}))=\chi_h(g)\tilde{\rho}(u_h^{(h')})=\epsilon_g(h)\tilde{\rho}(u_h^{(h')}) $$
and
$$\tilde{\alpha}_g(\tilde{\rho})(v_h^{(h')}))=\tilde{\rho}(\tilde{\alpha}_g)(v_h^{(h')}))=\mu_h(g)\tilde{\rho}(v_h^{(h')})=\epsilon_g(h)\epsilon_{z_h}(g)\tilde{\rho}(v_h^{(h')}) $$
which can be easily verified from the formulas for 
$\tilde{\rho}(u_h^{(h')}) $ and $\tilde{\rho}(v_h^{(h')}) $ .

\end{proof}

Next we define automorphisms $\tilde{\beta}_h $ for $h \in \{p,q,r \} $ by the formulas
$$\tilde{\beta}_h(\Phi_k(x))=\begin{cases}
\Phi_h(x) & k=0 \\
 v_h\Phi_{0}(\alpha_{h+z_h}(x))v_h^*  & k=h\\
v_{h''}^*\Phi_{h''}(\alpha_{h''+z_{h''}}(x))v_{h''} & k=h'\\
 v_{h''}^{(h)}v_h\Phi_{h'}(\alpha_{h'+z_h+z_{h''}}(x))v_h^*v_{h''}^{(h)*} & k=h''\\
\end{cases}
$$
$$\tilde{\beta}_h(u_h)=u_h^{(h)}, \quad \tilde{\beta}_h(u_h^{(h)})=\chi_h(h+z_h)v_hu_hv_h^*$$
$$\tilde{\beta}_h(u_h^{(h')})=-\epsilon_{z_{h'}}(h')\chi_h(h''+z_{h''})v_{h''}^*v_{h'}^* u_{h'}^{(h')*} v_{h'}u_{h''}^*v_{h''}$$
$$\tilde{\beta}_h(u_{h'})=-\epsilon_{z_{h''}}(h'')v_{h''}^*u_{h''}^{(h'')*}v_{h''}u_{h}^*  , \quad \tilde{\beta}_h(u_{h'}^{(h')})=\chi_h'(h''+z_{h''})v_{h''}^*u_{h'}^{h''} v_{h''}$$
$$\tilde{\beta}_h(u_{h'}^{(h'')})=\chi_{h'}({h'}+z_{h}+z_{h''})v_{h''}^{(h)}v_h u_{h'}^{(h')} v_h^*v_{h''}^{(h)*})$$
$$\tilde{\beta}_h(u_{h''})=u_{h''}^{(h)}, \quad \tilde{\beta}_h(u_{h''}^{(h'')})=-\epsilon_{z_h}(h)\chi_{h''}(h'+z_h+z_{h''})v_{h''}^{(h)} u_h^{(h)*}v_h u_{h'}^* v_h^*v_{h''}^{(h)*}$$
$$\tilde{\beta}_h(u_{h''}^{(h)})=\chi_{h''}(h+z_h)v_hu_{h''}v_h^*$$
$$\tilde{\beta_h}(v_h)=\nu_h v_h, \quad \tilde{\beta_h}(v_h^{(h')})=\overline{\nu_p}\mu_p(h+z_{h'}+z_{h''})v_{h''}^*v_{h'}^*v_h^{(h')}  v_{h'}    v_{h''} $$
$$\tilde{\beta}_h(v_{h'})= \overline{\nu_{h'}}\mu_{h'}(h''+z_{h''})v_{h''}^*v_{h'}^{(h'')} v_{h''}$$ $$ \tilde{\beta}_h(v_{h'}^{(h'')})=\nu_{h'} \mu_{h'}(h'+z_{h}+z_{h''})v_{h''}^{(h)}v_h v_{h'} v_h^*v_{h''}^{(h)*}$$
$$\tilde{\beta}_h(v_{h''})=v_{h''}^{(h)}, \quad \tilde{\beta}_h(v_{h''}^{(h)})=\mu_{h''}(h+z_h)v_hv_{h''}v_h^*.$$
Once again, the formulas here come from the calculations in the previous subsection, and some of the scalar coefficients can be simplified by calculating $\chi $ and $\mu $ in terms of $\epsilon $.

\begin{lemma} \label{longproof}
The formulas above define automorphisms $ \tilde{\beta}_h$ on $\mathcal{U} $ such that:
\begin{enumerate}
    \item $\tilde{\beta}_h \circ \tilde{\alpha}_g=  \tilde{\alpha}_g\circ\tilde{\beta}_h$
    \item $\tilde{\beta}_h^2=\mathrm{Ad}(v_h) \circ \alpha_{h+{z+h}}$
    \item $\tilde{\beta}_h \circ \tilde{\rho}=\mathrm{Ad}(u_h) \circ \tilde{\alpha}_h\circ \tilde{\rho} \circ \tilde{\beta}_h $.
\end{enumerate}
\end{lemma}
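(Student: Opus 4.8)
The plan is to treat Lemma \ref{longproof} as a direct, if lengthy, verification on the generators of $\mathcal{U}$, organized so as to exploit the cyclic symmetry as much as possible. First I would establish that each formula actually defines a well-defined $*$-endomorphism of $\mathcal{U}$. Since $\mathcal{U}$ is a free product of four copies of $\mathcal{O}_5$ with $C^*(\mathbb{F}_{13})$, subject only to the two relations (\ref{eqi1}) and (\ref{eqi2}) among the fifteen unitaries, it suffices to check (a) that $\tilde{\beta}_h$ restricted to each $\Phi_k(\mathcal{O}_5)$ is a $*$-homomorphism --- which is automatic because each such restriction has the form $\mathrm{Ad}(\text{unitary})\circ\Phi_j\circ\alpha_{(\cdots)}$ and hence preserves the Cuntz relations --- and (b) that the images of the fifteen unitaries again satisfy (\ref{eqi1}) and (\ref{eqi2}). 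The verification of (b) for (\ref{eqi1}) is exactly the computation carried out in Lemma \ref{newcn}, now read as a formal identity in $\mathcal{U}$, and it is here that the hypotheses of Corollary \ref{zcond} on $(z_p,z_q,z_r)$ are used; the verification for (\ref{eqi2}) is an analogous but independent product computation. Once $\tilde{\beta}_h$ is a well-defined endomorphism, invertibility follows for free from part (2): since $\tilde{\beta}_h^2=\mathrm{Ad}(v_h)\circ\alpha_{h+z_h}$ is an automorphism, $\tilde{\beta}_h$ is injective and admits both a left and a right inverse, hence is an automorphism.

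The $\mathbb{Z}_3$-symmetry cyclically permuting $(p,q,r)$, coming from the action on $\mathcal{C}$ used earlier in this section, allows me to reduce all three claims to the single case $h=p$: every formula for $\tilde{\beta}_q$ and $\tilde{\beta}_r$ is obtained from that for $\tilde{\beta}_p$ by relabelling, so any identity proved for $\tilde{\beta}_p$ transports. For $h=p$ the three relations are then checked generator by generator. Claim (1) reduces to character arithmetic: $\tilde{\alpha}_g$ acts by the scalar $\chi_i(g)$ (resp. $\mu_i(g)$) on $u_i^{(k)}$ (resp. $v_i^{(k)}$), and one checks that the product of unitaries defining $\tilde{\beta}_p(u_i^{(k)})$ transforms under $\tilde{\alpha}_g$ by precisely $\chi_i(g)$ --- using $\chi_h(g)=\epsilon_g(h)$, $\mu_h(g)=\epsilon_g(h)\epsilon_{z_h}(g)$ and the bicharacter property of $\epsilon$ --- and symmetrically for the $v$'s. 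Claim (2) is a direct computation of $\tilde{\beta}_p^2$ on each generator, where the designed scalar prefactors (the various $\nu_h$ and the $\chi_h(h+z_h)$) are exactly what make the two applications collapse to $\mathrm{Ad}(v_p)\circ\alpha_{p+z_p}$.

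The substantive content is Claim (3). On the Cuntz generators $\Phi_k(\mathcal{O}_5)$ it follows from the single-grading calculation already recorded in the $\mathbb{Z}_2$-extension section. The real work is on the fifteen unitaries, where both sides must be expanded using the explicit formulas for $\tilde{\rho}$ --- in particular those for $\tilde{\rho}(u_h^{(h')})$ and $\tilde{\rho}(v_h^{(h')})$ established in Lemma \ref{rhoform} --- together with the images under $\tilde{\beta}_p$ listed above; the two sides are then matched after repeatedly invoking the relation (\ref{arel}) among $a_p,a_q,a_r$, the relations (\ref{eqi1}) and (\ref{eqi2}), and the constraints of Corollary \ref{zcond}. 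I expect the main obstacle to be precisely this final matching: keeping track of the large number of sign factors $\epsilon_{z_h}(\cdot)$ and of the conjugating strings of $v$'s, and confirming that they cancel globally. Because these bookkeeping computations are long and offer no conceptual simplification, the natural presentation is to defer the full case-by-case verification to an appendix, recording in the main text only the key reductions, namely the uses of (\ref{arel}) and of Corollary \ref{zcond}.
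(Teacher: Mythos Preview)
Your proposal is correct and follows essentially the same approach as the paper's proof: verify well-definedness by checking preservation of (\ref{eqi1}) and (\ref{eqi2}) (the former via the computation of Lemma \ref{newcn}, the latter by an analogous direct expansion using the condition of Corollary \ref{zcond}), then check the three relations generator by generator, with the heavy lifting in part (3) on the unitaries $u_h^{(k)}$, $v_h^{(k)}$ reduced via (\ref{arel}), (\ref{eqi1}), and (\ref{eqi2}). The paper carries out the calculation uniformly in the cyclic notation $h,h',h''$ rather than invoking the $\mathbb{Z}_3$-symmetry as a separate reduction, but this is only a cosmetic difference; your plan, including the deferral of the full case-by-case check to an appendix, matches the paper's exactly.
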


The proof of Lemma \ref{longproof} is straightforward but tedious, so we defer it to an appendix.

\begin{lemma}
We have $\tilde{\rho}^2(x)=sxs^*+\sum_{g \in G} \limits t_g (\alpha_g\rho)(x)t_g^* $ for all $x \in \mathcal{U} $.
\end{lemma}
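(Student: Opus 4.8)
The plan is to reduce the full Cuntz-algebra relation to two multiplicative identities and then verify these generator by generator, isolating the genuinely new content in the off-diagonal unitaries. First I would observe that both sides define unital $*$-endomorphisms of $\mathcal{U}$: the map $E(x):=s xs^*+\sum_{g\in G}t_g(\tilde\alpha_g\tilde\rho)(x)t_g^*$ (with $s=s^{(0)}$, $t_g=t^{(0)}_g$) is an endomorphism because these are isometries with orthogonal ranges summing to $1$ and each $\tilde\alpha_g\tilde\rho$ is an endomorphism, while $\tilde\rho^2$ is an endomorphism by the previous lemma. Hence it suffices to check $\tilde\rho^2=E$ on the generators of $\mathcal{U}$. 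Using the Cuntz relations $s^*s=t^*_gt_g=1$, $s^*t_g=0$, $t^*_gt_h=\delta_{g,h}$, the identity $\tilde\rho^2(x)=E(x)$ is equivalent to the pair
$$\tilde\rho^2(x)s=sx,\qquad \tilde\rho^2(x)t_g=t_g(\tilde\alpha_g\tilde\rho)(x)\quad(\forall g\in G),$$
exactly as in the proof of the reconstruction lemma for $\mathbb{Z}_2$-extensions.

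Next I would dispatch the easy generators. For $x\in\Phi_k(\mathcal{O}_5)$ the two identities follow directly: for $k=0$ this is the original Cuntz relation of $\mathcal{C}$, and for $k\neq0$ one conjugates by $u_k$ using $\tilde\rho(\Phi_k(x))=u_k^*\Phi_k(\rho\alpha_k(x))u_k$ together with $\tilde\alpha_g\circ\tilde\rho=\tilde\rho\circ\tilde\alpha_g$, again reducing to the relation in $\mathcal{C}$. For the diagonal unitaries $u_h$, $u_h^{(h)}$, $v_h$ the key observation is that the subalgebra generated by $\Phi_0(\mathcal{O}_5)$, $\Phi_h(\mathcal{O}_5)$, $u_h$, $u_h^{(h)}$, $v_h$, with the restricted $\tilde\alpha,\tilde\rho,\tilde\beta_h$, is precisely a copy of the $\mathbb{Z}_2$-reconstruction of Section 3 (Theorem \ref{rcrthm}) for the automorphism $\beta_h$, under the identifications $\Phi_h\leftrightarrow\Phi_1$, $u_h\leftrightarrow u_0$, $u_h^{(h)}\leftrightarrow u_1$, $v_h\leftrightarrow v$ (recalling $-h=h$ in $\mathbb{Z}_2\times\mathbb{Z}_2$). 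Thus the two identities hold on $u_h,u_h^{(h)},v_h$ by that already-established case, and by the cyclic symmetry of all the defining formulas in $(p,q,r)$ this covers every $h$.

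The only genuinely new verification is for the off-diagonal unitaries $u_h^{(h')}$ and $v_h^{(h')}$, and by cyclic symmetry it is enough to treat $u_p^{(q)}$ and $v_p^{(q)}$. Here I would start from the explicit formulas for $\tilde\rho(u_p^{(q)})$ and $\tilde\rho(v_p^{(q)})$ recorded in Lemma \ref{rhoform}, apply $\tilde\rho$ a second time using the formulas for $\tilde\rho$ on all the constituent generators, and then check the two multiplicative identities. The scalar bookkeeping should collapse using the structure-constant relation (\ref{n10}) together with (\ref{n2}), (\ref{n6}), (\ref{n8}), (\ref{n9}) as in Lemmas \ref{lon1} and \ref{lon2}, the cocycle relation (\ref{arel}) among $a_p,a_q,a_r$, the $z$-conditions of Corollary \ref{zcond} (via Lemma \ref{newcn}), and the reduction relations (\ref{eqi1}) and (\ref{eqi2}); these are precisely the relations engineered to make $\tilde\rho$ well defined, so the same cancellations that verified preservation of (\ref{eqi1}) and (\ref{eqi2}) should reappear here.

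I expect the main obstacle to be the $v_p^{(q)}$ computation: applying $\tilde\rho^2$ produces a long word spread across three of the four $\mathcal{O}_5$ copies together with a product of several $v$-type and $u$-type unitaries, and the difficulty is to reindex the summations and collapse the accumulated phases (products of $\epsilon$, $\chi$, $\mu$, $\xi$, $\nu$) to the trivial scalar, with the word of unitaries reducing to the required form only after repeated use of (\ref{eqi1}) and (\ref{eqi2}). This is the tedious but mechanical core of the argument, and it is the natural place to defer the explicit verification to an appendix if desired.
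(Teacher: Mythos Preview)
Your reduction to generators and your treatment of $\Phi_k(\mathcal{O}_5)$ and of the diagonal unitaries $u_h,u_h^{(h)},v_h$ are correct and match the paper. Where you diverge is in the off-diagonal generators $u_h^{(h')}$ and $v_h^{(h')}$: you propose a direct second application of $\tilde\rho$ to the explicit formulas of Lemma \ref{rhoform} and a brute-force collapse of the resulting phases and words. The paper instead exploits Lemma \ref{longproof}(3), already established at this point, to bootstrap from the known case. Since $u_h^{(h')}=\tilde\beta_{h'}(u_h)$ and the intertwining relation $\tilde\beta_{h'}\tilde\rho=\mathrm{Ad}(u_{h'})\,\tilde\alpha_{h'}\,\tilde\rho\,\tilde\beta_{h'}$ can be inverted and iterated (using that $\tilde\alpha_{h'}$ commutes with $\tilde\rho$ and that $\tilde\alpha_{h'}(u_{h'})$ is a scalar multiple of $u_{h'}$), one gets $\tilde\rho^2\tilde\beta_{h'}=\mathrm{Ad}(\tilde\rho(u_{h'})^*u_{h'}^*)\,\tilde\beta_{h'}\,\tilde\rho^2$. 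Applying this to $u_h$ (resp. $v_h$), where $\tilde\rho^2$ is already known to have the Cuntz form, and then conjugating back by $u_{h'}\tilde\rho(u_{h'})$ using the explicit formula for $\tilde\rho(u_{h'})$, the paper obtains the desired identity in a few lines with no appeal to (\ref{eqi1}), (\ref{eqi2}), (\ref{arel}) or the $z$-conditions. Your direct computation would work in principle, but it duplicates by hand exactly the cancellations that Lemma \ref{longproof} has already packaged; the paper's route is both shorter and structurally cleaner.
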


\begin{proof}
It suffices to check the relation for $x=u_{h}^{(h')} $ and $x=v_h^{(h')} $.
We have 
$$\tilde{\rho}^2(u_{h}^{(h')})=\tilde{\rho}^2\tilde{\beta}_{h'}(u_h)=\tilde{\rho}\mathrm{Ad}(u_{h'}^*)\tilde{\alpha}_{h'}\tilde{\beta}_{h'}\tilde{\rho}(u_h)=\mathrm{Ad}(\tilde{\rho}(u_{h'})^*u_{h'}^*)\tilde{\beta}_{h'}\tilde{\rho}^2(u) $$
$$=\mathrm{Ad}(\tilde{\rho}(u_{h'})^*u_{h'}^*)\tilde{\beta}_{h'} (su_hs^*+\sum_{g \in G} \limits t_g (\alpha_g\rho)(u_h)t_g^*)$$
$$=\mathrm{Ad}(\tilde{\rho}(u_{h'})^*u_{h'}^*) (s^{(h')}u_h^{(h')}s^{(h')*}+\sum_{g \in G} \limits \chi_h(h')t_g^{(h')} u_{h'}(\alpha_g\rho)(u_h^{(h')})u_{h'}^*t_g^{(h')*})$$
$$= (s^{(h')}s^{(0)}{}^*+\sum_{g \in G} \limits a_{h'}(g) t^{(h')}_{g+h'} u_{h'}t^{(0)}_{g} {}^* ) ^* $$
$$\cdot (s^{(h')}u_h^{(h')}s^{(h')*}+\sum_{g \in G} \limits \chi_h(h') t_g^{(h')} u_{h'}(\alpha_g\rho)(u_h^{(h')})u_{h'}^*t_g^{(h')*})$$
$$\cdot (s^{(h')}s^{(0)}{}^*+\sum_{g \in G} \limits a_{h'}(g) t^{(h')}_{g+h'} u_{h'}t^{(0)}_{g} {}^* ) $$
$$=s^{(0)}u_h^{(h')}s^{(0)*}+\sum_{g \in G} \limits \chi_h(h') t_g^{(0)} \alpha_{g+h'}\rho(u_{h}^{(h')})t_g^{(0)*}  $$
$$=s^{(0)}u_h^{(h')}s^{(0)*}+\sum_{g \in G} \limits  t_g^{(0)} \alpha_{g}\rho(u_{h}^{(h')})t_g^{(0)*}  $$
and a similar calculation applies to $v_{h}^{(h')} $.

\end{proof}

\begin{theorem}
For any choice of $ z_p$, $z_q $, $z_r $ satisfying one of the conditions of Corollary \ref{zcond}, and any choice of 
$\nu_h \in \{\pm i \}$ for $z_h=0$ and $\nu_h\in \{\pm 1\}$ for $z_h\in \{p,q,r\}$
, 
there exists a corresponding extension of the $ \mathbb{Z}_2 \times \mathbb{Z}_2$ generalized Haagerup category $\mathcal{C} $ by the $ \mathbb{Z}_2 \times \mathbb{Z}_2$ subgroup of the outer automorphism group.
\end{theorem}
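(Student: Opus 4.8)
The plan is to collect the reconstruction lemmas of this subsection and then promote the construction from the universal $C^*$-algebra $\mathcal{U}$ to a factor. By those lemmas, the explicitly defined maps $\tilde{\alpha}_g$, $\tilde{\rho}$, and $\tilde{\beta}_h$ are well defined on $\mathcal{U}$ (they preserve the defining relations (\ref{eqi1}) and (\ref{eqi2})) and satisfy every algebraic identity characterizing a $\mathbb{Z}_2\times\mathbb{Z}_2$-extension with the chosen data: $\tilde{\alpha}$ is a $G$-action commuting with $\tilde{\rho}$; the fusion identity $\tilde{\rho}^2(x)=s^{(0)}x s^{(0)*}+\sum_{g\in G}t_g^{(0)}(\tilde{\alpha}_g\tilde{\rho})(x)t_g^{(0)*}$ holds on all of $\mathcal{U}$; and by Lemma \ref{longproof} each $\tilde{\beta}_h$ is an automorphism with $\tilde{\beta}_h\circ\tilde{\alpha}_g=\tilde{\alpha}_g\circ\tilde{\beta}_h$, $\tilde{\beta}_h^2=\mathrm{Ad}(v_h)\circ\tilde{\alpha}_{h+z_h}$, and $\tilde{\beta}_h\circ\tilde{\rho}=\mathrm{Ad}(u_h)\circ\tilde{\alpha}_h\circ\tilde{\rho}\circ\tilde{\beta}_h$.

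First I would pass to the von Neumann algebra completion. Exactly as in the proof of Theorem \ref{rcrthm} for the $\mathbb{Z}_2$ case, one applies the argument of the Appendix of \cite{MR3827808} to produce a hyperfinite type $\mathrm{III}_1$ factor $M$ that is a completion of $\mathcal{U}$ and to which $\tilde{\alpha}_g$, $\tilde{\rho}$, and the $\tilde{\beta}_h$ all extend as normal $*$-endomorphisms. Since $\tilde{\rho}$ restricted to $\Phi_0(\mathcal{O}_5)$ reproduces the original Cuntz-algebra model, the full tensor subcategory of $\mathrm{End}_0(M)$ generated by $\tilde{\rho}$ and the $\tilde{\alpha}_g$ is a copy of $\mathcal{C}$ with its given structure constants $(A,\epsilon)$, and adjoining $\tilde{\beta}_p,\tilde{\beta}_q,\tilde{\beta}_r$ enlarges it to a unitary fusion category $\mathcal{D}$.

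Next I would identify the grading. Conjugation by $\tilde{\beta}_h$ sends $[\alpha_g\rho]$ to $[\alpha_{g+h}\rho]$, so on the index set $\{[\alpha_g\rho]:g\in G\}$ the class $[\tilde{\beta}_h]$ acts as translation by $h$; in particular $[\tilde{\beta}_p],[\tilde{\beta}_q],[\tilde{\beta}_r]$ lie outside $\mathcal{C}$ and are pairwise inequivalent. This translation action furnishes a grading of $\mathcal{D}$ by $\mathbb{Z}_2\times\mathbb{Z}_2$ whose trivial component is exactly $\mathcal{C}$ and whose nontrivial components are $\mathcal{C}\tilde{\beta}_p$, $\mathcal{C}\tilde{\beta}_q$, $\mathcal{C}\tilde{\beta}_r$: since $[\tilde{\beta}_p\tilde{\beta}_q]$ translates by $p+q=r$ it lies in the same component as $[\tilde{\beta}_r]$, while $[\tilde{\beta}_h^2]=[\alpha_{h+z_h}]$ lies in the trivial component. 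Hence $\mathcal{D}$ is a faithfully $\mathbb{Z}_2\times\mathbb{Z}_2$-graded extension of $\mathcal{C}$. By construction the intertwiners $u_h,v_h$ and the scalars and functions $\chi_h,\mu_h,\xi_h,\nu_h,a_h$ realize precisely the prescribed extension data, so $\mathcal{D}$ is the extension associated to $(z_p,z_q,z_r,\nu_h)$.

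The genuinely hard work is the verification of the identities in Lemma \ref{longproof}, which is why it is deferred to the appendix; granting those, the only nonformal step here is the factor-completion argument, handled verbatim as in \cite{MR3827808}. The point requiring the most care is that the free product $\mathcal{U}$, subject to the imposed relations (\ref{eqi1})--(\ref{eqi2}), must admit a factor completion on which the maps remain normal; the consistency of these relations under $\tilde{\alpha}_g$, $\tilde{\rho}$, and $\tilde{\beta}_h$ is exactly what forces the conditions of Corollary \ref{zcond} (via Lemma \ref{newcn}), so that for the admissible triples $(z_p,z_q,z_r)$ no obstruction arises and the completion exists.
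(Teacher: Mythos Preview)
Your proposal is correct and matches the paper's approach: the theorem is not given a separate proof in the paper but is stated as the culmination of the reconstruction lemmas of this subsection (well-definedness of $\tilde{\alpha}_g$, $\tilde{\rho}$, and $\tilde{\beta}_h$ on $\mathcal{U}$; Lemma \ref{longproof}; the $\tilde{\rho}^2$ computation), together with the factor-completion step borrowed from the Appendix of \cite{MR3827808} exactly as in Theorem \ref{rcrthm}. Your summary of how the $\mathbb{Z}_2\times\mathbb{Z}_2$-grading arises from the translation action of $[\tilde{\beta}_h]$ on the $[\alpha_g\rho]$ is a helpful gloss the paper leaves implicit; the only minor imprecision is in your final sentence, where you suggest the conditions of Corollary \ref{zcond} are what make the factor completion exist---in fact those conditions are needed earlier, to make $\tilde{\beta}_h$ well defined on $\mathcal{U}$ (via Lemma \ref{newcn} and the preservation of relation (\ref{eqi1})), while the completion argument itself is insensitive to them.
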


We can count the number of distinct $ \mathbb{Z}_2 \times \mathbb{Z}_2$-extensions given by the above construction as follows. There are 28 triples $(z_p,z_q,z_r) $ which satisfy one of the conditions of Corollary \ref{zcond}. 
\begin{enumerate}
    \item For $(0,0,0)$, there exist exactly 8 extensions. 
    \item For each of $(x,0,0)$, $(0,x,0)$, $(0,0,x)$, $x\in \{p,q,r\}$, there exist exactly  4 extensions. 
    \item For each of $(x,x,0)$, $(x,0,x)$, $(0,x,x)$, $x\in \{p,q,r\}$, there exist exactly 2 extensions. 
    \item For each of $(x,x,x)$, $x\in \{p,q,r\}$, there exist exactly 2 extensions. 
    \item For each of $(p,q,r)$, $(q,r.p)$, $(r,p,q)$, $(p,r,q)$, $(r,q,p)$, $(q,p,r)$, 
    there exists a unique extension. 
\end{enumerate}
The fourth and fifth cases are a little subtle, and we discuss them now. 

Assume $(z_p,z_q,z_r)=(p,p,p)$. 
Then we may assume $\nu_p=\nu_q=1$. 
The only remaining freedom for perturbing $\beta_p$ and $\beta_q$ keeping this condition is to replace $\beta_p$ and $\beta_q$ with $\alpha_x\circ \beta_p$ and $\alpha_y\circ \beta_q$ with $x,y\in \{0,p\}$, up to inner automorphisms. 
This amounts to replacing $\beta_r$ with $\alpha_{x+y}\circ \beta_r$, and multiplying $\nu_r$ by 
$$\epsilon_{x+y}(z_r)\epsilon_{z_r}(x+y)=\epsilon_{x+y}(p)\epsilon_p(x+y),$$
which is always 1 in any combination of $x$ and $y$. 
Thus the two extensions for $\nu_r=1$ and $\nu_r=-1$ are inequivalent. 

In the fifth case, a similar computation shows that the two extensions for $\nu_r=1$ and $\nu_r=-1$ are equivalent. 

\begin{corollary}
There exist exactly 74 different $ \mathbb{Z}_2 \times \mathbb{Z}_2$-graded extensions, up to equivalence.
\end{corollary}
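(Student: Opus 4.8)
The plan is to assemble the count from the reconstruction and equivalence data already established. An extension in this family is determined by a triple $(z_p,z_q,z_r)$ satisfying one of the conditions of Corollary \ref{zcond} together with a choice of signs $\nu=(\nu_p,\nu_q,\nu_r)$, where each $\nu_h$ is a binary choice (two values in $\{\pm i\}$ if $z_h=0$, and in $\{\pm 1\}$ otherwise), so there are $2^3=8$ sign choices per triple. First I would verify the combinatorics of the triples: condition (1) of Corollary \ref{zcond} contributes the $16$ triples with $z_p+z_q+z_r=0$, namely $(0,0,0)$, the nine of type $(x,x,0)$ and its cyclic rotations, and the six of type $(p,q,r)$; conditions (2) and (3) then add the nine triples with exactly two zero entries and the three triples $(x,x,x)$ with $x\neq 0$. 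This gives the stated partition into five types with $1+9+9+3+6=28$ triples. By the $\mathbb{Z}_3$-symmetry cyclically permuting $\alpha_p,\alpha_q,\alpha_r$ (which fixes $\rho$), the number of inequivalent extensions is constant on each type, so it suffices to analyze one representative per type.

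Next I would pin down the equivalence relation on the sign data. After fixing $a_h,\chi_h,\mu_h,\xi_h$ as in the reconstruction, Theorem \ref{equiv} shows the only residual freedom is to replace $\beta_p,\beta_q$ by $\alpha_{k_p}\circ\beta_p,\ \alpha_{k_q}\circ\beta_q$ (which forces $\beta_r\mapsto\alpha_{k_p+k_q}\circ\beta_r$), for $k_p,k_q\in G$. As in Corollary \ref{coreq}, and using that $\tau$ is trivial for this category, such a move multiplies $\nu_h$ by the character $\phi_h(k):=\epsilon_k(z_h)\epsilon_{z_h}(k)$. A direct check with the bicharacter $\epsilon$ shows $\phi_h$ is trivial precisely when $z_h=0$, and is otherwise the nontrivial character with kernel $\langle z_h\rangle$. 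Thus $G\times G$ acts on the $(\mathbb{Z}_2)^3$-torsor of sign choices through the homomorphism
$$\Theta:G\times G\to(\mathbb{Z}_2)^3,\qquad (k_p,k_q)\mapsto\bigl(\phi_p(k_p),\,\phi_q(k_q),\,\phi_r(k_p+k_q)\bigr),$$
and since this action is free, the number of extensions for a given triple is $8/|\mathrm{im}(\Theta)|$.

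The computation of $|\mathrm{im}(\Theta)|$ then handles the five types uniformly. For $(0,0,0)$ all $\phi_h$ are trivial, so $\Theta\equiv(1,1,1)$ and we get $8/1=8$; each $\nu_h$ is a genuine invariant, which is why the fully trivial case keeps all eight. For a type with a single nonzero entry exactly one coordinate of $\Theta$ is onto $\{\pm1\}$, giving $|\mathrm{im}(\Theta)|=2$ and $8/2=4$. For $(x,x,0)$ two coordinates are independently surjective while the third is trivial, giving $|\mathrm{im}(\Theta)|=4$ and $8/4=2$. The two subtle types are $(x,x,x)$ and $(p,q,r)$, where all three $\phi_h$ are nontrivial but the coupling $k_r=k_p+k_q$ constrains the third sign. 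When $z_p=z_q=z_r=x$ all three $\phi_h$ equal a single character $\psi$, so $\phi_r(k_p+k_q)=\psi(k_p)\psi(k_q)$ is the product of the first two coordinates; hence $\mathrm{im}(\Theta)=\{(a,b,ab)\}$ has order $4$, giving $2$ extensions (equivalently, after normalizing $\nu_p=\nu_q=1$ the residual parameters $k_p,k_q\in\langle x\rangle$ cannot flip $\nu_r$). When $(z_p,z_q,z_r)=(p,q,r)$ the characters $\phi_p,\phi_q,\phi_r$ have the three distinct order-$2$ kernels, and evaluating $\Theta$ on $(p,0),(q,0),(0,p),(r,0)$ produces three independent vectors, so $\mathrm{im}(\Theta)=(\mathbb{Z}_2)^3$ and there is a unique extension.

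Multiplying and summing over the five types gives $1\cdot 8+9\cdot 4+9\cdot 2+3\cdot 2+6\cdot 1=8+36+18+6+6=74$. The main obstacle is exactly the analysis of the last two types: because the relation $\beta_p\beta_q\beta_r=\mathrm{id}$ forces $k_r=k_p+k_q$, the three signs cannot be perturbed independently, and one must use the explicit values of $\epsilon$ (equivalently, the kernels $\langle z_h\rangle$ of the $\phi_h$) to decide whether the constrained flip of $\nu_r$ lies in the subgroup generated by the flips of $\nu_p,\nu_q$. This is precisely what distinguishes the order-$4$ image (two extensions) from the full image (one extension), and it is the step where the bare-hands character computation cannot be bypassed.
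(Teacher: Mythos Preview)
Your proof is correct and follows essentially the same approach as the paper: partition the $28$ admissible triples $(z_p,z_q,z_r)$ into the five types, and for each type count sign classes by determining which flips of $(\nu_p,\nu_q,\nu_r)$ arise from replacing $(\beta_p,\beta_q,\beta_r)$ by $(\alpha_{k_p}\beta_p,\alpha_{k_q}\beta_q,\alpha_{k_p+k_q}\beta_r)$. Your packaging via the homomorphism $\Theta:G\times G\to(\mathbb{Z}_2)^3$ and $|\mathrm{im}(\Theta)|$ is a clean uniform formulation of exactly the computation the paper does case by case (normalizing $\nu_p,\nu_q$ first and then checking whether the residual freedom can flip $\nu_r$).
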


We can interpret our classification result in terms of Theorem \ref{ENOth} and Theorem \ref{DNth} as follows. 

First we show that the freedom for $\nu_h$ corresponds to $H^3(\mathbb{Z}_2\times \mathbb{Z}_2,\mathbb{T})\cong \mathbb{Z}_2^3$, with which we identify $\{1,-1\}^3$.  
Assume a $\mathbb{Z}_2\times \mathbb{Z}_2$-graded extension $(\alpha_g,\rho,\beta_h)$ is realized in $\mathrm{End}_0(M)$. 
We choose another factor $N$ and an outer $\mathbb{Z}_2\times \mathbb{Z}_2$-kernel 
$\sigma:\mathbb{Z}_2\times \mathbb{Z}_2\to \mathrm{Aut}(N)$, which is a map inducing an embedding of $\mathbb{Z}_2\times \mathbb{Z}_2$ into $\mathrm{Out}(N)$.  
We may assume $\sigma_r=(\sigma_p\circ \sigma_q)^{-1}$. 
Then there exist unitaries $w_h\in U(N)$ for $h=p,q,r$ satisfying $\sigma_h^2=\mathrm{Ad}(w_h)$, and there exists $\delta_h\in \{1,-1\}$ satisfying $\sigma_h(w_h)=\delta_h w_h$. 
The triple $(\delta_p,\delta_q,\delta_r)\in \{1,-1\}^3$ is identified with the obstruction of $\sigma$ in $H^3(\mathbb{Z}_2\times \mathbb{Z}_2,\mathbb{T})$. 
Now can get a new extension $(\alpha_g\otimes \mathrm{id},\rho\otimes \mathrm{id},\beta_h\otimes \sigma_h)$ realized in $\mathrm{End}_0(M\otimes N)$, which has the same $(z_p,z_q,z_r)$ as before while $\nu_h$ is replaced by $\delta_h\nu_h$. 
This means that the freedom of $\nu_h$ corresponds to the $H^3(\mathbb{Z}_2\times \mathbb{Z}_2,\mathbb{T})$-torsor structure. 

Now the only remaining freedom is $(z_p,z_q,z_r)$, which should correspond to an element in $$H^2(\mathbb{Z}_2\times \mathbb{Z}_2,\mathrm{Inv}(\mathcal{Z}(\mathcal{C}))=H^2(\mathbb{Z}_2\times \mathbb{Z}_2,G)=G^3.$$
This means that out of 64 possibilities for $M$, only 28 have trivial obstruction $O^4(c,M)\in H^4(\mathbb{Z}_2\times \mathbb{Z}_2,\mathbb{T})$. 

Finally, we have 
$$H^1(\mathbb{Z}_2\times \mathbb{Z}_2,\mathrm{Inv}(\mathcal{Z}(\mathcal{C}))=H^1(\mathbb{Z}_2\times \mathbb{Z}_2,G)=\mathrm{Hom}(\mathbb{Z}_2\times \mathbb{Z}_2,G)=G^2.$$
Since the effect of $p^1_{(c,M)}:H^1(\mathbb{Z}_2\times \mathbb{Z}_2,\mathrm{Inv}(\mathcal{Z}(\mathcal{C}))\to H^3(\mathbb{Z}_2\times \mathbb{Z}_2,\mathbb{T})$ 
should correspond to the freedom of replacing $(\beta_p,\beta_q,\beta_r)$ by 
$$(\alpha_x\circ\beta_p,\alpha_y\circ \beta_q,\alpha_{x+y}\circ \beta_r)$$
up to inner perturbation, we should have 
$$p^1_{(c,M)}(x,y)=(\epsilon_{z_p}(x)\epsilon_x(z_p),\epsilon_{z_q}(y)\epsilon_y(z_q),\epsilon_{z_r}(x+y)\epsilon_{x+y}(z_r)).$$

\subsection{$A_4$-extensions}
We will now consider extensions by the entire outer automorphism group $A_4=(\mathbb{Z}_2\times \mathbb{Z}_2)\rtimes \mathbb{Z}_3$. 

Let $\theta $ be the automorphism of $G$ defined by $\theta(h)=h'$. It was shown in \cite{MR3827808} that since the structure constants $A$ and $\epsilon $ are invariant under $\theta$, the automorphism $\gamma_0 $ of $\mathcal{O}_5 $ defined by 
$$\gamma_0(s)=s, \quad \gamma_0(t_g)=t_{\theta(g)} $$
(and as usual extended to the closure) satisfies 
$$\gamma_0 \circ \alpha_g = \alpha_{\theta(g)} \circ \gamma_0, \quad \gamma_0 \circ \rho = \rho \circ \gamma_0.$$

Let $H=\langle \gamma_0 \rangle \cong \mathbb{Z}_3$. Then we have 
$$H^n(H, \mathrm{Inv}(\mathcal{Z}(\mathcal{C})))=\{0\},\quad \forall n\geq 1,$$
$$H^4(H,\mathbb{T})=\{0\},$$
$$H^3(H,\mathbb{T})=\mathbb{Z}_3.$$
Thus Theorem \ref{ENOth} and Theorem \ref{DNth} show that there exist exactly three $H$-extensions of $\mathcal{C}$. 

One of the three $H$-extensions is generated by $\gamma_0 $, and the other two can be obtained by modifying the associator of the $\mathrm{Vec}_{\mathbb{Z}_3} $ subcategory generated by $\gamma_0 $ by an element of $H^3(\mathbb{Z}_3,\mathbb{T}) $, as in the argument at the end of the previous subsection. (We will refer to this construction as changing the associator of $\gamma_0 $).

\begin{remark}
The same argument works for other order 3 subgroups of $\mathrm{Out}(\mathcal{C})$ too. 
\end{remark}

We would like to extend $\gamma_0 $ to $\mathcal{U} $. Suppose that $\theta(z_h)=z_{\theta(h)}$ for $h \in \{p,q,r \} $.
Define $\tilde{\gamma_0} $
by $$\tilde{\gamma_0}(\Phi_h(x) )=\Phi_{\theta(h)}(\gamma_0(x))$$
$$ \tilde{\gamma_0}(u_{h}^{(k)}) =u_{\theta(h)}^{(\theta(k)
)}, \quad \tilde{\gamma_0}(v_{h}^{(k)}) =v_{\theta(h)}^{(\theta(k)
)}.$$

\begin{lemma}
The above formulas define an automorphism of $\mathcal{U} $, and we have
\begin{enumerate}
    \item $\tilde{\gamma_0}^3=\mathrm{id} $
    \item $\tilde{\gamma_0} \circ \tilde{\alpha}_g =\tilde{\alpha}_{\theta(g)} \circ \tilde{\gamma_0}$
    \item $\tilde{\gamma_0} \circ \tilde{\rho} =\tilde{\rho}\circ \tilde{\gamma_0}$
    \item If $\nu_p=\nu_q=\nu_r $, then $\tilde{\gamma_0} \circ \tilde{\beta}_h=\tilde{\beta}_{\theta(h)} \circ \tilde{\gamma_0}$
\end{enumerate}
\end{lemma}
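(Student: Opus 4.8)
The plan is to verify each assertion by evaluating both sides on the generators of $\mathcal{U}$, using throughout that $\theta$ is the three-cycle $p\mapsto q\mapsto r\mapsto p$, that $\gamma_0$ intertwines $\alpha$ and $\rho$ via the stated relations $\gamma_0\circ\alpha_g=\alpha_{\theta(g)}\circ\gamma_0$ and $\gamma_0\circ\rho=\rho\circ\gamma_0$, and that the structure constants $(A,\epsilon)$ are $\theta$-invariant. Before starting I would record the bookkeeping facts used repeatedly: $\theta$ respects the successor map, i.e. $\theta(h')=(\theta(h))'$ and $\theta(h'')=(\theta(h))''$; the hypothesis $z_{\theta(h)}=\theta(z_h)$ together with $\theta$-invariance of $\epsilon$ gives $\chi_{\theta(h)}(\theta(g))=\epsilon_{\theta(g)}(\theta(h))=\epsilon_g(h)=\chi_h(g)$ and likewise $\mu_{\theta(h)}(\theta(g))=\mu_h(g)$; and the chosen values $a_p=(1,-i,1,i)$, $a_q=(1,i,-i,1)$, $a_r=(1,1,i,-i)$ satisfy $a_{\theta(h)}(\theta(g))=a_h(g)$, a direct check on the four values of $g$. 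Since $\xi_h=i$ does not depend on $h$, it is automatically $\theta$-invariant.

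For well-definedness and assertions (1)--(3): the fifteen unitaries are permuted among themselves by $\tilde\gamma_0$, and substituting $\tilde\gamma_0$ of the generators into relation (\ref{eqi1}) produces the cyclic rotation $v_q v_q^{(r)} v_r v_r^{(p)} v_p v_p^{(q)}$, which equals $1$ because a cyclic rotation of a word equal to a central scalar is conjugate to it; the same argument handles (\ref{eqi2}), whose right-hand side $-\epsilon_{z_p}(p)\epsilon_{z_q}(q)\epsilon_{z_r}(r)$ is $\theta$-invariant (each factor equals the others by $\theta$-invariance of $\epsilon$ and $z_{\theta(h)}=\theta(z_h)$). As the Cuntz relations in each copy are preserved because $\gamma_0$ is a $*$-automorphism, the universal property yields a homomorphism, which is an automorphism once assertion (1), $\tilde\gamma_0^3=\mathrm{id}$, is known; the latter is immediate from $\theta^3=\mathrm{id}$ and $\gamma_0^3=\mathrm{id}$ on the generators. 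Assertion (2) reduces on $\Phi_h(x)$ to $\gamma_0\circ\alpha_g=\alpha_{\theta(g)}\circ\gamma_0$ and on the unitaries to the identities $\chi_h=\chi_{\theta(h)}\circ\theta$, $\mu_h=\mu_{\theta(h)}\circ\theta$ recorded above. Assertion (3) reduces on $\Phi_h(x)$ to $\gamma_0\circ\rho=\rho\circ\gamma_0$ combined with $\gamma_0\circ\alpha_h=\alpha_{\theta(h)}\circ\gamma_0$; on each of $u_h,u_h^{(h)},u_h^{(h')},v_h,v_h^{(h')}$ one compares the explicit formulas for $\tilde\rho$, reindexing the sum over $G$ by $g\mapsto\theta(g)$ and matching coefficients using $a_{\theta(h)}\circ\theta=a_h$, the $\theta$-invariance of the $\epsilon$-prefactors, and $z_{\theta(h)}=\theta(z_h)$.

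Assertion (4) is the main obstacle, and the only place where the hypothesis $\nu_p=\nu_q=\nu_r$ enters. The plan is the same generator-by-generator comparison, now against the explicit (and lengthy) formulas defining $\tilde\beta_h$. Those definitions are written entirely in terms of $h$, $h'$, $h''$, the copies $\Phi_{h},\Phi_{h'},\Phi_{h''}$, and scalars built from $\epsilon$, $\chi$, $\mu$, $\nu$, and the $z$'s, so the index patterns are $\theta$-equivariant by construction; the content is therefore to check that every scalar prefactor is carried to the correct one under $\theta$. All $\epsilon$-, $\chi$-, and $\mu$-factors match by the invariance identities already assembled, while the factor $\nu_h$ occurring in $\tilde\beta_h(v_h)=\nu_h v_h$ (and in the derived formulas) is carried to $\nu_{\theta(h)}$, so matching requires exactly $\nu_{\theta(h)}=\nu_h$, i.e. $\nu_p=\nu_q=\nu_r$. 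I expect this verification to be long and entirely parallel to Lemma \ref{longproof}, with no conceptual difficulty beyond careful bookkeeping, and it could reasonably be deferred to the appendix.
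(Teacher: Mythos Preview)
Your proposal is correct and follows essentially the same approach as the paper: verify well-definedness via the $\theta$-invariance of relations (\ref{eqi1}) and (\ref{eqi2}), then deduce each assertion from the $\theta$-invariance of the structure constants $\epsilon$, $\chi$, $\mu$, $a$, $\xi$, with $\nu_p=\nu_q=\nu_r$ being the only additional hypothesis needed for (4). One small overestimate: you suggest the verification of (4) would be ``long and entirely parallel to Lemma~\ref{longproof}'' and could be deferred to the appendix, but in fact the formulas defining $\tilde\beta_h$ are written entirely in terms of $h,h',h''$ and $\theta$-equivariant data, so once the scalar invariances are recorded the check is immediate --- the paper dispatches it in one line rather than a lengthy appendix computation.
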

\begin{proof}
First note that, using the fact that $z_{\theta(h)}=\theta(z_h) $, we can see that $\tilde{\gamma_0} $ preserves the relations (\ref{eqi1}) and (\ref{eqi2}), and is therefore well-defined. It is then clear that $\tilde{\gamma_0} $ is an automorphism of order $3$. 

Then the relations (2)-(4) follow from the invariance of the structure constants under $\theta $.

Since $\epsilon_g(h)=\epsilon_{\theta(g)}(\theta(h)) $ (and therefore
also $\chi_g(h)=\chi_{\theta(g)}(\theta(h))  $ and $\mu_g(h)=\mu_{\theta(g)}(\theta(h))  $, again using the fact that $z_{\theta(h)}=\theta(z_h) $), we can check that $\tilde{\gamma_0} \circ \tilde{\alpha}_g =\tilde{\alpha}_{\theta(g)} \circ \tilde{\gamma_0}$.

Similarly, since $\xi_h=i, \ \forall h \in \{p,q,r \} $ and $a_h(g)=a_{\theta(h)}(\theta(g)) $, we can check that $\tilde{\gamma_0} \circ \tilde{\rho} =\tilde{\rho}\circ \tilde{\gamma_0}$.

And since $\nu_h $ is the same for all $h$, we can check that $\tilde{\gamma_0} \circ \tilde{\beta}_h=\tilde{\beta}_{\theta(h)} \circ \tilde{\gamma_0}$.
\end{proof}

Again by changing the associator of $\gamma_0$, we get triple the number of extensions. 

\begin{theorem}
There exist exactly 15 quasi-trivial extensions of $\mathcal{C} $ by the entire outer automorphism subgroup of the Brauer-Picard group. 
More precisely, 
\begin{enumerate}
    \item For each of the 2 cases $z_p=z_q=z_r=0$ and $\nu_p=\nu_q=\nu_r=\pm i$, there are exactly 3 extensions distinguished by the associators of the invertible objects in the $\theta$-homogeneous part. These 6 extensions form a torsor over  $H^3(A_4,\mathbb{T})$. 
    \item For each of the 3 cases $(z_p,z_q,z_r)=(x,x',x'')$, $x\in \{p,q,r\}$, and $\nu_p=\nu_q=\nu_r=1$, there exist exactly three extensions distinguished by the associators of the invertible objects in the $\theta$-homogeneous part. 
\end{enumerate}
\end{theorem}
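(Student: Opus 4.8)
The plan is to reduce the classification of quasi-trivial $A_4$-extensions of $\mathcal{C}$ to the already-completed classification of quasi-trivial $V$-extensions, where $V=\mathbb{Z}_2\times\mathbb{Z}_2\triangleleft A_4$, together with an analysis of the residual $\mathbb{Z}_3=A_4/V$ symmetry generated by $\theta$. Existence is furnished by the explicit model: the preceding lemma shows that whenever the data satisfy $z_{\theta(h)}=\theta(z_h)$ and $\nu_p=\nu_q=\nu_r$, the automorphism $\tilde{\gamma_0}$ of $\mathcal{U}$ is well defined and satisfies $\tilde{\gamma_0}^3=\mathrm{id}$, $\tilde{\gamma_0}\circ\tilde{\alpha}_g=\tilde{\alpha}_{\theta(g)}\circ\tilde{\gamma_0}$, $\tilde{\gamma_0}\circ\tilde{\rho}=\tilde{\rho}\circ\tilde{\gamma_0}$ and $\tilde{\gamma_0}\circ\tilde{\beta}_h=\tilde{\beta}_{\theta(h)}\circ\tilde{\gamma_0}$; passing to a factor closure as in the $V$-case then yields a genuine $A_4$-extension, and changing the associator of the $\mathrm{Vec}_{\mathbb{Z}_3}$ subcategory generated by $\gamma_0$ by an element of $H^3(\mathbb{Z}_3,\mathbb{T})\cong\mathbb{Z}_3$ triples the number of extensions produced from each admissible $V$-datum.

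For completeness I would show that every quasi-trivial $A_4$-extension $\mathcal{D}$ arises this way. Restricting the grading to $V$ produces a quasi-trivial $V$-extension, hence one of the data $(z_p,z_q,z_r;\nu_p,\nu_q,\nu_r)$ classified in the previous subsection. Quasi-triviality provides an invertible object $\gamma$ in the $\theta$-graded component; since $[\gamma\rho\gamma^{-1}]=[\rho]$ and $[\gamma\alpha_g\gamma^{-1}]=[\alpha_{\theta(g)}]$, the automorphism $\gamma$ is of the $\gamma_0$-type, and conjugation by it carries $\beta_h$ to an object isomorphic to $\beta_{\theta(h)}$. Tracking this through the structure constants forces $z_{\theta(h)}=\theta(z_h)$, and the converse of the lemma's computation (that $\gamma$ can intertwine the $\tilde{\beta}_h$) forces $\nu_p=\nu_q=\nu_r$. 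Imposing $\theta$-invariance, i.e. $(z_p,z_q,z_r)=(z_p,\theta(z_p),\theta^2(z_p))$, together with Corollary \ref{zcond}, leaves exactly the four triples $(0,0,0)$, $(p,q,r)$, $(q,r,p)$, $(r,p,q)$ (each has $z_p+z_q+z_r=0$, so the first condition of Corollary \ref{zcond} holds). One must also verify that, once the $V$-datum and $\theta$-invariance are fixed, the only remaining invariant of $\mathcal{D}$ is the associator of $\gamma$, so that $\mathcal{D}$ is one of the constructed extensions.

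The remaining and most delicate step is the count modulo $A_4$-equivalence, and this is where I expect the main obstacle. An $A_4$-equivalence fixing $\mathcal{C}$ acts on the $V$-data by the $\theta$-equivariant versions of the moves in Theorem \ref{equiv}: perturbations $\beta_h\mapsto \mathrm{Ad}(w_h)\circ\alpha_{k_h}\circ\beta_h$ with $(k_p,k_q,k_r)=(k,\theta(k),\theta^2(k))$ and characters chosen compatibly. Keeping $\chi_h,\mu_h,\xi_h$ and the normalized $a_h$ fixed, such a move multiplies $\nu_h$ by $\epsilon_{z_h}(k_h)\epsilon_{k_h}(z_h)$. For $(0,0,0)$ this factor is identically $1$, so the two choices $\nu=\pm i$ stay inequivalent, giving $2\times 3=6$ extensions; for $(p,q,r)$ and its cyclic variants, taking $k=q$ flips all three signs simultaneously, since $\epsilon_q(p)\epsilon_p(q)=\epsilon_r(q)\epsilon_q(r)=\epsilon_p(r)\epsilon_r(p)=-1$, so $\nu=1$ and $\nu=-1$ become equivalent and each triple contributes $1\times 3=3$ extensions. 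This yields $6+9=15$ in total.

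Finally I would confirm the count against Theorems \ref{ENOth} and \ref{DNth}. With $\mathrm{Inv}(\mathcal{Z}(\mathcal{C}))=G_2=G$, the four admissible triples are exactly the $\theta$-invariant quasi-tensor products in $H^2(A_4,G)$ with vanishing $O^4$; the six extensions over $(0,0,0)$ form a torsor over $H^3(A_4,\mathbb{T})\cong\mathbb{Z}_6$ (the $\mathbb{Z}_2$-factor being the $\nu=\pm i$ choice and the $\mathbb{Z}_3$-factor the associator), while over each $(p,q,r)$-type triple $p^1_{(c,M)}$ has image of order $2$, so the classes form a torsor over $\mathrm{coker}(p^1_{(c,M)})\cong\mathbb{Z}_3$, reproducing the $3$ extensions. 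This matches the bare-hands count and establishes the theorem.
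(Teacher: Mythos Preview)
Your overall strategy matches the paper's: restrict to the $V=\mathbb{Z}_2\times\mathbb{Z}_2$ part, impose $\theta$-invariance on the data, and count modulo the $\theta$-equivariant version of the moves from Theorem~\ref{equiv}. Your existence argument and your counting computation (in particular the check that $\epsilon_{z_h}(k_h)\epsilon_{k_h}(z_h)=-1$ for all $h$ when $(z_p,z_q,z_r)=(p,q,r)$ and $k=q$) are correct.

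There are, however, two genuine gaps relative to the paper. First, you assert that changing the associator of $\gamma$ by an element of $H^3(\mathbb{Z}_3,\mathbb{T})$ produces \emph{inequivalent} $A_4$-extensions, but you do not prove it. The paper handles this cohomologically: since $|H|=3$ and $|\mathrm{Inv}(\mathcal{Z}(\mathcal{C}))|=4$ are coprime, the Lyndon--Hochschild--Serre spectral sequence gives a split exact sequence $0\to H^3(H,\mathbb{T})\to H^3(A_4,\mathbb{T})\to H^3(G,\mathbb{T})^H\to 0$, and the image of $p^1_{(c,M)}:H^1(A_4,G)\to H^3(A_4,\mathbb{T})$ has order dividing $|H^1(A_4,G)|=4$, hence meets $H^3(H,\mathbb{T})\cong\mathbb{Z}_3$ trivially. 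This is what guarantees the $\mathbb{Z}_3$-action is free; without it your ``triples the number'' step is unjustified.

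Second, your completeness argument (``every $A_4$-extension arises this way'') is only sketched. You note that $\gamma$ must conjugate $\beta_h$ to something isomorphic to $\beta_{\theta(h)}$, and you acknowledge that one must verify the only remaining invariant is the associator of $\gamma$. The paper does real work here: after fixing the associator of $\gamma$ to be trivial (so $\gamma^3=\mathrm{id}$ on the nose) and normalizing $\gamma|_{\mathcal{O}_5}=\gamma_0$, one cannot expect $\gamma\circ\beta_h\circ\gamma^{-1}=\beta_{\theta(h)}$ strictly. The paper replaces $\beta_r$ by $\beta'_r=\mathrm{Ad}(v_p^*v_r^*)\circ\alpha_{q+z_q}\circ\beta_r$ and defines $\beta'_p,\beta'_q$ by conjugation with $\gamma$, so that Eqs.~(\ref{c1}),~(\ref{c2}) hold exactly; only then can one conclude $\gamma(u_x)=u_{x'}$, $\gamma(v_x)=v_{x'}$, and hence that $\gamma$ acts on all fifteen generating unitaries exactly as $\tilde{\gamma}_0$ does. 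This is what pins down the $6j$-symbols from $(z_p,\nu_p)$ alone. Your phrase ``the converse of the lemma's computation forces $\nu_p=\nu_q=\nu_r$'' hides this normalization step, which is the technical heart of the uniqueness direction.
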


\begin{proof}
Let $(\alpha,\rho,\beta_p,\beta_q,\beta_r,\gamma)$ be a $A_4$-extension of $\mathcal{C}$ realized in $\mathrm{End}_0(M)$, where $(\alpha,\rho,\beta_p,\beta_q,\beta_r)$ is a $\mathbb{Z}_2\times \mathbb{Z}_2$-extension as in the previous subsection, and 
$\gamma$ is an invertible object in the $\theta$-homogeneous part. 
Then $\gamma^3\in \mathcal{C}$, and there exists $g\in G$ satisfying $[\gamma^3]=[\alpha_g]$. 
Since $[\gamma^3]$ commutes with $[\gamma]$, we get $[\gamma^3]=[\mathrm{id}]$. 
Since 
$$[\alpha_g\gamma]=[\alpha_{g'+g''}\gamma]=[\alpha_{g'}\gamma\alpha_{g'}^{-1}],$$
the associator of $\gamma$ does not depend on the choice of the invertible object $\gamma$ in the $\theta$-homogeneous part. 
Thus the associator of $\gamma$ is a well-defined invariant of the extension. 

Since $A_4$ is a semi-direct product $G\rtimes H$, and $|G|=|\mathrm{Inv}(\mathcal{Z}(\mathcal{C})))|=4$, $|H|=3$, we have  $H^p(H,H^q(G,\mathbb{T}))=0$ for all $p\geq 1$, $q\geq 1$, and $H^1(H,\mathrm{Inv}(\mathcal{Z}(\mathcal{C}))^H))=0$. 
Thus Lyndon-Hochschild-Serre spectral sequence shows that there exists a split exact sequence 
$$0\to H^3(H,\mathbb{T})\to H^3(A_4,\mathbb{T})\to H^3(G,\mathbb{T})^H\to 0,$$
where $H^3(H,\mathbb{T})\cong \mathbb{Z}_3$ and  $H^3(G,\mathbb{T})^H\cong \mathbb{Z}_2$,  
and 
$$H^1(A_4,\mathrm{Inv}(\mathcal{Z}(\mathcal{C}))))=H^1(G,G)^H=\mathrm{Hom}(G,G)^\theta\cong \mathbb{Z}_2\times \mathbb{Z}_2.$$
Thus the intersection of $H^3(H,\mathbb{T})$ and the image of  $$p^1_{(c,M)}:H^1(A_4,\mathrm{Inv}(\mathcal{Z}(\mathcal{C})))\to H^3(A_4,\mathbb{T})$$ 
is trivial, which means that the set of equivalence classes of $A_4$-extensions of $\mathcal{C}$ has a free $H^3(H,\mathbb{T})$-action through the $H^3(A_4,\mathbb{T})$-action, and it changes the associator of $\gamma$. 
In particular, we get the extensions listed in the theorem. 

Now it suffices to show that there exist exactly 5 extensions with $\gamma$ having trivial associator. 
In this case, we may assume that $\gamma^3=\mathrm{id}$, and the $H$-extension $(\alpha,\rho,\gamma)$ is equivalent to the model $(\alpha,\rho,\gamma_0)$. 
Thus using the uniqueness theorem, we may assume that $\gamma$ acts on the Cuntz algebra $\mathcal{O}_5\subset M$ as $\gamma_0$ by replacing $\rho$ with $\mathrm{Ad}(w)\circ \rho$ with a unitary $w$ fixed by $\alpha_g$ for all $g\in G$. 
Recall that this replacement does not change the extension data of 
$\beta_p$, $\beta_q$, or $\beta_r$. 
Thus we may and do assume that $\gamma$ restricted to $\mathcal{O}_5$ is $\gamma_0$ from the beginning. 

Since $(\alpha,\rho,\beta_p)$, $(\alpha,\rho, \gamma^{-1}\circ \beta_q\circ \gamma)$, $(\alpha,\rho,\gamma\circ \beta_r\circ\gamma^{-1})$ are equivalent extensions, we have 
$z_{h'}=z_h'$ for all $h\in \{p,q,r\}$, and $\nu_p=\nu_q=\nu_r=\pm i$ if $z_h=0$. 
If $z_h\neq 0$, we can arrange $\beta_p$, $\beta_q$, and $\beta_r$ so that $\nu_p=\nu_q=\nu_r=1$. 

As in the case of $\gamma$, we have $[(\beta_p\gamma)^3]=[\mathrm{id}]$, and so 
$$[\beta_p][\gamma\beta_p\gamma^{-1}][\gamma^2\beta_p\gamma^{-2}]=[\mathrm{id}].$$
Thus we may replace $\beta_q$ with $\gamma\circ \beta_p\circ \gamma^{-1}$, and $\beta_r$ with 
$(\beta_p\circ \gamma\circ \beta_q\gamma^{-1})^{-1}$, 
which does not change the extension data of $\beta_p$, $\beta_q$, or $\beta_r$. 

Unfortunately, we can not expect that $(\beta_p\gamma)^3=\mathrm{id}$ holds on the nose, and we should modify $\beta_p$. 
We have $\beta_p^{-1}\circ \gamma\circ \beta_p=\beta_p^{-1}\circ \beta_q\circ \gamma$, and 
\begin{align*}
\beta_p^{-1}\circ \beta_q&=\beta_p^{-2}\circ \beta_r^{-1}=\beta_p^{-2}\circ \beta_r^{-2}\circ \beta_r=\mathrm{Ad}v_p^*\circ \alpha_{p+z_p}\circ \mathrm{Ad}v_r^*\circ \alpha_{r+z_r}\circ \beta_r\\
&=\mathrm{Ad}(v_p^*v_r^*)\circ \alpha_{q+z_q}\circ \beta_r. 
\end{align*}
We set $\beta'_r=\mathrm{Ad}(v_p^*v_r^*)\circ \alpha_{q+z_q}\circ \beta_r$, which satisfies $(\beta'_r\circ \gamma)^3=\mathrm{id}$. 
We set $\beta'_p=\gamma\circ \beta'_r\circ \gamma^{-1}$ and $\beta'_q=\gamma^2\circ \beta'_r\circ \gamma^{-2}$. 
Then 
\begin{equation}\label{c1}
\gamma\circ \beta'_x\circ \gamma^{-1}=\beta'_{x'}
\end{equation}
\begin{equation}\label{c2}
\beta'_x\circ \beta'_{x'}\circ \beta'_{x''}=\mathrm{id}
\end{equation}
hold for all $x\in \{p,q,r\}$. 

Although the extension data of $\beta'_x$ is not necessarily the same as before, it is  completely determined by that of $\beta_x$. 
Since we can work on the new extension data in the previous sections equally well, we assume that Eq.(\ref{c1}),(\ref{c2}) hold for $\beta_x$ instead of $\beta'_x$ to avoid heavy notation. 

The above two equations force that $\gamma(u_x)$ is a multiple of $u_{x'}$, and $\gamma(v_x)$ is a multiple of $v_{x'}$. 
For the latter, we can simply assume that $\gamma(v_x)=v_{x'}$ holds for all $x\in \{p,q,r\}$ by renaming them, while we can still keep Eq.(\ref{prod1}) by normalizing $v_p$. 
Eq.(\ref{rhou}) shows that $\gamma(u_x)=u_{x'}$ holds for all $x\in \{p,q,r\}$. 

Now the action of $\gamma$ on $\mathcal{O}_5\cup\{u_x,u_x^{(x)},u_x^{(x')},v_x,v_x^{(x')}\}$ are completely determined by the data $(z_p,\nu_p)$. 
This means that if two $A_4$-extensions of $\mathcal{C}$ share the same data, they share the same 6j-symbols, and they are equivalent extensions. 
\end{proof}

\appendix
\section{Proof of Lemma \ref{longproof}}
In this Appendix, we prove Lemma \ref{longproof}, which states that the $\tilde{\beta}_h $, as defined in the reconstruction of a $ \mathbb{Z}_2 \times \mathbb{Z}_2$-extension of the $ \mathbb{Z}_2 \times \mathbb{Z}_2$ generalized Haagerup category,  satisfy the appropriate relations. The tedious proof consists of checking the claimed identities of endomorphisms by calculating the images of the various generating unitaries under the left and right hand side of each identity, simplifying if possible using (\ref{eqi1}) and (\ref{eqi2}), and comparing the results.
\begin{proof}
First, we need to show that $\tilde{\beta}_h $ a is well-defined endomorphism. Clearly, $\tilde{\beta}_h $ maps each copy of $ \mathcal{O}_5$ isomorphically onto another Cuntz subalgebra of $\mathcal{U} $. Then we need to check that $\tilde{\beta}_h $ preserves relations (\ref{eqi1}) and (\ref{eqi2}). The relation (\ref{eqi1}) was checked in the proof of Lemma \ref{newcn}. For relation (\ref{eqi2}), we have
$$\tilde{\beta}_p(u_r^{(p)}v_r^*v_q^{(r)*}u_q^{(r)}v_q^*v_p^{(q)*}u_p^{(q)}v_p^*v_r^{(p)*}) $$
$$=(\chi_{r}(p+z_p)v_pu_{r}v_p^*)(v_r^{(p)*})(\nu_{q} \mu_{q}(q+z_{p}+z_{r})v_{r}^{(p)}v_p v_{q} v_p^*v_{r}^{(p)*})^* $$
$$(\chi_{q}({q}+z_{p}+z_{r})v_{r}^{(p)}v_p u_{q}^{(q)} v_p^*v_{r}^{(p)*}) (\overline{\nu_{q}}\mu_{q}(r+z_{r})v_{r}^*v_{q}^{(r)} v_{r})^* (\overline{\nu_p}\mu_p(p+z_{q}+z_{r})v_{r}^*v_{q}^*v_p^{(q)}  v_{q}    v_{r})^*$$
$$(-\epsilon_{z_{q}}(q)\chi_p(r+z_{r})v_{r}^*v_{q}^* u_{q}^{(q)*} v_{q}u_{r}^*v_{r})(\nu_p v_p)^*(\mu_{r}(p+z_p)v_pv_{r}v_p^*)^* $$
The product of unitaries is
$$v_pu_rv_q^*u_q^{(q)}(v_p^*v_r^{(p)*}v_r^*v_q^{(r)*}v_q^*v_p^{(q)*})u_q^{(q)*}v_qu_r^*v_p^*=1 ,$$
using relation (\ref{eqi1}).
The scalar coefficient is
$$=-\epsilon_{z_q}(q)(\mu_r\chi_r)(p+z_p)(\mu_q\chi_q)(q+z_p+z_r)(\chi_p\mu_q)(r+z_r)\mu_p(p+z_q+z_r) $$
$$=-\epsilon_{z_q}(q)\epsilon_{z_r}(p+z_p)\epsilon_{z_q}(q+z_p+z_r)\epsilon_{r+z_r}(p+q)\epsilon_{z_q}(r+z_r)\epsilon_{p+z_q+z_r}(p)\epsilon_{z_p}(p+z_q+z_r) $$
$$-\epsilon_p(p)\epsilon_r(r)\epsilon_{z_p}(p)\epsilon_{z_q}(q)\epsilon_{z_r}(r)\epsilon_{z_p}(z_q+z_r)\epsilon_{z_q+z_r}(z_p)=-\epsilon_{z_p}(p)\epsilon_{z_q}(q)\epsilon_{z_r}(r) ,$$
using the relation in Lemma \ref{newcn}. Thus $\tilde{\beta}_p $ preserves (\ref{eqi2}), and since this calculation 
is invariant under cyclic permutations of $(p,q,r ) $, so do $\tilde{\beta}_q $ and $\tilde{\beta}_r $.

It is straightforward to check that $\tilde{\beta}_h \circ \tilde{\alpha}_g=\tilde{\alpha}_g \circ \tilde{\beta}_h$.

Next, we need to check that $\tilde{\beta}_h^2=\mathrm{Ad}(v_h) \circ \tilde{\alpha}_{h+z_h} $. This relation clearly holds on $\Phi_0(\mathcal{O}_5) $ and $\Phi_h(\mathcal{O}_5) $. We also have
$$\tilde{\beta}_h^2(\Phi_{h'}(x))=\tilde{\beta}_h (v_{h''}^*\Phi_{h''}(\alpha_{h''+z_{h''}}(x))v_{h''})$$
$$ = v_{h''}^{(h)*}
(v_{h''}^{(h)}v_h\Phi_{h'}(\alpha_{h'+z_h+z_{h''}}(\alpha_{h''+z_{h''}}(x)))v_h^*v_{h''}^{(h)*})
v_{h''}^{(h)})$$
$$=v_h \Phi_{h'}(\alpha_{h+z_h})v_h^*=(\mathrm{Ad}(v_h) \circ \tilde{\alpha}_{h+z_h}) (\Phi_{h'}(x)) $$
and
$$\tilde{\beta}_h^2(\Phi_{h''}(x))=\tilde{\beta}_h (v_{h''}^{(h)}v_h\Phi_{h'}(\alpha_{h'+z_h+z_{h''}}(x))v_h^*v_{h''}^{(h)*})$$
$$=(v_hv_{h''}v_h^*)v_h(
v_{h''}^*\Phi_{h''}(\alpha_{h''+z_{h''}}(\alpha_{h'+z_h+z_{h''}}(x)))v_{h''} 
) v_h^*(v_hv_{h''}^*v_h^*)$$
$$=v_h\Phi_{h''}(\alpha_{h+z_h} (x))v_h^* =(\mathrm{Ad}(v_h) \circ \tilde{\alpha}_{h+z_h}) (\Phi_{h''}(x)) .$$

We will now check this relation for all of the unitaries containing a symbol other than $h$.
\begin{itemize}
\item $$\tilde{\beta}_h^2(u_h^{(h')})=\tilde{\beta}_h(-\epsilon_{z_{h'}}(h')\chi_h(h''+z_{h''})v_{h''}^*v_{h'}^* u_{h'}^{(h')*} v_{h'}u_{h''}^*v_{h''}) $$
$$=-\epsilon_{z_{h'}}(h')\chi_h(h''+z_{h''})(v_{h''}^{(h)} )^*(v_{h''}^*v_{h'}^{(h'')} v_{h''})^*$$ $$(\chi_{h'}(h''+z_{h''})v_{h''}^*u_{h'}^{h''} v_{h''})^*(v_{h''}^*v_{h'}^{(h'')} v_{h''})(u_{h''}^{(h)})^*(v_{h''}^{(h)})$$
$$=\epsilon_{z_{h'}}(h')\epsilon_{z_{h''}}(h'') v_{h''}^{(h)*} v_{h''}^*v_{h'}^{(h'')*} (v_{h''} v_{h''}^*)u_{h'}^{(h'')*}(v_{h''} v_{h''}^*)v_{h'}^{(h'')} v_{h''} u_{h''}^{(h)*}v_{h''}^{(h)}$$
$$=\epsilon_{z_{h'}}(h')\epsilon_{z_{h''}}(h'')(v_{h''}^{(h)*} v_{h''}^*v_{h'}^{(h'')*}) u_{h'}^{(h'')*}v_{h'}^{(h'')} v_{h''} u_{h''}^{(h)*}v_{h''}^{(h)}$$
$$=\epsilon_{z_{h'}}(h')\epsilon_{z_{h''}}(h'')v_hv_h^{(h')}v_{h'} u_{h'}^{(h'')*}(v_{h'}^{(h'')} v_{h''} u_{h''}^{(h)*}v_{h''}^{(h)})$$
$$=\epsilon_{z_{h'}}(h')\epsilon_{z_{h''}}(h'')(-\epsilon_{z_h}(h)\epsilon_{z_{h'}}(h')\epsilon_{z_{h''}}(h''))$$ $$\cdot v_h(v_h^{(h')}v_{h'} u_{h'}^{(h'')*}u_{h'}^{(h'')}v_{h'}^*v_{h}^{(h')*})u_{h}^{(h')}v_{h}^*$$
$$=-\epsilon_{z_h}(h)v_hu_{h}^{(h')}v_{h}^*=\chi_h(h+z_h)v_hu_{h}^{(h')}v_{h}^* $$
$$=(\mathrm{Ad}(v_h)\circ \tilde{\alpha}_{h+z_h}) (u_{h}^{(h')}).$$

\item
$$\tilde{\beta}_h^2(u_{h'}^{(h'')} )=\tilde{\beta}_h(  \chi_{h'}({h'}+z_{h}+z_{h''})v_{h''}^{(h)}v_h u_{h'}^{(h')} v_h^*v_{h''}^{(h)*}) $$
$$=\chi_{h'}({h'}+z_{h}+z_{h''})(v_hv_{h''}v_h^*)( v_h)(\chi_h'(h''+z_{h''})v_{h''}^*u_{h'}^{h''} v_{h''})( v_h)^*(v_hv_{h''}v_h^*)^*$$
$$=\chi_{h'}(h+z_h)v_h(v_{h''}v_h^*v_hv_{h''}^*)u_{h'}^{h''} (v_{h''}v_h^*v_hv_{h''}^*)v_h^*$$
$$=\chi_{h'}(h+z_h)v_hu_{h'}^{h''} v_h^*=(\mathrm{Ad}(v_h)\circ \tilde{\alpha}_{h+z_h}) (u_{h'}^{(h'')}).$$

\item
 $$\tilde{\beta}_h^2(u_{h''}^{(h'')} )=\tilde{\beta}_h(  -\epsilon_{z_h}(h)\chi_{h''}(h'+z_h+z_{h''})v_{h''}^{(h)} u_h^{(h)*}v_h u_{h'}^* v_h^*v_{h''}^{(h)*}) $$
 $$=-\epsilon_{z_h}(h)\chi_{h''}(h'+z_h+z_{h''})(v_hv_{h''}v_h^*)(\chi_h(h+z_h)v_hu_hv_h^*)^*(v_h)$$ $$\cdot(-\epsilon_{z_{h''}}(h'')v_{h''}^*u_{h''}^{(h'')*}v_{h''}u_{h}^*)^*( v_h)^*(v_hv_{h''}v_h^*)^*  $$
$$=\epsilon_{z_h}(h)\epsilon_{z_{h''}}(h'')\chi_h(h+z_h)\chi_{h''}(h'+z_h+z_{h''})$$ $$\cdot v_h(v_{h''}v_h^*v_hu_h^*v_h^*v_h u_h v_{h''}^*)u_{h''}^{(h'')}(v_{h''}  v_h^*v_h v_{h''}^* )v_h^*$$
$$=\epsilon_{z_h}(h'') v_hu_{h''}^{(h'')}v_h^*=\chi_{h''}(h+z_{h}) v_hu_{h''}^{(h'')}v_h^*$$
$$=(\mathrm{Ad}(v_h)\circ \tilde{\alpha}_{h+z_h}) (u_{h''}^{(h'')}) $$

\item
$$\tilde{\beta}_h^2(v_{h}^{(h')} )=\tilde{\beta}_h(\overline{\nu_h}\mu_h(h+z_{h'}+z_{h''})v_{h''}^*v_{h'}^*v_h^{(h')}  v_{h'}    v_{h''} ) $$
$$=\overline{\nu_p}\mu_p(h+z_{h'}+z_{h''})(v_{h''}^{(h)})^*(v_{h''}^*v_{h'}^{(h'')} v_{h''})^*$$ $$\cdot(\overline{\nu_h}\mu_h(h+z_{h'}+z_{h''})v_{h''}^*v_{h'}^*v_h^{(h')}  v_{h'}    v_{h''} )(v_{h''}^*v_{h'}^{(h'')} v_{h''})(v_{h''}) $$
$$=\overline{\nu_h}^2 v_{h''}^{(h)*}v_{h''}^*v_{h'}^{(h'')*} (v_{h''} v_{h''}^*)v_{h'}^*v_h^{(h')}  v_{h'}   ( v_{h''}v_{h''}^*)v_{h'}^{(h'')} v_{h''}v_{h''}^{(h)}$$
$$=-\epsilon_{z_h}(z_h) (v_{h''}^{(h)*}v_{h''}^*v_{h'}^{(h'')*} v_{h'}^*)v_h^{(h')} ( v_{h'}  v_{h'}^{(h'')} v_{h''}v_{h''}^{(h)})$$
$$=\epsilon_{h+z_h}(h)\epsilon_{z_h}(h+z_h) v_h v_h^{(h') }(v_h^{(h')} v_h^{(h')*})v_h^*$$
$$=\mu_h(h+z_h) v_h v_h^{(h') }v_h^* =(\mathrm{Ad}(v_h)\circ \tilde{\alpha}_{h+z_h}) (v_{h}^{(h')}) $$

\item
$$\tilde{\beta}_h^2(v_{h'}^{(h'')} )=\tilde{\beta}_h(\nu_{h'} \mu_{h'}(h'+z_{h}+z_{h''})v_{h''}^{(h)}v_h v_{h'} v_h^*v_{h''}^{(h)*}))$$
$$=\nu_{h'} \mu_{h'}(h'+z_{h}+z_{h''})(v_hv_{h''}v_h^*)( v_h)$$ $$\cdot(\overline{\nu_{h'}}\mu_{h'}(h''+z_{h''})v_{h''}^*v_{h'}^{(h'')} v_{h''})( v_h)^*(v_hv_{h''}v_h^*)^* $$
$$=\mu_{h'}(h+z_h)v_h(v_{h''}v_h^* v_hv_{h''}^*)v_{h'}^{(h'')}( v_{h''}v_h^* v_hv_{h''}^*)v_h^*$$
$$=\mu_{h'}(h+z_h)v_hv_{h'}^{(h'')}v_h^*=
(\mathrm{Ad}(v_h)\circ \tilde{\alpha}_{h+z_h}) (v_{h'}^{(h'')}) $$

\item
$$\tilde{\beta}_h^2(u_{h'} )=\tilde{\beta}_h(-\epsilon_{z_{h''}}(h'')v_{h''}^*u_{h''}^{(h'')*}v_{h''}u_{h}^* ) $$
$$=-\epsilon_{z_{h''}}(h'')(v_{h''}^{(h)})^*(-\epsilon_{z_h}(h)\chi_{h''}(h'+z_h+z_{h''})$$ $$\cdot v_{h''}^{(h)} u_h^{(h)*}v_h u_{h'}^* v_h^*v_{h''}^{(h)*})^*(v_{h''}^{(h)})(u_h^{(h)})^*$$
$$=-\epsilon_{z_h}(h') (v_{h''}^{(h)*} v_{h''}^{(h)}) v_hu_{h'}v_h^*(u_h^{(h)}v_{h''}^{(h)*}v_{h''}^{(h)} u_h^{(h)*})
 $$
 $$=\chi_{h'}(h+z_h) v_hu_{h'}v_h^*=(\mathrm{Ad}(v_h)\circ \tilde{\alpha}_{h+z_h}) (u_{h'}) 
 $$
 
\item
 $$\tilde{\beta}_h^2(u_{h'}^{(h')} )=\tilde{\beta}_h(\chi_h'(h''+z_{h''})v_{h''}^*u_{h'}^{h''} v_{h''} )$$
$$=\chi_{h'}(h''+z_{h''})(v_{h''}^{(h)})^*(\chi_{h'}({h'}+z_{h}+z_{h''})v_{h''}^{(h)}v_h u_{h'}^{(h')} v_h^*v_{h''}^{(h)*}))(v_{h''}^{(h)}) $$
$$=\chi_{h'}(h+z_h)(v_{h''}^{(h)*}v_{h''}^{(h)})v_h u_{h'}^{(h')} v_h^*(v_{h''}^{(h)*}v_{h''}^{(h)} )$$
$$=\chi_{h'}(h+z_h)v_h u_{h'}^{(h')} v_h^*=(\mathrm{Ad}(v_h)\circ \tilde{\alpha}_{h+z_h}) (u_{h'}^{(h')}) $$

\item
$$\tilde{\beta}_h^2(u_{h''} )=\tilde{\beta}_h(u_{h''}^{(h)})=\chi_{h''}(h+z_h)v_h u_{h''}v_h^*=(\mathrm{Ad}(v_h)\circ \tilde{\alpha}_{h+z_h}) (u_{h''}) $$

\item
$$\tilde{\beta}_h^2(u_{h''}^{(h)} )=\tilde{\beta}_h( \chi_{h''}(h+z_h)v_hu_{h''}v_h^*)$$
$$=\chi_{h''}(h+z_h)(\nu_h v_h)(u_{h''}^{(h)}) (\nu_h v_h)^*  $$
$$=\chi_{h''}(h+z_h) v_hu_{h''}^{(h)}  v_h^*  =(\mathrm{Ad}(v_h)\circ \tilde{\alpha}_{h+z_h}) (u_{h''}^{(h)})$$

\item
$$\tilde{\beta}_h^2(v_{h'} )=\tilde{\beta}_h(\overline{\nu_{h'}}\mu_{h'}(h''+z_{h''})v_{h''}^*v_{h'}^{(h'')} v_{h''}) $$
$$= \tilde{\beta}_h(\overline{\nu_{h'}}\mu_{h'}(h''+z_{h''})(v_{h''}^{(h)}) (\nu_{h'} \mu_{h'}(h'+z_{h}+z_{h''})v_{h''}^{(h)}v_h v_{h'} v_h^*v_{h''}^{(h)*})) (v_{h''}^{(h)*})) $$
$$=\mu_{h'}(h+z_h)(v_{h''}^{(h)}v_{h''}^{(h)})v_h v_{h'} v_h^*(v_{h''}^{(h)*} v_{h''}^{(h)*} )$$
$$=\mu_{h'}(h+z_h)v_h v_{h'} v_h^*=(\mathrm{Ad}(v_h)\circ \tilde{\alpha}_{h+z_h}) (v_{h'})$$

\item
$$\tilde{\beta}_h^2(v_{h''} )=\tilde{\beta}_h(v_{h''}^{(h)} )=\mu_{h''}(h+z_h)v_hv_{h''}v_h^*=(\mathrm{Ad}(v_h)\circ \tilde{\alpha}_{h+z_h}) (v_{h''})$$
\item
$$\tilde{\beta}_h^2(v_{h''}^{(h)} )=\tilde{\beta}_h(\mu_{h''}(h+z_h)v_hv_{h''}v_h^*)$$
$$=\mu_{h''}(h+z_h)(\nu_h v_h)(v_{h''}^{(h)}) (\nu_h v_h)^*$$
$$=\mu_{h''}(h+z_h)v_h v_{h''}^{(h)} v_h^*=\mathrm{Ad}(v_h)\circ \tilde{\alpha}_{h+z_h} (v_{h''}^{(h)}).$$
\end{itemize}

Finally, we will check the relation $\tilde{\beta}_h  \circ \tilde{\rho} =\mathrm{Ad}(u_h)\circ \tilde{\alpha}_h \circ \tilde{\rho}\circ \tilde{\beta}_h $.

We have 
$$(\tilde{\beta}_h \tilde{\rho})(\Phi_{h'}(x)) =\tilde{\beta}_h (u_{h'}^*\Phi_{h'}(\rho \alpha_{h'}(x))u_{h'} )$$
$$=(u_hv_{h''}^*u_{h''}^{(h'')}v_{h''})
(v_{h''}^*\Phi_{h''}(\alpha_{h''+z_{h''}}(\rho \alpha_{h'}(x)))(v_{h''})
v_{h''}^*u_{h''}^{(h'')*}v_{h''}u_{h}^*)
$$
$$=u_hv_{h''}^*u_{h''}^{(h'')}\Phi_{h''}(\rho \alpha_{h+z_{h''}}(x))u_{h''}^{(h'')*}v_{h''}u_{h}^*
$$
while
$$(\mathrm{Ad}(u_h)\tilde{\alpha}_h \tilde{\rho} \tilde{\beta}_h )(\Phi_{h'}(x))$$
$$=u_h(\tilde{\rho} (v_{h''}^*\Phi_{h''}(\alpha_{h'+z_{h''}}(x))v_{h''})      ) u_h^* $$
$$=(u_h v_{h''}^* u_{h''}^{(h'')}u_{h''}) (u_{h''}^*\Phi_{h''}(\rho \alpha_{h+z_{h''}}(x))u_{h''})(u_{h''}^*u_{h''}^{(h'')*}v_{h''}        u_h^*) $$
$$=u_h v_{h''}^* u_{h''}^{(h'')} \Phi_{h''}(\rho \alpha_{h+z_{h''}}(x))u_{h''}^{(h'')*}v_{h''}        u_h^* $$

Similarly, we have
$$(\tilde{\beta}_h \tilde{\rho})(\Phi_{h''}(x)) $$
$$=\tilde{\beta}_h(u_{h''}^*\Phi_{h''}(\rho \alpha_{h''}(x))u_{h''} ) $$
$$=u_{h''}^{(h)*}
(v_{h''}^{(h)}v_h\Phi_{h'}(\alpha_{h'+z_h+z_{h''}}(\rho \alpha_{h''}(x)))v_h^*v_{h''}^{(h)*})
) u_{h''}^{(h)}$$
$$=u_{h''}^{(h)*}
v_{h''}^{(h)}v_h\Phi_{h'}(\rho \alpha_{h+z_h+z_{h''}}(x))v_h^*v_{h''}^{(h)*}u_{h''}^{(h)}$$
while
$$(\mathrm{Ad}(u_h)\tilde{\alpha}_h \tilde{\rho} \tilde{\beta}_h) (\Phi_{h''}(x)) $$
$$=u_h\tilde{\rho}(v_{h''}^{(h)}v_h\Phi_{h'}(\alpha_{h''+z_h+z_{h''}}(x))v_h^*v_{h''}^{(h)*} ) u_h^* $$
$$=u_h(u_h^*u_{h''}^{(h)*}v_{h''}^{(h)}v_hu_{h'})
(u_{h'}^*\Phi_{h'}(\rho\alpha_{h+z_h+z_{h''}}(x))u_{h'})(u_{h'}^*v_h^*v_{h''}^{(h)*}u_{h''}^{(h)}u_h)
 u_h^* $$
$$=u_{h''}^{(h)*}v_{h''}^{(h)}v_h\Phi_{h'}(\rho\alpha_{h+z_h+z_{h''}}(x))v_h^*v_{h''}^{(h)*}u_{h''}^{(h)}$$
 
Now we will again check the relation on all the unitaries containing a symbol other than $h$.
\begin{itemize}
\item
$$(\tilde{\beta}_h \tilde{\rho})(u_{h'}) $$
$$=\tilde{\beta}_h(u_{h'}^*( s^{(h')}s^{(0)}{}^*+\sum_{g \in G} \limits a_{h'}(g) t^{(h')}_{g+h'} u_{h'}t^{(0)}_{g} {}^* )) $$
$$=u_h v_{h''}^*u_{h''}^{(h'')}v_{h''}(-\epsilon_{z_{h''}}(h'') v_{h''}^*s^{(h'')}v_{h''}s^{(h)}{}^*$$ $$+\sum_{g \in G} \limits \epsilon_{h''+z_{h''}}(g+h')a_{h'}(g) v_{h''}^*t^{(h'')}_{g+h'}v_{h''} v_{h''}^*u_{h''}^{(h'')*}v_{h''}u_{h}^*t^{(h)}_{g} {}^* )$$
$$=u_h v_{h''}^*u_{h''}^{(h'')}(-\epsilon_{z_{h''}}(h'') s^{(h'')}v_{h''}s^{(h)}{}^*$$ $$+\sum_{g \in G} \limits \epsilon_{h''+z_{h''}}(g+h')a_{h'}(g) t^{(h'')}_{g+h'}u_{h''}^{(h'')*}v_{h''}u_{h}^*t^{(h)}_{g} {}^* ),$$
while
$$(\mathrm{Ad}(u_h)\tilde{\alpha}_h \tilde{\rho} \tilde{\beta}_h )(u_{h'}) $$
$$=\chi_{h'}(h)u_h \tilde{\rho} (-\epsilon_{z_{h''}}(h'')v_{h''}^*u_{h''}^{(h'')*}v_{h''}u_{h}^*) u_h^*$$
$$=-\epsilon_{z_{h''}}(h'') u_h$$ $$\cdot 
( ( s^{(0)}v_{h''}^*s^{(h'')}{}^*+\sum_{g \in G} \limits a_{h''}(g) \epsilon_{h''+z_{h''}}(g+h'') t^{(0)}_{g+h''} v_{h''}^*u_{h''}^{(h'')}t^{(h'')}_{g} {}^* )u_{h''}^{(h'')*}v_{h''})^*$$ $$
\cdot u_h^*( s^{(h)}s^{(0)}{}^*+\sum_{g \in G} \limits a_h(g) t^{(h)}_{g+h} u_ht^{(0)}_{g} {}^* )^*  u_h^*$$
$$=-\epsilon_{z_{h''}}(h'') u_h v_{h''}^*u_{h''}^{(h'')}
(s^{(h'')}v_{h''}s^{(h)*}$$ $$+\sum_{g \in G}\limits \overline{a_{h''}(g)a_h(g+h'')} \epsilon_{h''+z_{h''}}(g+h'')t_g^{(h'')}u_{h''}^{(h'')*}v_{h''}u_{h}^* t_{g+h'}^{(h)*}  )  $$
$$=u_h v_{h''}^*u_{h''}^{(h'')}
(-\epsilon_{z_{h''}}(h'') s^{(h'')}v_{h''}s^{(h)*}$$ $$+\sum_{g \in G}\limits a_{h'}(g+h') \epsilon_{h''+z_{h''}}(g)t_g^{(h'')}u_{h''}^{(h'')*}v_{h''}u_{h}^* t_{g+h'}^{(h)*}  )  $$
$$=u_h v_{h''}^*u_{h''}^{(h'')}
(-\epsilon_{z_{h''}}(h'') s^{(h'')}v_{h''}s^{(h)*}$$ $$+\sum_{g \in G}\limits a_{h'}(g) \epsilon_{h''+z_{h''}}(g+h')t_{g+h'}^{(h'')}u_{h''}^{(h'')*}v_{h''}u_{h}^* t_{g}^{(h)*}  )  ,$$
where we have used (\ref{arel}).
\item
$$(\tilde{\beta}_h \tilde{\rho})(u_{h}^{(h')}) $$ 
 $$=\tilde{\beta}_h( u_{h'}^*u_h^{(h')*}v_h^{(h')}v_{h'} [s^{(h'')} v_{h'}^*v_h^{(h')*} s^{(h')*}$$ $$+\sum_{g \in G} \limits  
  \epsilon_{h''+z_h+z_{h'}}(g+h)a_h(g)t_{g+h}^{h''} v_{h'}^*v_h^{(h')*}u_h^{(h')}t_g^{(h')*}]u_{h'}
$$
 $$=(u_hv_{h''}^*u_{h''}^{(h'')}v_{h''})(
 v_{h''}^*u_{h''}v_{h'}^*u_{h'}^{(h')}v_{h'}   v_{h''})(
 v_{h''}^*v_{h'}^*v_h^{(h')}  v_{h'}    v_{h''})(
 v_{h''}^*v_{h'}^{(h'')} v_{h''})$$ 
 $$[-\epsilon_{z_{h'}}(h')\chi_h(h''+z_{h''})(v_{h''}^{(h)}v_hs^{(h')} v_{h}^*v_{h''}^{(h)*})
 (v_{h''}^*v_{(h')}^{(h'')*} v_{h''})$$ $$\cdot
 (v_{h''}^*v_{h'}^*v_{(h)}^{(h')*}v_{h'} v_{h''})
 (v_{h''}^*s^{(h'')*}v_{h''})
 $$ $$
 +\sum_{g \in G} \limits  
  \epsilon_{h''+z_h+z_{h'}}(g+h)\epsilon_{h'+z_h+z_{h''}}(g+h)\epsilon_{h''+z_{h''}}(g)a_h(g)$$ $$
 (v_{h''}^{(h)}v_h t_{g+h}^{(h')} v_{h}^*v_{h''}^{(h)*})
 (v_{h''}^*v_{(h')}^{(h'')*} v_{h''})
 (v_{h''}^*v_{h'}^*v_{(h)}^{(h')*}v_{h'} v_{h''})$$ $$\cdot
 (v_{h''}^*v_{h'}^* u_{h'}^{(h')*} v_{h'}u_{h''}^*v_{h''})
 (v_{h''}^*t_g^{(h'')*}v_{h''})]$$
 $$
 v_{h''}^*u_{h''}^{(h'')*}v_{h''}u_{h}^* $$
 $$=u_hv_{h''}^*u_{h''}^{(h'')}u_{h''}v_{h'}^*u_{h'}^{(h')}
 [\epsilon_{z_{h'}}(h')\epsilon{z_{h''}}(h)s^{(h')} v_{h'} s^{(h'')*})$$ $$+\sum_{g \in G} \limits
-\epsilon_{z_{h'}}(h+g) \epsilon_{z_{h''}}(h)\epsilon_{h'}(g)a_h(g)t_{g+h}^{(h')}  u_{h'}^{(h')*} v_{h'}u_{h''}^*t_g^{(h'')*}]
u_{h''}^{(h'')*}v_{h''}u_{h}^* ,$$
where we have used (\ref{eqi1}) twice,
while
$$(\mathrm{Ad}(u_h)\tilde{\alpha}_h \tilde{\rho} \tilde{\beta}_h )(u_{h}^{(h')}) $$ 
 $$=\chi_h(h)(-\epsilon_{z_{h'}}(h')\chi_h(h''+z_{h''}))u_h\tilde{\rho}( v_{h''}^*v_{h'}^* u_{h'}^{(h')*} v_{h'}u_{h''}^*v_{h''} )    u_h^*$$
 $$=\epsilon_{z_{h'}}(h')\epsilon_{h''+z_{h''}}(h) u_h(v_{h''}^*u_{h''}^{(h'')}u_{h''})(v_{h'}^*u_{h'}^{(h')}u_{h'})$$ $$\cdot(\chi_{h'}(h') u_{h'}^*u_{h'}^{(h')*}(v_{h'} s^{(0)}v_{h'}^*s^{(h')}{}^*+\sum_{g \in G} \limits a_{h'}(g) \epsilon_{h'+z_{h'}}(g+h') v_{h'}t^{(0)}_{g+h'} v_{h'}^*u_{h'}^{(h')}t^{(h')}_{g} {}^* )u_{h'} )^*$$ $$\cdot (u_{h'}^*u_{h'}^{(h')*}v_{h'})(u_{h''}^*( s^{(h'')}s^{(0)}{}^*+\sum_{g \in G} \limits a_{h''}(g) t^{(h'')}_{g+h''} u_{h''}t^{(0)}_{g} {}^* ))^* (u_{h''}^*u_{h''}^{(h'')*}v_{h''})  u_h^*$$
 $$ 
 =-\epsilon_{z_{h'}}(h')\epsilon_{h''+z_{h''}}(h)u_hv_{h''}^*u_{h''}^{(h'')}u_{h''}v_{h'}^*u_{h'}^{(h')}
 $$
 $$[s^{(h')}v_{h'} s^{(h'')*} +\sum_{g \in G} \limits \overline{a_{h'}(g)a_{h''}(g+h')} \epsilon_{h'+z_{h'}}(g+h') t_g^{(h')}u_{h'}^{(h')*}v_{h'}u_{h''}^*t_{g+h}^{(h'')*}    ] u_{h''}^{(h'')*}v_{h''})  u_h^*$$
   $$ 
 =u_hv_{h''}^*u_{h''}^{(h'')}u_{h''}v_{h'}^*u_{h'}^{(h')}
 $$
  $$[\epsilon_{z_{h'}}(h')\epsilon_{z_{h''}}(h)s^{(h')}v_{h'} s^{(h'')*} $$ $$+\sum_{g \in G} \limits -a_h(g) \epsilon_{z_{h'}}(h+g) \epsilon_{z_{h''}}(h)\epsilon_{h'}(g) t_{g+h}^{(h')}u_{h'}^{(h')*}v_{h'}u_{h''}^*t_{g}^{(h'')*}    ] u_{h''}^{(h'')*}v_{h''})  u_h^*,$$
  where again we have used (\ref{arel}).
 \item
 $$(\tilde{\beta}_h \tilde{\rho})(u_{h'}^{(h'')}) $$ 
 $$=\tilde{\beta}_h(\chi_{h'}(h'')u_{h''}^*u_{h'}^{(h'')*}v_{h'}^{(h'')}v_{h''}[  s^{(h)} v_{h''}^*v_{h'}^{(h'')*} s^{(h'')*}$$ $$+\sum_{g \in G} \limits a_{h'}(g) 
  \epsilon_{h+z_{h'}+z_{h''}}(g+h')t_{g+h'}^{(h)} v_{h''}^*v_{h'}^{(h'')*}u_{h'}^{(h'')}t_g^{(h'')*}]u_{h''}   )$$
$$=(u_{h''}^{(h)} )^*( v_{h''}^{(h)}v_h u_{h'}^{(h')} v_h^*v_{h''}^{(h)*})^*(v_{h''}^{(h)}v_h v_{h'} v_h^*v_{h''}^{(h)*} )(v_{h''}^{(h)}) $$ 
$$ [\chi_{h'}({h'}+z_{h}+z_{h''})(v_h s^{(0)}v_h^*) (v_{h''}^{(h)})^*(v_{h''}^{(h)}v_h v_{h'} v_h^*v_{h''}^{(h)*})^* (v_{h''}^{(h)}v_h    s^{(h')*}v_h^*v_{h''}^{(h)*} )$$
 $$+\sum_{g \in G} \limits a_{h'}(g) 
  \epsilon_{h+z_{h'}+z_{h''}}(g+h')
  \epsilon_{h+z_h}(g+h')\epsilon_{h'+z_h+z_{h''}}(g)
  $$ $$(v_ht_{g+h'}^{(0)}v_h^*)(v_{h''}^{(h)})^*(v_{h''}^{(h)}v_h v_{h'} v_h^*v_{h''}^{(h)*})^*(v_{h''}^{(h)}v_h u_{h'}^{(h')} v_h^*v_{h''}^{(h)*}) 
  (v_{h''}^{(h)}v_h    t_g^{(h')*}v_h^*v_{h''}^{(h)*} )](u_{h''}^{(h)})$$
$$=u_{h''}^{(h)*}  v_{h''}^{(h)}v_h u_{h'}^{(h')*}v_{h'}[-\epsilon_{z_h+z_{h''}}(h')s^{(0)}v_{h'}^*s^{(h')}$$ $$+\sum_{g \in G} \limits a_{h'}(g) 
  \epsilon_{z_h+z_{h'}+z_{h''}}(h')
  \epsilon_{h'+z_{h'}}(g)
  t_{g+h'}^{(0)}v_{h'}^*  u_{h'}^{(h')}  t_g^{(h')*}
  ] v_h^*v_{h''}^{(h)*}u_{h''}^{(h)},$$ 
 while
 $$(\mathrm{Ad}(u_h)\tilde{\alpha}_h \tilde{\rho} \tilde{\beta}_h )(u_{h'}^{(h'')}) $$
 $$=u_h\chi_{h'}(h)\tilde{\rho}(\chi_{h'}({h'}+z_{h}+z_{h''})v_{h''}^{(h)}v_h u_{h'}^{(h')} v_h^*v_{h''}^{(h)*} )     u_h^* $$
 $$=-\epsilon_{z_h+z_{h''}}(h')u_h(u_{h}^* u_{h''}^{(h)*}  v_{h''}^{(h)}v_{h}u_{h'}v_{h}^*u_{h}^{(h)}u_{h})(u_h^*u_h^{(h)*}v_h)$$ $$(u_{h'}^*u_{h'}^{(h')*}(v_{h'} s^{(0)}v_{h'}^*s^{(h')}{}^*+\sum_{g \in G} \limits a_{h'}(g) \epsilon_{h'+z_{h'}}(g+h') v_{h'}t^{(0)}_{g+h'} v_{h'}^*u_{h'}^{(h')}t^{(h')}_{g} {}^* )u_{h'})$$ $$(u_h^*u_h^{(h)*}v_h)^*(u_{h}^* u_{h''}^{(h)*}  v_{h''}^{(h)}v_{h}u_{h'}v_{h}^*u_{h}^{(h)}u_{h})^* u_h^*  $$
 $$= u_{h''}^{(h)*}  v_{h''}^{(h)}v_{h}u_{h'}^{(h')*}v_{h'}(-\epsilon_{z_h+z_{h''}}(h') s^{(0)}v_{h'}^*s^{(h')}{}^*$$ $$+\sum_{g \in G} \limits a_{h'}(g)  \epsilon_{z_h+z_{h'}+z_{h''}}(h')
  \epsilon_{h'+z_{h'}}(g) t^{(0)}_{g+h'} v_{h'}^*u_{h'}^{(h')}t^{(h')}_{g} {}^* ))v_h^*v_{h''}^{(h)*}u_{h''}^{(h)}$$ 
\item
 $$(\tilde{\beta}_h \tilde{\rho})(u_{h''}^{(h'')}) $$
 $$=\tilde{\beta}_h(\chi_{h''}(h'') u_{h''}^*u_{h''}^{(h'')*}v_{h''}( s^{(0)}v_{h''}^*s^{(h'')}{}^*$$ $$+\sum_{g \in G} \limits a_{h''}(g) \epsilon_{h''+z_{h''}}(g+h'') t^{(0)}_{g+h''} v_{h''}^*u_{h''}^{(h'')}t^{(h'')}_{g} {}^* )u_{h''}   )$$
 $$=-(u_{h''}^{(h)})^*(v_{h''}^{(h)} u_h^{(h)*}v_h u_{h'}^* v_h^*v_{h''}^{(h)*})^*(v_{h''}^{(h)})$$ $$\cdot [-\epsilon_{z_h}(h)\chi_{h''}(h'+z_h+z_{h''}) s^{(h)}(v_{h''}^{(h)})^*(v_{h''}^{(h)}v_h s^{(h')}v_h^*v_{h''}^{(h)*})$$ $$ +\sum_{g \in G} \limits a_{h''}(g) \epsilon_{h''+z_{h''}}(g+h'')\epsilon_{h'+z_h+z_{h''}}(g)$$ $$\cdot t^{(h)}_{g+h''}(v_{h''}^{(h)})^*(v_{h''}^{(h)} u_h^{(h)*}v_h u_{h'}^* v_h^*v_{h''}^{(h)*})  (v_{h''}^{(h)}v_h t^{(h')}_{g} {}^* v_h^*v_{h''}^{(h)*}) ](u_{h''}^{(h)})$$
 $$=-u_{h''}^{(h)*}v_{h''}^{(h)}v_h u_{h'} v_h^*u_h^{(h)} [-\epsilon_{z_h}(h)\chi_{h''}(h'+z_h+z_{h''}) s^{(h)}v_h s^{(h')}$$ $$ +\sum_{g \in G} \limits a_{h''}(g) \epsilon_{h''+z_{h''}}(g+h'')\epsilon_{h'+z_h+z_{h''}}(g) t^{(h)}_{g+h''} u_h^{(h)*}v_h u_{h'}^* t^{(h')}_{g} {}^*  ]v_h^*v_{h''}^{(h)*}u_{h''}^{(h)}$$
 $$=-\epsilon_{z_h}(h')\epsilon_{z_{h''}}(z_{h''}) u_{h''}^{(h)*}  v_{h''}^{(h)}v_{h}u_{h'}v_{h}^*u_{h}^{(h)} $$
 $$[s^{(h)}v_hs^{(h')}+ \sum_{g \in G} \limits a_{h''}(g) \epsilon_{h+z_h}(g+h')t_{g+h''}^{(h)}u_{h}^{(h)*}v_hu_{h'}^*t_{g}^{(h')} ]v_{h}^* v_{h''}^{(h)*}u_{h''}^{(h)},$$
 while
 $$(\mathrm{Ad}(u_{h})\tilde{\alpha}_h \tilde{\rho} \tilde{\beta}_h )(u_{h''}^{(h'')}) $$ 
 $$=u_h \chi_{h''}(h) \tilde{\rho}( -\epsilon_{z_h}(h)\chi_{h''}(h'+z_h+z_{h''})v_{h''}^{(h)} u_h^{(h)*}v_h u_{h'}^* v_h^*v_{h''}^{(h)*}   )    u_h^*$$ $$=\epsilon_{z_h}(h')\epsilon_{z_{h'}}(z_{h'})u_h(u_{h}^* u_{h''}^{(h)*}  v_{h''}^{(h)}v_{h}u_{h'}v_{h}^*u_{h}^{(h)}u_{h} )$$ $$\cdot(\chi_h(h) u_h^*u_h^{(h)*}v_h$$ $$\cdot( s^{(0)}v_h^*s^{(h)}{}^*+\sum_{g \in G} \limits a_h(g) \epsilon_{h+z_h}(g+h) t^{(0)}_{g+h} v_h^*u_h^{(h)}t^{(h)}_{g} {}^* )u_h)^*(u_h^*u_h^{(h)*}v_h)$$ $$\cdot(u_{h'}^*( s^{(h')}s^{(0)}{}^*+\sum_{g \in G} \limits a_{h'}(g) t^{(h')}_{g+h'} u_{h'}t^{(0)}_{g} {}^* ) ){}^*$$ $$\cdot(u_h^*u_h^{(h)*}v_h)^*(u_{h}^* u_{h''}^{(h)*}  v_{h''}^{(h)}v_{h}u_{h'}v_{h}^*u_{h}^{(h)}u_{h} )^* u_h^*$$
 $$=-\epsilon_{z_h}(h')\epsilon_{z_{h''}}(z_{h''}) u_{h''}^{(h)*}  v_{h''}^{(h)}v_{h}u_{h'}v_{h}^*u_{h}^{(h)} $$ $$\cdot( s^{(0)}v_h^*s^{(h)}{}^*+\sum_{g \in G} \limits a_h(g) \epsilon_{h+z_h}(g+h) t^{(0)}_{g+h} v_h^*u_h^{(h)}t^{(h)}_{g} {}^* )^*$$ $$\cdot( s^{(h')}s^{(0)}{}^*+\sum_{g \in G} \limits a_{h'}(g) t^{(h')}_{g+h'} u_{h'}t^{(0)}_{g} {}^* ){}^* v_{h}^* v_{h''}^{(h)*}u_{h''}^{(h)}$$
 $$=-\epsilon_{z_h}(h')\epsilon_{z_{h''}}(z_{h''}) u_{h''}^{(h)*}  v_{h''}^{(h)}v_{h}u_{h'}v_{h}^*u_{h}^{(h)} $$
 $$[s^{(h)}v_hs^{(h')}+ \sum_{g \in G} \limits \overline{a_h(g)a_{h'}(g+h)} \epsilon_{h+z_h}(g+h)t_g^{(h)}u_{h}^{(h)*}v_hu_{h'}^*t_{g+h''}^{(h')} ]$$ $$\cdot v_{h}^* v_{h''}^{(h)*}u_{h''}^{(h)}$$
 $$=-\epsilon_{z_h}(h')\epsilon_{z_{h''}}(z_{h''}) u_{h''}^{(h)*}  v_{h''}^{(h)}v_{h}u_{h'}v_{h}^*u_{h}^{(h)} $$
 $$[s^{(h)}v_hs^{(h')}+ \sum_{g \in G} \limits a_{h''}(g) \epsilon_{h+z_h}(g+h')t_{g+h''}^{(h)}u_{h}^{(h)*}v_hu_{h'}^*t_{g}^{(h')} ]v_{h}^* v_{h''}^{(h)*}u_{h''}^{(h)}$$
 \item
 $$(\tilde{\beta}_h \tilde{\rho})(u_{h'}^{(h')}) $$
 $$= \tilde{\beta}_h(\chi_{h'}(h') u_{h'}^*u_{h'}^{(h')*}v_{h'}$$ $$\cdot( s^{(0)}v_{h'}^*s^{(h')}{}^*+\sum_{g \in G} \limits a_{h'}(g) \epsilon_{h'+z_{h'}}(g+h') t^{(0)}_{g+h'} v_{h'}^*u_{h'}^{(h')}t^{(h')}_{g} {}^* )u_{h'}) $$
 $$=-(v_{h''}^*u_{h''}^{(h'')*}v_{h''}u_{h}^*)^*(v_{h''}^*u_{h'}^{h''} v_{h''})^*(v_{h''}^*v_{h'}^{(h'')} v_{h''})  $$
 $$[\chi_{h'}(h''+z_{h''})s^{(h)}(v_{h''}^*v_{h'}^{(h'')} v_{h''})^*(v_{h''}^* s^{(h'')*}v_{h''})$$ $$ + \sum_{g \in G} \limits a_{h'}(g) \epsilon_{h'+z_{h'}}(g+h')\epsilon_{h''+z_{h''}(g)}$$ $$\cdot t^{(h)}_{g+h'}(v_{h''}^*v_{h'}^{(h'')} v_{h''})^*(v_{h''}^*u_{h'}^{h''} v_{h''})v_{(h''}^*t_{g}^{(h'')*} v_{h''}  ] (v_{h''}^*u_{h''}^{(h'')*}v_{h''}u_{h}^*)$$
 $$=-u_hv_{h''}^*v_{h'}^{(h'')} v_{h''}  [\chi_{h'}(h''+z_{h''})s^{(h)}v_{h''}^*v_{h'}^{(h'')*} s^{(h'')*}$$ $$ + \sum_{g \in G} \limits a_{h'}(g) \epsilon_{h'+z_{h'}}(g+h')\epsilon_{h''+z_{h''}}(g)t^{(h)}_{g+h'}v_{h''}^*v_{h'}^{(h'')*} u_{h'}^{(h'')}t_{g}^{(h'')*}  ] u_{h''}^{(h'')*}v_{h''}u_{h}^*,$$
 while
 $$(\mathrm{Ad}(u_{h})\tilde{\alpha}_h \tilde{\rho} \tilde{\beta}_h )(u_{h'}^{(h')}) $$  
 $$=u_h \chi_{h'}(h)\tilde{\rho}(\chi_h'(h''+z_{h''})v_{h''}^*u_{h'}^{h''} v_{h''} )u_h^*$$ 
 $$=-\epsilon_{z_{h''}(h')}u_h
 \chi_{h'}(h'')[ u_{h''}^*u_{h'}^{(h'')*}v_{h'}^{(h'')}v_{h''} s^{(h)} v_{h''}^*v_{h'}^{(h'')*} s^{(h'')*}u_{h''}$$ $$+\sum_{g \in G} \limits  
  \epsilon_{h+z_{h'}+z_{h''}}(g+h')a_{h'}(g)u_{h''}^*u_{h'}^{(h'')*}v_{h'}^{(h'')}v_{h''} t_{g+h'}^{h} v_{h''}^*v_{h'}^{(h'')*}u_{h'}^{(h'')}t_g^{(h'')*}u_{h''}]$$ $$\cdot
 (u_{h''}^*u_{h''}^{(h'')*}v_{h''})   u_h^* $$
 $$=-\epsilon_{z_{h''}(h')}u_hu_{h''}^*u_{h'}^{(h'')*}v_{h'}^{(h'')}v_{h''}
 [  s^{(h)} v_{h''}^*v_{h'}^{(h'')*} s^{(h'')*}$$ $$+\sum_{g \in G} \limits  
  \epsilon_{h+z_{h'}+z_{h''}}(g+h')a_{h'}(g) t_{g+h'}^{h} v_{h''}^*v_{h'}^{(h'')*}u_{h'}^{(h'')}t_g^{(h'')*}]
 u_{h''}^{(h'')*}v_{h''}   u_h^* $$
\item
 $$(\tilde{\beta}_h \tilde{\rho})(u_{h''}) $$
 $$=\tilde{\beta}_h(u_{h''}^*( s^{(h'')}s^{(0)}{}^*+\sum_{g \in G} \limits a_{h''}(g) t^{(h'')}_{g+h''} u_{h''}t^{(0)}_{g} {}^* )) $$
 $$=u_{h''}^{(h)*} (v_{h''}^{(h)}v_h s^{(h')}v_h^* v_{h''}^{(h)*}s^{(h)}{}^*$$ $$+\sum_{g \in G} \limits a_{h''}(g) \epsilon_{h'+z_h+z_{h''}}(g+h'') v_{h''}^{(h)}v_ht^{(h')}_{g+h''}v_h^* v_{h''}^{(h)*} u_{h''}^{(h)}t^{(h)}_{g} {}^* ) $$
 $$=u_{h''}^{(h)*} v_{h''}^{(h)}v_h [s^{(h')}v_h^* v_{h''}^{(h)*}s^{(h)}{}^*$$ $$+\sum_{g \in G} \limits a_{h''}(g) \epsilon_{h'+z_h+z_{h''}}(g+h'') t^{(h')}_{g+h''}v_h^* v_{h''}^{(h)*} u_{h''}^{(h)}t^{(h)}_{g} {}^* ) ,$$
 while
 $$(\mathrm{Ad}(u_{h})\tilde{\alpha}_h \tilde{\rho} \tilde{\beta}_h )(u_{h''}) $$
 $$=u_h(\chi_{h''}(h)\tilde{\rho}(u_{h''}^{(h)}   ) )u_h^* $$
 $$=u_h(\chi_{h''}(h)[ u_{h}^*u_{h''}^{(h)*}v_{h''}^{(h)}v_{h} s^{(h')} v_{h}^*v_{h''}^{(h)*} s^{(h)*}u_{h}$$ $$+\sum_{g \in G} \limits  
  \epsilon_{h'+z_{h''}+z_{h}}(g+h'')a_{h''}(g)u_{h}^*u_{h''}^{(h)*}v_{h''}^{(h)}v_{h} t_{g+h''}^{h'} v_{h}^*v_{h''}^{(h)*}u_{h''}^{(h)}t_g^{(h)*}u_{h}
] )u_h^* $$
$$=u_{h''}^{(h)*}v_{h''}^{(h)}v_{h}[ s^{(h')} v_{h}^*v_{h''}^{(h)*} s^{(h)*}$$ $$+\sum_{g \in G} \limits
  \epsilon_{h'+z_{h''}+z_{h}}(g+h'')a_{h''}(g)t_{g+h''}^{h'} v_{h}^*v_{h''}^{(h)*}u_{h''}^{(h)}t_g^{(h)*}]  $$
\item
 $$(\tilde{\beta}_h \tilde{\rho})(u_{h''}^{(h)}) $$
 $$=\tilde{\beta}_h(u_{h}^*u_{h''}^{(h)*}v_{h''}^{(h)}v_{h} [  s^{(h')} v_{h}^*v_{h''}^{(h)*} s^{(h)*}$$ $$+\sum_{g \in G} \limits  
  \epsilon_{h'+z_{h''}+z_{h}}(g+h'')a_{h''}(g)t_{g+h''}^{h'} v_{h}^*v_{h''}^{(h)*}u_{h''}^{(h)}t_g^{(h)*}
] u_{h}  ) $$
$$=(u_h^{(h)})^*(v_hu_{h''}v_h^*)^*(v_hv_{h''}v_h^*)(v_h)$$ $$\cdot [\chi_{h''}(h+z_h) v_{h''}^*s^{(h'')}v_{h''} (v_{h})^*(v_hv_{h''}v_h^*)^*( v_h s^{(0)*} v_h^*)$$ $$+\sum_{g \in G} \limits  
  \epsilon_{h'+z_{h''}+z_{h}}(g+h'')\epsilon_{h''+z_{h''}}(g+h'') \epsilon_{h+z_h}(g)a_{h''}(g)
   $$ $$\cdot v_{h''}^*t_{g+h''}^{(h'')}v_{h''} (v_{h})^*(v_hv_{h''}v_h^*)^*(v_hu_{h''}v_h^*)( v_h t_g^{(h)*} v_h^*)
]u_h^{(h)} $$
$$=\epsilon_{z_h}(h'')u_h^{(h)*}v_h u_{h''}^*  [s^{(h'')}s^{(0)*} +\sum_{g \in G} \limits  
 a_{h''}(g)
   t_{g+h''}^{(h'')}u_{h''} t_g^{(h)*}
]v_h^*u_h^{(h)} ,$$
while
$$(\mathrm{Ad}(u_{h})\tilde{\alpha}_h \tilde{\rho} \tilde{\beta}_h) (u_{h''}^{(h)}) $$
 $$=u_h(\chi_{h''}(h)\tilde{\rho}( \chi_{h''}(h+z_h)v_hu_{h''}v_h^*   ) )u_h^* $$
$$=\epsilon_{z_h}(h'')u_h(u_h^*u_h^{(h)*}v_h)
$$ $$\cdot(u_{h''}^*( s^{(h'')}s^{(0)}{}^*+\sum_{g \in G} \limits a_{h''}(g) t^{(h'')}_{g+h''} u_{h''}t^{(0)}_{g} {}^* ))
(u_h^*u_h^{(h)*}v_h)^* u_h^* $$
$$=\epsilon_{z_h}(h'')u_h^{(h)*}v_h
u_{h''}^*[ s^{(h'')}s^{(0)}{}^*+\sum_{g \in G} \limits a_{h''}(g) t^{(h'')}_{g+h''} u_{h''}t^{(h)}_{g} {}^* ]
v_h^*u_h^{(h)}$$
\item
$$(\tilde{\beta}_h \tilde{\rho})(v_{h}^{(h')})$$
$$=\tilde{\beta}_h(-\epsilon_{z_{h'}}(h')\mu_h(h')\chi_h(h''+z_h+z_{h'}) \xi_h u_{h'}^* u_h^{(h')*}  v_h^{(h')}v_{h'}u_{h''}v_{h'}^*u_{h'}^{(h')}u_{h'} )$$
$$=(-\epsilon_{z_{h'}}(h')\mu_h(h')\chi_h(h''+z_h+z_{h'}\xi_h)(\overline{\nu_h}\mu_h(h+z_{h'}+z_{h''}))$$ $$\cdot(-\epsilon_{z_{h'}}(h')\chi_h(h''+z_{h''}))(\chi_{h'}(h''+z_{h''}))$$ $$\cdot(v_{h''}^*u_{h''}^{(h'')*}v_{h''}u_{h}^*)^*(v_{h''}^*v_{h'}^* u_{h'}^{(h')*} v_{h'}u_{h''}^*v_{h''})^*(v_{h''}^*v_{h'}^*v_h^{(h')}  v_{h'}    v_{h''} )(v_{h''}^*v_{h'}^{(h'')} v_{h''})$$ $$\cdot(u_{h''}^{(h)})(v_{h''}^*v_{h'}^{(h'')} v_{h''})^*(v_{h''}^*u_{h'}^{h''} v_{h''} )(v_{h''}^*u_{h''}^{(h'')*}v_{h''}u_{h}^*) $$ 
$$=-\xi_h\overline{\nu_h}\epsilon_{z_h}(h'+z_{h'}+z_{h''})\epsilon_{z_{h''}}(h')$$ $$\cdot u_h v_{h''}^*u_{h''}^{(h'')}u_{h''}v_{h'}^*u_{h'}^{h'} (v_h^{(h')}  v_{h'} v_{h'}^{(h'')} v_{h''})u_{h''}^{(h)}v_{h''}^*v_{h'}^{(h'')*} u_{h'}^{h''} u_{h''}^{(h'')*}v_{h''}u_{h}^* $$
$$=-\xi_h\overline{\nu_h}\epsilon_{z_h}(h'+z_{h'}+z_{h''})\epsilon_{z_{h''}}(h')$$ $$\cdot u_h v_{h''}^*u_{h''}^{(h'')}u_{h''}v_{h'}^*u_{h'}^{h'} (v_h^*v_{h''}^{(h)*}u_{h''}^{(h)}v_{h''}^*v_{h'}^{(h'')*} u_{h'}^{h''}) u_{h''}^{(h'')*}v_{h''}u_{h}^* $$
$$=\xi_h\overline{\nu_h}\epsilon_{z_h}(h''+z_{h'}+z_{h''})\epsilon_{z_{h'}}(h')\epsilon_{z_{h''}}(h)$$ $$\cdot u_h v_{h''}^*u_{h''}^{(h'')}u_{h''}v_{h'}^*u_{h'}^{h'} u_h^{(h')*}v_h^{(h')}v_{h'} u_{h''}^{(h'')*}v_{h''}u_{h}^* ,$$
where we have used relations  (\ref{eqi1}) and (\ref{eqi2}),
 while
$$(\mathrm{Ad}(u_{h})\tilde{\alpha}_h \tilde{\rho} \tilde{\beta}_h )(v_{h}^{(h')}) $$
 $$=u_h(\mu_{h}(h)\tilde{\rho}( \overline{\nu_h}\mu_h(h+z_{h'}+z_{h''})v_{h''}^*v_{h'}^*v_h^{(h')}  v_{h'}    v_{h''}  )u_h^* $$
 $$=\mu_{h}(h)\overline{\nu_h}\mu_h(h+z_{h'}+z_{h''})(-\epsilon_{z_{h'}}(h')\mu_h(h')\chi_h(h''+z_h+z_{h'}) \xi_h)$$ $$\cdot u_h  (u_{h''}^*u_{h''}^{(h'')*}v_{h''})^*(u_{h'}^*u_{h'}^{(h')*}v_{h'})^*(u_{h'}^* u_h^{(h')*}  v_h^{(h')}v_{h'}u_{h''}v_{h'}^*u_{h'}^{(h')}u_{h'})$$ $$\cdot(u_{h'}^*u_{h'}^{(h')*}v_{h'})(u_{h''}^*u_{h''}^{(h'')*}v_{h''}) u_h^* $$
 $$=\xi_h\overline{\nu_h}\epsilon_{z_h}(h''+z_{h'}+z_{h''})\epsilon_{z_{h'}}(h')\epsilon_{z_{h''}}(h)$$ $$u_h v_{h''}^*u_{h''}^{(h'')}u_{h''}v_{h'}^*u_{h'}^{h'}u_{h}^{(h')*}v_h^{(h')}v_{h'}u_{h''}^{(h'')*}v_{h''} u_h^* $$
\item
$$(\tilde{\beta}_h \tilde{\rho})(v_{h'}^{(h'')})$$
$$=\tilde{\beta}_h(-\epsilon_{z_{h''}}(h'')\mu_{h'}(h'')\chi_{h'}(h+z_{h'}+z_{h''}) \xi_{h'} u_{h''}^* u_{h'}^{(h'')*}  v_{h'}^{(h'')}v_{h''}u_{h}v_{h''}^*u_{h''}^{(h'')}u_{h''} )$$
$$=-\epsilon_{z_{h''}}(h'')\mu_{h'}(h'')\chi_{h'}(h+z_{h'}+z_{h''}) \xi_{h'}(\chi_{h'}({h'}+z_{h}+z_{h''}))$$ $$\cdot(-\epsilon_{z_h}(h)\chi_{h''}(h'+z_h+z_{h''}))(\nu_{h'} \mu_{h'}(h'+z_{h}+z_{h''}))$$ $$\cdot (u_{h''}^{(h)})^*(v_{h''}^{(h)}v_h u_{h'}^{(h')} v_h^*v_{h''}^{(h)*})^*(v_{h''}^{(h)}v_h v_{h'} v_h^*v_{h''}^{(h)*})(v_{h''}^{(h)})(u_h^{(h)})(v_{h''}^{(h)})^*$$ $$\cdot (v_{h''}^{(h)} u_h^{(h)*}v_h u_{h'}^* v_h^*v_{h''}^{(h)*})(u_{h''}^{(h)})
$$
$$=\nu_{h'}\xi_{h'} \epsilon_{z_h+z_{h''}}(h')\epsilon_{z_{h'}}(h''+z_h+z_{h''}) u_{h''}^{(h)*}v_{h''}^{(h)}v_h u_{h'}^{(h')*}  v_{h'}  u_{h'}^* v_h^*v_{h''}^{(h)*}u_{h''}^{(h)},
$$
while
 $$(\mathrm{Ad}(u_{h})\tilde{\alpha}_h \tilde{\rho} \tilde{\beta}_h )(v_{h'}^{(h'')}) $$
 $$=u_{h}(\mu_{h'}(h)\tilde{\rho}( \nu_{h'} \mu_{h'}(h'+z_{h}+z_{h''})v_{h''}^{(h)}v_h v_{h'} v_h^*v_{h''}^{(h)*}) )u_h^* $$
 $$=\mu_{h'}(h) \nu_{h'} \mu_{h'}(h'+z_{h}+z_{h''})\xi_{h'}$$ $$\cdot u_{h}(u_{h}^* u_{h''}^{(h)*}  v_{h''}^{(h)}v_{h}u_{h'}v_{h}^*u_{h}^{(h)}u_{h})(u_h^*u_h^{(h)*}v_h)$$ $$\cdot(u_{h'}^*u_{h'}^{(h')*}v_{h'})(u_h^*u_h^{(h)*}v_h)^*(u_{h}^* u_{h''}^{(h)*}  v_{h''}^{(h)}v_{h}u_{h'}v_{h}^*u_{h}^{(h)}u_{h})^*u_h^*
 $$
 $$=\nu_{h'}\xi_{h'}\epsilon_{z_h+z_{h''}}(h')\epsilon_{z_{h'}}(h''+z_h+z_{h''}) u_{h''}^{(h)*}  v_{h''}^{(h)}v_{h}u_{h'}^{(h')*}v_{h'}
 u_{h'}^*v_h^*v_{h''}^{(h)*}u_{h''}^{(h)}
 $$
 
\item
$$(\tilde{\beta}_h \tilde{\rho})(v_{h''}^{(h)})$$
$$=\tilde{\beta}_h(-\epsilon_{z_{h}}(h)\mu_{h''}(h)\chi_{h''}(h'+z_{h''}+z_{h}) \xi_{h''} u_{h}^* u_{h''}^{(h)*}  v_{h''}^{(h)}v_{h}u_{h'}v_{h}^*u_{h}^{(h)}u_{h}) $$
$$=-\epsilon_{z_{h}}(h)\mu_{h''}(h)\chi_{h''}(h'+z_{h''}+z_{h}) \xi_{h''}\chi_{h''}(h+z_h)$$ $$\cdot \chi_h(h+z_h)\mu_{h''}(h+z_h)(-\epsilon_{z_{h''}}(h''))$$ $$\cdot (u_h^{(h)})^*(v_hu_{h''}v_h^*)^*(v_hv_{h''}v_h^*)(v_h)(v_{h''}^*u_{h''}^{(h'')*}v_{h''}u_{h}^*)(v_h)^*(v_hu_hv_h^*)(u_h^{(h)}) $$ 
$$=\xi_{h''} \epsilon_{z_h}(h'')\epsilon_{z_{h''}}(z_h)  u_h^{(h)*}v_hu_{h''}^*u_{h''}^{(h'')*}v_{h''}v_h^*u_h^{(h)} $$ 
while
$$(\mathrm{Ad}(u_{h})\tilde{\alpha}_h \tilde{\rho} \tilde{\beta}_h )(v_{h''}^{(h)}) $$
$$=u_h \mu_{h''}(h)\tilde{\rho}(\mu_{h''}(h+z_h)v_hv_{h''}v_h^*)  u_h^* $$
$$=\mu_{h''}(h)\mu_{h''}(h+z_h)\xi_{h''} u_h (u_h^*u_h^{(h)*}v_h)(u_{h''}^*u_{h''}^{(h'')*}v_{h''}) (u_h^*u_h^{(h)*}v_h)^*   u_h^* $$
$$\xi_{h''} \epsilon_{z_h}(h'')\epsilon_{z_{h''}}(z_h) u_h^{(h)*}v_h u_{h''}^*u_{h''}^{(h'')*}v_{h''}v_h^* u_h^{(h)}$$
\item
$$(\tilde{\beta}_h \tilde{\rho})(v_{h'})$$
$$=\tilde{\beta}_h(\xi_{h'} u_{h'}^*u_{h'}^{(h')*}v_{h'}) $$
$$=\xi_{h'}(-\epsilon_{z_{h''}}(h'')v_{h''}^*u_{h''}^{(h'')*}v_{h''}u_{h}^*)^*(\chi_{h'}(h''+z_{h''})v_{h''}^*u_{h'}^{h''} v_{h''})^*$$ $$\cdot(\overline{\nu_{h'}}\mu_{h'}(h''+z_{h''})v_{h''}^*v_{h'}^{(h'')} v_{h''}) $$
 $$=-\xi_{h'}\overline{\nu_{h'}}\epsilon_{z_{h'}}(h''+z_{h''})\epsilon_{z_{h''}}(h'')u_h v_{h''}^*v_{h'}^{(h'')} v_{h''},$$
 while
$$(\mathrm{Ad}(u_{h})\tilde{\alpha}_h \tilde{\rho} \tilde{\beta}_h )(v_{h'}) $$
$$=u_h \mu_{h'}(h)\tilde{\rho}(\overline{\nu_{h'}}\mu_{h'}(h''+z_{h''})v_{h''}^*v_{h'}^{(h'')} v_{h''})   u_h^* $$
 $$=\mu_{h'}(h)\overline{\nu_{h'}}\mu_{h'}(h''+z_{h''}) (-\epsilon_{z_{h''}}(h'')\mu_{h'}(h'')\chi_{h'}(h+z_{h'}+z_{h''})\xi_{h'})$$ $$\cdot
 u_h(u_{h''}^*u_{h''}^{(h'')*}v_{h''})^*(u_{h''}^* u_{h'}^{(h'')*}  v_{h'}^{(h'')}v_{h''}u_{h}v_{h''}^*u_{h''}^{(h'')}u_{h''})(u_{h''}^*u_{h''}^{(h'')*}v_{h''})u_{h}^*$$
 $$=
 -\xi_{h'}\overline{\nu_{h'}}\epsilon_{z_{h'}}(h''+z_{h''})\epsilon_{z_{h''}}(h'')
 u_hv_{h''}^*u_{h''}^{(h'')} v_{h'}^{(h'')}v_{h''}$$
\item
 $$(\tilde{\beta}_h \tilde{\rho})(v_{h''})=\tilde{\beta}_h(\xi_{h''} u_{h''}^*u_{h''}^{(h'')*}v_{h''}) $$
 $$=\xi_{h''}u_{h''}^{(h)*}(-\epsilon_{z_h}(h)\chi_{h''}(h'+z_h+z_{h''})v_{h''}^{(h)} u_h^{(h)*}v_h u_{h'}^* v_h^*v_{h''}^{(h)*})^*   v_{h''}^{(h)}  $$
 $$=\xi_{h''} \epsilon_{z_h}(h')\epsilon_{z_{h''}}(h'')u_{h''}^{(h)*} v_{h''}^{(h)} v_hu_{h'}v_h^*u_{h}^{(h')},$$
while
$$(\mathrm{Ad}(u_{h})\tilde{\alpha}_h \tilde{\rho} \tilde{\beta}_h) (v_{h''}) =u_h\mu_{h''}(h) \tilde{\rho}(v_{h''}^{(h)}) u_h^* $$
 $$=\mu_{h''}(h) u_h(-\epsilon_{z_{h}}(h)\mu_{h''}(h)\chi_{h''}(h'+z_{h''}+z_{h}) \xi_{h''} u_{h}^* u_{h''}^{(h)*}  v_{h''}^{(h)}v_{h}u_{h'}v_{h}^*u_{h}^{(h)}u_{h})u_h^*  $$
 $$=\xi_{h''}\epsilon_{z_h}(h')\epsilon_{z_{h''}}(h'') u_{h''}^{(h)*}  v_{h''}^{(h)}v_{h}u_{h'}v_{h}^*u_{h}^{(h)}$$
 \end{itemize}
 \end{proof}
\printbibliography
\end{document}